\documentclass[11pt]{article}
\usepackage{amssymb, amsmath,amsthm}
\usepackage{mathrsfs, yfonts, color, calligra}
\usepackage[all]{xy}
\usepackage{enumitem}
\setitemize{itemsep=1pt,topsep=0pt,parsep=0pt,partopsep=0pt}
\usepackage{multirow,float,longtable} 
\usepackage{pgffor,xifthen}
\usepackage{tikz}
\usepackage{changes}
\usepackage{caption}
\usepackage{blkarray}
\usepackage{rotating}
\usepackage{diagbox}
\usepackage{dynkin-diagrams}
\usepackage{makecell}
\usepackage{verbatim}
\usepackage{xxcolor}
\usepackage{hyperref}
\usepackage{array}
\usepackage{booktabs}
\usepackage{subcaption}
\usepackage{stmaryrd}

\usepackage{geometry}
\renewcommand{\thesubfigure}{(\alph{subfigure})}

\hypersetup{
	colorlinks,
	linkcolor=black, 
	citecolor={black},
	urlcolor={black}
}

\makeatletter
\renewcommand{\p@subfigure}{}
\makeatother

\usetikzlibrary{calc}
\tikzset{
dot/.style={draw,circle,inner sep=1.5pt,fill=black},
cir/.style={draw,circle,inner sep=0.8pt},
->-/.style={
    decoration={
      markings,  
      mark=at position 0.6 with {\arrow[thin,scale=2]{>}}
    },
postaction={decorate} 
}, 
-<-/.style={
    decoration={
      markings,
      mark=at position 0.6 with {\arrow[thin,scale=2]{<}}
    },
    postaction={decorate}
  }
}

\setlist{nosep}

\newtheorem{Def}{Definition}[section]
\newtheorem{Prop}[Def]{Proposition}
\newtheorem{Thm}[Def]{Theorem}
\newtheorem{Lem}[Def]{Lemma}
\newtheorem{Cor}[Def]{Corollary}

\theoremstyle{definition} 
\newtheorem{Remark}[Def]{Remark}
\newtheorem{Exam}[Def]{Example}

\setlist[enumerate]{label = {\rm (\arabic*)}}

\DeclareMathOperator{\diag}{diag}

 \DeclareMathOperator{\End}{End}

 \DeclareMathOperator{\Tr}{Tr}

 \DeclareMathOperator{\sgn}{sgn}
 \DeclareMathOperator{\defect}{Def}

\DeclareMathOperator{\size}{\sf size}
\DeclareMathOperator{\ex}{ex}

\DeclareMathOperator{\lso}{\mathfrak{so}}
\DeclareMathOperator{\lsp}{\mathfrak{sp}}
\DeclareMathOperator{\lsl}{\mathfrak{sl}}
\DeclareMathOperator{\lgl}{\mathfrak{gl}}
\DeclareMathOperator{\lieg}{\mathfrak{g}}
\DeclareMathOperator{\liegd}{\dot{\mathfrak{g}}}
\DeclareMathOperator{\lieh}{\mathfrak{h}}
\newcommand{\liehd}{\dot{\mathfrak{h}}}
\DeclareMathOperator{\lghat}{\hat{\mathfrak{g}}}
\DeclareMathOperator{\lgdhat}{\hat{\dot{\mathfrak{g}}}}
\DeclareMathOperator{\lhdhat}{\hat{\dot{\mathfrak{h}}}}
\DeclareMathOperator{\lsohat}{\widehat{\mathfrak{so}}}
\DeclareMathOperator{\lglhat}{\widehat{\mathfrak{gl}}}
\DeclareMathOperator{\lslhat}{\widehat{\mathfrak{sl}}}
\DeclareMathOperator{\lsphat}{\widehat{\mathfrak{sp}}}
\DeclareMathOperator{\lgtilde}{\tilde{\mathfrak{g}}}
\newcommand{\lgdtilde}{\tilde{\dot{\mathfrak{g}}}}
\newcommand{\lhdtilde}{\tilde{\dot{\mathfrak{h}}}}

\DeclareMathOperator{\Span}{Span}
\DeclareMathOperator{\Cl}{{\sf Cl}} 

\newcommand{\defCategory}[2]{
  \newcommand{#1}{#2\defvariable}}

\newcommand{\defvariable}[2][]{
\if\relax\detokenize{#1}\relax  
\if\relax\detokenize{#2}\relax
    \else  ({#2})  \fi
    \else  ^{{\rm #1}}({#2})  \fi}

\defCategory{\C}{\mathscr{C}}
\defCategory{\K}{\mathscr{K}}
\defCategory{\D}{\mathscr{D}}

\foreach \x in {A,...,Z}{%
\expandafter\xdef\csname cal\x\endcsname{\noexpand\ensuremath{\noexpand\mathcal{\x}}}
\expandafter\xdef\csname scr\x\endcsname{\noexpand\ensuremath{\noexpand\mathscr{\x}}}
\expandafter\xdef\csname bb\x\endcsname{\noexpand\ensuremath{\noexpand\mathbb{\x}}}}

\newcommand{\levelR}{\mathcal{R}}
\newcommand{\levelL}{\mathcal{L}}
\newcommand{\W}{{\bf W}} 

\newcommand{\F}{\mathcal{F}}
 
\newcommand{\B}{\mathsf{B}}

\newcommand{\lgo}{\mathring{\lieg}}
\newcommand{\sgmo}{\mathring{\Sigma}}
\newcommand{\dlho}{\mathring{\lieh}{}^*}

\newcommand{\Vir}{{\sf Vir}}

\newcommand{\ii}{\sqrt{-1}}
\def\F{\mathscr{F}}
\def\P{\mathscr{P}}

\def\m{{\sf m}}
\def\r{{\sf r}}
\newcommand{\M}{\mathsf{M}}
\newcommand{\Y}{\mathsf{Y}}
\newcommand{\Z}{\mathsf{Z}}

\newcommand{\dCox}{{\sf h}\!^\vee}
\newcommand{\ddCox}{\dot{\sf h}\!^\vee}
\newcommand{\rkCar}{l}
\newcommand{\lLevel}{R}

\newcommand{\lleft}{}

\newcommand{\bigalg}{\rm w}

\newcommand{\gh}[1]{{#1}^*}
\newcommand{\floor}[1]{\lfloor #1\rfloor}
\newcommand{\nord}[1]{:\!{#1}\!:\;}

\newcommand{\grws}[2]{{#1}_{#2_{\bar{0}}}\oplus {#1}_{#2_{\bar{1}}}}

\newcommand{\lra}{\longrightarrow}

\newcommand{\sigdom}{P_{\Sigma}^+}

\newcommand{\sigdomdel}{\tilde{P}_{\Sigma}^+}

\renewcommand{\leq}{\leqslant}
\renewcommand{\geq}{\geqslant}
\renewcommand{\le}{\leqslant}
\renewcommand{\ge}{\geqslant}
\renewcommand{\min}{\mathrm{min}}
\renewcommand{\max}{\mathrm{max}}

\renewcommand{\succeq}{\trianglerighteqslant} 
\renewcommand{\succ}{\rhd}

\title{Level-rank dualities and moving vectors}
\author{Wei Hu, Feiyue Huang, Yanbo Li, Xiangyu Qi}
\date{}

\begin{document}

\maketitle

\renewcommand{\thefootnote}{\alph{footnote}}
\setcounter{footnote}{-1} \footnote{2020 Mathematics Subject
Classification: 17B67; 05E10.}
\renewcommand{\thefootnote}{\alph{footnote}}
\setcounter{footnote}{-1} \footnote{Keywords: level-rank duality, moving vector, Fock space, abacus}

\begin{abstract}
Duality relations between Lie algebras are a significant phenomenon in Lie algebra representation theory, with  level-rank duality as a famous example. 
Level-rank dualities for affine Lie algebras of type $A^{(1)}$ were first discovered by Frenkel in 1982, and later extended to all classical non-twisted affine types by Hasegawa in 1989 through 
elaborate character calculations.
In this paper, for all classical affine Lie algebras, we construct appropriate Fock spaces in a uniform way and establish corresponding combinatorial models (Maya diagrams and abaci), extending Uglov map to all classcial affine types. 
Through the action of the Virasoro algebra, we completely characterize the joint highest weight vectors in the Fock space, thereby obtaining the corresponding level-rank duality theory. 
Our method no longer relies on character calculations. Using this new level-rank duality theory, the defect of the cyclotomic KLR algebra $\mathscr{R}^{\Lambda}_{\beta}$ of classical affine type can be interpreted as the sum of the components of the correponding moving vector.
\end{abstract}
\setcounter{tocdepth}{2}
\tableofcontents

\section{Introduction}

Duality theories provide a powerful framework in representation theory, where two algebraic objects act on a common vector space in a mutually commuting manner. 
Under favorable circumstances, the information of one algebra can be translated into that of the other, thereby converting complicated problems into more tractable ones. 
A prototypical example is the classical Schur--Weyl duality (\cite{Schur1927,Weyl1939}) and its numerous generalizations .

In a celebrated work \cite{Howe}, Howe established a duality theory involving a classical group $G$ and a Lie algebra $\liegd$ acting on a tensor space $V$. 
In this setting, the decomposition of $V$ as a bimodule yields a correspondence between the irreducible representations of $G$ and those of $\liegd$.
For affine Lie algebras, Frenkel \cite{Frenkel1982} realized the level-rank duality of two type A affine Lie algebras on a Fock space. 
Subsequently, through intricate character computations, Hasegawa \cite{Hasegawa1989} extended Frenkel's results to all classical non-twisted affine Kac--Moody algebras.

From a combinatorial perspective, Uglov \cite{U} developed a description of the level-rank duality for affine type $A$, introducing what is now known as the Uglov map, and applied it to the study of Kazhdan--Lusztig theory.
Chuang and Miyachi \cite{ChuangMiyachi2012} conjectured that Uglov's bijections could be lifted to a categorical level, giving rise to Koszul-type equivalences between blocks of highest-weight covers for Ariki--Koike algebras, or equivalently, blocks of categories $\mathcal{O}$ for cyclotomic rational double affine Hecke algebras (DAHAs). 
This categorical level-rank duality was subsequently proved by Rouquier, Shan, Varagnolo, and Vasserot in \cite{Rouquier2016, Shan2014}.

In this paper, we  develop the level-rank duality theory for all classical affine types using a unified and transparent method. 
We construct Fock spaces using $\mathbb{Z}_2$-graded vector spaces, a construction that is uniform across all types and applicable to both twisted and non-twisted affine Lie algebras. This construction admits an intuitive combinatorial characterization via Maya diagrams. More precisely, for the Fock space $\F$, we consider a grid board. 
A Maya diagram is obtained by coloring a finite number of these cells black, and up to a sign, each Maya diagram corresponds to a monomial basis vector in $\F$. For the precise definition of the Fock space and Maya diagrams, see Section 3 and 4. 

After defining the Fock space, we discuss the actions of affine Lie algebras on it. The main idea is as follows: for a given affine pair $(\lghat, \lgdhat)$, we embed their direct sum into a larger affine algebra $\lghat_{\bigalg}$. 
Using the standard construction from \cite{Feingold1985}, we realize the Fock space as a level-1 module over $\lghat_{\bigalg}$. 
Upon restriction, this module carries mutually commuting actions of $\lghat$ and $\lgdhat$. These actions admit a natural interpretation on Maya diagrams (or abaci): the action of $\lghat$ corresponds to horizontal left-right movements of beads, while the action of $\lgdhat$ corresponds to vertical up-down movements.

Our duality pairs of affine Lie algebras are quite general. We divide the types of  classical affine Lie algebras into four classes  
\[\{A^{(1)}\}, \quad \{A^{(2)}\}, \quad \{C^{(1)}\}, \quad O^{(\r)}=\{B^{(1)}, D^{(1)},D^{(2)}\}.\]
The dual pair $(\lghat_{\lleft}, \lgdhat)$ can be chosen to be any two affine algebras from the same class. For each of these pairs, we  uniformly construct the corresponding Fock space $\F$ as a subspace of a suitable wedge space constructed from a $\mathbb{Z}_2$-graded vector space, and establish the following level-rank duality theorem.

\begin{Thm}\label{mainDualThmIntro}
Let $(\lghat_{\lleft},\lgdhat)$ be a pair of affine Lie algebras given in Table \ref{tbl-intro}, and let $\F$ be the associated Fock space  defined in Definition \ref{def-Fock-space}. Then, as a $\lghat_{\lleft} \oplus \lgdhat$-module, the level-$1$ $\lghat_{\bigalg}$ module $\F$ decomposes as a direct sum of irreducible $(\Sigma_{\lleft} \ltimes \lghat_{\lleft}, \dot{\Sigma} \ltimes \lgdhat)$-bimodules: 
    \[
    \F = \bigoplus_{Y} V_{\Sigma_{\lleft} \ltimes \lghat_{\lleft}}(\lambda_{\Y}; \levelR) \otimes V_{\dot{\Sigma} \ltimes \lgdhat}(\dot{\lambda}_{\Y}; \levelL),
    \]
where $\Sigma$ and $\dot{\Sigma}$ are automorphism groups of $\lghat_{\lleft}$ and $\lgdhat$ respectively  induced by diagram automorphisms,  $V_{\Sigma\ltimes\lghat}(\lambda_{\Y}; \levelR)$ is the irreducible $\Sigma\ltimes\lghat$-module with highest weight $\lambda_{\Y}$ and level $\levelR$, and $V_{\dot{\Sigma}\ltimes\lgdhat}(\dot{\lambda}_{\Y}; \levelL)$ is the irreducible $\dot{\Sigma}\ltimes\lgdhat$-module with highest weight $\dot{\lambda}_{\Y}$ and level $\levelL$.  Depending on the type of the duality pair, $\Y$ ranges over specific Young diagrams, and  $\lambda_{\Y}$ and $\dot{\lambda}_{\Y}$ are the  weights corresponding to the Young diagram $\Y$.
\end{Thm}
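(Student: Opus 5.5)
Here is the plan. The Fock space $\F$ is realized as a level-$1$ module over the big affine algebra $\lghat_{\bigalg}$ via the Feingold--Frenkel construction. The pair $\lghat_{\lleft}\oplus\lgdhat$ embeds in $\lghat_{\bigalg}$ as mutually commuting subalgebras, so $\F$ is a $\lghat_{\lleft}\oplus\lgdhat$-module, and we first check it is integrable. Integrability follows from the grading: $\F$ is graded by the energy operator $L_0$ of $\lghat_{\bigalg}$, the graded pieces are finite-dimensional, and the grading is bounded below on each charge sector. The levels $\levelR$ and $\levelL$ are read off by restricting the level-$1$ central element, giving the Dynkin-index ratios of the embeddings. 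By complete reducibility of integrable highest-weight modules, $\F$ then decomposes as $\bigoplus M_i\otimes N_i$, where $M_i$ ranges over $(\Sigma\ltimes\lghat)$-irreducibles and $N_i=\mathrm{Hom}(M_i,\F)$ is a $\dot\Sigma\ltimes\lgdhat$-module. The groups $\Sigma,\dot\Sigma$ enter because the diagram automorphisms are realized on $\F$ by permuting or relabelling the basis (Maya diagram symmetries). They commute with the other algebra, and they are needed to merge conjugate irreducibles, for example the two spinor-type components in type $D$.

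The core of the argument is to show the decomposition is multiplicity-free with the stated bijection. This reduces to determining the space $\F^{+}$ of joint highest weight vectors, i.e. vectors killed by the positive parts $\hat{\mathfrak n}^+\oplus\dot{\hat{\mathfrak n}}^+$.

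\begin{itemize}
\item For each Young diagram $\Y$ in the specified range, we exhibit an explicit Maya diagram, namely a ``rectangular'' or packed abacus configuration attached to $\Y$. Its bead moves are all blocked, both horizontally (the $\lghat$ action) and vertically (the $\lgdhat$ action), so it is a joint highest weight vector.
\item Its weights under the two Cartan subalgebras are computed directly from bead counts on runners and rows. This gives $\lambda_{\Y}$ and $\dot\lambda_{\Y}$, and we check they are dominant of levels $\levelR$ and $\levelL$.
\end{itemize}

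To prove these vectors span $\F^{+}$ (modulo $\Sigma\times\dot\Sigma$), we use the Virasoro action. Write the Sugawara operators $L^{\bigalg}_n$, $L^{\lleft}_n$, $\dot L_n$. The coset construction gives $L^{\bigalg}_n=L_n^{\lleft}+\dot L_n+L^{\mathrm{coset}}_n$. For a conformal embedding the coset central charge vanishes, and then the coset Virasoro acts trivially on a unitary module, so $L^{\bigalg}_0=L^{\lleft}_0+\dot L_0$ on $\F$. On a joint highest weight vector of weights $(\lambda,\dot\lambda)$, the Casimir values then give the identity
\[
\deg(v)=\frac{(\lambda,\lambda+2\rho)}{2(\levelR+\dCox)}+\frac{(\dot\lambda,\dot\lambda+2\dot\rho)}{2(\levelL+\ddCox)},
\]
where $\deg$ is the combinatorial energy of the Maya diagram. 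Among all Maya diagrams with given horizontal and vertical weights, we show that this identity forces the diagram to be the minimal-energy configuration, which is exactly the packed one attached to $\Y$. In particular each weight space of $\F^{+}$ is at most one-dimensional. The bijection $\Y\mapsto(\lambda_{\Y},\dot\lambda_{\Y})$ is then injective, and $\Sigma$-orbits match with $\dot\Sigma$-orbits, which yields the irreducibility of each bimodule factor.

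The main obstacle is the last step, proving that every joint highest weight vector is one of the packed configurations, uniformly across the four classes. I would first handle type $A^{(1)}$ via Uglov's map, where the energy is a sum of squared charges. I would then treat the other classes by folding, viewing $\F$ inside a type-$A$ Fock space with an involution, and comparing the energy minimization there. The twisted cases $A^{(2)}$ and $D^{(2)}$ need care because of the half-integer moding; I would handle them by using the $\mathbb Z_2$-grading to pass to a twisted Sugawara construction with the appropriate $\frac1{16}$-type shift in $L_0$.
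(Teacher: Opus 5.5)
\paragraph{Review.} Your setup matches the paper's: restriction from $\lghat_{\bigalg}$, the packed diagrams $\Y$ as explicit joint highest weight vectors, and the coset identity $D^{\lghat_{\lleft}}+D^{\lgdhat}=D^{\lghat_{\bigalg}}$ (Proposition~\ref{VirEv}), which yields the anomaly identity on joint highest weight vectors.

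The gap is the step you yourself flag as the main obstacle: ``this identity forces the diagram to be the minimal-energy configuration.'' The anomaly identity holds for \emph{every} joint highest weight vector of \emph{every} weight pair that actually occurs. It is an equation, not a bound, so by itself it says nothing about which $\dot\mu$ can accompany $\lambda=\lambda_{\Y}$. To exclude a joint highest weight vector of weight $(\lambda_{\Y},\dot\mu)$ with $\dot\mu\neq\dot\lambda_{\Y}$, you need an independent a priori relation between $(\dot\mu,{\sf d}_v)$ and $(\dot\lambda_{\Y},{\sf d}_{\Y})$. Equivalently, you need a strict bound ${\sf d}_{\M}>{\sf d}^{\lghat_{\lleft}}_{\lambda;\levelR}+{\sf d}^{\lgdhat}_{\dot\mu;\levelL}$ (suitably renormalized in type $(A^{(2)},A^{(2)})$) for every non-packed Maya diagram $\M$ with dominant classical weights $(\lambda,\dot\mu)$. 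That bound is essentially the theorem itself, and your route to it is not carried out.
\begin{itemize}
\item Even in type $A^{(1)}$, the energy of a diagram is not just a sum of squared charges; it also contains the sizes of the component partitions. You would need a genuine quadratic inequality over integer matrices with prescribed margins, including the $\rho$-shifts, together with its equality cases.
\item It is unclear how ``folding'' would handle the neutral fermions, half rows, spinor sectors and $\Sigma$-twists of the other classes.
\end{itemize}
The same missing ingredient is what would give multiplicity one.

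\paragraph{What the paper uses instead.} The paper obtains the missing one-sided constraint from a second duality on the same space: the finite--infinite duality $(\lgo_{\lleft},\lgdhat{}^{\infty})$. This comes from Howe's invariant theory for a classical group $G$ acting on the Clifford algebra (Lemma~\ref{howe-dual}, Theorem~\ref{fin-inf}).
\begin{itemize}
\item A joint $(\lghat_{\lleft},\lgdhat)$ highest weight vector $v$ is in particular an $\lgo_{\lleft}$-highest weight vector of weight $\lambda_{\Y}$. Hence, up to diagram automorphisms, $v=g.v^h_Y$ with $g\in U(\lgdhat{}^{\infty})^-$.
\item This forces $\dot\mu-\dot\lambda_{\Y}+({\sf d}_{\Y}-{\sf d}_v)\dot\delta\in\tilde{\dot{Q}}{}^{-}_{\ex}$.
\item With this constraint, (\ref{H}) becomes a chain of inequalities. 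It uses $(\tilde{\dot{Q}}{}^{+}_{\ex}\mid\text{dominant})\ge 0$ and $(\,\cdot\mid\dot\theta_{\ex})\le\levelL$, and collapses to $\dot\mu=\dot\lambda_{\Y}$ and ${\sf d}_v={\sf d}_{\Y}$.
\item Multiplicity one then follows from ${\sf pr}(\dot\zeta_i)=0\Rightarrow\dot\zeta_i=0$. This places $v$ in the one-dimensional $\lgdhat{}^{\infty}$-highest weight space of the irreducible module generated by $v^h_Y$.
\end{itemize}
Without this ingredient, or a proved replacement for it, your argument shows neither that the packed vectors exhaust the joint highest weight vectors nor that each occurs with multiplicity one.
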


\begin{table}[!htbp]
\centering
\scalebox{0.9}{
\renewcommand{\arraystretch}{1.4}
\begin{tabular}{c|c|c|c|c}
\hline
Type                                    &   $A^{(1)},A^{(1)}$            &    $C^{(1)}, C^{(1)}$       &    $O^{(\r)},O^{(\r)}$                                                     &    $A^{(2)},A^{(2)}$\\ 
\hline
parameters 
  &  \makecell{$L_{\bar{0}}=2l, R_{\bar{0}}=2r$,\\ $L_{\bar{1}}=R_{\bar{1}}=0$} & \makecell{$L_{\bar{0}}=2l, R_{\bar{0}}=2r$,\\ $L_{\bar{1}}=R_{\bar{1}}=0$} & \makecell{$L_{\bar{0}}=2l$ or $2l+1$,\\
 $R_{\bar{0}}=2r$ or $2r+1$,\\
 $L_{\bar{1}}, R_{\bar{1}}\le 1$
 } & \makecell{$L_{\bar{0}}=2l$ or $2l+1$,\\
 $R_{\bar{0}}=2r$ or $2r+1$,\\
 $L_{\bar{1}}=R_{\bar{1}}= 0$}
  \\
\hline 
$\lghat_{\lleft}, \lgdhat$     & $\lglhat(l), \lslhat(r)$      & $\lsphat(2l), \lsphat(2r)$& $\lsohat(L_{\bar{0}}, L_{\bar{1}}), \lsohat(R_{\bar{0}}, R_{\bar{1}})$    &    $\lglhat{}^{(2)}(L), \lslhat{}^{(2)}(R)$\\
\hline
$\lghat_{\bigalg}$                                &  $\lsohat(2lr)$               & $\lsohat(4lr)$            & $\lsohat(L_{\bar{0}}R_{\bar{0}}+L_{\bar{1}}R_{\bar{1}}, L_{\bar{0}}R_{\bar{1}}+L_{\bar{1}}R_{\bar{0}})$   &   $\lglhat{}^{(2)}(LR)$ \\
\hline
level $\levelR, \levelL$                            &  $\levelR=r,\levelL=l$                        & \multicolumn{3}{c}{$\levelR=R_{\bar{0}}+R_{\bar{1}}, \levelL=L_{\bar{0}}+L_{\bar{1}}$} \\
\hline
range of $Y$                            &  $\P^l_{r-1}$                 & \multicolumn{3}{c}{$\P^l_r$ }   \\
\hline
$\lambda_{\Y}, \dot{\lambda}_{\Y}$   & see (\ref{weightY-A})    & 
\multicolumn{3}{c}{see (\ref{weightY})}  \\
\hline
\end{tabular}
}
\caption{Data of duality types}
\label{tbl-intro}
\end{table}
The proof of the level-rank duality theorem presented in this paper differs significantly from existing approaches. 
In Hasegawa's work on non-twisted affine algebras \cite{Hasegawa1989}, the result is obtained by directly verifying character identities, which involves many specialized techniques for symmetric polynomials and $\Theta$-functions. 
In contrast, this paper adopts an approach focused on exploring the intrinsic structure and actions of the modules themselves. 
Due to the complete reducibility of the Fock space, it suffices to identify all joint highest weight vectors within it. 
This philosophy aligns with the classical Howe duality theories for finite classical groups \cite{Cheng2012, Howe, Wang1999}. 

However, those classical theories indispensably rely on Jacobson's density theorem, which requires the Fock space, as a module over one of the algebras in the dual pair, to decompose as a direct sum of finite-dimensional irreducible modules. 
This condition is unattainable in the affine case, meaning we cannot directly generalize the classical approach. 
Instead, our method overcomes this obstacle by combining finite-dimensional duality theory with the action of the Virasoro algebra on the Fock space to derive the affine duality.

The combinatorial model provided by Maya diagrams and abaci is a cornerstone of our unified approach and its applications. It serves several crucial roles. Firstly, the Fock space actions of the affine Lie algebras are translated into intuitive graphical operations, making the otherwise complex algebraic manipulations transparent and manageable. Secondly, the one-to-one correspondence between Maya diagrams and monomial basis vectors allows us to identify all joint highest weight vectors and track the change of weights through simple moves of beads. Finally, and perhaps most importantly, this model provides a bridge to the representation theory of cyclotomic KLR algebras $\mathscr{R}^{\Lambda}_{\beta}$. By interpreting the combinatorial data of an abacus—such as the moving vector and $e$-weights studied in \cite{Fayers, JL, LQ}—through the lens of the $\lgdhat$-action, we gain new Lie-theoretic insights. Specifically, we show that the sum of the components of the moving vector for $\mathscr{R}^{\Lambda}_{\beta}$ is precisely the defect of $(\Lambda, \beta)$ (see Theorem \ref{thm-def-weight}). This connection not only generalizes known results but also suggests that this combinatorial model will be a powerful tool for future investigations into block invariants of cyclotomic KLR algebras. 

Furthermore, the combinatorial model in our duality theory has been used recently by the third author and his student to study cores and Diophantine equations in \cite{LZZ}. By extending the concept core to arbitrary charge for all classical affine types, they establish and study Diophantine equations of classical affine types via the height formula given by the so-called Uglov vector, and solve a generalized version of an open problem raised by Brunat, Chapelier-Laget and Gerber in \cite{BCG}. In particular, closed formulae for computing the number of certain kinds of core abaci are given.

This paper is organized as follows. In Section 2, we recall some basic facts about affine Lie algebras. In Section 3, we construct the Fock space and define the actions of affine Lie algebras on it. In Section 4, we will give, for all classical types, a uniform graphical
configuration, and interpret
the action of Lie algebras on this model.
Section 5 is devoted to giving a proof of our  level-rank duality theorem. In Section 6, we prove that the defect of a cyclotomic KLR algebra is equal to the sum of the components of its moving vector. We give some concluding comments in Section 7.
 
\section{Preliminaries for affine Lie Algebras}

In this section, we fix notations and give some basic facts about affine Lie algebras. The main references are \cite{Hasegawa1989} and \cite{Kac}.

\subsection{Affine Kac-Moody Lie algebras}\label{prelim-KM}
We use terminologies of \cite{Kac}. Let $A$ be an affine Cartan matrix of size ${\rkCar}+1$ of type $X^{(\r)}_N$. Let $\lieg$ be a finite dimensional reductive Lie algebra such that $[\lieg, \lieg]$ is simple of type $X_N$ with $(-|-)'$ the normalized invariant form.
Denote by $E_i, F_i, H_i, i=0, 1, \dots, {\rkCar}$ the elements given in \cite[\S 8.3]{Kac}(Concrete realization will be listed in \S \ref{notation-classic-lie} below).
 Fix an automorphism $\sigma$ of $\lieg $ of order $ \m $, with $\r$ the least positive integer such that $\sigma^\r$ is inner.
Then we have a $\mathbb{Z}/ \m \mathbb{Z}$-grading on $\lieg$:
\[
\lieg = \bigoplus_{\overline{j}\in \mathbb{Z}/ \m \mathbb{Z}} \lieg_{\overline{j}},\quad  \lieg_{\overline{j}} = \{x : \sigma(x) = e^{\frac{2\pi\ii}{ \m }{j}}x\}.
\]
Recording the degree of each $E_i\in \lieg_{\overline{s_i}}$ gives a sequence $(s_0, \dots, s_{\rkCar})$ of non-negative integers such that $ \m = \r\sum_{i=0}^{\rkCar} a_i s_i$, where $a_i$ are the indices on the Dynkin diagram. The derived affine Lie algebra of type $X^{(\r)}_N$ can be realized by
\[
\lghat :=  \big(\bigoplus_{j \in \mathbb{Z}} \lieg_{\overline{j}}\otimes t^j\big)\oplus\mathbb{C}c,
\]
\[
[x_1(n_1) + \eta_1 c, x_2(n_2) + \eta_2 c] = [x_1 x_2](n_1 + n_2) + \frac{1}{ \m } (x_1|x_2)' n_1 \delta_{n_1, -n_2} c, 
\]
where $x(n)$ denotes $x\otimes t^n$.
Extending $\lghat$ by a derivation $d'$ with $d'(x(n))=\frac{n}{ \m }x(n)$ and $[d',c]=0$, we obtain $\lgtilde = \lghat \oplus \mathbb{C}d'$ the affine Kac-Moody algebra 
with scaling element $d = a_0 \r (d' - H )$, where $H\in \sum\limits_{i=1}^{\rkCar} \mathbb{C} H_i$ defined by the conditions $\alpha_i(H) = \frac{s_i}{ \m }$ for each $i=1,2,\dots,l$.
The normalized symmetric invariant form $(-|-)$ on $\lgtilde$ is defined by
\[
(x_1(n_1) | x_2(n_2)) = \frac{1}{\r} \delta_{n_1,-n_2} (x_1 | x_2)', \quad (\mathbb{C}c + \mathbb{C}d' | x(n)) = 0, \quad (c|c)= 0 =(d'|d'),\quad  (c|d')=1.
\]
The bilinear form is carried over to $\hat{\lieh}$ and $\hat{\lieh}^*$, and will be again denoted by $(-|-)$.

Denote by $\Vir := \bigoplus_{n\in \mathbb{Z}} \mathbb{C} d(n) \oplus \mathbb{C} z$ the Virasoro algebra, where the Lie bracket is given by
\[
[d(n_1), d(n_2)] = (n_1 - n_2) d(n_1 + n_2) + \frac{n_1^3-n_1}{12}\delta_{n_1, -n_2} z, \quad [z, \Vir] = 0.
\]
As derivations, $\Vir$ acts on $\lghat$ by
\[
[d(n), x(n')] = -\frac{n'}{ \m } x( \m n + n'), \quad [z, \lghat] = 0 = [\Vir, c].
\]
With this action, $\lgtilde$ can be viewed as a subalgebra of $\Vir\ltimes \lghat$ by identifying $d'$ with $-d(0)$.

It is well-known \cite[\S 8.3]{Kac} that $\lgtilde$ is a Kac-Moody Lie algebra associated to the generalized Cartan matrix $A$. Let $a_i, a_i^\vee, i=0,1,\dots,\rkCar$ be indices associated to $A$ and $A^T$, respectively. Then we have the relations
\[
A\left(\begin{matrix}
    a_0 \\ \vdots \\ a_{\rkCar}
\end{matrix}\right) = 0, \quad 
(a_0^\vee, \dots, a_{\rkCar}^\vee) A = 0,
\]
with $c = \sum\limits_{i=0}^{\rkCar} a_i^\vee \alpha_i^\vee$ central, $\delta = \sum\limits_{i=0}^{\rkCar} a_i \alpha_i$ the  imaginary root, where $\alpha_i$ are the simple roots and $\alpha_i^\vee$ the simple coroots. 
We write $\theta = \sum\limits_{i=1}^{\rkCar} a_i \alpha_i = \delta - a_0 \alpha_0$ and write $\dCox = \sum\limits_{i=0}^{\rkCar} a_i^\vee$ for the dual Coxeter number. The Chevalley generators of $\lghat$ are given by 
\[
e_i = E_i(s_i),\quad f_i = F_i(s_i),\quad h_i = H_i(0) + \frac{a_is_i}{a_i^\vee\m} c, \quad i=0,\dots, \rkCar.
\] 
Let $\lgo$ be the finite dimensional simple Lie subalgebra of $\lghat$ generated by $e_i, f_i, h_i, i=1,\dots, \rkCar$. Restricting $(-|-)$ from $\lgtilde$ (not $\lieg$) to $\lgo$, we get a non-degenerate invariant form again denoted by $(-|-)$.
Write $\tilde{\lieh}^* = \dlho \oplus \mathbb{C} \Lambda_0 \oplus \mathbb{C} \delta$, we have the orthogonal projection $\overline{\cdot }: \tilde{\lieh}^* \to \dlho$.
Denote by $\varpi_i$ the $i$-th fundamental weight of $\lgo$, then the $i$-th fundamental weight of $\lgtilde$ is given by $\Lambda_i = a_i^\vee \Lambda_0 + \varpi_i$. 
We put  $\rho = \sum\limits_{i=1}^{\rkCar} \varpi_i = \dfrac{1}{2}\sum\limits_{\alpha\in \mathring{\Delta}{}^+} \alpha \in \dlho$, and $\hat{\rho} = \rho + \dCox \Lambda_0 = \sum\limits_{i=0}^{\rkCar} \Lambda_i \in \hat{\lieh}^*$. 
Note that $(\rho | \theta) = \dCox - a_0^\vee$, $(\rho | \alpha_i) = \frac{a_i^\vee}{a_i}$.

An irreducible highest weight module of $\lghat$ is characterized by $(\lambda; {\lLevel})\in \dlho \times \mathbb{C}$ and is denoted by $V_{\lghat}(\lambda;{\lLevel})$.
We call $\lambda$ the classical part and ${\lLevel}$ the level of the weight, and sometimes we also write $\lambda + {\lLevel}\Lambda_0 \in \hat{\lieh}^*$.
Write 
$$
\tilde{P}^+ ({\lLevel}) = \left\{\sum\limits_{i=0}^{\rkCar} c_i \Lambda_i + c' \delta \mid \sum\limits_{i=0}^{\rkCar} c_i a_i^\vee = {\lLevel}, c'\in \mathbb{C}\right\}
$$ 
the set of dominant integral weights of level ${\lLevel}$.
Note that for $\lambda + {\lLevel} \Lambda_0 \in \tilde{P}^+ ({\lLevel})$, we have $(\lambda|\theta) \le {\lLevel}$.
Write $P^+({\lLevel}) = \{ \lambda \in \dlho | \lambda + {\lLevel}\Lambda_0 \in \tilde{P}^+ ({\lLevel}) \}$ the set of classical parts of $\tilde{P}^+ ({\lLevel})$.

There is another important finite dimensional simple Lie subalgebra $\lgo'$ of $\lghat$ generated by $e_i, f_i, h_i$, $i=0, 1, \dots, \rkCar-1$. Since $\lghat$ is generated by $\lgo$ and $\lgo'$, we can locally combine the actions of two simple Lie algebras to analyse $\lghat$-modules.

For an abelian Lie algebra $\lieg$, we use the conventions that $\lieh = \lieg$, $\rho = 0$, $\dCox = 0$.

\subsection{Segal-Sugawara construction}\label{Segal-Sugawara}
Assume that $\lieg$ is simple or abelian,  and that  $V$ is  a $\lghat$-module satisfying the following two conditions.
\begin{itemize}
    \item $V$ is restricted, that is, for any $v\in V$, we have $\lieg(n).v = 0$ when $n$ is sufficiently large;
    \item $c$ acts as ${\lLevel}\cdot id_V$ for some ${\lLevel}\in \mathbb{C}$ such that ${\lLevel} + \dCox \ne 0$. 
\end{itemize} 
The scalar ${\lLevel}$ is called the \textit{level} of $V$. Recall that the grading $\lghat = \big(\bigoplus_{j \in \mathbb{Z}} \lieg_{\overline{j}}\otimes t^j\big)\oplus\mathbb{C}c$ is isotropic with respect to $(-|-)$ in the sense that 
$(\lieg_{\overline{i}}(i) | \lieg_{\overline{j}}(j)) = 0$ unless $i = -j$ and $(-|-)\big|_{\lieg_{\overline{j}}(j) \times \lieg_{-\overline{j}}(-j)}$ non-degenerate. Choose a basis $u_{i;-j}(-j)$ of $\lieg_{-\overline{j}}(-j)$, and a dual basis $u^{i;j}(j)$ of $\lieg_{\overline{j}}(j)$ with respect to $(-|-)$.
Then the following Segal-Sugawara operators $D^{\lghat}(n) \in \End (V)$ give a $\Vir \ltimes \lghat$-module structure on $V$ (cf. \cite[Exercise \S 12.20]{Kac}):
\begin{align*}
    d(0) \mapsto & D^{\lghat}(0) := \frac{1}{2 \r ({\lLevel}+\dCox)} \sum_{i} (u_{i;0}(0) u^{i;0}(0) + 2\sum_{n>0} u_{i;-\overline{n}}(-n) u^{i;\overline{n}}(n)) \\
    & \hspace{13mm} - H + \r {\lLevel} \frac{(H|H)}{2} + \left(\frac{{\rm dim} \lieg}{24} - \frac{|\rho|^2}{2 \dCox \r}\right) \frac{{\lLevel}}{{\lLevel} + \dCox},     \\
    d(j) \mapsto & D^{\lghat}(j) = \frac{1}{j} \ [d(j), D^{\lghat}(0)], \\
    z \mapsto & z^{\lghat} = \frac{{\lLevel}\cdot \dim \lieg}{{\lLevel}+\dCox}. \tag{2.2.1}\label{eq-central-charge}
\end{align*}
In particular, if $V = V_{\lghat}(\lambda, {\lLevel})$ with highest weight vector $v$ and ${\lLevel} + \dCox \ne 0$, we can fix a $\lgtilde$-module structure canonically by putting $D^{\lghat}(0).v = {\sf d}^{\lghat}_{\lambda; {\lLevel}} v$, where ${\sf d}^{\lghat}_{\lambda; {\lLevel}}$ is the \textit{vacuum anomaly}:
\[
{\sf d}^{\lghat}_{\lambda;{\lLevel}} = \frac{(\lambda \mid \lambda + 2\rho)}{2\r ({\lLevel} + \dCox)} - \lambda(H) + \r {\lLevel} \frac{(H|H)}{2} + \left(\frac{{\rm dim} \lieg}{24} - \frac{|\rho|^2}{2 \dCox \r}\right) \frac{{\lLevel}}{{\lLevel} + \dCox}.
\tag{2.2.2}\label{Formula-vacuum}\]
This allows us to identify $V_{\lghat}(\lambda, {\lLevel})$ with the irreducible highest weight $\lgtilde$-module $V_{\lgtilde}(\lambda + {\lLevel}\Lambda_0 - {\sf d}^{\lghat}_{\lambda;{\lLevel}}\delta)$. For each $\Lambda\in P^+$, the set of weights of $V_{\lgtilde}(\Lambda)$ is denoted by $P(\Lambda)$. 

For a finite dimensional reductive Lie algebra $\lieg = \oplus \lieg_i$  with each $\lieg_i$ simple or abelian, we can generalize the above constructions by defining
\[
D^{\lghat}(n) = \sum D^{\lghat_i}(n), \quad z^{\lghat} = \sum z^{\lghat_i}, \quad {\sf d}^{\lghat}_{\lambda;{\lLevel}}= \sum {\sf d}^{\lghat_i}_{\lambda_i;R_i}.
\]

We briefly state the construction of coset $\Vir$-module (cf.\cite[\S 1.2.4]{Hasegawa1989}). 
Let $V$ be a restricted $\lghat$-module of positive level. The Segal-Sugawara operators give a representation $\pi: \Vir\ltimes \lghat \to \End(V)$.
Suppose there is another (unitary) representation $\pi':\Vir \to \End (V)$ satisfying
\[
[\pi'(d(n_1)), \pi(x(n_2))] = -\frac{1}{ \m } n_2 \pi(x(n_1 + n_2)),
\]
or equivalently, $[\pi'(d(n)) - D^{\lghat}(n), \lghat] = 0$, then $D^{\perp}(n):= \pi'(d(n)) - D^{\lghat}(n)$ and $z^{\perp}:= \pi'(z) - \pi(z)$ define a (unitary) representation of $\Vir$ on ${\rm Hom}_{\lghat}(V_{\lghat}({\lambda};{\lLevel}), V)$ for any $V_{\lghat}({\lambda};{\lLevel})$.
Moreover, if in addition $\pi(z) = \pi'(z)$, then we have $\pi'(d(n)) = D^{\lghat}(n)$ for all $n$, that is, $\pi\big|_{\Vir} = \pi'$. 

\subsection{Diagram automorphism} \label{DiagAuto}
We follow the notations in \cite[\S 1.3]{Hasegawa1989}. For  a generalized Cartan matrix $A$, let $\sigma$ be an automorphism of the Dynkin diagram of $A$. Then it induces automorphisms on both $\lieh(A)^*$ as well as $\lieg(A)$, which are called diagram automorphisms and again denoted by $\sigma$. 
Since the action of $\sigma$ on $\lieh(A)^*$ is given by permuting the simple roots, it also permutes the fundamental weights, and preserves the invariant form $(-|-)$ on $\lieh(A)^*$, as well as the set of positive roots $Q^+$. In particular, the partial order on $\lieh(A)^*$, defined by $\lambda \ge \mu$ if  $\lambda - \mu \in Q^+$, is preserved by $\sigma$.

Let $\Sigma \leq  {\rm Aut} \lieg(A)$ be an (Abelian) subgroup consisting of certain  diagram automorphisms. 
We say a $\lieg(A)$-module $V$ with $\pi:\lieg(A) \to \End (V)$ is a $\Sigma \ltimes \lieg(A)$-module if there exists a representation $\pi':\Sigma\to PGL(V)$ of $\Sigma$ on $V$ which satisfies 
$\pi'(\sigma) \pi(g) \pi'(\sigma)^{-1} = \pi( \sigma (g) )$.
Through this construction we see that for a $\Sigma \ltimes \lieg(A)$-module $V$, the action of any $\sigma \in \Sigma$ gives an isomorphism between weight spaces, that is, $V_\lambda \cong V_{\sigma(\lambda)}$ for any $\lambda \in \lieh(A)^*$. For a weight $\lambda$, we define a $\lieg(A)$-module
\[
V_{\Sigma \ltimes \lieg(A)} (\lambda):= \bigoplus_{\mu \in \Sigma. \lambda} V_{\lieg(A)} (\mu).
\]
This module gives rise to an irreducible $\Sigma \ltimes \lieg(A)$-module by letting $\Sigma$ permutes the highest weight vectors.

\begin{Remark}
Note that if $A$ is of type $D_{\rkCar}$ and $\Sigma = \langle \sigma_{\rkCar-1,\rkCar} \rangle$ permuting vertices $\rkCar-1$ and $\rkCar$ on Dynkin diagram, 
then the module $V_{\Sigma \ltimes \lieg(A)} (\lambda)$ for a dominant integral weight is nothing but a finite dimensional irreducible ${Pin}(2\rkCar)$-module,
while $V_{\lieg(A)} (\lambda)$ is an irreducible ${Spin}(2\rkCar)$-module. 
In other words, for any $\lso(2\rkCar)$-module $V$ which can be lifted to ${Pin}(2\rkCar)$, the action of certain reflection in ${Pin}(2\rkCar)$ on $V$ plays the role of $\sigma_{\rkCar-1, \rkCar}$ making $V$ a $\Sigma \ltimes \lso(2\rkCar)$-module (see also Remark \ref{rem-pin} below).
\end{Remark}

\subsection{Notations for classical Lie algebras} \label{notation-classic-lie}
We list some  conventions needed to construct classical Lie algebras, see also Appendix \ref{appendix-datum-types} for details.

\begin{itemize}
\item For $n\in \mathbb{Z}_{\ge 0}$, we introduce the following index sets with linear order:
\begin{align*}
    {\bf I}_{2n}:=& \{ 1 \lessdot \cdots \lessdot n \lessdot -n \lessdot \cdots \lessdot -1 \}, \quad {\bf I}_0:=\varnothing; \\
    {\bf I}_{2n+1}:=& \{ 1 \lessdot \cdots \lessdot n \lessdot 0 \lessdot -n \lessdot \cdots \lessdot -1 \}.
\end{align*}

\item For each $M\in\mathbb{Z}_{\ge 0}$, the $M$-dimensional $\mathbb{C}$-vector space is denoted by $\W_M$ with basis $\{\psi^i\mid i\in {\bf I}_M\}$. Let $I_M$ be the $M\times M$ identity matrix, and let $J_M=(\delta_{i,-j})_{i,j\in {\bf I}_M}$. The matrix $E_{ij}$ has $(i,j)$-element $1$ and zero otherwise.

\item For a $\mathbb{Z}_2$-graded vector space $\W = \W_M\oplus \W_N$ with $\W_M$ in degree $\bar{0}$ and $\W_N$ in degree $\bar{1}$, we use the index set 
${\bf I}_{M,N}={\bf I}_M\sqcup \gh{{\bf I}}_N$ to label the basis of $\W$ as well as rows and columns of $\lgl(M + N)$, where $\gh{{\bf I}}_N=\{\gh{i}\mid i\in {\bf I}_N\}$.
\end{itemize}

\subsubsection{Notations for orthogonal series}
We list some conventions of orthogonal algebras.
Fix $M\in \mathbb{Z}_{>0}$ and $N\in \mathbb{Z}_{\ge 0}$. Let $m = \floor{\frac{M}{2}}$ and $n = \floor{\frac{N}{2}}$. The Lie algebra $\lgl(M+N)$ naturally acts on the $\mathbb{Z}_2$-graded space $\W =\W_M\oplus \W_N$, where $\W_M$ is in degree $\bar{0}$ and $\W_N$ is in degree $\bar{1}$.  The orthogonal Lie algebra 
$$\lieg = \lso(M, N):= \{x\in \lgl(M + N)\mid x^T J + J x = 0\},$$ where 
$J = \left(\begin{matrix}
    J_{M} &   \\ & J_{N}
\end{matrix}\right)$.
When $N = 0$ we abbreviate to $\lso(M)$. The matrices $D_{ij}: = E_{ij} - E_{-j, -i}$, $i,j\in {\bf I}_{M, N}$ span  $\lieg$ as a vector space. Note that $D_{ij} = - D_{-j, -i}$. 

Let $\mathring{\lieh}$ be the span of its basis $\{ D_{ii} \mid i = 1, 2,\dots, m, \gh{1}, \gh{2},\dots, \gh{n}\}$
with dual basis in $\dlho$: $\{\epsilon_i \mid i = 1, 2, \dots, m, \gh{1}, \dots, \gh{n}\}$. 
Defining
\[(x|y)' = \frac{1}{2} \Tr(xy), x,y\in\lieg \mbox{ and } (\epsilon_i | \epsilon_j)' = \delta_{i, j}\] 
gives rise to invariant symmetric non-degenerate bilinear forms on $\lieg$ and $\dlho$, respectively. 
{
The root vectors we need are as follows (Conventions: $\gh{i}+1 = \gh{(i+1)}$, $-\gh{i} = \gh{(-i)}$). 
\[
    E_i = D_{i, i+1}, i = 1, \dots, m-1,\gh{1}, \dots, \gh{(n-1)}, \quad  E_m = 
    \begin{cases}
    D_{m-1, -m},&  M=2m, \\
         \sqrt{2} D_{m, 0}, &  M=2m+1.
    \end{cases}
\]
\[ 
    E_{\gh{n}} = 
    \begin{cases}
    D_{\gh{(n-1)}, \gh{-n}},& \  N=2n, \\
    D_{-1, \gh{-1}},& \ N=2, \\
    \sqrt{2} D_{\gh{n}, \gh{0}},& \ N=2n+1>1,\\
    \sqrt{2} D_{-1, \gh{0}},& \ N=1.
    \end{cases} \quad
    E_{0} = 
    \begin{cases}
    D_{-1, \gh{1}},& \  N>1, \\
        \sqrt{2} D_{-1, \gh{0}},& \ N=1,\\
        D_{-1, 2},& \ N=0.
    \end{cases}
\]
and $F_i=E_i^T$ is the transpose of $E_i$ for each $i$. 
}

Let $\sigma$ be the automorphism of $\lso(M,N)$ given by the conjugation of $\left(\begin{matrix} I_M & \\ & -I_N\end{matrix}\right)$. 
Then $\sigma^2 = {\rm id}$, so $\m = 2$ and there is a decomposition $\lieg = \lieg_{\bar{0}} \oplus \lieg_{\bar{1}}$ with $\lieg_{\bar{0}} \cong \lso(M) \oplus \lso(N)$ and  $\lieg_{\bar{1}} \cong \mathbb{C}^{MN}$.  
This is compatible with the $\mathbb{Z}_2$-grading on $\W$. 
One can check that if $MN$ even then $\sigma$ is inner, and $\r = 1$; while $MN$ odd we have $\r = 2$.
Since $\sigma(E_0)=-E_0$ and $\sigma$ preserves other $E_i$, the degree of $E_i$ is $s_i = \delta_{i, 0}$ for each $i$, and we take $H = \dfrac{1}{2}\sum\limits_{i =1}^{n} D_{\gh{i}\gh{i}}$.
Then we can construct the affine algebra 
$$
\lghat= \lsohat (M, N) = (\lieg_{\bar{0}} \otimes \mathbb{C}[t^{\pm 1}]) \oplus (\lieg_{\bar{1}} \otimes t^{\frac{1}{2}} \mathbb{C}[t^{\pm 1}]) \oplus \mathbb{C}c.
$$ 
Comparing with the general construction in \ref{prelim-KM}, note that here we use the Laurent polynomial ring $\mathbb{C}[t^{\pm \frac{1}{2}}]$. 
The action of the derivation $d'$ can be uniformly given by $d'(x(s)) = sx(s)$ and  $[d', c] = 0$.

\begin{Remark}\label{ortho-reduction}
The Cartan matrix of $\lghat$ is of rank $l = m + n$ and we relabel the vertices $0, 1, \cdots, l$ on Dynkin diagram by $\gh{n}, \gh{(n-1)}, \cdots, \gh{1}, 0, 1, \cdots, m$. 
By deleting the vertex $\gh{n}$ or $m$ we get two simple Lie subalgebras isomorphic to $\lso(M + 2n)$ or $\lso(2m + N)$, respectively.
Through this observation, the type of $\lghat$ is determined by the parity of $M$, $N$. In fact, one can check that $\lsohat(M+2, N) \cong \lsohat(M, N+2)$. 
Therefore, in practice we shall reduce our discussion to three cases: 
$\lsohat(2l)$ for type $D_{l}^{(1)}$, $\lsohat(2l+1)$ for type $B_l^{(1)}$ and $\lsohat(2l+1,1)$ for type $D_{l+1}^{(2)}$.  We uniformly call the affine algebras $\lsohat(M, N)$ as orthogonal affine algebras of type $O^{(\r)}$.
\end{Remark}

\subsubsection{Notations for general linear series}
We list some conventions of twisted affine general linear algebras. 
Fix $M\in \mathbb{Z}_{>0}$, and let $m = \floor{\frac{M}{2}}$.
The Lie algebra $\lieg = \lgl(M)$ acts naturally on the vector space $\W_{M}$ (which can be regarded as a $\mathbb{Z}_2$-graded space concentrating on degree $\bar{0}$). The Cartan subalgebra $\lieh$ has basis $\{ E_{ii} \mid i \in {\bf I_M}\}$
with dual basis in $\lieh^*$: $\{\epsilon_i \mid i \in {\bf I}_M\}$. Defining
\[(x|y)' = \frac{1}{2} \Tr(xy), x,y\in\lieg \mbox{ and } (\epsilon_i | \epsilon_j)' = \delta_{i, j}\] 
gives rise to invariant symmetric non-degenerate bilinear forms on $\lieg$ and $\lieh^*$, respectively. 

Let $\sigma$ be the outer automorphism of $\lgl(M)$ given by $\sigma(x) = - J_M^{-1} x^{T} J_M$. Then $\sigma^2 = {\rm id}$, $\m = 2$ and $\r = 2$. There is a decomposition $\lieg = \lieg_{\bar{0}} \oplus \lieg_{\bar{1}}$, where $\lieg_{\bar{0}}$ is spanned by $E_{ij} - E_{-j, -i}, i,j\in {\bf I}_M$ and $\lieg_{\bar{1}}$ is spanned by $E_{ij} + E_{-j, -i}, i,j\in {\bf I}_M$. Then we can construct the affine algebra of type $A^{(2)}_{M-1}$:
$$\lghat:= \lglhat{}^{(2)} (M) = (\lieg_{\bar{0}} \otimes \mathbb{C}[t^{\pm 1}]) \oplus (\lieg_{\bar{1}} \otimes t \mathbb{C}[t^{\pm 1}]) \oplus \mathbb{C}c.$$
Let $A_{ij}(n) \coloneq \begin{cases}
E_{ij}(n) - (-1)^n E_{-j, -i}(n),& i > 0;\\
(-1)^n E_{ij}(n) - E_{-j, -i}(n),& i \le 0.
\end{cases}$
Note that $A_{ij}(0) = D_{ij}(0)$. The root vectors are: 
\[e_i = A_{i, i+1}(0), i=1,2,\ldots,m-1, \;e_0= A_{-1, 1}(1), \; e_m = 
    \begin{cases}
    A_{m-1, -m}(0),&  M=2m, \\
         \sqrt{2} A_{m, 0}(0), &  M=2m+1.
    \end{cases} \]
\[f_i = A_{i+1, i}(0), i=1,2,\ldots,m-1,\; f_0=A_{1, -1}(-1),\; f_m = 
    \begin{cases}
    A_{-m, m-1}(0),&  M=2m, \\
         \sqrt{2} A_{0, m}(0), &  M=2m+1.
    \end{cases}\]
The simple Lie subalgebra  
$\lgo = \langle e_i, f_i, h_i\mid i=1,2, \ldots, m \rangle$ is isomorphic to $\lso(M)$. When $M=2m$ is even,  $\lgo$ has an automorphism $\sigma_{m, m-1}$  induced by the diagram automorphism permuting the last two vertices.

Since $\sigma(\lsl(M)) = \lsl(M)$, we can similarly define $\lslhat{}^{(2)}(M) \subset \lglhat{}^{(2)}(M)$. 
Note that the identity matrix lies in $\lieg_{\bar{1}}$, so these two algebras share the same $\hat{\lieh}$ and root data.

Consider the complement $\hat{\mathfrak{I}}$ of $\lslhat{}^{(2)}(M)$ in $\lglhat{}^{(2)}(M)$, which is given by $\hat{\mathfrak{I}}  \coloneq  I \otimes t\mathbb{C}[t^\pm] \oplus \mathbb{C}c$.
Note that $\hat{\mathfrak{I}}$ can be regarded as the affination of the abelian Lie algebra $\mathfrak{I} = \mathbb{C}I$ concentrated in degree $\bar{1}$, with $\dim \mathfrak{I} = 1$ and the dual Coxeter number $\dCox(\mathfrak{I}) = 0$. Therefore, via Segal-Sugawara construction, every $\Vir$-module obtained from $\hat{\mathfrak{I}}$ satisfies $z^{\hat{\mathfrak{I}}} = 1$.
By the direct sum decomposition $\lgl(M) = \lsl(M) \oplus \mathfrak{I}$, for a $\Vir$-module induced from $\lglhat{}^{(2)}(M)$ we have $z^{\lglhat{}^{(2)}(M)} = z^{\lslhat{}^{(2)}(M)} + 1$.

\begin{Remark}\label{reverse-A}
    Different from the usual conventions, under this construction the algebra $\lslhat{}^{(2)} (2 m)$ and $\lslhat{}^{(2)} (2 m +1)$ correspond respectively to the Dynkin diagrams
\begin{center}
  \begin{tikzpicture}
  \begin{scope}[scale=1.2]
\draw[line width=.5pt, double distance=2pt,->-] (0,0) node[dot]{} node[below]{${\scriptstyle 0}$} -- (1,0) node[dot]{}node[below]{${\scriptstyle 1}$};
\draw (1,0)--(2,0)node[dot]{}node[below]{${\scriptstyle 2}$};
\node at (2.5,0) {$\cdots\cdots$};
\draw (3,0)node[dot]{}--(4,0)node[dot]{}node[right]{${\scriptstyle m-2}$}--(4.7,0.7)node[dot]{}node[right]{${\scriptstyle m}$};
\draw (4,0)--(4.7,-0.7)node[dot]{}node[right]{${\scriptstyle m-1}$};
  \end{scope}
\begin{scope}[shift={(7,0)},scale=1.2]
\draw[line width=.5pt, double distance=2pt,->-] (0,0) node[dot]{} node[below]{${\scriptstyle 0}$} -- (1,0) node[dot]{}node[below]{${\scriptstyle 1}$};
\draw (1,0)--(2,0)node[dot]{}node[below]{${\scriptstyle 2}$};
\node at (2.5,0) {$\cdots\cdots$};
\draw (3,0)node[dot]{}--(4,0);
\draw[line width=.5pt, double distance=2pt,->-] (4,0) node[dot]{} node[below]{${\scriptstyle m-1}$} -- (5,0) node[dot]{}node[below]{${\scriptstyle m}$};
\end{scope}
  \end{tikzpicture} 
\end{center}
We shall denote by $A^{(2)}_{M-1}$ these two Dynkin diagrams.
Meanwhile, the usual Dynkin diagrams with the opposite directions are denoted by ${}^{t}\!\!A^{(2)}_{M-1}$,
and the corresponding algebras are denoted by ${}^{t}\!\!\lslhat{}^{(2)} (M)$.
It is easy to see that the algebras ${}^{t}\!\!\lslhat{}^{(2)} (M)$ and $\lslhat{}^{(2)} (M)$ are isomorphic.
\end{Remark}

\subsection{Conventions on weights} \label{Parity}

Suppose $\lghat$ is a classical affine Lie algebras of type $\lsohat(M,N)$, $\lglhat{}^{(2)}(M)$, $\lsphat(2l)$ or $\lglhat(l)$, with $\dim \dlho = \rkCar$.
Then we can take a standard basis $\epsilon_1,\epsilon_2,\ldots,\epsilon_{\rkCar}$ of $\dlho$ in the sense that the simple roots can be expressed by $\alpha_{i} = \epsilon_{i} - \epsilon_{i+1}$ whenever $i$ is not an ending point in the Dynkin diagram.
Under this basis we can identify each element in $\dlho$ with its coordinate.
In particular, we use the convention that for an array of coordinates $\lambda = (\lambda_1, \lambda_2, \dots, \lambda_{\rkCar})$, we write $\mathbf{1} = (1, 1, \dots, 1)$ for the array of all $1$'s with the same dimension, and $\lambda \cdot \mathbf{1} = \sum\limits_{i=1}^{\rkCar} \lambda_i$ for the sum of coordinates of $\lambda$.

Let ${\rkCar} \in \mathbb{Z}_{>0}$ and ${r} \in \mathbb{R}_{\ge 0} \cup \{ +\infty \}$. Denote by 
$$\P^{\rkCar}_{{r}} = \{ Y = (y_1, y_2, \dots, y_{\rkCar})\in \mathbb{Z}^{\rkCar} \mid   {r} \ge y_1 \ge \cdots \ge y_{\rkCar} \ge 0 \}$$
the set of partitions with ${\rkCar}$ parts and not ``longer" than ${r}$. For each $Y\in\P^{\rkCar}_{{r}}$, we write $\size(Y)=y_1+y_2+\cdots+y_l$ for the size of $Y$.  If ${r}\in \mathbb{Z}_{>0}$, putting the Young diagram of $Y$ in a rectangle with $r$ rows and ${\rkCar}$ columns, the complement gives rise to another partition $Y^c=(r-y_{\rkCar}, r-y_{\rkCar-1},\ldots,r-y_1)$ in $\P^{\rkCar}_{{r}}$. We denote by $Y^t\in \P^{r}_{\rkCar}$ the conjugate of $Y$, and write  $Y^\dagger = (Y^c)^t$. For instance, if $Y = (2, 1, 1, 0) \in \P^4_3$, then $Y^c = (3, 2, 2, 1)$, $Y^t = (3, 1, 0)$ and $Y^\dagger = (4, 3, 1)$, as illustrated by the following figure. 
\begin{center}
\begin{tikzpicture}[scale=0.5]
    \draw[thick] (0,0) rectangle (4,-3);

    \draw[fill=lightgray] (0, 0)
    -- (0, -2) -- (1, -2)
    -- (1, -1) -- (3, -1)
    -- (3, 0) -- (0, 0) -- cycle;
    \node[font=\small\bfseries\boldmath] at (0.5, -0.5) {$Y$};
    \node[font=\small\bfseries\boldmath] at (3.5, -2.5) {$Y^c$};
        \draw[dotted] (0,0) grid[step=1] (4,-3);
\end{tikzpicture}
\end{center}

Given a partition $Y = (y_i) \in \P^{\rkCar}_{{\lLevel}}$, we identify $Y$ with the weight  $\sum\limits_{i=1}^{\rkCar} y_i \epsilon_i\in \dlho$ of $\lgo$.
In all finite type cases the weights $Y$ obtained in this way are always dominant and integral.
Conversely, we want to describe all dominant integral weights using partitions, together with certain half-integral weights and diagram automorphisms.
For this purpose, we shall introduce $\sigdomdel({\lLevel})$, a set of desired $\Sigma$-orbit representatives in $\tilde{P}^+ ({\lLevel})$, and its orthogonal projection $\sigdom(\lLevel)$ in $\dlho$ which has a nice correspondence to the set of partitions $\P^{{\rkCar}}_{\lLevel}$. 
The representative set $\sigdomdel({\lLevel})$ consists of all $\sum\limits_{i=0}^{\rkCar} c_i \Lambda_i + c' \delta \in \tilde{P}^+ ({\lLevel})$ satisfying the following additional conditions depending on types, as given in Table \ref{table-sigma-conditions}.
\begin{table}[!htbp]
\centering
\scalebox{0.95}{
\begin{tabular}{c||c|c|c|c}
\hline
$\lghat$    &   $\lsohat(2l)$                           &   $\lsohat(2l+1)$     &   $\lglhat{}^{(2)}(2l)$   &   $\lglhat(l)$  \\ 
\hline
$\Sigma$ action & \makecell{$\sigma_{0,1}:\Lambda_0 \leftrightarrow \Lambda_1$ \\ $\sigma_{n-1,n}:\Lambda_{n-1} \leftrightarrow \Lambda_n$}    &   $\sigma_{0,1}:\Lambda_0 \leftrightarrow \Lambda_1$    &   $\sigma_{n-1,n}:\Lambda_{n-1} \leftrightarrow \Lambda_n$  &   $\sigma_{cyc}^{\#}:\Lambda_i \mapsto \Lambda_{i+1 \pmod l}$ \\
\hline
\makecell{conditions \\ for $\sigdomdel({\lLevel})$} &   $c_0 \ge c_1$, $c_{l} \ge c_{l-1}$      &   $c_0 \ge c_1$       &   $c_{l} \ge c_{l-1}$     &   $c_0 > 0$  \\
\hline
\end{tabular}
}
\caption{The group $\Sigma$ and the set of representatives $\sigdomdel({\lLevel})$ in each type}\label{table-sigma-conditions}
\end{table}

Recall that in each classical affine case the Cartan matrix $A$ is symmetrizable, which means there exists an invertible diagonal matrix $D$ such that $C = DA$ is symmetric. This gives an invariant form on $\tilde{\lieh}{}^*$.
Certainly taking $D_{\rm nor} = {\rm diag}(\frac{a_0^{\vee}}{a_0},\frac{a_1^{\vee}}{a_1},\dots,\frac{a_{\rkCar}^{\vee}}{a_{\rkCar}})$ gives the normalized invariant form $(-|-)_{\rm nor}$.
On the other hand, to make the bilinear form uniformly fitting in each type, through this paper we shall give an alternative choice of $D=\diag(d_1, \dots, d_n)$ such that it induces a natural bilinear form on $\tilde{\lieh}{}^*$ with $(\epsilon_i|\epsilon_j) = \delta_{ij}$. \textbf{This modification causes changes of convention in the following two types.}
\begin{itemize}
    \item For type $D^{(2)}_{{\rkCar}+1}$, we take $\delta = 2(\alpha_0 + \cdots + \alpha_{\rkCar})$ and therefore $D={\rm diag}(\frac{1}{2}, 1,\dots, 1, \frac{1}{2})$. 
        This specific choice of $\delta$ can fit nicely with the action of Virasoro algebra. Under this modification of $D$, the vacuum anomaly of $\lghat = \lsohat(2{\rkCar}+1, 1)$ can be expressed as
        \[
        {\sf d}^{\lghat}_{\lambda;{\lLevel}} = \frac{(\lambda \mid \lambda + 2\rho)}{2 ({\lLevel} + \dCox)} + \left(\frac{{\rm dim} \lieg}{24} - \frac{|\rho|^2}{2 \dCox}\right) \frac{{\lLevel}}{{\lLevel} + \dCox},
        \]
        being in a same form as the ones of $\lsohat(L)$. In this way we can deal with three orthogonal cases uniformly.
    \item For type $C^{(1)}_{\rkCar}$, we take $a^{\vee}_i = 2$ for each $i$ and therefore $D={\rm diag}(2, 1,\dots, 1, 2)$.
        Under this setting the level of each $V(\Lambda_i)$ is $2$, being the same as in $O^{(\r)}$ or $A^{(2)}$ cases for $i$ not an ending point.
\end{itemize}
Detailed data and conventions are listed in Appendix \ref{appendix-datum-types}.

We introduce a parity decomposition for $\tilde{P}^+(\lLevel)$ in non-$A^{(1)}$ cases.
Given $\sum\limits_{i=0}^{\rkCar} c_i \Lambda_i + c'\delta \in \tilde{P}^+({\lLevel})$, since $a_i^\vee = 2$ whenever $i$ is not an ending point on the diagram,
the parity of ${\lLevel}$ depends only on those $c_i$ at ending points. Define $c_h$, $c_t$ as follows:
\begin{table}[!htbp]
\centering
\renewcommand{\arraystretch}{1.4}
\begin{tabular}{c||c|c|c|c|c|c}
\hline
$\lghat$    &   $\lsohat(2l)$                   &   $\lsohat(2l+1)$ &   $\lsohat(2l+1,1)$   &   $\lglhat{}^{(2)}(2l)$           &   $\lglhat{}^{(2)}(2l+1)$ & $\lsphat(2l)$ \\ 
\hline
$c_h$       &   $c_0 + c_1$                     &   $c_0 + c_1$     &   $c_0$               &   $2c_0$                          &   $2c_0$                  &   $2c_0$        \\
\hline
$c_t$       &   $c_{{\rkCar}-1} + c_{\rkCar}$   &   $c_{\rkCar}$    &   $c_{\rkCar}$        &   $c_{{\rkCar}-1} + c_{\rkCar}$   &   $c_{\rkCar}$            &   $2c_{\rkCar}$ \\
\hline
\end{tabular}
\caption{The definition of $c_h$ and $c_t$ in each type}\label{table-ch-ct}
\end{table}

\noindent
The parity decomposition reads as: $\tilde{P}^+({\lLevel}) = \tilde{P}^+({\lLevel})_{\bar{0}, \bar{0}} \sqcup \tilde{P}^+(\lLevel)_{\bar{0}, \bar{1}} \sqcup \tilde{P}^+(\lLevel)_{\bar{1}, \bar{0}} \sqcup \tilde{P}^+(\lLevel)_{\bar{1}, \bar{1}}$, where
\[
\tilde{P}^+(\lLevel)_{\bar{n}_h, \bar{n}_t}:= \left\{ \sum\limits_{i=0}^{\rkCar} c_i \Lambda_i + c'\delta \in \tilde{P}^+({\lLevel}) \mid c_h\equiv n_h\pmod{2},\quad c_t \equiv n_t\pmod{2} \right\}
\]
for $\bar{n}_h, \bar{n}_t \in \mathbb{Z}/2\mathbb{Z}$, and $\sigdomdel(\lLevel)_{\bar{n}_h, \bar{n}_t} = \tilde{P}^+(\lLevel)_{\bar{n}_h, \bar{n}_t} \cap \sigdomdel(\lLevel)$.
Note that $\tilde{P}^+(\lLevel)_{\bar{n}_h, \bar{n}_t}= \varnothing$ unless $n_h + n_t \equiv {\lLevel} \pmod{2}$. 
In case that $\tilde{P}^+(\lLevel)_{\bar{n}_h, \bar{n}_t} \neq \varnothing$, there is a precise description of the sets $\sigdom(\lLevel)_{\bar{n}_h, \bar{n}_t}$ using partitions, that is, 
\[
\sigdom(\lLevel)_{\bar{n}_h, \bar{0}}= \left\{ Y \mid  Y \in \P^{\rkCar}_{\frac{{\lLevel}}{2}} \right\},
\]
\[
\sigdom(\lLevel)_{\bar{n}_h, \bar{1}} = \left\{ Y + \frac{1}{2}\mathbf{1} \mid Y \in \P^{\rkCar}_{\frac{{\lLevel-1}}{2}} \right\}.
\]

In the spirit of \cite[\S 2]{KOO}, two weights $\Lambda, \Lambda' \in \tilde{P}^+(\lLevel)$ are in the same {\em root sieving equivalence class} if and only if $\bar{\Lambda} - \bar{\Lambda}' \in Q$.
With respect to this relation, the parity sets $\tilde{P}^+(\lLevel)_{\bar{n}_h, \bar{n}_t}$ have the following properties.

\begin{Lem}
\label{root-sieving-eq}
   Suppose $R\in\mathbb{N}$. Then the following hold. 
    \begin{enumerate} 
        \item If $\Lambda, \Lambda' \in \tilde{P}^+(\lLevel)$ lie in different parity sets, then $\bar{\Lambda} - \bar{\Lambda}' \notin Q$.
        \item For $\lghat = \lsohat(2l)$, $\lglhat{}^{(2)}(2l)$ or $\lsphat(2l)$, each parity set decomposes as two root sieving equivalent classes,
            \[
           \tilde{P}^+(\lLevel)_{\bar{n}_h, \bar{0}} / {\sim} = \{\lLevel\Lambda_0,\lLevel\Lambda_0 + \varpi_1\}, \quad
            \tilde{P}^+(\lLevel)_{\bar{n}_h, \bar{1}} / {\sim} = \{\lLevel\Lambda_0 + \varpi_{n-1},\lLevel\Lambda_0 + \varpi_{n}\}.
            \]
            The diagram automorphism $\sigma_{0,1}$ (if exists) leaves each equivalent class invariant when $\bar{n}_h = \bar{0}$, while permuting the two classes when $\bar{n}_h = \bar{1}$.
            Similarly, the diagram automorphism $\sigma_{n-1,n}$ (if exists) leaves each equivalent class invariant when $\bar{n}_t = \bar{0}$, while permuting the two classes when $\bar{n}_t = \bar{1}$.
        \item For $\lghat = \lsohat(2l+1)$, $\lsohat(2l+1,1)$ or $\lglhat{}^{(2)}(2l+1)$, each parity set forms a single root sieving equivalent class, say, 
            \[
            \tilde{P}^+(\lLevel)_{\bar{n}_h, \bar{0}} / {\sim} = \{\lLevel\Lambda_0\}, \quad
            \tilde{P}^+(\lLevel)_{\bar{n}_h, \bar{1}} / {\sim} = \{\lLevel\Lambda_0 + \varpi_{n-1}\}.
            \]
    \end{enumerate}
\end{Lem}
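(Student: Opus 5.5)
The plan is to compute $P/Q$ (classical weight lattice modulo the root lattice) of $\lgo$ in each type and match its cosets against the parity invariants $(c_h \bmod 2, c_t \bmod 2)$.

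\textbf{Step 1: projections.} Using the standard basis $\epsilon_i$ of Section~\ref{Parity}, write
\[
\bar\Lambda=\sum_{i\ge 1} c_i\varpi_i .
\]
Recall that the root lattice $Q$ of $\lgo$ is generated by the $\alpha_i=\epsilon_i-\epsilon_{i+1}$ together with the end root. The end root is $\epsilon_l$, $2\epsilon_l$, or $\epsilon_{l-1}+\epsilon_l$ according to whether $\lgo$ is of type $B$, $C$, or $D$. The fundamental weights have the form $\varpi_i=\epsilon_1+\cdots+\epsilon_i$, except at spin-type nodes, where $\varpi=\tfrac12(\epsilon_1+\cdots\pm\epsilon_l)$ (or the analogous formula at the head end after the relabelling of Remark~\ref{ortho-reduction} and Remark~\ref{reverse-A}).

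\textbf{Step 2: the cosets of $Q$.} Determine membership in $Q$ by two invariants:
\begin{itemize}
\item whether the coordinates of $\bar\Lambda$ are integers or lie in $\tfrac12+\mathbb Z$;
\item the parity of the coordinate sum $\bar\Lambda\cdot\mathbf 1$, when the coordinates are integral.
\end{itemize}
In type $B$ or $C$ the integral sublattice $\mathbb Z^l$ equals $Q$ or contains it with index~2. In type $D$, $P/Q$ has order 4, given by the classes of $0$, $\varpi_1$, $\varpi_{l-1}$, $\varpi_l$.

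\textbf{Step 3: matching with $(c_t,c_h)$.}
\begin{itemize}
\item The tail invariant $c_t \bmod 2$ records half-integrality: the half-integral part of $\bar\Lambda$ comes exactly from the spin nodes at the tail end. This agrees with the description $\sigdom(\lLevel)_{\bar n_h,\bar1}=\{Y+\tfrac12\mathbf1\}$.
\item The head invariant $c_h$, combined with the level relation $\sum_i a_i^\vee c_i=\lLevel$ (where $a_i^\vee=2$ at interior nodes), determines the parity of $\bar\Lambda\cdot\mathbf 1$ modulo the relevant lattice. Concretely, $\bar\Lambda\cdot\mathbf1\equiv \sum_{i\ \text{interior}} i\,c_i+\cdots$. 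Since $\alpha_i$ has coordinate sum $0$ and the end roots have even sum in type $D$ (or sum $1$ in type $B$), I will derive the class of $\bar\Lambda$ in $P/Q$ as a function of $(c_h \bmod 2,\,c_t \bmod 2)$ together with one extra bit in the $D$-like cases.
\end{itemize}
Part (1) then follows because distinct parity pairs give distinct cosets of $P/Q$. Parts (2) and (3) follow by counting: in the $D$-like types, where $\lgo$ is $\lso(2l)$ or $\lsp$ with the relevant index, each parity set meets two cosets, represented by $\lLevel\Lambda_0$ and $\lLevel\Lambda_0+\varpi_1$ (respectively $\varpi_{n-1}$ and $\varpi_n$). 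In the odd types only one coset occurs. Throughout I use the paper's normalizations from Appendix~\ref{appendix-datum-types}.

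\textbf{Step 4: the automorphisms.} Compute the action of $\sigma_{0,1}$ and $\sigma_{n-1,n}$ on $\bar\Lambda$.
\begin{itemize}
\item $\sigma_{n-1,n}$ acts on the classical part as the reflection $\epsilon_l\mapsto-\epsilon_l$. This changes $\bar\Lambda$ by $2\lambda_l\epsilon_l$, which lies in $Q$ for integral $\lambda_l$ and lies outside $Q$ (odd coordinate-sum change) for half-integral $\lambda_l$. This gives the claimed behaviour according to $\bar n_t$.
\item $\sigma_{0,1}$ sends $\bar\Lambda\mapsto \bar\Lambda+(c_0-c_1)\varpi_1$, since $\Lambda_1=a_1^\vee\Lambda_0+\varpi_1$ with $a_1^\vee=1$ at that end. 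The coset therefore changes exactly when $c_0-c_1$ is odd, that is, when $c_h=c_0+c_1$ is odd.
\end{itemize}

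\textbf{Main obstacle.} The bookkeeping is the hard part: the unusual conventions for $D^{(2)}$, $C^{(1)}$ and the reversed $A^{(2)}$ diagrams change which node carries $a_i^\vee=1$ and where the spin weights sit. I would first tabulate $\varpi_i$ and $a_i^\vee$ for each of the six types, then verify the coset computation case by case.
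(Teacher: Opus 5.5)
Your overall strategy matches the paper's proof. The paper simply identifies $Q$ as $\mathbb{Z}^l$ when $\lgo=\lso(2l+1)$, or as the even-coordinate-sum sublattice of $\mathbb{Z}^l$ when $\lgo=\lso(2l)$ or $\lsp(2l)$, and compares this with the integral versus half-integral description of the parity sets. Your Step~4 on $\sigma_{0,1}$ and $\sigma_{n-1,n}$ is correct, and it supplies what the paper's two-line proof leaves implicit.

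\textbf{The flaw in Step~3.} The mechanism you give there for part~(1) does not work. The head invariant does not determine the parity of $\bar\Lambda\cdot\mathbf{1}$. For example, in $\lsohat(2l)$ the weights $R\Lambda_0$ and $(R-1)\Lambda_0+\Lambda_1$ both have $c_h=R$ and $c_t=0$, yet their classical parts $0$ and $\epsilon_1$ have coordinate sums of different parity. Moreover, ``distinct parity pairs give distinct cosets'' cannot be established as a statement about all four pairs. In the $\lso(2l+1)$ cases $|P/Q|=2$, so four parity pairs cannot inject into $P/Q$.

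\textbf{The missing observation.} It is the congruence recorded just before the lemma. Since $a_i^\vee=2$ at every non-end node, one has $c_h+c_t\equiv R \pmod 2$. Hence at a fixed level only two parity sets are non-empty, and they are distinguished by $c_t \bmod 2$ alone; $c_h$ carries no further information. In every type, $c_t$ is odd exactly when $\bar\Lambda\in\tfrac12\mathbf{1}+\mathbb{Z}^l$, and $Q\subseteq\mathbb{Z}^l$. Part~(1) follows immediately from these two facts.

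\textbf{Parts (2) and (3).} The coordinate-sum parity is the genuinely independent ``extra bit''. It is a $Q$-invariant only in the $\lso(2l)$ and $\lsp(2l)$ cases, where it splits each parity set into two classes. In the $\lso(2l+1)$ cases $Q=\mathbb{Z}^l$ and there is no such bit. Note that counting only gives ``at most two'' (respectively ``one'') class. You still have to exhibit both classes, for instance by padding $\Lambda_1$, or $\Lambda_{l-1}$ and $\Lambda_l$, with multiples of $\Lambda_0$. This needs $R\ge 1$, and $R\ge 2$ when $a_1^\vee=2$; for $R=0$ there is a single class.

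\textbf{A misprint to expect in (3).} When you do the case check for (3), you will find that the half-integral class is that of the spin weight $\varpi_l$. The printed $\varpi_{n-1}$ is integral and lies in $Q=\mathbb{Z}^l$.
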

\begin{proof}{
    Note that for $\lghat = \lsohat(2l)$, $\lglhat{}^{(2)}(2l)$ or $\lsphat(2l)$, the lattice $Q$ is generated by $\pm\epsilon_i\pm\epsilon_j$, whence $Y - Y'\in Q$ if and only if $\size(Y) -\size(Y') \equiv 0 \pmod 2$ for $Y, Y' \in \mathbb{Z}^l$.
    While for $\lghat = \lsohat(2l+1)$, $\lsohat(2l+1,1)$ or $\lglhat{}^{(2)}(2l+1)$, $Q$ is generated by $\pm\epsilon_i$ and coincides with the lattice $\mathbb{Z}^l$.
    }
\end{proof}

For types $D^{(1)}, B^{(1)}$ and $A^{(2)}_{2 \rkCar -1}$, the Dynkin diagrams have at least one branching point. 
In these cases, we observe that $2\epsilon_{\rkCar} = \alpha_{\rkCar} - \alpha_{\rkCar - 1} \notin \mathring{Q}{}^+$.
To uniformly deal with the weight $2\epsilon_{\rkCar}$ we extend $\tilde{Q}$ to a larger lattice of other types, by replacing the pair of branching vertices with a single vertex with double arrow, as shown in the following table. 
\begin{table}[!htbp]
\centering
\renewcommand{\arraystretch}{1.4}
\begin{tabular}{c|c}
\hline
$\lghat$            &  $\lghat_{\ex}$     \\ 
\hline
$\lsohat(2l)$       &  $\lsohat(2l+1,1)$ or $\lslhat{}^{(2)}(2l+1)$ or ${}^{t}\!\lslhat{}^{(2)}(2l+1)$ or $\lsphat(2l)$  \\
\hline
$\lsohat(2l+1)$       &  $\lsohat(2l+1,1)$ or $\lslhat{}^{(2)}(2l+1)$ \\
\hline
$\lslhat{}^{(2)}(2l)$      &   $\lslhat{}^{(2)}(2l+1)$ or $\lsphat(2l)$  \\
\hline
\end{tabular}
\caption{The possible $\lghat_{\ex}$ for each type of $\lghat$}\label{table-g-ex-all}
\end{table}
In practice the choice of $\lghat_{\ex}$ depends on the actual situation, as shown in Table \ref{table-g-ex}.
In each possible choice, the mapping $\epsilon_i \mapsto \epsilon_i$ induces an embedding $\tilde{{Q}}^+ \hookrightarrow \tilde{{Q}}^+_{\ex}$, and thus we also have $\tilde{{P}}^+_{\ex} \subset \tilde{{P}}^+$.
On the other hand, it is routine to verify that for our above choice of orbit representatives $\sigdomdel({\lLevel})$, we always have
\[
\left(\tilde{{Q}}^+_{\ex} \mid \sigdomdel({\lLevel})\right) \ge 0.
\tag{2.5.1}\label{Qexplus}\]
In particular, for $\beta \in \tilde{{Q}}^+_{\ex}$, we have $\left( \beta | \hat{\rho}\right) \ge 0$, with equality holds if and only if $\beta = k\epsilon_l$ and $(k\epsilon_l|\rho) = 0$.

In non-branching cases, we use the notation $\tilde{{Q}}^+_{\ex}:=\tilde{{Q}}^+$.

\begin{Remark}
    Throughout this paper, we say an affine Lie algebra $\lghat$ (or $\lgtilde$) is of branching type whenever any of its subalgebras $\lgo$ or $\lgo'$ are of the form $\lso(2m)$ with $m \ge 1$, even if the simple Lie algebra degenerates and therefore with no Dynkin diagram. This is because the type-$D$-like algebra $\lso(2m)$ causes some technical issues in the duality theory, and we need to use the tools such as the group $\Sigma$, the representative set $\sigdomdel({\lLevel})$ as well as the extended algebra $\lghat_{\ex}$ to uniformly deal with these issues.

    The degenerate case happens when concerning the small-rank orthogonal algebras $\lso(2m)$ with $m = 1, 2, 3$. In these cases, one can still define the automorphism $\sigma_{m,m-1}$ in a similar way without considering the Dynkin diagram, as well as $\Sigma$ and $\sigdomdel({\lLevel})$. On the other hand, the extended algebra $\lghat_{\ex}$ can be defined directly by the embedding $\epsilon_i \mapsto \epsilon_i$ without referring to the Dynkin diagram. In this way, all the theory can be established in a same way as in the non-degenerate cases.
\end{Remark}

\section{Fock space representations}
\label{sect-Fock-space-constr}

In this section, starting from a pair of affine Lie algebras $(\lghat,\lgdhat)$ in one the four classes: $\{A^{(1)}\}$,  $\{A^{(2)}\}$,  $\{C^{(1)}\}$, and $\{B^{(1)}, D^{(1)},D^{(2)}\}$, we shall construct a Fock space $\F$ which is a subspace of the wedge space of certain vector space determined by the pair and $h=0,\frac{1}{2}$. 
Via the  Clifford algebra action, the Fock space can be realized as a bimodule over  $(\lghat,\lgdhat)$. The decomposition of $\F$ as a bimodule then gives the level-rank duality in Section 5 below.





\subsection{General construction and actions of Clifford algebras}

Suppose $\W$ is a $\mathbb{C}$-vector space with a non-degenerate symmetric bilinear form $(-, -)$ and admits one of the following decompositions:
\begin{itemize}
\item isotropic: $\W = \W^+ \oplus \W^-$ such that $(\W^\pm, \W^\pm) = 0$ and $(-, - )\big|_{\W^+ \times \W^-}$ is non-degenerate;
\item quasi-isotropic: $\W = \W^+ \oplus \mathbb{C}e \oplus \W^-$ such that $(\W^\pm, \W^\pm) = 0$, $(\W^\pm, e) = 0$ and $(e, e)=1$, $(-, -)\big|_{\W^+ \times \W^-}$ is non-degenerate.
\end{itemize}
We uniformly write $\W = \W^{\ge 0} \oplus \W^-$ with $\W^{\ge 0} = \W^+$ or $\W^+ \oplus \mathbb{C}e$ in each case respectively. 

Denote by $T(\W)$ the tensor algebra over $\W$. The Clifford algebra $\Cl(\W)$ is the quotient algebra of $T(\W)$ given by 
\[
\Cl(\W)= T(\W)/\langle x \otimes y + y\otimes x - (x, y) \rangle_{2-\text{sided ideal}}.
\]
Write $xy$ for the residue class of $x \otimes y$ in $\Cl(\W)$.
The exterior algebra $\bigwedge(\W^-)$, being isomorphic to ${\Cl}(\W)/J$ as a vector space, where $J = \Cl(\W)\W^+ $ or $ \Cl(\W)\W^+ + \Cl(\W) (e - \frac{\sqrt{2}}{2})$ is a left ideal generated by $\W^+$ and $e - \frac{\sqrt{2}}{2}$ (if $e$ exists),
gives rise to an irreducible left $\Cl(\W)$-module structure on $\bigwedge(\W^-)$ by defining
\[
w.1 = 0, \forall w\in \W^+; \quad e.1 = \frac{\sqrt{2}}{2}\,  (\mbox{if }e\mbox{ exists}).
\]
Viewed as operators on $\bigwedge(\W^-)$, the elements in  $\W^+$ and $\W^-$ are called {\em annihilation operators} and {\em creation operators}, respectively.


Let $N$ be a positive integer and let $\W_N$ an $N$-dimensional $\mathbb{C}$-vector space with basis $\psi^i$, $i\in {\bf I}_N$ (cf. \S \ref{notation-classic-lie}) and bilinear form $(\psi^i, \psi^j) = \delta_{i, -j}$. Set $\W^{\pm}_{N} = {\rm Span}_{\mathbb{C}}\{ \psi^i \mid i\in {\bf I}_N^{\pm} \}$. 
The module $\bigwedge(W^-_N)$ gives a realization of spin modules of $\lso(N)$:
$\bigwedge(W^-_N) \cong V(\varpi_n)$ if $N = 2n+1$, while $\bigwedge(W^-_N)_{\text{even}} \cong V(\varpi_n)$ and $\bigwedge(W^-_N)_{\text{odd}} \cong V(\varpi_{n-1})$ if $N = 2n$. (See for instance \cite[Chapter 6]{GW})

\begin{Remark}\label{rem-pin}
    According to classical theory of orthogonal groups, the group ${Pin}(N)$ can be realized as a subgroup $\langle \pm v \in \Cl(\W_N) \mid v \in \W_N, (v,v)=-2\rangle$ of $\Cl(\W_N)^{\times}$. 
    In the case $N = 2n$ even, the operator $\tau = \psi^n - \psi^{-n} \in {Pin}(2n)$, being compatible with $\lso(2n)$-action, maps $\bigwedge(W^-_N)_{\text{\rm even}}$ and $\bigwedge(W^-_N)_{\text{\rm odd}}$ onto each other by switching the highest weight vectors.
    Therefore, the assignment $\sigma_{n-1, n} \mapsto \tau$ gives a projective representation of $\Sigma$, the diagram automorphism group of $\lso(2n)$. This  makes $\bigwedge(W^-_N)$ an irreducible $\Sigma \ltimes \lso(2n)$-module (cf. \S \ref{DiagAuto}).
\end{Remark}

Let $h \in \{ 0, \frac{1}{2} \}$. Define the space $\W_N^h= \W_N \otimes t^h \mathbb{C}[t^{\pm 1}]$ with basis $\psi^i(\gamma)$, $i \in {\bf I}_N$, $\gamma \in h + \mathbb{Z}$.
The bilinear form on $\W_N^h$ is defined by $(\psi^i(\gamma), \psi^j(\eta)) = \delta_{i, -j}\delta_{\gamma, -\eta}$.
Put
\[
\W^{h, \pm}_{N} = {\rm Span}_{\mathbb{C}}\{ \psi^i(\gamma) \mid \gamma \in (h + \mathbb{Z})^{\pm}, \text{ or } \gamma = 0, i\in {\bf I}_N^{\pm} \}.
\] 
Observe that when $N$ is odd and $h = 0$, we get a quasi-isotropic decomposition $\W^{h}_{N} = \W^{h, +}_{N} \oplus \mathbb{C}\psi^0(0)\oplus \W^{h, -}_{N}$; otherwise we get an isotropic decomposition $\W^{h}_{N} = \W^{h, +}_{N} \oplus \W^{h, -}_{N}$. 
Under this setting the wedge space $\bigwedge(\W_N^{h,-})$ has a basis consisting of ``monomials" of the form $\psi^{i_1}(\gamma_1) \cdots \psi^{i_m}(\gamma_m)$ with each $\gamma_j < 0$.
By the multiplication in $\Cl(\W_N^h)$, the 
annihilation operator $\psi^i(\gamma)$ with $\gamma > 0$ acts on the monomial as follows 
(the hat symbol indicates that the element is deleted from the sequence).
\begin{equation*}
    \psi^i(\gamma).\psi^{i_1}(\gamma_1) \cdots \psi^{i_m}(\gamma_m) = 
    \begin{cases}
        (-1)^{k+1} \psi^{i_1}(\gamma_1) \cdots \widehat{\psi^{i_k}(\gamma_k)}  \cdots \psi^{i_m}(\gamma_m) ,& \text{ if } \exists k: i_k = -i, \gamma_k = -\gamma; \\
        0 ,& \text{ otherwise. }
    \end{cases}
\end{equation*}

\subsection{Actions of affine Lie algebras}
\label{action-affine}
In this subsection we give concrete constructions of certain wedge product spaces and realize classical affine Lie algebras of types $O^{(\r)}$ and $A^{(2)}$ as operators on them (cf.\cite[\S 3]{Feingold1985}).
These constructions give level-$1$ spin representations of corresponding affine algebras.
We follow the notations in \S \ref{notation-classic-lie}. Fix $M_{\bar{0}}\in \mathbb{Z}_{>0}$ and $M_{\bar{1}}\in \mathbb{Z}_{\ge 0}$ and set $\W=\grws{\W}{M}$ be the $\mathbb{Z}_2$-graded space with $\W_{M_{\bar{0}}}$ in degree $\bar{0}$.

Take $\{ h, h'\} = \{ 0, \frac{1}{2} \}$, that is, $h$ can be $0$ or $\frac{1}{2}$ and $h'=\frac{1}{2}-h$, and define a $\frac{1}{2}\mathbb{Z}$-graded space
$$\W^h = (\W_{M_{\bar{0}}} \otimes t^h \mathbb{C}[t^\pm]) \oplus (\W_{M_{\bar{1}}} \otimes t^{h'} \mathbb{C}[t^\pm]),$$
with basis $\psi^i(\gamma)$ indexed by ${\bf I}^h = ({\bf I}_{M_{\bar{0}}} \times (h + \mathbb{Z})) \sqcup (\gh{{\bf I}}_{M_{\bar{1}}} \times (h' + \mathbb{Z}))$.
The direct sums of (quasi-)isotropic spaces give a (quasi-)isotropic decomposition: $\W^{h, \pm}= \W^{h, \pm}_{M_{\bar{0}}} \oplus \W^{h', \pm}_{M_{\bar{1}}}$.
We have the wedge product space $\bigwedge(\W^{h,-})$ associated to $\W^h$.

To describe the action of affine algebras on $\bigwedge(\W^{h,-})$ we need to introduce the completion of $\Cl(\W^h)$. For simplicity, we write $\Cl$ for $\Cl(\W^h)$ in this subsection. Define a topology on $\Cl$ with open sets $X + \Cl \psi^i(\gamma)$
 where $X\in \Cl, \psi^i(\gamma)\in\W^{h,+}$. 
Then the completion consists of elements of the form: 
\[
\widetilde{\Cl}:= \left\{ X + \sum_{\gamma \ge 0}\sum_i X_{i,\gamma} \psi^i(\gamma)\mid X, X_{i,\gamma} \in \Cl, \psi^i(\gamma)\in \W^{h,+}\right\},
\] 
allowing to take infinite sum of elements with annihilation operators on the right with increasing degrees.
In particular, $\widetilde{\Cl}$ acts in a well-defined way on $\bigwedge(\W^{h,-})$, since for each monomial in $\bigwedge(\W^{h,-})$, all but finitely many annihilation operators act by 0 on it.
We define the following normal ordering $\nord{ \ }$ in $\Cl$: for $x(\gamma),y(\eta)\in\W^h$, 
\begin{equation*}
    \nord{x(\gamma) y(\eta)} = 
    \begin{cases}
        - y(\eta) x(\gamma) ,& \text{ if } \gamma > 0 > \eta;\\
        \frac{1}{2}(x(\gamma) y(\eta) - y(\eta) x(\gamma)) ,& \text{ if } \gamma = 0 = \eta;\\
        x(\gamma) y(\eta) ,& \text{ otherwise}.
    \end{cases}
\end{equation*}
Then, for each $i,j\in {\bf I}_{M_{\bar{0}}}\sqcup \gh{\bf I}_{M_{\bar{1}}}$ and $s\in\frac{1}{2}\mathbb{Z}$, the expression involving infinite sum of the form 
\[
\sum_{\gamma} \nord{\psi^i(s-\gamma) \psi^{-j}(\gamma)} \in \widetilde{\Cl}, 
\]
while hereafter we use the convention that 
the sum with the range of index omitted always runs over the set that ensures all its terms well-defined. For instance, here we need $\psi^i(s-\gamma), \psi^{-j}(\gamma)$ both in $\W^h$ for the above expression.
Such expressions give well-defined operators on $\bigwedge(\W^{h,-})$, which are in fact the homogeneous component of certain vertex operators (cf. \cite[\S 14]{Kac}).  
 
In the following discussion, we shall respectively give the actions of $\lghat = \lsohat{}^{(2)}(M_{\bar{0}}, M_{\bar{1}})$ or $\lglhat{}^{(2)}(M_{\bar{0}})$ on $\bigwedge(\W^{h,-})$, as well as the actions induced by their corresponding graph automorphism groups.
According to the Segal-Sugawara construction (see \S \ref{Segal-Sugawara}), the action of $\lghat$ on $\bigwedge(\W^{h,-})$ induces a representation of the Virasoro algebra $\Vir$ on $\bigwedge(\W^{h,-})$ with central charge $z^{\lghat}$. 
\begin{align*}
    D^{\lghat}: \Vir & \to \End \left(\bigwedge(\W^{h,-})\right), \\
    d(s)&\mapsto D^{\lghat}(s),\\
     z&\mapsto z^{\lghat}=\frac{{\rm dim}\lieg}{1 + \dCox}. 
\end{align*}
We briefly compute the central charge $z^{\lghat}$ in each case.

\subsubsection{Orthogonal case}\label{orthL1}

Keep the notations above. We define the following action of $\lghat = \lsohat{}^{(2)}(M_{\bar{0}}, M_{\bar{1}})$ on $\bigwedge(\W^{h,-})$:
\[
\pi: D_{ij}(s) \mapsto \sum_{\gamma} \nord{\psi^{i} (s - \gamma) \psi^{-j} (\gamma)} \in\widetilde{\Cl}.
\]
Set $M = M_{\bar{0}} + M_{\bar{1}}$, we can quickly compute $\dim \lso(M_{\bar{0}}, M_{\bar{1}}) = \frac{1}{2}M(M-1)$, as well as the dual Coxeter number $\dCox = M - 2$ (cf. Appendix \ref{appendix-datum-types}) .
The central charge of the induced $\Vir$-action is 
$$z^{\lsohat(M_{\bar{0}}, M_{\bar{1}})} = \frac{\dim \lso(M_{\bar{0}}, M_{\bar{1}})}{1 + \dCox} = \frac{M}{2}.$$

We further discuss the action of diagram automorphisms. Suppose $M_{\bar{0}} = 2 m_{\bar{0}}$ is even and $\lfloor \frac{M_{\bar{1}}}{2} \rfloor = m_{\bar{1}}$.
Consider the following finite dimensional subspace of $\W^h$:
\[
\mathring{\W}= \W_{M_{\bar{1}}}^- \otimes t^{-h+\frac{1}{2}} + \W_{M_{\bar{0}}} \otimes t^{-h} + \W_{M_{\bar{1}}}^+ \otimes t^{-h-\frac{1}{2}}.
\] 
Note that $\mathring{\W} \cong \mathbb{C}^{M_{\bar{0}} + 2m_{\bar{1}}}$ is the natural  representation of $\lgo = \lso(M_{\bar{0}} + 2m_{\bar{1}})$,
and there is a decomposition $\W^h = (\mathring{\W} \otimes \mathbb{C}[t^{\pm}]) \oplus (\W_{M_{\bar{1}}}^0 \otimes t^{h-\frac{1}{2}} \otimes \mathbb{C}[t^{\pm}])$ with $\lgo$ acting trivially on the second part.
Restricting to the negative part we have $\mathring{\W} \cap \W^{h, -} = \mathring{\W}$ for $h = \frac{1}{2}$, while for $h=0$, $\mathring{\W} \cap \W_M^{h, -} \cong \mathbb{C}^{m_{\bar{0}} + m_{\bar{1}}}$ and its wedge space becomes the sum of $\pm$ spin representations of $\lgo$.
In both cases the action on $\bigwedge(\mathring{\W} \cap \W^{h, -})$ lifts to ${Pin}(M_{\bar{0}} + 2m_{\bar{1}})$. Observe that as a $\lgo$-module, 
\[
\bigwedge(\W^{h,-}) = \bigwedge(\mathring{\W} \cap \W^{h, -}) \otimes \bigwedge(\mathring{\W} \otimes \mathbb{C}[t^-]) \otimes \bigwedge(\W_{M_{\bar{1}}}^0 \otimes t^{h-\frac{1}{2}} \otimes \mathbb{C}[t^-]).
\]
If $h = 0$, then $\bigwedge(\W^{h,-}) \cong \bigwedge(\mathbb{C}^{m_{\bar{0}} + m_{\bar{1}}}) \otimes \bigwedge(\mathbb{C}^{M_{\bar{0}} + 2m_{\bar{1}}} \otimes \mathbb{C}[t^-]) \otimes \mathbb{C}[t^-]$ lifts to ${Pin}(M_{\bar{0}} + 2m_{\bar{1}})$\cite[Remark 3.5]{Wang1999};
if $h = \frac{1}{2}$, then $\bigwedge(\W^{h,-}) \cong \bigwedge(\mathbb{C}^{M_{\bar{0}} + 2m_{\bar{1}}} \otimes \mathbb{C}[t^-]) \otimes \mathbb{C}[t^-]$ lifts to $O(M_{\bar{0}} + 2m_{\bar{1}})$\cite[Remark 3.1]{Wang1999}.

Eventually using $\tau \in {Pin}(M_{\bar{0}} + 2m_{\bar{1}})$, we are ready to define the action of $\sigma_{m_{\bar{0}}, m_{\bar{0}}-1}$ on $\bigwedge(\W^{h,-})$: up to $\pm$ 
it acts by permuting $1$ with $\psi^{-m_{\bar{0}}}(0)$ on $\bigwedge(\mathbb{C}^{m_{\bar{0}} + m_{\bar{1}}})$(cf. Remark \ref{rem-pin}), and by permuting $\psi^{m_{\bar{0}}}(\gamma)$ with $\psi^{-m_{\bar{0}}}(\gamma)$, $\gamma < 0$ on rest of the spaces.
Similar actions are given for $\sigma_{\gh{m_{\bar{1}}}, \gh{(m_{\bar{1}}-1)}}$ in case $M_{\bar{1}} = 2m_{\bar{1}}$ even.
Later in \S \ref{diag-auto-on-abacus} below we shall give a graphical description of these actions.

\subsubsection{General linear case}
In general linear case we fix $h = 0$ and take $M_{\bar{1}} = 0$ and simply write $M=M_{\bar{0}}$. To realize the twisted general linear algebras on $\W^h$ we slightly modify the bilinear form by a sign twist on the neutral fermonic field:
$(\psi^0(\gamma), \psi^0(\eta)) = (-1)^\gamma \delta_{\gamma, -\eta}$. 
Since $h = 0$ and $\gamma, \eta \in \mathbb{Z}$, the twisted bilinear form is again symmetric.
Note that this modification only happens for type $A^{(2)}_{2\rkCar}$ algebra $\lglhat{}^{(2)}(2\rkCar+1)$. Abusing the notations we shall not distinguish the modified spaces $\Cl(\W^h)$ and $\bigwedge(\W^{h,-})$. 

The action $\lghat = \lglhat{}^{(2)}(M)$ on $\bigwedge(\W^{h,-})$ reads as follows. 
\[    
\pi: A_{ij}(s) \mapsto
\begin{cases}
    \sum\limits_{\gamma\in\mathbb{Z}} \nord{\psi^{i} (s - \gamma) \psi^{-j} (\gamma)} ,& \text{ if } ij > 0 \text{ or } i > 0, j = 0;\\
    \sum\limits_{\gamma\in\mathbb{Z}}(-1)^\gamma \nord{\psi^{i} (s - \gamma) \psi^{-j} (\gamma)} ,& \text{ if } ij < 0 \text{ or } i \le 0, j = 0.
\end{cases}
\]
Compute $\dim \lsl(M) = M^2 - 1$, as well as the dual Coxeter number $\dCox = M$ (see Appendix \ref{appendix-datum-types}).
The central charge of the induced $\Vir$-representation is
$$
z^{\lglhat{}^{(2)}(M)} = z^{\lslhat{}^{(2)}(M)} + 1 = \frac{\dim \lsl(M)}{1 + \dCox} + 1 = M.
$$
Similarly as in the orthogonal cases, when $M = 2m$ is even, the simple Lie subalgebra $\lgo$ is of type $D$. One can again define the action of the diagram automorphism $\sigma_{m, m-1}$ by lifting to the corresponding $Pin$ group.

\begin{Remark}
    The space with $h = \frac{1}{2}$ in general linear case corresponds to the affine algebra ${}^{t}\!\!A^{(2)}_{M}$ with Dynkin diagram labelling in the opposite direction.
    Though these two types of affine algebras are isomorphic, the construction of the algebras and spaces are much more complicated and we will omit it in this paper.
    Analogue dual theory of pairs $({}^{t}\!\!A^{(2)}_{L}, A^{(2)}_{R})$ can be given in a similar way.
\end{Remark}

Summarizing orthogonal and general linear cases we have:
\begin{Prop}[{\cite[Theorem C]{Feingold1985}}]
    \label{prop-affine-action}
    Let $\lghat = \lsohat{}^{(2)}(M_{\bar{0}}, M_{\bar{1}})$ or $\lglhat{}^{(2)}(M)$, and let $\bigwedge(\W^{h,-})$ be the wedge product defined above.
    The map $\pi: \lghat \to \widetilde{\Cl}(\W^h) \subset \End (\bigwedge(\W^{h,-}))$ defined above makes $\bigwedge(\W^{h,-})$ an integrable highest weight $\lghat$-module of level $1$, with the vacuum vector $1$ being a highest weight vector.
    The module $\bigwedge(\W^{h,-})$ is either irreducible or a direct sum of two irreducible summands which are related by a diagram automorphism of $\lghat$.
    In other words, $\bigwedge(\W^{h,-})$ is an irreducible $\Sigma \ltimes \lghat$-module.
\end{Prop}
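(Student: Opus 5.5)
The plan is the standard one for free-fermion realizations, as in \cite{Feingold1985} and \cite[\S 14]{Kac}. It has three parts: check the commutation relations, show integrability together with the highest weight property of the vacuum, and then obtain irreducibility or the splitting from a character or Virasoro argument.

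\textbf{Step 1: commutation relations.} Write $\pi(D_{ij}(s))=\sum_\gamma \nord{\psi^i(s-\gamma)\psi^{-j}(\gamma)}$ (and the twisted variants for $A_{ij}(s)$). Commute two such quadratic expressions inside $\widetilde{\Cl}$ using $\psi\psi'+\psi'\psi=(\psi,\psi')$. The un-normal-ordered part reproduces the loop bracket $[x_1,x_2](s_1+s_2)$. Reordering into normal order produces a finite scalar, which is a sum over $\gamma$ lying between $0$ and $s_1$. Counting these terms, with the half contributions at $\gamma=0$ coming from the symmetric ordering, gives exactly $\tfrac{1}{\m}(x_1|x_2)'s_1\delta_{s_1,-s_2}$ with $c\mapsto 1$. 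In the $A^{(2)}$ case the signs $(-1)^\gamma$ must be tracked, including the twisted form on $\psi^0$; these signs are what make the operators respect the $\sigma$-twisted grading $\lieg_{\bar0}\otimes\mathbb{C}[t^{\pm}]\oplus\lieg_{\bar1}\otimes t\mathbb{C}[t^{\pm}]$. Compatibility with $D_{ij}=-D_{-j,-i}$ (resp. the symmetry of $A_{ij}$) follows from anticommutativity of the fermions.

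\textbf{Step 2: integrability and the vacuum as highest weight vector.} The Chevalley generators $e_i=E_i(s_i)$ are quadratic expressions that each contain an annihilation operator, so they kill $1$. The same holds for $h_i$ acting on $1$ up to a scalar, and evaluating that scalar gives level $1$. For integrability, each $\pi(e_i)$ and $\pi(f_i)$ is a finite sum of terms $\psi\psi'$ with $\psi,\psi'$ from a finite set of pairwise anticommuting fermions. Hence $\pi(e_i)^3=0$, or already $\pi(e_i)^2=0$ in the simply-laced situations, so local nilpotency is immediate. The weight spaces are finite-dimensional because $d'$ acts by total degree.

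\textbf{Step 3: decomposition.} This is the step I expect to be the main obstacle. Since the module is integrable and restricted with positive level, it is completely reducible (Kac, Cor.~10.7 style). It therefore remains to count the highest weight vectors of $\lghat$ in $\bigwedge(\W^{h,-})$, for which there are two routes.
\begin{itemize}
\item \emph{Character comparison.} Compute the character of $\bigwedge(\W^{h,-})$ as a product of $(1+xq^{\gamma})$ factors, use the Jacobi triple product, and match it with the Weyl--Kac character of $V(\Lambda)$ for the level-one fundamental weights.
\item \emph{Virasoro argument.} Compare the central charge of the Sugawara $\Vir$ (computed above as $M/2$, resp. $M$) with that of the free-fermion $\Vir$ on $\bigwedge(\W^{h,-})$. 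Since they agree, the two $\Vir$ actions coincide, as recalled at the end of \S\ref{Segal-Sugawara}. The coset Virasoro then has $z^\perp=0$ and unitarity forces it to be trivial. Consequently the multiplicity spaces ${\rm Hom}_{\lghat}(V(\Lambda),\bigwedge)$ are lowest-energy eigenspaces.
\end{itemize}

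Either route leaves the $L_0$-lowest vectors to identify. These are the vacuum $1$ together with, in the $h=0$, $M_{\bar0}$ even case, the parity-odd vector $\psi^{-m_{\bar0}}(0)$ (and its analogue for $M_{\bar1}$ even). Parity (even versus odd wedge degree) is preserved by $\pi$, so there are at most two summands. Finally, the operator $\tau$ of Remark \ref{rem-pin}, extended as in \S\ref{orthL1}, swaps the two candidate highest weight vectors and intertwines $\pi$ with $\pi\circ\sigma$. This yields the two-summand structure related by a diagram automorphism, and hence irreducibility as a $\Sigma\ltimes\lghat$-module.
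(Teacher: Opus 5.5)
The paper does not prove this proposition; it quotes it from Feingold--Frenkel. Your overall framework (Wick computation of brackets, complete reducibility, then comparison of the Sugawara and Clifford Virasoro actions) is the standard one. The genuine gap is in Step~3, where you identify the highest weight vectors. Once $D^{\lghat}=D^{\Cl}$, the coset argument only tells you that a $\lghat$-highest weight vector of level-one weight $\Lambda$ has $D^{\Cl}(0)$-eigenvalue equal to the vacuum anomaly ${\sf d}^{\lghat}_{\bar\Lambda;1}$ of (\ref{Formula-vacuum}). That number depends on $\Lambda$, so nothing confines the highest weight vectors to the degree-zero part of $\bigwedge(\W^{h,-})$. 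Your list ($1$ together with $\psi^{-m_{\bar0}}(0)$ or its $\bar1$ analogue) is therefore incomplete.

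Concretely, take $M_{\bar1}=0$, $h=\tfrac12$ and $M_{\bar0}\ge5$, the Neveu--Schwarz sector of $\lsohat(M_{\bar0})$. The vector $\psi^{1}(-\tfrac12)$ is annihilated by every $e_i$, including $e_0=D_{-1,2}(1)$. It has weight $\Lambda_1$ modulo $\delta$ and $D^{\Cl}(0)$-eigenvalue $\tfrac12={\sf d}^{\lghat}_{\epsilon_1;1}$. In fact $\bigwedge(\W^{\frac12,-})=V(\Lambda_0)\oplus V(\Lambda_1)$, whereas your argument would conclude irreducibility.

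These two summands are exchanged by $\sigma_{0,1}$. This cannot come from $\tau$ or any element of a $Pin$ group: those preserve the energy grading, while $1$ and $\psi^{1}(-\tfrac12)$ have different energies. It is implemented instead by the mode-shifting automorphism of $\Cl$
\[
\psi^{1}(\gamma)\mapsto\psi^{-1}(\gamma+1),\qquad \psi^{-1}(\gamma)\mapsto\psi^{1}(\gamma-1),
\]
with the other $\psi^j$ fixed. It swaps $\pi(e_0)$ and $\pi(e_1)$ and carries $1$ to $\psi^{1}(-\tfrac12)$.

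Parity cannot repair the count. A parity splitting only exhibits submodules and gives no upper bound on the number of irreducible summands. Moreover, parity is not even preserved when a neutral zero mode is present: for $M_{\bar0}$ odd and $h=0$, $\pi(D_{-1,0}(0)).1=\tfrac{\sqrt2}{2}\psi^{-1}(0)$.

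To close the gap, run through the finitely many level-one dominant weights $\Lambda$ and compute ${\sf d}^{\lghat}_{\bar\Lambda;1}$ for each. Then determine the finite-dimensional space of vectors of classical weight $\bar\Lambda$ with that $D^{\Cl}(0)$-eigenvalue. This locates all highest weight vectors, for example $1,\psi^{1}(-\tfrac12)$ in the NS case and $1,\psi^{-m_{\bar0}}(0)$ in the Ramond case with $M_{\bar0}$ even. After that, exhibit the relevant diagram automorphism in each case.

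Two smaller errors should also be fixed.
\begin{itemize}
\item For $\lglhat{}^{(2)}(M)$ the central charges $z^{\lghat}=M$ and $z^{\Cl}=M/2$ do not agree. You must compare with $\tfrac12D^{\Cl}(2s)+\tfrac{M}{32}\delta_{s,0}$, and treat the Heisenberg part $\hat{\mathfrak I}$ separately when invoking complete reducibility.
\item In Step~2, $\pi(e_i)$ is an infinite sum over $\gamma$ and is not nilpotent. For $h=\tfrac12$ and $M_{\bar0}\ge4$, $\pi(D_{12}(0))^2$ sends $\psi^{2}(-\tfrac12)\psi^{2}(-\tfrac32)$ to $2\psi^{1}(-\tfrac12)\psi^{1}(-\tfrac32)\neq0$. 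What holds, and suffices, is local nilpotency, since only finitely many summands act nontrivially on a given monomial.
\end{itemize}
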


\subsection{Actions of Virasoro algebras}

By Segal-Sugawara construction (cf. \S \ref{Segal-Sugawara}), the above action of $\lghat = \lsohat{}^{(2)}(M_{\bar{0}}, M_{\bar{1}})$ or $\lglhat{}^{(2)}(M_{\bar{0}})$ on $\bigwedge(\W^{h,-})$ defined above gives rise to a representation 
 $D^{\lghat}: \Vir \to \End \left(\bigwedge(\W^{h,-})\right)$.
Besides $D^{\lghat}$, there is another representation of $\Vir$ on $\bigwedge(\W^{h,-})$ given by the Clifford algebra structure \cite[\S 5]{Feingold1985}:
\begin{align*}
    D^{\Cl}: \Vir & \to \End (\bigwedge(\W^{h,-})), \\
    d(s)&\mapsto D^{\Cl}(s)= -\frac{1}{4} \sum_{(i, \gamma)} (s - 2\gamma) \nord{\psi^{i}(s-\gamma) \psi^{-i}(\gamma)} +\frac{1}{16} {\rm dim}\W^h(0) \delta_{s, 0},\\
    z&\mapsto z^{\Cl}:= \frac{M_{\bar{0}}+M_{\bar{1}}}{2}
\end{align*}
with the following nice properties:
\begin{align*}
    &[D^{\Cl}(s), a(\eta)] = -\left(\frac{s}{2} + \eta\right) a(s + \eta), \quad a(\eta)\in \W^h;\\
    &[D^{\Cl}(s), x(s')] = -s' x(s + s'), \quad x(s')\in \lghat.
\end{align*}
Comparing these two representations using coset $\Vir$-module construction, we have:
\begin{Prop}[{\cite[2.4b]{Hasegawa1989}}]
    In orthogonal cases, we have $z^{\lghat} = z^{\Cl}$, thus $D^{\lghat} = D^{\Cl}$. 
    In general linear cases, we have $z^{\lghat} = 2 z^{\Cl}$, thus $D^{\lghat}(s) = \frac{1}{2} D^{\Cl}(2s) + \delta_{s, 0}\frac{M_{\bar{0}}}{32}$.
\end{Prop}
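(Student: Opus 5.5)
The plan is to use the coset Virasoro construction recalled in \S\ref{Segal-Sugawara}, with $\pi' = D^{\Cl}$ (or a rescaled version of it in the general linear case) and $\pi = D^{\lghat}$.

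\textbf{Step 1: the commutation hypothesis.} The displayed property $[D^{\Cl}(s), x(s')] = -s' x(s+s')$ for $x(s')\in\lghat$ is exactly the compatibility $[\pi'(d(n)),\pi(x(n'))] = -\frac{n'}{\m}\pi(x(\m n+n'))$, once the normalisation of the Virasoro index is matched.

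\emph{Orthogonal case.} Here $\m=2$ and the affine algebra is written with half-integer modes $x(s')$, $s'\in\frac12\mathbb{Z}$. The paper's convention $d'(x(s))=sx(s)$ shows that $D^{\Cl}(s)$ plays the role of $d(s)$ directly. I would verify this by computing $[D^{\Cl}(s),\nord{\psi^i(s'-\gamma)\psi^{-j}(\gamma)}]$ from the rule $[D^{\Cl}(s),a(\eta)] = -(\frac s2+\eta)a(s+\eta)$. Summing over $\gamma$ telescopes the coefficients to $-s'$. Normal-ordering corrections only appear when $s=-s'$, and they are scalars, which cancel in the commutator with the $\lghat$ action.

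\emph{General linear case.} Here the $\lghat$ modes are integral, while $\m=2$ rescales the derivation. So I would set $\pi'(d(s)) = \frac12 D^{\Cl}(2s)+\text{const}\cdot\delta_{s,0}$. This satisfies the Virasoro relations with central charge $z^{\Cl}\cdot\frac{2^3}{2}\cdot\frac12\cdot\ldots$. Carrying out the standard rescaling $L_n\mapsto \frac12 L_{2n}+\frac{c}{16}\delta_{n,0}$ of a Virasoro algebra gives central charge $2z^{\Cl}=M$, and the constant $\frac{M}{32}$ arises exactly from this shift, using $z^{\Cl}=\frac M2$.

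\textbf{Step 2: comparing central charges.} In the orthogonal case, \S\ref{orthL1} gives $z^{\lghat} = \frac{M}{2}$, while $z^{\Cl} = \frac{M_{\bar 0}+M_{\bar 1}}{2} = \frac M2$, so they agree. In the general linear case, $z^{\lghat}=M=2z^{\Cl}$, which matches the central charge of the rescaled $\pi'$.

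\textbf{Step 3: conclusion.} By the coset construction, $D^{\perp}=\pi'-D^{\lghat}$ is a unitary $\Vir$-representation of central charge $0$ on ${\rm Hom}_{\lghat}(V,\bigwedge(\W^{h,-}))$. By Proposition~\ref{prop-affine-action}, $\bigwedge(\W^{h,-})$ is a sum of at most two irreducible level-$1$ modules, so these Hom spaces are at most one-dimensional. A unitary representation with $z^\perp=0$ is trivial, so $D^\perp(n)=0$ for $n\neq0$ and $D^\perp(0)$ is a scalar. This is the ``moreover'' clause of the coset construction.

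\textbf{Main obstacle.} The delicate point is fixing the additive constant in $D^\perp(0)$. This means checking that both $\pi'(d(0))$ and $D^{\lghat}(0)$ have the same eigenvalue on the vacuum $1$, and also on the highest weight vector of the second summand when $\bigwedge(\W^{h,-})$ is reducible. For $D^{\Cl}(0)$ the vacuum eigenvalue is $\frac1{16}\dim\W^h(0)$, because all normally ordered terms kill $1$ except the $\gamma=0$ ones, which contribute $0$ by antisymmetrisation. For $D^{\lghat}(0)$ the eigenvalue is the vacuum anomaly (\ref{Formula-vacuum}) at level $1$. I expect a type-by-type check, using the data in Appendix~\ref{appendix-datum-types}, to show that the two agree. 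For the second summand, which is related to the first by a diagram automorphism commuting with both operators, the agreement then follows automatically.
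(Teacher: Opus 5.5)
Your proposal is the argument the paper invokes when it cites Hasegawa's coset-Virasoro comparison. It uses the compatibility $[D^{\Cl}(s),x(s')]=-s'x(s+s')$, equality of central charges, and the ``moreover'' clause of \S\ref{Segal-Sugawara}. Your rescaling $\pi'(d(s))=\frac12 D^{\Cl}(2s)+\frac{z^{\Cl}}{16}\delta_{s,0}$ in the general linear case is also right. It is the unique shift that restores the cocycle $\frac{n^3-n}{12}\delta_{n,-m}$, it gives central charge $2z^{\Cl}=M$, and it produces exactly the constant $\frac{M_{\bar 0}}{32}$.

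The ``main obstacle'' in your last paragraph does not exist, and part of your proposed handling of it is wrong. Both $\pi'$ and $D^{\lghat}$ satisfy the Virasoro relations with the same cocycle. Also, $D^{\perp}(n)$ commutes with $\lghat$, hence with every $D^{\lghat}(m)$. So $D^{\perp}$ is itself a Virasoro representation with that cocycle. Once $D^{\perp}(\pm1)=0$, you get $2D^{\perp}(0)=[D^{\perp}(1),D^{\perp}(-1)]=0$ directly. There is no additive constant left to fix, and no vacuum-anomaly comparison is needed; a unitary representation with $z^{\perp}=0$ is trivial, including $D^{\perp}(0)$.

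Your argument for the second summand would fail anyway. The diagram automorphism exchanging the two summands need not commute with the degree operators. Take $\lsohat(M_{\bar 0})$ with $M_{\bar 0}$ even, $M_{\bar 1}=0$ and $h=\frac12$. The summands are $V(\Lambda_0)$ and $V(\Lambda_1)$, exchanged by $\sigma_{0,1}$. Their highest weight vectors $1$ and $\psi^{1}(-\frac12)$ have $D^{\Cl}(0)$-eigenvalues $0$ and $\frac12$, consistent with the $\delta$-shift of $\sigma_{01}$ in Appendix~\ref{appendix-datum-types}. The anomalies do agree in this case, but not for the reason you give. Simply drop that paragraph and the proof is complete.
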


Using this proposition, each monomial $v = a_1(\eta_1) \cdots a_p(\eta_p).1\in \bigwedge(\W^{h,-})$ is an eigenvector of both $D^{\lghat}(0)$ and $D^{\Cl}(0)$. The eigenvalue of $v$ under  $D^{\Cl}(0)$, denoted by ${\sf d}_v$, is 
\begin{equation}
    {\sf d}_v = -(\eta_1 + \cdots + \eta_p) + \frac{1}{16} {\rm dim}\W^h(0).\tag{3.3.1}\label{degreeV}
\end{equation}

\subsection{The Fock space $\F$ and commutative actions}
\label{subsection-liepair}

In this subsection, based on the previous two subsections, we shall construct the crucial Fock space $\F$ which will be used to establish a duality theory later on. The process is as follows. We consider certain pair of affine Lie algebras  $(\lghat_{\lleft}, \lgdhat)$, and embed $\lghat_{\lleft} \oplus \lgdhat$ into a larger algebra $\lghat_{\bigalg}$ related to the tensor product of natural representations of two algebras. Via restriction, the level-$1$ Fock space representation $\F$  of $\lghat_{\bigalg}$ becomes a $(\lghat_{\lleft}, \lgdhat)$-bimodule.

From now on we use the convention that the un-dotted or dotted datum corresponds to the algebra $\lieg_{\lleft}$ or $\liegd$, respectively, while the datum corresponding to $\lieg_{\bigalg}$ is labelled by the subscript ${\bigalg}$.

Let $\W_L=\grws{\W}{L}$ and $\W_R=\grws{\W}{R}$ be $\mathbb{Z}_2$-graded $\mathbb{C}$-vector spaces with $L_{\bar{0}}, R_{\bar{0}}$ positive. Their tensor product is also $\mathbb{Z}_2$-graded, 
\[\W=\W_L\otimes\W_R=\underbrace{(\W_{L_{\bar{0}}}\otimes \W_{R_{\bar{0}}}\oplus \W_{L_{\bar{1}}}\otimes \W_{R_{\bar{1}}})}_{\W_{\bar{0}}}\oplus \underbrace{(\W_{L_{\bar{0}}}\otimes \W_{R_{\bar{1}}}\oplus \W_{L_{\bar{1}}}\otimes \W_{R_{\bar{0}}})}_{\W_{\bar{1}}},\]
with basis $\psi^{i,p}=\psi^i\otimes \psi^p$, $i\in {\bf I}_{L_{\bar{0}}}\sqcup \gh{\bf I}_{L_{\bar{1}}},p\in {\bf I}_{R_{\bar{0}}}\sqcup \gh{\bf I}_{R_{\bar{1}}}$. One has  subspaces 
\[\W^{\pm}=\mathrm{Span}_{\mathbb{C}}\left(\{\psi^{i,p}\mid i\in {\bf I}_{L_{\bar{0}}}^{\pm}\sqcup (\gh{\bf I}_{L_{\bar{1}}})^{\pm}, \mbox{or }i=0,\gh{0}, p\in {\bf I}_{R_{\bar{0}}}^{\pm}\sqcup (\gh{\bf I}_{R_{\bar{1}}})^{\pm}\}\bigcup \{\psi^{\pm e}, \psi^{\pm \gh{e}}\} \right),\] 
where $\psi^{\pm e}:= \frac{\sqrt{2}}{2} (\psi^{0, 0} \pm \ii \psi^{\gh{0}, \gh{0}})\in \W_{\bar{0}}$ and $\psi^{\pm \gh{e}}:= \frac{\sqrt{2}}{2} (\psi^{0, \gh{0}} \pm \ii \psi^{\gh{0}, 0})\in\W_{\bar{1}}$.  

For $\{ h, h'\} = \{ 0, \frac{1}{2} \}$, one can construct a $\frac{1}{2}\mathbb{Z}$-graded space
\[\W^h=(\W_{\bar{0}}\otimes t^h\mathbb{C}[t^{\pm}])\oplus (\W_{\bar{1}}\otimes t^{h'}\mathbb{C}[t^{\pm}]),\]
and define a bilinear form on $\W^h$ by $(\psi^{i,p}(\gamma), \psi^{j,q}(\eta)) = \delta_{i,-j} \delta_{p, -q} \delta_{\gamma, -\eta}$.  The corresponding (quasi-) isotropic decomposition is as follows. 
$$\W^{h, \pm} = {\rm Span}_{\mathbb{C}}\{ \psi(\gamma)\in\W^h \mid \gamma \in \frac{1}{2}\mathbb{Z}^{\pm}, \text{ or } \gamma = 0, \psi\in \W^{\pm}\}.$$
The space $\bigwedge(\W^{h,-})$ can be viewed as a module over $\lghat_{\bigalg}=\lsohat{}^{(2)}(\dim \W_{\bar{0}}, \dim \W_{\bar{1}})$ or $\lglhat{}^{(2)}(\dim \W)$ (see Proposition \ref{prop-affine-action}).  
Depending on the types of $\lghat$ and $\lgdhat$, we define the Fock space $\F$ associated to the pair $(\lghat,\lgdhat)$ as follows. 

\begin{Def}\label{def-Fock-space}
Keep the notations above with $h=0$ or $\frac{1}{2}$. 
\begin{itemize}
    \item If $(\lghat,\lgdhat)=(\lsohat(L_{\bar{0}},L_{\bar{1}}),\lsohat(R_{\bar{0}},R_{\bar{1}}))$ with $L_{\bar{0}} L_{\bar{1}} R_{\bar{0}} R_{\bar{1}}$ being odd, then we define the Fock space
    $$
    \F = \bigwedge(\W^{h,-})_{\rm {even}},
    $$ 
    that is, the subspace of $ \bigwedge(\W^{h,-})$ spanned by those  elements $x_1\wedge x_2\wedge\cdots\wedge x_{2n}$ with $n\in\mathbb{N}$ and $x_i\in\W^{h,-}$ for all $i$. 
    \item If $(\lghat,\lgdhat)=(\lglhat(l),\lslhat(r))$, where  $L_{\bar{0}}=2l, R_{\bar{0}}=2r,L_{\bar{1}}=R_{\bar{1}}=0$, then we take a subspace $\bar{\W}=(\W^+_{2l} \otimes \W^+_{2r}) \oplus (\W^-_{2l} \otimes \W^-_{2r})$ of $\W$. Let $\bar{\W}^{h,-}=(\bar{\W}\otimes t^{h}\mathbb{C}[t^{\pm}])\cap \W^{h,-}$, and define 
     $$\F=\bigwedge(\bar{\W}^{h,-}).$$ 
    \item For all other cases, we define 
    $$\F=\bigwedge(\W^{h,-})$$
    with $h=0$ when $(\lghat,\lgdhat)=(\lglhat{}^{(2)}(L), \lslhat{}^{(2)}(R))$. 
\end{itemize}
\end{Def}
 
In each case, via restriction, the actions of $\lghat$ and $\lgdhat$ on $\F$ are of the forms:
\[\begin{array}{cc}
\begin{array}{rcl}
     \pi:  \lghat & \to & \End (\F) \\
     X_{ij}(n) & \mapsto & \sum\limits_{(s, \gamma)} \pm\nord{\psi^{i,s}(n-\gamma) \psi^{-j, -s}(\gamma)} \\
     c_{\lleft} &\mapsto&  \levelR;\\
\end{array}
& 
\begin{array}{rcl}
  \dot{\pi}:  \lgdhat & \to & \End (\F) \\
     X_{pq}(n) & \mapsto & \sum\limits_{(k, \gamma)} \pm\nord{\psi^{k,p}(n-\gamma) \psi^{-k, -q}(\gamma)} \\
    \dot{c} & \mapsto & \levelL,\\
\end{array}
\end{array}\]
with specific choices of signs depending on types.
Detailed action rules are listed in Appendix \ref{appendix-aff-aff}.

\subsubsection{Orthogonal case}
\label{liepair-ortho}
Let $(\lieg, \liegd) = (\lso(L_{\bar{0}}, L_{\bar{1}}), \lso(R_{\bar{0}}, R_{\bar{1}}))$. Then $\lieg$ and $\liegd$ naturally act on $\W_L$ and $\W_R$, respectively, and 
 the tensor product gives rise to a natural embedding:
\[
\begin{matrix}
    \lso(L_{\bar{0}}, L_{\bar{1}}) & \oplus & \lso(R_{\bar{0}}, R_{\bar{1}}) & \hookrightarrow & \lieg_{\bigalg}:=\lso(L_{\bar{0}} R_{\bar{0}} + L_{\bar{1}} R_{\bar{1}}, L_{\bar{1}} R_{\bar{0}} + L_{\bar{0}} R_{\bar{1}}) \\
    \curvearrowright &  & \curvearrowright  &   & \curvearrowright \\
    (\grws{\W}{L}) & \otimes & (\grws{\W}{R}) & \cong & \W=\W_{\bar{0}}\oplus \W_{\bar{1}}.
\end{matrix}
\]
Note that this embedding is compatible with the $\mathbb{Z}_2$-gradings, thus we can extend to the affine algebra:
\begin{align*}
    \lghat_{\lleft} \oplus \lgdhat &\to \lghat_{\bigalg} \\
    x(n) + y(m) & \mapsto  (x \otimes I)(n) + (I \otimes y)(m),\\
    \eta c_{\lleft} + \eta' \dot{c} & \mapsto  (R\eta + L\eta') c. 
\end{align*}

\begin{Remark}
\label{rem-Fock-D2D2}
For type $(D^{(2)},D^{(2)})$, that is, $L_{\bar{0}} L_{\bar{1}} R_{\bar{0}} R_{\bar{1}}$ odd, the action of $\lso(L_{\bar{0}}, L_{\bar{1}}) \oplus \lso(R_{\bar{0}}, R_{\bar{1}})$ fails to cover $\W$ 
in the sense that the base elements $\psi^{\pm e}$, $\psi^{\pm \gh{e}}$ are of weights $0$ both for $\lso(L_{\bar{0}}, L_{\bar{1}})$ and $\lso(R_{\bar{0}}, R_{\bar{1}})$.

Recall that as a $\lsohat(L_{\bar{0}} R_{\bar{0}} + L_{\bar{1}} R_{\bar{1}}, L_{\bar{1}} R_{\bar{0}} + L_{\bar{0}} R_{\bar{1}})$-module, the space $\bigwedge (\W^{0,-})$ decomposes as two irreducible modules with highest weight vectors $1$ and $\psi^{-e}(0)$.
It happens that these two irreducible $\lghat_{\bigalg}$-modules are isomorphic as $\lghat_{\lleft} \oplus \lgdhat$-modules, 
thus in the decomposition of $\bigwedge (\W^{0,-})$ as $\lghat_{\lleft} \oplus \lgdhat$-modules every irreducible components will have two isomorphic copies coming from $1$ and $\psi^{-e}(0)$.
The case $h = \frac{1}{2}$ becomes analogously with $\psi^{-e}(0)$ replaced by $\psi^{-\gh{e}}(0)$.

Consequently, to prevent the redundant multiplicities, here the Fock space $\F = \bigwedge(\W^{h,-})_{\text{even}}$ is taken to be one of the irreducible direct summands of $\bigwedge (\W^{h,-})$(cf. Definition \ref{def-Fock-space}).
\end{Remark}

Let us consider the action of diagram automorphisms on the Fock space. Suppose $L_{\bar{0}} = 2 l_{\bar{0}}$ and $\lfloor \frac{L_{\bar{1}}}{2} \rfloor = l_{\bar{1}}$. 
Analogously as in level-$1$ case in \S\ref{orthL1}, consider $\lgo_{\lleft}-$action on $\F$ we have the decomposition:
\[
\F \cong \bigwedge(\mathbb{C}^{l_{\bar{0}} + l_{\bar{1}}} \otimes \mathbb{C}^{R_{\overline{2h}}}) \otimes \bigwedge(\mathbb{C}^{L_{\bar{0}} + 2l_{\bar{1}}} \otimes \mathbb{C}^{\mathbb{N}}) \otimes \mathbb{C}^{\mathbb{N}},
\]
which by the same reason lifts to ${Pin}(L_{\bar{0}} + 2l_{\bar{1}})$, allowing us to define the action of $\sigma^{\lleft}_{l_{\bar{0}}, l_{\bar{0}}-1}$.
Write the monomial basis of $\F$ by $\prod\limits_{p \in S} \psi^{-l_{\bar{0}}, p}(0) \cdot \prod\limits_{k} \psi^{i_k, p_k}(\gamma_k)$ 
with $S\subset I_{R_{\bar{0}}}$ if $h = 0$ or $S\subset \gh{I}_{R_{\bar{1}}}$ if $h = \frac{1}{2}$ and $(i_k, \gamma_k) \neq (-l_{\bar{0}}, 0)$, we can concretely describe the action of $\sigma^{\lleft}_{l_{\bar{0}}, l_{\bar{0}}-1}$(up to $\pm$):
\[
\sigma^{\lleft}_{l_{\bar{0}}, l_{\bar{0}}-1}: \prod\limits_{p \in S} \psi^{-l_{\bar{0}}, p}(0) \cdot \prod\limits_{k} \psi^{i_k, p_k}(\gamma_k) \longleftrightarrow \prod\limits_{p \in S^c} \psi^{-l_{\bar{0}}, -p}(0) \cdot \prod\limits_{k} \psi^{i_k, p_k}(\gamma_k).
\]
Similar action can be defined for $\sigma^{\lleft}_{\gh{l}_{\bar{1}}, \gh{(l_{\bar{1}}-1)}}$ and $\lgdhat$ counterparts. These define a $(\Sigma_{\lleft} \ltimes \lghat_{\lleft}, \dot{\Sigma} \ltimes \lgdhat)$-action on $\F$.
More intuitive explanations and examples of these actions will be given in \S\ref{graphic} below.

\subsubsection{General linear case}
\label{liepair-gl}
Let $(\lghat_{\lleft}, \lgdhat) = (\lglhat{}^{(2)}(L), \lslhat{}^{(2)}(R))$, $\lghat_{\bigalg} = \lglhat{}^{(2)}(LR)$ and fix $h=0$.
Observe that on the tensor space $\W = \W_L \otimes \W_R$, the tensor product of matrices $J_L \otimes J_R$ coincides with $J_{LR}$. 
This means the natural embedding $\lglhat(L) \oplus \lslhat(R) \hookrightarrow \lglhat(LR)$ is compatible with automorphisms on each algebra.
That is, it induces a Lie algebra embedding:
\begin{align*}
    \lghat_{\lleft} \oplus \lgdhat &\to \lghat_{\bigalg} \\
    x(n) + y(m) & \mapsto  (x \otimes I)(n) + (I \otimes y)(m),\\
    \eta c_{\lleft} + \eta' \dot{c} & \mapsto  (R\eta + L\eta') c. 
\end{align*}
 
Similarly as in the orthogonal cases, the actions of the diagram automorphisms are again defined by lifting to corresponding ${Pin}$ group.

\subsubsection{Types $A^{(1)}$ and $C^{(1)}$}
\label{liepair-AC}
For completeness we also state the known result of types $A^{(1)}$ and $C^{(1)}$, following \cite[\S 4]{Hasegawa1989}. In this part, we assume that $L_{\bar{1}}=R_{\bar{1}}=0$, $L=L_{\bar{0}}=2l$ and $R=R_{\bar{0}}=2r$.

For type $C^{(1)}$, let $(\lghat_{\lleft}, \lgdhat) = (\lsphat(2l), \lsphat(2r))$ and $\lghat_{\bigalg} = \lsohat(4lr)$ acting on $\W = \W_{2l} \otimes \W_{2r} = \W_{4lr}$.
The embedding $\lghat_{\lleft} \oplus \lgdhat\to \lghat_{\bigalg}$ can be described explicitly as follows. 
\begin{align*}
    \lghat_{\lleft} \oplus \lgdhat &\to \lghat_{\bigalg} \\
    C_{ij}(n) & \mapsto
    \begin{cases}
    (C_{ij} \otimes I_{2r})(n), & ij>0;\\
    (C_{ij} \otimes {\rm diag}(I_{r}, I_{-r}))(n), & ij<0;\\
    \end{cases}\\
    C_{pq}(m) & \mapsto
    \begin{cases}
    (I_{2l} \otimes C_{pq})(m),& pq>0;\\
    ({\rm diag}(I_{l}, I_{-l}) \otimes C_{pq})(m),& pq<0;\\
    \end{cases} \\
    \eta c_{\lleft} + \eta' \dot{c} & \mapsto  (R\eta + L\eta') c. 
\end{align*}

For type $A^{(1)}$, recall from  Definition \ref{def-Fock-space} that $(\lghat,\lgdhat)=(\lglhat(l),\lslhat(r))$, and the Fock space is  $\F=\bigwedge(\bar{\W}^{h,-})$, where  $\bar{\W}=(\W^+_{2l} \otimes \W^+_{2r}) \oplus (\W^-_{2l} \otimes \W^-_{2r})$ and $\bar{\W}^{h,-}=(\bar{\W}\otimes t^{h}\mathbb{C}[t^{\pm}])\cap \W^{h,-}$. Then $\F$ is a module over $\lghat_{\bigalg}=\lsohat(\bar{W})=\lsohat(2lr)$. 

Since $\lieg_{\lleft} = \lgl(l)$ and $\liegd = \lsl(r)$ act naturally on $\W_{2l}^+$ and $\W_{2r}^{+}$, there is an embedding
$\lgl(l) \oplus \lsl(r) \hookrightarrow \lgl(lr)$, and $\lgl(lr)$ acts naturally on $\W^+_{2l} \otimes \W^+_{2r}$.
Identify the dual space $(\W^+_{2l} \otimes \W^+_{2r})^* \cong \W^-_{2l} \otimes \W^-_{2r}$ with respect to the bilinear form. 
Then $\lgl(lr)$ naturally acts on $\bar{\W}$ by $\lgl(lr) \hookrightarrow \lso(\bar{\W}), x \mapsto x - J x J$. 
The composition $\lieg\oplus\liegd\hookrightarrow \lgl(lr)\hookrightarrow\lso(2lr)$ then gives an embedding of the corresponding affine algebras. 
\[
\lghat_{\lleft} \oplus \lgdhat \to \lghat_{\bigalg}.
\]

Consider the action of diagram automorphisms. In this case we take $\Sigma_{\lleft} = \langle \sigma^{\#}_{cyc} \rangle$ and $\dot{\Sigma} = \{ {\rm id} \}$(ref. \cite[\S 4.2]{Hasegawa1989}).
Later we will give a graphical explanation for the action of $\sigma^{\#}_{cyc}$ on Fock space using abacus configuration, see \S\ref{diag-auto-on-abacus} below.

\medskip 

In summary, for all cases in \S\ref{liepair-ortho}, \S\ref{liepair-gl} and \S\ref{liepair-AC}, the Fock space $\F$ becomes a restricted $\lghat_{\lleft}$-module of level $\levelR$ as well as a restricted $\lgdhat$-module of level $\levelL$, where $\levelR = r, \levelL = l$ for type $(A^{(1)},A^{(1)})$ and $\levelR = R, \levelL = L$ otherwise.
In particular, the actions of $\lghat_{\lleft}, \lgdhat$ and $\lghat_{\bigalg}$ respectively induce three actions $D^{\lghat_{\lleft}}, D^{\lgdhat}$ and $D^{\lghat_{\bigalg}}$ of $\Vir$ on $\F$.
In fact, by formula (\ref{eq-central-charge}), it is straightforward to verify the following proposition by  comparing central charges.
\begin{Prop} [{cf. \cite[\S 4.1]{Hasegawa1989}}] \label{VirEv}
We have an equality
    $D^{\lghat_{\lleft}}+D^{\lgdhat} = D^{\lghat_{\bigalg}}$ as  representations of $\Vir$ on $\F$. In particular, if $v\in \F$ is a $(\lghat_{\lleft}, \lgdhat)$-highest weight vector of classical weight $(\lambda_{\lleft}, \dot{\lambda})$, then $v$ is a common eigenvector of  $D^{\lghat_{\lleft}}(0),D^{\lgdhat}(0)$ and $D^{\lghat_{\bigalg}}(0)$, and the eigenvalues satisfy  
    \[
    {\sf d}^{\lghat_{\lleft}}_{\lambda_{\lleft};\levelR} + {\sf d}^{\lgdhat}_{\dot{\lambda};\levelL} = 
    \begin{cases}
        \frac{1}{2}{\sf d}_v + \frac{LR}{32}, &\mbox{ for type } (A^{(2)},A^{(2)});\\
        {\sf d}_v, &\mbox{ otherwise}.
    \end{cases}
    \]
\end{Prop}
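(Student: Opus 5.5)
The plan is to apply the coset Virasoro construction of \S\ref{Segal-Sugawara} twice, using the fact that two Virasoro representations which have the same commutators with $\lghat$ and the same central charge must coincide.

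\textbf{Step 1.} Set $\pi' = D^{\lghat_{\bigalg}}$. Restricting the Sugawara property of $\lghat_{\bigalg}$ through the embedding $\lghat_{\lleft}\oplus\lgdhat\hookrightarrow\lghat_{\bigalg}$ of \S\ref{subsection-liepair} gives
\[[D^{\lghat_{\bigalg}}(n),x(s)]=-s\,x(s+n)\]
for $x(s)$ in either factor. Normalizations such as the $\frac{1}{\m}$ factors and the $d'$-grading must be checked to match. The embedding sends $x(n)$ to $(x\otimes I)(n)$ with the same mode index, so the gradings agree. Hence $D^\perp(n):=D^{\lghat_{\bigalg}}(n)-D^{\lghat_{\lleft}}(n)$ commutes with $\lghat_{\lleft}$.

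\textbf{Step 2.} Since $[\lghat_{\lleft},\lgdhat]=0$ inside $\lghat_{\bigalg}$, the operators $D^{\lghat_{\lleft}}(n)$, being quadratic in $\lghat_{\lleft}$, commute with $\lgdhat$. Therefore $D^\perp$ also satisfies the Sugawara commutation relation with $\lgdhat$. Applying the coset construction to $\lgdhat$, the difference $D^\perp - D^{\lgdhat}$ commutes with $\lgdhat$ as well as with $\lghat_{\lleft}$. It defines a Virasoro representation of central charge
\[z^{\lghat_{\bigalg}}-z^{\lghat_{\lleft}}-z^{\lgdhat}.\]

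\textbf{Step 3.} Compute the central charges with formula (\ref{eq-central-charge}) and show this coset central charge vanishes. For the orthogonal case take $M=L$, $N=R$. Then $z^{\lghat_{\bigalg}}=\frac{LR}{2}$, and the two factors give
\[\frac{R\cdot L(L-1)/2}{R+L-2}+\frac{L\cdot R(R-1)/2}{L+R-2}=\frac{LR(L+R-2)}{2(L+R-2)}=\frac{LR}{2}.\]
Similar checks handle $\lsp$, where $\dim=l(2l+1)$ and $\dCox=l+1$ at levels $2r$ and $2l$ in the paper's normalization. They also handle $\lgl^{(2)}$, using the $+1$ for the center, and $\lgl(l)\oplus\lsl(r)$ inside $\lso(2lr)$, which has central charge $lr$. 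By unitarity, a unitary Virasoro representation with zero central charge acts trivially. This is the positive-level fact invoked at the end of \S\ref{Segal-Sugawara}. Hence $D^{\lghat_{\lleft}}+D^{\lgdhat}=D^{\lghat_{\bigalg}}$.

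\textbf{Step 4.} Let $v$ be a joint highest weight vector. Because $v$ is annihilated by positive modes, the Sugawara formula gives $D^{\lghat_{\lleft}}(0)v={\sf d}^{\lghat_{\lleft}}_{\lambda;\levelR}v$, and likewise for $\lgdhat$. The same formula applied to $D^{\lghat_{\bigalg}}(0)$ shows $v$ is an eigenvector of all three operators. The eigenvalue of $D^{\lghat_{\bigalg}}(0)$ is read off from the previous Proposition (\cite[2.4b]{Hasegawa1989}). In the orthogonal, $C^{(1)}$ and $A^{(1)}$ cases, $D^{\lghat_{\bigalg}}=D^{\Cl}$, so the eigenvalue is ${\sf d}_v$ by (\ref{degreeV}). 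In the $A^{(2)}$ case,
\[D^{\lghat_{\bigalg}}(0)=\tfrac12D^{\Cl}(0)+\tfrac{LR}{32},\]
using $M_{\bar 0}=LR$, which gives $\frac12{\sf d}_v+\frac{LR}{32}$.

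\textbf{Main obstacle.} The real difficulty is the normalization bookkeeping in Steps 1 and 3. The paper's nonstandard invariant forms for $C^{(1)}$ and $D^{(2)}$, the half-integer moding, and the shift term $-H+\r\levelR\frac{(H|H)}{2}$ must all be checked so that the central charges match exactly. This is also where the $\frac12$ rescaling for $A^{(2)}$ enters.
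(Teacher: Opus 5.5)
Your proposal is correct and is essentially the paper's argument. Both apply the coset construction of \S\ref{Segal-Sugawara} to $\lghat_{\lleft}\oplus\lgdhat\subset\lghat_{\bigalg}$, reduce the identity to the central-charge equality computed from (\ref{eq-central-charge}), and read off the eigenvalues via Hasegawa's 2.4b. There is one slip in your $C^{(1)}$ bookkeeping: in the paper's normalization the levels $2r$ and $2l$ go with $\dCox=2l+2$ and $2r+2$ (Appendix~\ref{appendix-datum-types}, A.7), not $l+1$ and $r+1$. With your mixed values the check fails (for $l=r=1$ it gives $3\neq 2lr=2$), whereas consistent values give $\frac{2r\cdot l(2l+1)}{2r+2l+2}+\frac{2l\cdot r(2r+1)}{2l+2r+2}=2lr=z^{\lsohat(4lr)}$.
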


A big task is to decompose $\F$ into the sum of  irreducible $(\lghat,\lgdhat)$-bimodules, which give the level-rank duality theory. Before that, we shall give a  combinatorial model of $\F$ in the next section.

\section{Graphical configurations: Maya diagrams and abaci} \label{graphic}

It is well-known \cite{U} for type $A^{(1)}$, the level-rank duality on semi-infinite wedge spaces are closely related to multipartitions. In this section we will give, for all classical types, a uniform graphical configuration which draws Maya diagrams on infinitely many pieces of ``chessboards", and interpret the action of Lie algebras on this model.
These two types of diagrammatic models are closely related via certain graphical operations.

\subsection{Maya diagram models and moves}\label{Maya-diagrams}

We introduce a graphical interpretation of the Fock space $\F$.

First we consider the cases not of type $(A^{(1)},A^{(1)})$. Recall that
\[\W^h=(\W_{\bar{0}}\otimes t^h\mathbb{C}[t^{\pm}])\oplus (\W_{\bar{1}}\otimes t^{h'}\mathbb{C}[t^{\pm}])\]
has a basis $\psi^{i,p}(\gamma)$ indexed by 
\[{\bf I}^h := ((({\bf I}_{L_{\bar{0}}} \times {\bf I}_{R_{\bar{0}}}) \sqcup (\gh{\bf I}_{{L}_{\bar{1}}} \times \gh{\bf I}_{{R}_{\bar{1}}})) \times (h + \mathbb{Z})) \sqcup (({\bf I}_{L_{\bar{0}}} \times \gh{\bf I}_{R_{\bar{1}}}) \sqcup (\gh{\bf I}_{{L}_{\bar{1}}} \times {\bf I}_{{R}_{\bar{0}}}) \times (h' + \mathbb{Z})).\]
The set ${\bf I}^h$ is endowed with lexicographical order. This can be done as follows. We always assume that the elements in $\gh{\bf I}_M$ are  bigger than the elements in ${\bf I}_N$ for each pair of positive integers  $M,N$. Together with the order on ${\bf I}_M$ and $\gh{\bf I}_N$ given in \S \ref{notation-classic-lie}, 
for each $(i,p,\gamma)\in {\bf I}^h$, the lexicographical order is respect to $\gamma, i$ and $p$.

For each homogeneous component $\W_{L_a}\otimes \W_{R_b}\otimes \mathbb{C}t^{\gamma}$ of $\W^{h}$ with  $a,b\in\{\bar{0},\bar{1}\}$, we draw a grided board with $L_a$ columns and $R_b$ rows labelled by the corresponding subset of ${\bf I}^h$. For instance, assume that $L_{\bar{0}}=4$ and $R_{\bar{1}}=3$ for $\W_{L_{\bar{0}}}\otimes \W_{R_{\bar{1}}}\otimes \mathbb{C}t^{-\frac{1}{2}}$ we draw a grided board of degree $-\frac{1}{2}$ with columns labelled by ${\bf I}_{L_{\bar{0}}}={\bf I}_4$ and rows labelled by $\gh{\bf I}_{R_{\bar{1}}}=\gh{\bf I}_3$. 
\begin{center}
\begin{tikzpicture}[scale=0.5]
\draw[dotted] (0,0) grid[step=1] (4,-3);
\draw[thick] (0,0) rectangle (4,-3);
\node[above] at (0.5,0) {${\scriptstyle 1}$};
\node[above] at (1.5,0) {${\scriptstyle 2}$};
\node[above] at (2.5,0) {${\scriptstyle -2}$};
\node[above] at (3.5,0) {${\scriptstyle -1}$};
\node[left] at (0,-0.5) {${\scriptstyle \gh{1}}$};
\node[left] at (0,-1.5) {${\scriptstyle \gh{0}}$};
\node[left] at (0,-2.5) {${\scriptstyle \gh{-1}}$};
    \end{tikzpicture}
\end{center}
A cell in $i$-th column and $p$-th row are labelled by $(i, p, \gamma)\in {\bf I}^{h}$ and corresponds to $\psi^{i, p}(\gamma) \in \W^{h}$. We arrange the ``initial'' boards as follows.  The arranged boards is denoted by ${\bf F}(h)$.
\begin{center}
    \begin{tikzpicture}[scale=0.5]
        \node[left] at (-2,2.5) {${\bf F}(h):$};
        \draw[thick] (0,0) rectangle (7,5);
        \draw[thick] (0,2)--(7,2);
        \draw[thick] (3,0)--(3,5);
        \node at (1.5,1) {$h+\frac{1}{2}$};
        \node at (5,3.5) {$h+\frac{1}{2}$};
        \node at (1.5,3.5) {$h+1$};
        \node at (5,1) {$h$};
        \node[left] at (0,1) {${\bf I}_{R_{\bar{0}}}$};
        \node[left] at (0,3.5) {$\gh{\bf I}_{R_{\bar{1}}}$};
        \node[above] at (1.5,5) {$\gh{\bf I}_{L_{\bar{1}}}$};
        \node[above] at (5,5) {${\bf I}_{L_{\bar{0}}}$};
    \end{tikzpicture}
\end{center}
 Similarly, one can draw ${\bf F}(h-k)$ for all $k\in\mathbb{Z}$. We can arrange all possible boards horizontally or vertically as follows. 
\[\cdots {\bf F}(h)\, {\bf F}(h-1)\, \cdots \quad\quad \begin{array}{c}
    \vdots\\
    {\bf F}(h)\\
    {\bf F}(h-1)\\
    \vdots
\end{array}\]
Denote them by ${\bf F}_{\rm hor}$ and ${\bf F}_{\rm ver}$ respectively. Now we remove simultaneously in ${\bf F}_{\rm hor}$ and ${\bf F}_{\rm ver}$ the cells indexed by $(i,p,\gamma)$ for which $\psi^{i,p}(\gamma)$ is not in $\W^{h,-}$. The resulting graph is denoted by $\overline{\bf F}_{\rm hor}$ and $\overline{\bf F}_{\rm ver}$, respectively. Also, the remaining part of ${\bf F}(h-k)$ is denoted by $\overline{\bf F}(h-k)$ for all $k\in\mathbb{Z}$.  For instance, set 
\[h=\frac{1}{2}, L_{\bar{1}}=R_{\bar{1}}=1, L_{\bar{0}}=7,R_{\bar{0}}=5.\]
The horizontal board $\overline{\bf F}_{\rm hor}$ is as follows. 
\begin{center}
\begin{tikzpicture}[scale=0.5]
    \begin{scope}
        \draw[dotted] (5,0) grid[step=1] (8,-1);
        \draw[thick] (5,0) rectangle (8,-1);
        \draw[dotted] (0,-4) grid[step=1] (1,-6);
        \draw[thick] (0,-4) rectangle (1,-6);
        \draw[dotted] (1,-1) grid[step=1] (8,-6);
        \draw[thick] (1,-1) rectangle (8,-6);
        \draw[loosely dotted] (0,0) rectangle (5,-1);
        \draw[loosely dotted] (0,0) rectangle (1,-4);
        \node[above] at (0.5,0) {$\scriptstyle \gh{0}$};
        \node[above] at (1.5,0) {$\scriptstyle {1}$};
        \node[above] at (2.5,0) {$\scriptstyle {2}$};
        \node[above] at (3.5,0) {$\scriptstyle {3}$};
        \node[above] at (4.5,0) {$\scriptstyle {0}$};
        \node[above] at (5.5,0) {$\scriptstyle {-3}$};
        \node[above] at (6.5,0) {$\scriptstyle {-2}$};
        \node[above] at (7.5,0) {$\scriptstyle {-1}$};
        \node[left] at (0,-0.5) {$\scriptstyle \gh{0}$};
        \node[left] at (0,-1.5) {$\scriptstyle {1}$};
        \node[left] at (0,-2.5) {$\scriptstyle {2}$};
        \node[left] at (0,-3.5) {$\scriptstyle {0}$};
        \node[left] at (0,-4.5) {$\scriptstyle {-2}$};
        \node[left] at (0,-5.5) {$\scriptstyle {-1}$};
        \node at (4.5,-0.5) {$\star$};
        \draw[thick, decoration={
                brace,
                mirror,
                raise=0.2cm
            }, decorate] (0.2,-6) -- (7.8,-6) node [pos=0.5,anchor=north,yshift=-0.25cm] {$\overline{\bf F}(-\frac{1}{2})$};
    \end{scope}
    \begin{scope}[shift={(8,0)}]
        \draw[dotted] (1,0) grid[step=1] (8,-1);
        \draw[thick] (0,0) rectangle (1,-1);
        \draw[thick] (1,0) rectangle (8,-1);
        \draw[dotted] (0,-1) grid[step=1] (1,-6);
        \draw[thick] (0,-1) rectangle (1,-6);
        \draw[dotted] (1,-1) grid[step=1] (8,-6);
        \draw[thick] (1,-1) rectangle (8,-6);
        \draw[thick, decoration={
                brace,
                mirror,
                raise=0.2cm
            }, decorate] (0.2,-6) -- (7.8,-6) node [pos=0.5,anchor=north,yshift=-0.25cm] {$\overline{\bf F}(-\frac{3}{2})$};
        \node[above] at (0.5,0) {$\scriptstyle \gh{0}$};
        \node[above] at (1.5,0) {$\scriptstyle {1}$};
        \node[above] at (2.5,0) {$\scriptstyle {2}$};
        \node[above] at (3.5,0) {$\scriptstyle {3}$};
        \node[above] at (4.5,0) {$\scriptstyle {0}$};
        \node[above] at (5.5,0) {$\scriptstyle {-3}$};
        \node[above] at (6.5,0) {$\scriptstyle {-2}$};
        \node[above] at (7.5,0) {$\scriptstyle {-1}$};
        \node[right, shift={(0.5,0)}] at (8,-3) {$\cdots\cdots$};
    \end{scope}    

    \draw[dashed] (4, -1) -- (4, -6);
    \draw[dashed] (5, -1) -- (5, -6);
    \draw[dashed] (12, 0) -- (12, -6);
    \draw[dashed] (13, 0) -- (13, -6);
    \draw[dashed] (1, -3) -- (16, -3);
    \draw[dashed] (1, -4) -- (16, -4);

\end{tikzpicture}
\end{center}
All cells in ${\bf F}(\frac{1}{2}-k)$ with $k<0$ and the cells with dotted frames in ${\bf F}(\frac{1}{2})$ are removed, since they correspond to indexes $(i,p,\gamma)$ such that $\psi^{i,p}(\gamma)\notin\W^{\frac{1}{2},-}$. 
For instance, the cell marked with $\star$ is removed since it corresponds to $\psi^{{0},\gh{0}}(0)$ which does not belong to $\W^{\frac{1}{2},-}$.

A horizontal (or vertical, resp.) {\bf Maya diagram} $\M$ of shape $\W^{h, -}$ is an assignment that coloring finitely many cells black in  $\overline{\bf F}_{\rm hor}$ (or $\overline{\bf F}_{\rm ver}$, resp.). 
 We may say an uncolored cell white.
To emphasize the shape of the board, we will sometimes denote by $\dot{\M}$ the vertical Maya diagram, which is identically the same as $\M$ via the natural correspondence between $\overline{\bf F}_{\rm hor}$ and $\overline{\bf F}_{\rm ver}$.

For each Maya diagram $\M$ over $\overline{\bf F}_{\rm hor}$ such that the black cells are $(i_1,p_1,\gamma_1)$, $(i_2,p_2,\gamma_2),\ldots$, $(i_n,p_n,\gamma_n)$, in a descending order with respect to the lexicographical order on ${\bf I}^h$, we can construct a vector 
\[v(\M)=\begin{cases}
    \psi^{-e}(0)\psi^{i_1, p_1}(\gamma_1) \cdots \psi^{i_n, p_n}(\gamma_n), &  L_{\bar{0}}L_{\bar{1}}R_{\bar{0}}R_{\bar{1}}n \mbox{ is odd}, h=0;\\
    \psi^{-\gh{e}}(0)\psi^{i_1, p_1}(\gamma_1) \cdots \psi^{i_n, p_n}(\gamma_n), &  L_{\bar{0}}L_{\bar{1}}R_{\bar{0}}R_{\bar{1}}n \mbox{ is odd}, h=\frac{1}{2};\\
    \psi^{i_1, p_1}(\gamma_1) \cdots \psi^{i_n, p_n}(\gamma_n), & \mbox{otherwise}.
\end{cases}\]
The set 
\[\{v(\M)\mid \M\mbox{ is a Maya diagram over }\overline{\bf F}_{\rm hor}\}\]
is then a monomial basis of the Fock space $\F$. Identically, we can define $v(\dot{\M})$ for each Maya diagram over $\overline{\bf F}_{\rm ver}$. In this way we can use Maya diagrams to display the monomial basis of $\F$ and to study, up to scalar, the action of Lie algebras graphically. For example, the operator $\nord{\psi^{i, p}(\gamma)\psi^{-j, -q}(-\eta)}$ with $\gamma, \eta < 0$ acts as moving the black cell at $(j, q, \eta)$ to position $(i, p, \gamma)$, 
or $0$ if the source cell is not black or the target cell is already black. 
While in the case both $\psi^{i, p}(\gamma)$ and $\psi^{-j, -q}(-\eta)$ being annihilation (resp. creation) operators, $\nord{\psi^{i, p}(\gamma)\psi^{-j, -q}(-\eta)}$ with $\gamma, \eta < 0$ acts as simultaneously adding (resp. removing) black cells at $(j, q, \eta)$ and $(i, p, \gamma)$.

Let us consider the actions of $(\lghat_{\lleft}, \lgdhat)$ on Maya diagrams. Obviously the action of $\lghat_{\lleft}$ goes horizontally while $\lgdhat$ goes vertically.
Here we discuss $\pi_{\lleft}: \lghat_{\lleft} \to \End (\F)$. Take (non-zero) $X_{ij}(n)\in \lghat_{\lleft}$, where $X = A, B, C, D$ depends on type of $\lghat_{\lleft}$, and a Maya diagram $\M$, the element $X_{ij}(n).v(\M)$ is a linear combination of monomials $v(\M')$, where $\M'$ is a Maya diagram obtained in one of the following patterns. 
\begin{itemize}
    \item Move a black cell $(j,p,\gamma)$ in $\M$ to the white cell $(i, p, n+\gamma)$ for some $p,\gamma$; 
    \item Move a black cell $(-i,p,\gamma)$ in $\M$ to the white cell $(-j, p, n+\gamma)$ for some $p,\gamma$; 
    \item  Suppose $X_{ij}(n)\in \lghat_{\lleft}{}^-$. Reverse the color of two white cells $(i, p, n-\gamma)$ and $(-j, -p, \gamma)$ in $\M$ for some $p,\gamma$; 
    \item Suppose $X_{ij}(n)\in \lghat_{\lleft}{}^+$. Reverse the color of two black cells
     $(-i, -p, \gamma-n)$ and $(j, p, -\gamma)$ in $\M$ for some $p,\gamma$.
\end{itemize}
The case for $\dot{\pi}$ goes in a similar pattern. Drawing the Maya diagram horizontally(resp. vertically), we find that the action of $\lghat_{\lleft}{}^+$(resp. $\lgdhat{}^+$) carries black cells to leftward(resp. upward) white cells and cancels pair of black cells appearing in rows(columns) with opposite label $\pm p$(resp. $\pm i$).
This observation gives us an intuitive understanding of $(\lghat_{\lleft}, \lgdhat)$ joint highest weight vectors. 

\begin{Exam}
    Here we introduce the Maya diagram corresponding to a Young diagram, which will be frequently used in the following sections.
    Let $Y = (y_1,y_2,\ldots,y_l) \in \P^l_{\infty}$ a partition with at most $l$ parts. Draw the following Maya diagrams on $\overline{\bf{F}}_{\rm ver}$:
    \begin{itemize}
        \item For each column labelled by $i = 1,2,\ldots,l$, fill $y_i$ many cells at the top of the column. Denote this Maya diagram by $\M^+(Y)$;
        \item For each column labelled by $i = -l, -l+1, \ldots, -1$, fill $y_{i+l+1}$ many cells at the top of the column. Denote this Maya diagram by $\M^-(Y)$.
    \end{itemize}

    \begin{center}
        \begin{tikzpicture}[scale=0.5]
        \begin{scope}
            \draw[thick] (0,-9) -- (0,0) -- (9,0) -- (9,-9);
            \draw[thick] (5,0) rectangle (9,1);
            \draw[thick] (0,-3) -- (-1,-3) -- (-1,-9);
            \draw[loosely dotted] (-1,-3) -- (-1,1) -- (5,1);
            \draw[fill=lightgray] (0, 0) 
            -- (0, -8) -- (1, -8)
            -- (1, -7) -- (2, -7)
            -- (2, -2) -- (3, -2)
            -- (3, -1) -- (4, -1)
            -- (4, 0) -- (0, 0) -- cycle;
            
            \draw[dashed] (0, -2) -- (9, -2);
            \draw[dashed] (0, -3) -- (9, -3);
            \draw (-1,-5) rectangle (9,-6);
            \draw[dashed] (4, 1) -- (4, -9);
            \draw[dashed] (5, 1) -- (5, -9);

            \node[above] at (0.5,1) {$\scriptstyle {1}$};
            \node[above] at (2,1) {$\scriptstyle {\cdots}$};
            \node[above] at (3.5,1) {$\scriptstyle {l}$};
            \node[below] at (4.5, -9) {$h = \frac{1}{2}$, $\Y = \M^+(Y)$};
            \foreach \x in {1,2,...,3}
                \draw[gray, dotted] (\x,0) -- (\x,-9);
            \foreach \x in {6,7,...,8}
                \draw[gray, dotted] (\x,1) -- (\x,-9);
            \draw[gray, dotted] (0,-1) -- (9,-1);
            \foreach \y in {4,5,...,8}
                \draw[gray, dotted] (-1,-\y) -- (9,-\y);
            \node[font=\bfseries\boldmath] at (1, -1.5) {$Y$};
            \node[right] at (9, -2) {$\overline{\bf F}(-\frac{1}{2})$};
            \node[right] at (9, -8) {$\overline{\bf F}(-\frac{3}{2})$};
        \end{scope}
        \begin{scope}[shift={(15,0)}]
            \draw[thick] (0,-9) -- (0,-5) -- (4,-5) -- (4,-3) -- (5,-3) -- (5,0) -- (9,0) -- (9,-9);
            \draw[thick] (0,-6) -- (-1,-6) -- (-1,-9);
            \draw[loosely dotted] (-1,-6) -- (-1,1) -- (9,1) -- (9,0);
            \draw[loosely dotted] (0,-5) -- (0,1);
            \draw[loosely dotted] (-1,0) -- (5,0);
            \draw[fill=lightgray] (5, 0) 
            -- (5, -8) -- (6, -8)
            -- (6, -7) -- (7, -7)
            -- (7, -2) -- (8, -2)
            -- (8, -1) -- (9, -1)
            -- (9, 0) -- (5, 0) -- cycle;
            \node[font=\bfseries\boldmath] at (6, -1.5) {$Y$};

            \draw (0,-5) rectangle (9,-6);
            \draw[dashed] (4, -5) -- (4, -9);
            \draw[dashed] (5, -3) -- (5, -9);
            \draw[dashed] (5, -3) rectangle (9, -2);
            \node[above] at (0.5,1) {$\scriptstyle {1}$};
            \node[above] at (2,1) {$\scriptstyle {\cdots}$};
            \node[above] at (3.5,1) {$\scriptstyle {l}$};
            \node[above] at (5.5,1) {$\scriptstyle {-l}$};
            \node[above] at (7,1) {$\scriptstyle {\cdots}$};
            \node[above] at (8.5,1) {$\scriptstyle {-1}$};
            \node[below] at (4.5, -9) {$h = 0$, $\Y = \M^-(Y)$};

            \foreach \x in {1,2,...,3}
                \draw[gray, dotted] (\x,-5) -- (\x,-9);
            \foreach \x in {6,7,...,8}
                \draw[gray, dotted] (\x,0) -- (\x,-9);
            \draw[gray, dotted] (5,-1) -- (9,-1);
            \draw[gray, dotted] (4,-4) -- (9,-4);
            \foreach \y in {7,8}
                \draw[gray, dotted] (-1,-\y) -- (9,-\y);
            \node[right] at (9, -2) {$\overline{\bf F}(0)$};
            \node[right] at (9, -8) {$\overline{\bf F}(-1)$};
        \end{scope}    
        \end{tikzpicture}
    \end{center}

    We uniformly write $\Y = \M^+(Y)$ when $h = \frac{1}{2}$ or $\Y = \M^-(Y)$ when $h = 0$. In particular, if $Y\in \P^l_r$, then graphically the Maya diagram $\Y$ lies in the top-left corner of the board.
    It is routine to check that none of the above $\lghat{}^+$ and $\lgdhat{}^+$ operation on $\Y$ can be done and the monomial $v(\Y)$ is a $(\lghat_{\lleft}, \lgdhat)$ joint highest weight vector. 
    In fact, all $(\lghat_{\lleft}, \lgdhat)$ joint highest weight vectors can be obtained in this way up to scalar, which is crucial in the proof of Theorem \ref{mainDualThmIntro}. 
\end{Exam}

It is clear that $v(\M)$ is a weight vector for both $\lghat$ and $\lgdhat$. The question is how to read the  weights of $v(\M)$ from the Maya diagram $\M$. 
In the spirit of Remark \ref{ortho-reduction}, from now on we assume that $L_{\bar{1}}, R_{\bar{1}} \le 1$, and set $l = \lfloor \frac{L_{\bar{0}}}{2}\rfloor$, $r = \lfloor \frac{R_{\bar{0}}}{2}\rfloor$.
Write the standard bases of $\dlho_{\lleft}$ and $\mathring{\liehd}{}^*$ by $\epsilon_1, \ldots, \epsilon_l$ and $\dot{\epsilon}_1, \ldots, \dot{\epsilon}_r$, respectively.
Given a Maya diagram $\M$, the  weights $(\lambda_{\M}, \dot{\lambda}_{\M}) \in \dlho_{\lleft} \oplus \mathring{\liehd}{}^*$ of $v(\M)$ can be read in the following way. 
\begin{itemize}
    \item In the corresponding vertical Maya diagram $\dot{\M}$ of $\M$, for $i$-th column, define 
    \[b_i=\#\{(p,\gamma)\mid \mbox{The cell } (i,p,\gamma)\mbox{ in }\dot{\M} \mbox{ is black}\}.\]
    Then the horizontal weight of $v(\M)$ is given by 
     \[\lambda_{\M} = \sum\limits_{i = 1}^l \left(b_i - b_{-i} + \frac{R_{\overline{2h}}}{2}\right) \epsilon_i.\]
    \item In $\M$, for $p$-th row, define 
     \[\dot{b}_p=\#\{(i,\gamma)\mid \mbox{The cell } (i,p,\gamma)\mbox{ in } \M \mbox{ is black}\}.\]
    Then the vertical weight of $v(\M)$ is given by 
    \[\dot{\lambda}_{\M} = \sum\limits_{p = 1}^r \left(\dot{b}_p - \dot{b}_{-p} + \frac{L_{\overline{2h}}\pmod 2}{2} \right) \dot{\epsilon}_p.\] 
\end{itemize}
It is remarkable here that from the definitions we have each pair of $b_i, \dot{b}_p \in \mathbb{Z}$. In particular, in a fixed Fock space, every monomial basis vector $v(\M)$ is of weight $\lambda_{\M}$ with coordinates lying in $\frac{R_{\overline{2h}}}{2}\mathbf{1} + \mathbb{Z}^l$, and similarly for $\dot{\lambda}_{\M}$.
These facts are helpful when we consider all possible dominant weights in $\F$. 
For the diagram $\Y$ corresponding to $Y\in\P^l_r$, we can read the coordinates of the weights as follows:

\begin{equation}
(\lambda_{\Y}, \dot{\lambda}_{\Y}) = 
\begin{cases}
    (Y^c + \frac{R_{\bar{0}}\pmod 2}{2}\mathbf{1}, Y^t + \frac{L_{\bar{0}}\pmod 2}{2}\mathbf{1}),& h = 0;\\
    (Y + \frac{R_{\bar{1}}}{2}\mathbf{1}, Y^t +  \frac{L_{\bar{1}}}{2}\mathbf{1}),& h = \frac{1}{2}.
\end{cases}
\tag{4.1.1}\label{weightY}
\end{equation}
One can easily see that the above weights are always dominant. 
In fact, recall the parity decomposition given in \S \ref{Parity}, we have $\lambda_{\Y} \in \sigdom(\lLevel)_{\bar{n}_h, \bar{n}_t}$, where $n_h \equiv_2 R_{\overline{2h'}}$, $n_t \equiv_2 R_{\overline{2h}}$, and similarly for $\dot{\lambda}_{\Y}$.

To recover the weight information of $\lgtilde_{\lleft}$ and $\lgdtilde$, we need to consider eigenvalue of $D^{\lghat}(0)$ and $D^{\lgdhat}(0)$. 
Unfortunately, the monomial basis elements $v(\M)$ are not always eigenvectors of these operators.
Therefore instead of studying $D^{\lghat}(0)$ and $D^{\lgdhat}(0)$ directly, we can recover the degree of $v(\M)$ by ${\sf d}_{v(\M)}$, the eigenvalue under $D^{\Cl}(0)$, thanks to Proposition \ref{VirEv}.
Define the weight lattice of $\F$ by
\[
P(\F):= \{ (\lambda, \dot{\mu}; n) \in \dlho \times \mathring{\liehd}{}^* \times \mathbb{C}\},
\]
with weight spaces 
$$
\F_{(\lambda, \dot{\mu}; n)} = \{v\in \F \mid -D^{\Cl}(0).v = nv\text{ and }  v \text{ is of weight } (\lambda, \dot{\mu}) \text{ with respect to } \lieg_{\lleft} \oplus \liegd \}.
$$
For $v\in \F_{(\lambda, \dot{\mu}; n)}$, we say the weight of $v$ in $\F$ is the triple $(\lambda, \dot{\mu}; n)$.
Certainly a monomial basis vector $v(\M)$ is of weight $(\lambda_{\M}, \dot{\lambda}_{\M}; -{\sf d}_{v(\M)})$. 
In particular, if $\M = \Y$ is given by a Young diagram, then $v({\Y})$ is of weight $(\lambda_{\Y}, \dot{\lambda}_{\Y}; -{\sf d}_{\Y})$, with 
\[{\sf d}_{\Y} = h\cdot \size(Y)+ \frac{1}{16}\dim \W^h(0).\]
The weight information of $\lgtilde_{\lleft}$ and $\lgdtilde$ can be partially read via the surjection:
\[\begin{array}{rcl}
    \tilde{\lieh}{}^* \times \lhdtilde{}^* &\to& \dlho_{\lleft} \times \mathring{\liehd}{}^* \times \mathbb{C} \\
    (\lambda + \levelR \Lambda_0 + k\delta,\; \dot{\mu} + \levelL \dot{\Lambda}_0 + \dot{k} \dot{\delta}) &\mapsto & (\lambda, \dot{\mu};\; k + \dot{k}).
\end{array}\]
Let us remind that in type $D^{(2)}$ the indices are $(2,2,\ldots,2)$ and $\delta=2(\sum_{i}\alpha_i)$ in our setting, see \S \ref{Parity}. In practice, if we  deal with horizontal and vertical moves step by step, leaving either $k$ or $\dot{k}$ unchanged in each step, we are then able to get the coefficient of $\delta$ or $\dot{\delta}$ by the difference of ${\sf d}_{v(\M)}$. 

Note that on $\tilde{\lieh}{}^*_{\lleft} \times \lhdtilde{}^*$ we can consider the action of the Weyl group $\mathcal{W}\times \dot{\mathcal{W}}$ as well as the group of diagram automorphisms $\Sigma \times \dot{\Sigma}$.
Since $\delta$ and $\dot{\delta}$ are invariant under the action of $\mathcal{W}\times \dot{\mathcal{W}}$, pushing back through the above surjection gives a well-defined action of $\mathcal{W}\times \dot{\mathcal{W}}$ on $\dlho_{\lleft} \times \mathring{\liehd}{}^* \times \mathbb{C}$. 
Similarly, we can also define the action of $\Sigma \times \dot{\Sigma}$ on $\dlho_{\lleft} \times \mathring{\liehd}{}^* \times \mathbb{C}$. In particular, for any $w \in \mathcal{W}, \sigma \in \Sigma$, by the property of weight spaces we have $\F_{w(\lambda, \dot{\mu}; n)} \cong \F_{(\lambda, \dot{\mu}; n)} \cong \F_{\sigma(\lambda, \dot{\mu}; n)}$, and similarly for $\dot{\mathcal{W}}, \dot{\Sigma}$.

\begin{Exam}
    Set $\lghat_{\lleft} = \lsohat(8)$, $\lgdhat = \lsohat(7)$ and $h = 0$. In this case $L_{\bar{1}} = R_{\bar{1}} = 0, L_{\bar{0}} = 8, R_{\bar{0}} = 7, l = 4$, and $r = 3$.
    Take  $Y = (2, 1, 1, 0) \in \P^4_3$, and draw it in the top-left $4 \times 3$-rectangle of $\overline{\bf F}_{\rm hor}$: 

\begin{center}
\begin{tikzpicture}[scale=0.5]
    \draw[thick] (0,0) rectangle (4,-7);
    \draw[fill=lightgray] (0, 0)
    -- (0, -2) -- (1, -2)
    -- (1, -1) -- (3, -1)
    -- (3, 0) -- (0, 0) -- cycle;
    \node[font=\small\bfseries\boldmath] at (0.5, -0.5) {$Y$};
    \draw[dashed] (0, -3) -- (4, -3);
    \draw[dashed] (0, -4) -- (4, -4);
    \node[above] at (0.5,0) {$\scriptstyle {-4}$};
    \node[above] at (1.5,0) {$\scriptstyle {-3}$};
    \node[above] at (2.5,0) {$\scriptstyle {-2}$};
    \node[above] at (3.5,0) {$\scriptstyle {-1}$};
    \node[left] at (0,-0.5) {$\scriptstyle {1}$};
    \node[left] at (0,-1.5) {$\scriptstyle {2}$};
    \node[left] at (0,-2.5) {$\scriptstyle {3}$};
    \node[left] at (0,-3.5) {$\scriptstyle {0}$};
    \node[left] at (0,-6.5) {$\scriptstyle {-1}$};
    \node[left] at (0,-5.5) {$\scriptstyle {-2}$};
    \node[left] at (0,-4.5) {$\scriptstyle {-3}$};
    \node[below] at (0, -7) {$\overline{\bf F}(0)$};

    \draw[thick] (4,0) rectangle (12,-7);
    \draw[dashed] (4, -3) -- (12, -3);
    \draw[dashed] (4, -4) -- (12, -4);
    \node[above] at (4.5,0) {$\scriptstyle {1}$};
    \node[above] at (5.5,0) {$\scriptstyle {2}$};
    \node[above] at (6.5,0) {$\scriptstyle {3}$};
    \node[above] at (7.5,0) {$\scriptstyle {4}$};
    \draw[dashed] (8, 0) -- (8, -7);
    \node[above] at (8.5,0) {$\scriptstyle {-4}$};
    \node[above] at (9.5,0) {$\scriptstyle {-3}$};
    \node[above] at (10.5,0) {$\scriptstyle {-2}$};
    \node[above] at (11.5,0) {$\scriptstyle {-1}$};
    \node[below] at (8, -7) {$\overline{\bf F}(-1)$};
    \draw[dotted] (4,0) grid[step=1] (12,-7);
    \draw[dotted] (0,0) grid[step=1] (4,-7);
    \draw[loosely dotted] (-4, 0) rectangle (0,-7);
\end{tikzpicture}
\end{center}
The resulting Maya diagram is $\Y = \M^-(Y)$. Recall the convention that we identify a weight with its coordinates vector with respect to the basis $\epsilon_i$ or $\dot{\epsilon}_i$. 
The pair of classical weights of this diagram can be read as $\lambda_{\Y}=\left(\frac{7}{2}, \frac{5}{2}, \frac{5}{2}, \frac{3}{2}\right) = Y^c + \frac{1}{2}\mathbf{1}$ and $\dot{\lambda}_{\Y} = (3, 1, 0) = Y^t$. 
They correspond to the affine weights $\Lambda = \lambda_{\Y} + 7 \Lambda_0 = 4 \Lambda_4 + \Lambda_3 + \Lambda_1$, $\dot{\Lambda} = \dot{\lambda}_{\Y} + 8 \dot{\Lambda}_0 = 4\dot{\Lambda}_0 + 2\dot{\Lambda}_1 + \dot{\Lambda}_2$.
The monomial basis vector $v(\Y)$   is of weights $\left(\left(\frac{7}{2}, \frac{5}{2}, \frac{5}{2}, \frac{3}{2}\right), (3, 1, 0); -\frac{7}{2}\right)$, and is a joint highest weight vector.

Now consider the vector 
\[v=\psi^{-2, 1}(0)\psi^{-3, 2}(0)\psi^{-4, -2}(0)\psi^{-4, 2}(0)\psi^{-4, 1}(0)\psi^{1, 1}(-1) \in \F,\] 
corresponding to the following Maya diagram $\M(v)$.
\begin{center}
\begin{tikzpicture}[scale=0.5]
    \draw[thick] (0,0) rectangle (4,-7);
    \draw[dotted] (0,0) grid[step=1] (4,-7);
    \draw[loosely dotted] (-4, 0) rectangle (0,-7);

    \draw[fill=lightgray] (0,0) rectangle (1,-1);
    \draw[fill=lightgray] (0,-1) rectangle (1,-2);
    
    \draw[fill=lightgray] (2,0) rectangle (3,-1);
    \draw[->] (2.5,-0.5) -- (1.5,-0.5);
    \draw[fill=lightgray] (4,0) rectangle (5,-1);
    \draw[->] (4.5,-0.5) -- (2.5,-0.5);
    
    \draw[fill=lightgray] (0,-5) rectangle (1,-6);
    \node at (0.5, -5.5) {$\times$};
    \draw[fill=lightgray] (1,-1) rectangle (2,-2);
    \node at (1.5, -1.5) {$\times$};

    \draw[dashed] (0, -3) -- (4, -3);
    \draw[dashed] (0, -4) -- (4, -4);
    \node[above] at (0.5,0) {$\scriptstyle {-4}$};
    \node[above] at (1.5,0) {$\scriptstyle {-3}$};
    \node[above] at (2.5,0) {$\scriptstyle {-2}$};
    \node[above] at (3.5,0) {$\scriptstyle {-1}$};
    \node[left] at (0,-0.5) {$\scriptstyle {1}$};
    \node[left] at (0,-1.5) {$\scriptstyle {2}$};
    \node[left] at (0,-2.5) {$\scriptstyle {3}$};
    \node[left] at (0,-3.5) {$\scriptstyle {0}$};
    \node[left] at (0,-6.5) {$\scriptstyle {-1}$};
    \node[left] at (0,-5.5) {$\scriptstyle {-2}$};
    \node[left] at (0,-4.5) {$\scriptstyle {-3}$};
    \node[below] at (0, -7) {$\overline{\bf F}(0)$};

    \draw[thick] (4,0) rectangle (12,-7);
    \draw[dotted] (4,0) grid[step=1] (12,-7);
    \draw[dashed] (4, -3) -- (12, -3);
    \draw[dashed] (4, -4) -- (12, -4);
    \node[above] at (4.5,0) {$\scriptstyle {1}$};
    \node[above] at (5.5,0) {$\scriptstyle {2}$};
    \node[above] at (6.5,0) {$\scriptstyle {3}$};
    \node[above] at (7.5,0) {$\scriptstyle {4}$};
    \draw[dashed] (8, 0) -- (8, -7);
    \node[above] at (8.5,0) {$\scriptstyle {-4}$};
    \node[above] at (9.5,0) {$\scriptstyle {-3}$};
    \node[above] at (10.5,0) {$\scriptstyle {-2}$};
    \node[above] at (11.5,0) {$\scriptstyle {-1}$};
    \node[below] at (8, -7) {$\overline{\bf F}(-1)$};
\end{tikzpicture}
\end{center}
The pair of classical weights of $v$ can be read as 
$$\lambda_v = (1, -1, -1, -3) + \frac{7}{2}\mathbf{1} = \left(\frac{9}{2}, \frac{5}{2}, \frac{5}{2}, \frac{1}{2}\right),$$
 $\dot{\lambda}_v = (3, 1, 0)$ and ${\sf d}_v = -(-1)+\frac{56}{16} = \frac{9}{2}$.
    Here $v$ is not a highest weight vector, since we can apply operators in $\lghat_{\lleft}^+$ to push the black cells leftwards.
    Applying $e_2 = D_{2,3}(0)$ moves the black cell at $(-2, 1, 0)$ to $(-3, 1, 0)$, while subsequently applying $e_0 = D_{-1,2}(1)$ moves the black cell at $(1, 1, -1)$ to $(-2, 1, 0)$.
    Since $e_4 = D_{3,-4}(0)$, applying it afterwards erases a pair of black cells at $(-3, 2, 0)$ and $(-4, -2, 0)$.
    After successive action of these three operators we bring the diagram $\M$ to $\Y$, which means up to a scalar we have $e_4 e_0 e_2. v = v(\Y)$.
    The change of weight writes $\alpha_0 + \alpha_2 + \alpha_4 = \left(-1, 0, 0, 1\right) + \delta$,
    corresponding to the difference $\left(\left(\frac{7}{2}, \frac{5}{2}, \frac{5}{2}, \frac{3}{2}\right), (3, 1, 0); -\frac{7}{2}\right) - \left(\left(\frac{9}{2}, \frac{5}{2}, \frac{5}{2}, \frac{1}{2}\right), (3, 1, 0); -\frac{9}{2}\right) = (\left(-1, 0, 0, 1\right), 0; 1)$.
\end{Exam}

Finally we return to the $(A^{(1)},A^{(1)})$ case, in which $\bar{\W}= (\W^+_{2l} \otimes \W^+_{2r}) \oplus (\W^-_{2l} \otimes \W^-_{2r})$.
Embedding this space into $\W_{2l} \otimes \W_{2r}$, we can draw the board corresponding to $\bar{\W}$ by removing cells $(i, p)$ with $ip < 0$.
Then we are able to routinely draw Maya diagrams as well as reading the weights  in other cases, with the convention $L_{\bar{0}} = l, R_{\bar{0}} = r$ in the weight formula. 
In this case, for the diagram $\Y$ corresponding to $Y\in \P^l_r$, the weights are as follows:
\begin{equation}
(\lambda_{\Y}, \dot{\lambda}_{\Y}) = 
\begin{cases} 
    (Y^c - \frac{r}{2}\mathbf{1}, [Y^\dagger]),& h = 0;  \\ 
    (Y,[Y^t]),& h = \frac{1}{2}. 
\end{cases}
\tag{4.1.2}\label{weightY-A}
\end{equation}
As an example, for $(\lghat_{\lleft}, \lgdhat) = (\lglhat(4), \lslhat(3))$ and $h = \frac{1}{2}$, the action of $e_0 = E_{41}(1) \in \lglhat(4)$ on $\psi^{1,1}(-\frac{3}{2})$ displaying on the horizontal board $\overline{\bf F}_{\rm hor}$ is of the form:
\begin{center}
\begin{tikzpicture}[scale=0.5]
    \draw[thick] (0,0) rectangle (4,-3);
    \draw[dotted] (0,0) grid[step=1] (4,-3);
    \draw[thick] (8,-6) rectangle (4,-3);
    \draw[dotted] (8,-6) grid[step=1] (4,-3);

    \draw[loosely dotted] (0, 0) rectangle (16,-6);

    \node[above] at (0.5,0) {$\scriptstyle {1}$};
    \node[above] at (1.5,0) {$\scriptstyle {2}$};
    \node[above] at (2.5,0) {$\scriptstyle {3}$};
    \node[above] at (3.5,0) {$\scriptstyle {4}$};
    \node[above] at (4.5,0) {$\scriptstyle {-4}$};
    \node[above] at (5.5,0) {$\scriptstyle {-3}$};
    \node[above] at (6.5,0) {$\scriptstyle {-2}$};
    \node[above] at (7.5,0) {$\scriptstyle {-1}$};

    \node[left] at (0,-0.5) {$\scriptstyle {1}$};
    \node[left] at (0,-1.5) {$\scriptstyle {2}$};
    \node[left] at (0,-2.5) {$\scriptstyle {3}$};
    \node[left] at (0,-5.5) {$\scriptstyle {-1}$};
    \node[left] at (0,-4.5) {$\scriptstyle {-2}$};
    \node[left] at (0,-3.5) {$\scriptstyle {-3}$};
    \node[below] at (4, -6) {$\overline{\bf F}(-\frac{1}{2})$};

    \draw[thick] (8,0) rectangle (12,-3);
    \draw[dotted] (8,0) grid[step=1] (12,-3);
    \draw[thick] (16,-6) rectangle (12,-3);
    \draw[dotted] (16,-6) grid[step=1] (12,-3);
    
    \node[above] at (8.5,0) {$\scriptstyle {1}$};
    \node[above] at (9.5,0) {$\scriptstyle {2}$};
    \node[above] at (10.5,0) {$\scriptstyle {3}$};
    \node[above] at (11.5,0) {$\scriptstyle {4}$};
    \node[above] at (12.5,0) {$\scriptstyle {-4}$};
    \node[above] at (13.5,0) {$\scriptstyle {-3}$};
    \node[above] at (14.5,0) {$\scriptstyle {-2}$};
    \node[above] at (15.5,0) {$\scriptstyle {-1}$};
    \node[below] at (12, -6) {$\overline{\bf F}(-\frac{3}{2})$};

    \draw[fill=lightgray] (8,0) rectangle (9,-1);
    \draw[->] (8.5,-0.5) -- (3.5,-0.5);
\end{tikzpicture}
\end{center}

\subsection{Abacus and Uglov map}\label{Uglov-map}

From now on we fix a dual pair $(\lghat_{\lleft}, \lgdhat)$ and $h \in \{0, \frac{1}{2}\}$, and assume that $L_{\bar{1}}, R_{\bar{1}} \le 1$, $l=\lfloor\frac{L_{\bar{0}}}{2}\rfloor$, $r=\lfloor\frac{R_{\bar{0}}}{2}\rfloor$. 
Starting from a horizontal Maya diagram $\M$, the corresponding row abacus ${\B}(\M)$ is the graph after applying the following successive operations on $\M$:
\begin{enumerate}[label={(\arabic{*})}]
    \item Rotate the rows with negative labels $-p = -1, \ldots, -r$ toward left, line up and piece together the $-p$ one with the row labelled by $p$;
    \item Reverse the color of all cells coming from negative rows;
    \item Replace all black cells by beads  and all white cells by empty positions. 
\end{enumerate}

\begin{center}
    \begin{tikzpicture}[scale=0.4]

    \begin{scope}[shift={(15,-18)}]
    \draw[->] (-1.3,-4) arc (270:90:1.5);
    \draw[dotted] (0,0) grid[step=1] (7,-5);
    \draw[thick] (0,0) rectangle (7,-5);
    \draw[dashed] (3,0) --++(0,-5);
    \draw[dashed] (4,0) --++(0,-5);
    \draw[dashed] (0,-2) --++(7,0);
    \draw[dashed] (0,-3) --++(7,0);
    \draw[dotted] (-1,-3) grid[step=1] (0,-5);
    \draw[thick] (-1,-3) rectangle (0,-5);
    \draw[dotted] (4,1) grid[step=1] (7,0);
    \draw[thick] (4,1) rectangle (7,0);
    \draw[dashed] (5,1) --++(0,-1);
    \draw[dashed] (6,1) --++(0,-1);
    \draw[fill=black!30] (6,1) rectangle (7,0);
    \draw[fill=black!30] (0,0) rectangle (1,-1);
    \draw[fill=black!30] (3,0) rectangle (4,-1);
    \draw[fill=black!30] (6,0) rectangle (7,-1);
    \draw[fill=black!30] (1,-2) rectangle (2,-3);
    \draw[fill=black!30] (4,-2) rectangle (5,-3);
    \draw[fill=black!30] (-1,-3) rectangle (0,-4);
    \draw[fill=black!30] (0,-3) rectangle (1,-4);
    \draw[fill=black!30] (4,-3) rectangle (5,-4);
    \draw[fill=black!30] (3,-4) rectangle (4,-5);
    \node[below] at (-0.5,-5) {${\scriptstyle 0^*}$};
    \node[below] at (0.5,-5) {${\scriptstyle 1}$};
    \node[below] at (1.5,-5) {${\scriptstyle 2}$};
    \node[below] at (2.5,-5) {${\scriptstyle 3}$};
    \node[below] at (3.5,-5) {${\scriptstyle 0}$};
    \node[below] at (4.5,-5) {${\scriptstyle -3}$};
    \node[below] at (5.5,-5) {${\scriptstyle -2}$};
    \node[below] at (6.5,-5) {${\scriptstyle -1}$};
    \end{scope}
    \begin{scope}[shift={(23,-18)}]
    \draw[dotted] (0,0) grid[step=1] (7,-5);
    \draw[thick] (0,0) rectangle (7,-5);
    \draw[dashed] (3,0) --++(0,-5);
    \draw[dashed] (4,0) --++(0,-5);
    \draw[dashed] (0,-2) --++(7,0);
    \draw[dashed] (0,-3) --++(7,0);
    \draw[dotted] (-1,1) grid[step=1] (0,0);
    \draw[thick] (-1,1) rectangle (0,0);
    \draw[dotted] (-1,0) grid[step=1] (0,-5);
    \draw[thick] (-1,0) rectangle (0,-5);
    \draw[dashed] (-1,-2) --++(1,0);
    \draw[dashed] (-1,-3) --++(1,0);
    \draw[dotted] (0,1) grid[step=1] (7,0);
    \draw[thick] (0,1) rectangle (7,0);
    \draw[dashed] (3,1) --++(0,-1);
    \draw[dashed] (4,1) --++(0,-1);
    \draw[fill=black!30] (-1,0) rectangle (0,-1);
    \draw[fill=black!30] (0,-1) rectangle (1,-2);
    \node[below] at (-0.5,-5) {${\scriptstyle 0^*}$};
    \node[below] at (0.5,-5) {${\scriptstyle 1}$};
    \node[below] at (1.5,-5) {${\scriptstyle 2}$};
    \node[below] at (2.5,-5) {${\scriptstyle 3}$};
    \node[below] at (3.5,-5) {${\scriptstyle 0}$};
    \node[below] at (4.5,-5) {${\scriptstyle -3}$};
    \node[below] at (5.5,-5) {${\scriptstyle -2}$};
    \node[below] at (6.5,-5) {${\scriptstyle -1}$};
    \node[right] at (7,0.5) {${\scriptstyle 0^*}$};
    \node[right] at (7,-0.5) {${\scriptstyle 1}$};
    \node[right] at (7,-1.5) {${\scriptstyle 2}$};
    \node[right] at (7,-2.5) {${\scriptstyle 0}$};
    \node[right] at (7,-3.5) {${\scriptstyle -2}$};
    \node[right] at (7,-4.5) {${\scriptstyle -1}$};
    \node[right] at (8,-2.5){$\cdots$};
    \end{scope}

    \draw[<->, thick] (10, -21) node[above, scale=1.2] {$\M$} -- (10, -27) node[below,scale=1.2]{$\B(\M)$};

    \begin{scope}[shift={(15,-30)}]
    \draw[dotted] (0,0) grid[step=1] (7,-3);
    \draw[thick] (0,0) rectangle (7,-3);
    \draw[dashed] (3,0) --++(0,-3);
    \draw[dashed] (4,0) --++(0,-3);
    \draw[dashed] (0,-2) --++(7,0);
    \draw[dashed] (0,-3) --++(7,0);
    \draw[dotted] (4,1) grid[step=1] (7,0);
    \draw[thick] (4,1) rectangle (7,0);
    \node[below] at (-0.5,-3) {${\scriptstyle 0^*}$};
    \node[below] at (0.5,-3) {${\scriptstyle 1}$};
    \node[below] at (1.5,-3) {${\scriptstyle 2}$};
    \node[below] at (2.5,-3) {${\scriptstyle 3}$};
    \node[below] at (3.5,-3) {${\scriptstyle 0}$};
    \node[below] at (4.5,-3) {${\scriptstyle -3}$};
    \node[below] at (5.5,-3) {${\scriptstyle -2}$};
    \node[below] at (6.5,-3) {${\scriptstyle -1}$};
    \draw[fill=black] (6.5,0.5) circle (0.3);
    \draw[fill=black] (0.5,-0.5) circle (0.3);
    \draw[fill=black] (3.5,-0.5) circle (0.3);
    \draw[fill=black] (6.5,-0.5) circle (0.3);
    \draw[fill=black] (1.5,-2.5) circle (0.3);
    \draw[fill=black] (4.5,-2.5) circle (0.3);
    \end{scope}
    \begin{scope}[shift={(7,-30)}]
    \draw[dotted] (0,0) grid[step=1] (7,-2);
    \draw[thick] (0,0) rectangle (7,-2);
    \draw[dashed] (3,0) --++(0,-2);
    \draw[dashed] (4,0) --++(0,-2);
    \draw[dotted] (7,0) grid[step=1] (8,-2);
    \draw[thick] (7,0) rectangle (8,-2);
    \node[below] at (0.5,-2) {${\scriptstyle 1}$};
    \node[below] at (1.5,-2) {${\scriptstyle 2}$};
    \node[below] at (2.5,-2) {${\scriptstyle 3}$};
    \node[below] at (3.5,-2) {${\scriptstyle 0}$};
    \node[below] at (4.5,-2) {${\scriptstyle -3}$};
    \node[below] at (5.5,-2) {${\scriptstyle -2}$};
    \node[below] at (6.5,-2) {${\scriptstyle -1}$};
    \draw[fill=black] (0.5,-0.5) circle (0.3);
    \draw[fill=black] (1.5,-0.5) circle (0.3);
    \draw[fill=black] (2.5,-0.5) circle (0.3);
    \draw[fill=black] (4.5,-0.5) circle (0.3);
    \draw[fill=black] (5.5,-0.5) circle (0.3);
    \draw[fill=black] (6.5,-0.5) circle (0.3);
    \draw[fill=black] (7.5,-0.5) circle (0.3);
    \draw[fill=black] (0.5,-1.5) circle (0.3);
    \draw[fill=black] (1.5,-1.5) circle (0.3);
    \draw[fill=black] (3.5,-1.5) circle (0.3);
    \draw[fill=black] (4.5,-1.5) circle (0.3);
    \draw[fill=black] (5.5,-1.5) circle (0.3);
    \end{scope}
    \begin{scope}[shift={(23,-30)}]
    \draw[dotted] (0,0) grid[step=1] (7,-3);
    \draw[thick] (0,0) rectangle (7,-3);
    \draw[dashed] (3,0) --++(0,-3);
    \draw[dashed] (4,0) --++(0,-3);
    \draw[dashed] (0,-2) --++(7,0);
    \draw[dashed] (0,-3) --++(7,0);
    \draw[dotted] (-1,1) grid[step=1] (0,0);
    \draw[thick] (-1,1) rectangle (0,0);
    \draw[dotted] (-1,0) grid[step=1] (0,-3);
    \draw[thick] (-1,0) rectangle (0,-3);
    \draw[dashed] (-1,-2) --++(1,0);
    \draw[dashed] (-1,-3) --++(1,0);
    \draw[dotted] (0,1) grid[step=1] (7,0);
    \draw[thick] (0,1) rectangle (7,0);
    \draw[dashed] (3,1) --++(0,-1);
    \draw[dashed] (4,1) --++(0,-1);
    \node[below] at (-0.5,-3) {${\scriptstyle 0^*}$};
    \node[below] at (0.5,-3) {${\scriptstyle 1}$};
    \node[below] at (1.5,-3) {${\scriptstyle 2}$};
    \node[below] at (2.5,-3) {${\scriptstyle 3}$};
    \node[below] at (3.5,-3) {${\scriptstyle 0}$};
    \node[below] at (4.5,-3) {${\scriptstyle -3}$};
    \node[below] at (5.5,-3) {${\scriptstyle -2}$};
    \node[below] at (6.5,-3) {${\scriptstyle -1}$};
    \draw[fill=black] (-0.5,-0.5) circle (0.3);
    \draw[fill=black] (0.5,-1.5) circle (0.3);
    \node[right] at (7,0.5) {${\scriptstyle 0^*}$};
    \node[right] at (7,-0.5) {${\scriptstyle 1}$};
    \node[right] at (7,-1.5) {${\scriptstyle 2}$};
    \node[right] at (7,-2.5) {${\scriptstyle 0}$};
    \node[right] at (8,-1){$\cdots$};
    \end{scope}
    \begin{scope}[shift={(-1,-30)}]
    \draw[dotted] (0,0) grid[step=1] (7,-2);
    \draw[thick] (0,0) rectangle (7,-2);
    \draw[dashed] (3,0) --++(0,-2);
    \draw[dashed] (4,0) --++(0,-2);
    \draw[dotted] (7,0) grid[step=1] (8,-2);
    \draw[thick] (7,0) rectangle (8,-2);
    \node[below] at (0.5,-2) {${\scriptstyle 1}$};
    \node[below] at (1.5,-2) {${\scriptstyle 2}$};
    \node[below] at (2.5,-2) {${\scriptstyle 3}$};
    \node[below] at (3.5,-2) {${\scriptstyle 0}$};
    \node[below] at (4.5,-2) {${\scriptstyle -3}$};
    \node[below] at (5.5,-2) {${\scriptstyle -2}$};
    \node[below] at (6.5,-2) {${\scriptstyle -1}$};
    \node[below] at (7.5,-2) {${\scriptstyle 0^*}$};
    \draw[fill=black] (0.5,-0.5) circle (0.3);
    \draw[fill=black] (1.5,-0.5) circle (0.3);
    \draw[fill=black] (2.5,-0.5) circle (0.3);
    \draw[fill=black] (3.5,-0.5) circle (0.3);
    \draw[fill=black] (4.5,-0.5) circle (0.3);
    \draw[fill=black] (5.5,-0.5) circle (0.3);
    \draw[fill=black] (6.5,-0.5) circle (0.3);
    \draw[fill=black] (7.5,-0.5) circle (0.3);
    \draw[fill=black] (0.5,-1.5) circle (0.3);
    \draw[fill=black] (1.5,-1.5) circle (0.3);
    \draw[fill=black] (2.5,-1.5) circle (0.3);
    \draw[fill=black] (3.5,-1.5) circle (0.3);
    \draw[fill=black] (4.5,-1.5) circle (0.3);
    \draw[fill=black] (5.5,-1.5) circle (0.3);
    \draw[fill=black] (6.5,-1.5) circle (0.3);
    \draw[fill=black] (7.5,-1.5) circle (0.3);
    \node[left] at (0,-1.25){$\cdots$};
    \end{scope}
    \end{tikzpicture}
\end{center}
\begin{Remark}\label{def-abacus}
    For the completeness of the discussion, we briefly recall the original definition of $\beta$-sets and level-$1$ abacus, see, for instance, \cite{LQ} for details. A $\beta$-set $\B$ is a subset of $\mathbb{Z}$ such that both $\max({\B})$ and $\min(\mathbb{Z} \backslash \B)$ exist.
    Its charge $s(\B)$ is defined as $s(\B) := |\B \cap \mathbb{Z}_{\geq 0}| - |\mathbb{Z}_{<0} \setminus \B|$.
    Given a horizontal line with positions labelled by $\mathbb{Z}$ in an ascending order going from left to right, we can obtain the abacus display of a $\beta$-set by putting a bead at the position $x$ for each $x \in \B$. 
    There is a bijection between $\beta$-sets and charged partitions, sending $\B$ to a partition with charge $s(\B)$.

    Note that moving the beads left or right on an abacus does not change its charge.
    In particular, given an abacus with charge $s$, we can push all the beads as left as possible, so that all the beads are arranged adjacently in order, and get the abacus corresponding to $\B = \mathbb{Z}_{ < s}$, which we call the vacuum abacus. 
    Under the correspondence between abacus and charged partitions, such an vacuum abacus always corresponds to the empty partition.
\end{Remark}
 
We say a row in an abacus is a full row if it goes infinitely in both directions, and a half row if it is bounded on the left. In this way we get an abacus with $r$ full rows labelled by $1,2,\ldots,r$ from top to bottom, with possibly a half row labelled by $0$ at bottom as well as a half row labelled by $\gh{0}$ on the top, depending on the type of $\lgdhat$.

Graphically, each full row in the abacus corresponds to a $\beta$-set in an obvious way.
In the cases $X^{(\r)} = A^{(1)}$ or $R_{\bar{0}}$ even, there is no half row in ${\B}(\M)$ for any Maya diagram $\M$. The abacus can be interpreted to a charged $r$-multipartition.
In particular, in case that $(\lghat_{\lleft}, \lgdhat)$ is of type $(A^{(1)},A^{(1)})$, the abacus defined in this way coincides with the classical one.

The charge of a $\beta$-set can be read by counting certain beads on an abacus.
Here we similarly define the charge of the $p$-th row, $p = 1,2,\ldots, r$, to be $\dot{b}_p - \dot{b}_{-p}$, see the notations given above the formulae (\ref{weightY}).
Compare with the formulae of $\dot{\lambda}_{\M}$ we note that here the charge sequence of an abacus is its weight with respect to $\mathring{\liehd}{}^*$ up to a shift by some fixed $\mathbf{1}$. 
In particular, we do not define the charge of the half rows $0$ or $\gh{0}$, which is consistent with the fact that the corresponding weight component is always zero.

Through the one-to-one correspondence between $\B(\M)$ and $\M$, we can interpret the Lie-actions on Maya diagrams into operations on abaci.
Since the two opposite half rows $\pm p$ in a Maya diagram are merged into a same full row $p > 0$ in an abacus, on each full row of $\B(\M)$, the action of $\lghat$ can be directly understood as left or right moves of beads.
Therefore, when studying the action of $\lghat$, the row abacus model is more intuitive and concise than the Maya diagram model, while in contrast, considering the action of $\lgdhat$ on the row abacus $\B(\M)$ becomes more complicated.

Back to Fock space, the correspondence between several combinatorial objects also induces an isomorphism between $\F$, the ``finite" wedge space, and certain semi-infinite wedge construction of Fock space, as given in \cite{U}. 

\begin{Remark}\label{double-half-row}
    The graphical operations obtaining the abacus from a Maya diagram can be understood in a more conceptual way, which is related to the particle and anti-particle picture of the uncut board ${\bf F}_{\rm hor}$. 
    Put the Maya diagram $\M$ back into ${\bf F}_{\rm hor}$ without removing any cells. For each cell $(i, p, \gamma)$ such that $\psi^{i,p}(\gamma) \in \W^{h,-}$, $p = 0, -1, \ldots, -r, \gh{0}$, we consider the cell $(-i, -p, -\gamma)$ in $\W^{h,+}$, which is paired with it with respect to the bilinear form on $\W^h$, and color it with the opposite color of $(i, p, \gamma)$. 
    Assigning each neutral particle with any color, we get the colored board ${\bf F}_{\rm hor}$. 
    It is remarkable that any choices of coloring for the neutral particles give the same weight data, thereby do not change any of the results in the subsequent sections.

    Then we cut the colored board ${\bf F}_{\rm hor}$ again as follows. We keep all the rows with positive labels $p > 0$, as well as the cells lying in $\W^{h,-}$ in the rows $0$ and $\gh{0}$. Removing all other cells, the resulting board is exactly the row abacus $\B(\M)$.

    On the other hand, we can also ``duplicate'' the half rows in the abacus to make them into full rows. That is, we keep all full rows labelled by $p = 0, 1, \ldots, r, \gh{0}$ in the colored board ${\bf F}_{\rm hor}$. Denote the resulting abacus by ${\sf DB}(\M)$.
    In some sense, this duplicated abacus ${\sf DB}(\M)$ with all rows full can be regarded as a weighted abacus with weight $\frac{1}{2}$ assigned to rows $0$ and $\gh{0}$. 
    
    The advantage of this viewpoint is that all the actions of $\lghat$ on the weighted abacus are given by left or right moves of beads, which is consistent with the original abacus model in type $A^{(1)}$. It is remarkable that when considering the $\lghat$-action on ${\sf DB}(\M)$, 
    each operator acts simultaneously on the positive and negative parts of the rows labelled by $0$ and $\gh{0}$, certifying the duplicating construction.

    The weighted abacus model allows us to treat half rows and full rows in a uniform way, which helps understanding the action of the diagram automorphism group $\Sigma$ on the abacus, as well as the ``half moves'' which appear when considering the moving vectors, see Remark \ref{remark-half-move} below.
\end{Remark}

\begin{Remark}
    Here the terms in our context are slightly different from the ones in physics literature. In physics, when considering $A^{(1)}$-type Lie algebras, the abacus defined here is called a ``Maya diagram'', see \cite{Tingley} for instance.
    However, when considering orthogonal types with neutral particles, there is no corresponding ``Maya diagram'' in physics, i.e., the abacus with half rows as defined here. Therefore, we ``flip back'' the anti-particles occupying positions in the half rows, resulting in the Maya diagram defined in this paper.
\end{Remark}

From a vertical Maya diagram $\dot{\M}$, one can define a column abacus $\dot{\sf B}(\dot{\M})$ similarly. Based on parallel discussions, on a column abacus, the action of $\lgdhat$ corresponds to vertical moves of beads, which is deeply related to the concept of moving vectors, see Section \ref{moving-vector} below. 

\begin{center}
\begin{tikzpicture}[scale=0.4]
	\draw[<->, thick] (15,-6) node[left, scale=1.2] {$\dot{\M}$} -- (20,-6) node[right, scale=1.2]{$\dot{\B}(\dot{\M})$};

	\begin{scope}[shift={(10,-13)}]

	\draw[->] (5.5,1.2) arc (0:180:2);
	\draw[dotted] (0,0) grid[step=1] (7,-5);
	\draw[thick] (0,0) rectangle (7,-5);
	\draw[dashed] (3,0) --++(0,-5);
	\draw[dashed] (4,0) --++(0,-5);
	\draw[dashed] (0,-2) --++(7,0);
	\draw[dashed] (0,-3) --++(7,0);
	\draw[dotted] (-1,-3) grid[step=1] (0,-5);
	\draw[thick] (-1,-3) rectangle (0,-5);
	\draw[dotted] (4,1) grid[step=1] (7,0);
	\draw[thick] (4,1) rectangle (7,0);
	\draw[dashed] (5,1) --++(0,-1);
	\draw[dashed] (6,1) --++(0,-1);
	\draw[fill=black!30] (6,1) rectangle (7,0);
	\draw[fill=black!30] (0,0) rectangle (1,-1);
	\draw[fill=black!30] (3,0) rectangle (4,-1);
	\draw[fill=black!30] (6,0) rectangle (7,-1);
	\draw[fill=black!30] (1,-2) rectangle (2,-3);
	\draw[fill=black!30] (4,-2) rectangle (5,-3);
	\draw[fill=black!30] (-1,-3) rectangle (0,-4);
	\draw[fill=black!30] (0,-3) rectangle (1,-4);
	\draw[fill=black!30] (4,-3) rectangle (5,-4);
	\draw[fill=black!30] (3,-4) rectangle (4,-5);
	\node[right] at (7,0.5) {${\scriptstyle 0^*}$};
	\node[right] at (7,-0.5) {${\scriptstyle 1}$};
	\node[right] at (7,-1.5) {${\scriptstyle 2}$};
	\node[right] at (7,-2.5) {${\scriptstyle 0}$};
	\node[right] at (7,-3.5) {${\scriptstyle -2}$};
	\node[right] at (7,-4.5) {${\scriptstyle -1}$};
	\end{scope}
	\begin{scope}[shift={(10,-19)}]
	\draw[dotted] (0,0) grid[step=1] (7,-5);
	\draw[thick] (0,0) rectangle (7,-5);
	\draw[dashed] (3,0) --++(0,-5);
	\draw[dashed] (4,0) --++(0,-5);
	\draw[dashed] (0,-2) --++(7,0);
	\draw[dashed] (0,-3) --++(7,0);
	\draw[dotted] (-1,1) grid[step=1] (0,0);
	\draw[thick] (-1,1) rectangle (0,0);
	\draw[dotted] (-1,0) grid[step=1] (0,-5);
	\draw[thick] (-1,0) rectangle (0,-5);
	\draw[dashed] (-1,-2) --++(1,0);
	\draw[dashed] (-1,-3) --++(1,0);
	\draw[dotted] (0,1) grid[step=1] (7,0);
	\draw[thick] (0,1) rectangle (7,0);
	\draw[dashed] (3,1) --++(0,-1);
	\draw[dashed] (4,1) --++(0,-1);
	\draw[fill=black!30] (-1,0) rectangle (0,-1);
	\draw[fill=black!30] (0,-1) rectangle (1,-2);
	\node[right] at (7,0.5) {${\scriptstyle 0^*}$};
	\node[right] at (7,-0.5) {${\scriptstyle 1}$};
	\node[right] at (7,-1.5) {${\scriptstyle 2}$};
	\node[right] at (7,-2.5) {${\scriptstyle 0}$};
	\node[right] at (7,-3.5) {${\scriptstyle -2}$};
	\node[right] at (7,-4.5) {${\scriptstyle -1}$};
	\node[below] at (-0.5,-5) {${\scriptstyle 0^*}$};
	\node[below] at (0.5,-5) {${\scriptstyle 1}$};
	\node[below] at (1.5,-5) {${\scriptstyle 2}$};
	\node[below] at (2.5,-5) {${\scriptstyle 3}$};
	\node[below] at (3.5,-5) {${\scriptstyle 0}$};
	\node[below] at (4.5,-5) {${\scriptstyle -3}$};
	\node[below] at (5.5,-5) {${\scriptstyle -2}$};
	\node[below] at (6.5,-5) {${\scriptstyle -1}$};
	\node[below] at (3.5,-5.5){$\vdots$};
	\end{scope}

	\begin{scope}[shift={(25,-13)}]
	\draw[dotted] (0,0) grid[step=1] (4,-5);
	\draw[thick] (0,0) rectangle (4,-5);
	\draw[dashed] (3,0) --++(0,-5);
	\draw[dashed] (4,0) --++(0,-5);
	\draw[dashed] (0,-2) --++(4,0);
	\draw[dashed] (0,-3) --++(4,0);
	\draw[dotted] (-1,-3) grid[step=1] (0,-5);
	\draw[thick] (-1,-3) rectangle (0,-5);
	\node[right] at (4,0.5) {${\scriptstyle 0^*}$};
	\node[right] at (4,-0.5) {${\scriptstyle 1}$};
	\node[right] at (4,-1.5) {${\scriptstyle 2}$};
	\node[right] at (4,-2.5) {${\scriptstyle 0}$};
	\node[right] at (4,-3.5) {${\scriptstyle -2}$};
	\node[right] at (4,-4.5) {${\scriptstyle -1}$};
	\draw[fill=black] (0.5,-0.5) circle (0.3);
	\draw[fill=black] (3.5,-0.5) circle (0.3);
	\draw[fill=black] (1.5,-2.5) circle (0.3);
	\draw[fill=black] (-0.5,-3.5) circle (0.3);
	\draw[fill=black] (0.5,-3.5) circle (0.3);
	\draw[fill=black] (3.5,-4.5) circle (0.3);
	\end{scope}
	\begin{scope}[shift={(25,-7)}]
	\draw[dotted] (0,0) grid[step=1] (3,-5);
	\draw[thick] (0,0) rectangle (3,-5);
	\draw[dashed] (0,-2) --++(3,0);
	\draw[dashed] (0,-3) --++(3,0);
	\draw[dotted] (0,-5) grid[step=1] (3,-6);
	\draw[thick] (0,-5) rectangle (3,-6);
	\draw[fill=black] (0.5,-0.5) circle (0.3);
	\draw[fill=black] (1.5,-0.5) circle (0.3);
	\draw[fill=black] (2.5,-0.5) circle (0.3);
	\draw[fill=black] (0.5,-1.5) circle (0.3);
	\draw[fill=black] (1.5,-1.5) circle (0.3);
	\draw[fill=black] (0.5,-2.5) circle (0.3);
	\draw[fill=black] (1.5,-2.5) circle (0.3);
	\draw[fill=black] (0.5,-3.5) circle (0.3);
	\draw[fill=black] (1.5,-3.5) circle (0.3);
	\draw[fill=black] (2.5,-3.5) circle (0.3);
	\draw[fill=black] (1.5,-4.5) circle (0.3);
	\draw[fill=black] (2.5,-4.5) circle (0.3);
	\draw[fill=black] (1.5,-5.5) circle (0.3);
	\draw[fill=black] (2.5,-5.5) circle (0.3);
	\node[right] at (3,-0.5) {${\scriptstyle 1}$};
	\node[right] at (3,-1.5) {${\scriptstyle 2}$};
	\node[right] at (3,-2.5) {${\scriptstyle 0}$};
	\node[right] at (3,-3.5) {${\scriptstyle -2}$};
	\node[right] at (3,-4.5) {${\scriptstyle -1}$};
	\end{scope}
	\begin{scope}[shift={(25,-19)}]
	\draw[dotted] (0,0) grid[step=1] (4,-5);
	\draw[thick] (0,0) rectangle (4,-5);
	\draw[dashed] (3,0) --++(0,-5);
	\draw[dashed] (4,0) --++(0,-5);
	\draw[dashed] (0,-2) --++(4,0);
	\draw[dashed] (0,-3) --++(4,0);
	\draw[dotted] (-1,1) grid[step=1] (0,0);
	\draw[thick] (-1,1) rectangle (0,0);
	\draw[dotted] (-1,0) grid[step=1] (0,-5);
	\draw[thick] (-1,0) rectangle (0,-5);
	\draw[dashed] (-1,-2) --++(1,0);
	\draw[dashed] (-1,-3) --++(1,0);
	\draw[dotted] (0,1) grid[step=1] (4,0);
	\draw[thick] (0,1) rectangle (4,0);
	\draw[dashed] (3,1) --++(0,-1);
	\draw[dashed] (4,1) --++(0,-1);
	\node[right] at (4,0.5) {${\scriptstyle 0^*}$};
	\node[right] at (4,-0.5) {${\scriptstyle 1}$};
	\node[right] at (4,-1.5) {${\scriptstyle 2}$};
	\node[right] at (4,-2.5) {${\scriptstyle 0}$};
	\node[right] at (4,-3.5) {${\scriptstyle -2}$};
	\node[right] at (4,-4.5) {${\scriptstyle -1}$};
	\node[below] at (-0.5,-5) {${\scriptstyle 0^*}$};
	\node[below] at (0.5,-5) {${\scriptstyle 1}$};
	\node[below] at (1.5,-5) {${\scriptstyle 2}$};
	\node[below] at (2.5,-5) {${\scriptstyle 3}$};
	\node[below] at (3.5,-5) {${\scriptstyle 0}$};
	\node[below] at (1.75,-5){$\vdots$};
	\draw[fill=black] (-0.5,-0.5) circle (0.3);
	\draw[fill=black] (0.5,-1.5) circle (0.3);
	\end{scope}
	\begin{scope}[shift={(25,-1)}]
	\draw[dotted] (0,0) grid[step=1] (3,-5);
	\draw[thick] (0,0) rectangle (3,-5);
	\draw[dashed] (0,-2) --++(3,0);
	\draw[dashed] (0,-3) --++(3,0);
	\draw[dotted] (0,-5) grid[step=1] (3,-6);
	\draw[thick] (0,-5) rectangle (3,-6);
	\draw[fill=black] (0.5,-0.5) circle (0.3);
	\draw[fill=black] (1.5,-0.5) circle (0.3);
	\draw[fill=black] (2.5,-0.5) circle (0.3);
	\draw[fill=black] (0.5,-1.5) circle (0.3);
	\draw[fill=black] (1.5,-1.5) circle (0.3);
	\draw[fill=black] (2.5,-1.5) circle (0.3);
	\draw[fill=black] (0.5,-2.5) circle (0.3);
	\draw[fill=black] (1.5,-2.5) circle (0.3);
	\draw[fill=black] (2.5,-2.5) circle (0.3);
	\draw[fill=black] (0.5,-3.5) circle (0.3);
	\draw[fill=black] (1.5,-3.5) circle (0.3);
	\draw[fill=black] (2.5,-3.5) circle (0.3);
	\draw[fill=black] (0.5,-4.5) circle (0.3);
	\draw[fill=black] (1.5,-4.5) circle (0.3);
	\draw[fill=black] (2.5,-4.5) circle (0.3);
	\draw[fill=black] (0.5,-5.5) circle (0.3);
	\draw[fill=black] (1.5,-5.5) circle (0.3);
	\draw[fill=black] (2.5,-5.5) circle (0.3);
	\node[right] at (3,-0.5) {${\scriptstyle 1}$};
	\node[right] at (3,-1.5) {${\scriptstyle 2}$};
	\node[right] at (3,-2.5) {${\scriptstyle 0}$};
	\node[right] at (3,-3.5) {${\scriptstyle -2}$};
	\node[right] at (3,-4.5) {${\scriptstyle -1}$};
	\node[right] at (3,-5.5) {${\scriptstyle 0^*}$};
	\node[above] at (1.75,0){$\vdots$};
	\end{scope}
\end{tikzpicture}
\end{center}

Now through the above two graphical operations, a Maya diagram corresponds to a row abacus as well as a column abacus, respectively. In this way, the Maya diagram serves as a bridge connecting the row and column abaci, which are respectively related to $\lghat$ and $\lgdhat$.
As a conclusion, there are $1-1$ correspondences:
\begin{center}
	\begin{tikzpicture}
	\node[draw] (v) at (0, 2)  {$v(\M)$: monomial basis of $\F$};
    \node[draw] (m) at (0, 0)  {$\M\leftrightarrow \dot{\M}$: Maya diagrams};
    \node[draw] (ha) at (-3, -2)  {${{\B}}(\M)$: row abacus};
    \node[draw] (va) at (3, -2)  {${\dot{\B}}(\dot{\M})$: column abacus};
    
    \draw[<-] (v) -- (m);
    \draw[<->] (ha) -- (m);
    \draw[<->] (va) -- (m);
    \draw[dashed, ->] (ha) --node[midway, above]{Uglov map} (va);
	\end{tikzpicture}
\end{center}
In type $(A^{(1)}, A^{(1)})$, the dashed arrow gives a correspondence between charged $r$-multipartitions with charged $l$-multipartitions, which is precisely the Uglov bijection \cite{U}.
Therefore, by flipping negative rows and columns on Maya diagrams, we generalize the Uglov bijection to all types of classical affine algebras.

\begin{Remark}
    With specific choice of crystal action rules, one can endow an (abstract) $(\lghat, \lgdhat)$ bi-crystal structure on the set of all Maya diagrams (or all abaci).
    In fact, after quantization, the set of all abaci can be made to a $(U_{q}(\lghat), U_{-q^{-1}}(\lgdhat))$-crystal, as in \cite{U} for type $A^{(1)}$.
    Details of these will be given in subsequent on-going works.
    
    It is also worth mentioning that the crystal graphs of various classical affine Lie algebras have been given in \cite{HongKang} in the form of Young walls.
    In type $A^{(1)}$, these crystals are essentially a generalization of charged multipartitions.
    Therefore, the crystal graphs given by the abacus configurations will be equivalent to the Young walls, but at the same time reflecting more dual information.
\end{Remark}

\subsection{Action of reflection group revisit} \label{diag-auto-on-abacus}
Here we recall the action of $\Sigma$, the group of diagram automorphism, on the Fock space, viewed via Maya diagrams or abaci.

For the types whose Dynkin diagram has branching nodes, the group $\Sigma$ is generated by reflections. 
Interpreting the assignments of $\sigma_{l-1, l}$ on monomial basis elements given in \S\ref{subsection-liepair} to abacus configurations, we have the following graphical explanations for the action.
The reflection $\sigma_{l-1, l}$ acts on a row abacus by:
\begin{itemize}
    \item permuting each pair of adjacent columns labelled by $\pm l$;
    \item if there is a half row starting with $-l$, reverse the color of this starting bead.
\end{itemize}
When considering on a duplicated abacus, the second operation is again permuting a pair of adjacent columns and can be described uniformly as the first operation.

Similar action can be given for $\sigma_{0,1}$, with $\pm l$ replaced by $\pm 1$. 
In fact, one can give on row abaci the action of every simple reflections $s_1,s_2\ldots,s_{l-1}$ of the Weyl group in the same fashion: the simple reflection $s_i$ acts by permuting each pair of adjacent columns labelled by $i$ and $i+1$, as well as each pair $-i$ and $-i-1$.
Analogous interpretation of the action of $\dot{\Sigma}$ can be observe on column abaci.

\begin{Exam}\label{example-diag-auto-1}
    Take $(\lghat,\lgdhat) = (\lsohat(2l), \lsohat(2r))$ and $h = \frac{1}{2}$.
    Given $Y = (y_1,y_2,\ldots,y_l)\in \P^l_r$, consider the action of $\Sigma = \langle \sigma_{0,1}, \sigma_{l-1,l} \rangle$ on $\Y$.
    If $y_1<r$ or $y_l>0$, the action of $\Sigma$ on $\Y$ is non-trivial and can be present by the following Maya diagrams:
    \begin{center}
    \begin{tikzpicture}[scale=0.5]
        \begin{scope}
            \draw[thick] (0,0) rectangle (6,-8);
            \draw[dashed] (3, 1) -- (3, -8);
            \draw[dashed] (-0.5,-4) -- (6.5,-4);
            \draw[fill=lightgray] (0, 0) 
            -- (0, -3) -- (1, -3) 
            -- (1, -2) -- (2, -2) 
            -- (2, -1) -- (3, -1) 
            -- (3, 0) -- (0, 0) -- cycle;
            
            \node[above] at (0.5,0) {$\scriptstyle {1}$};
            \node[above] at (1.5,0) {$\scriptstyle {\cdots}$};
            \node[above] at (2.5,0) {$\scriptstyle {l}$};
            \node[above] at (3.5,0) {$\scriptstyle {-l}$};
            \node[above] at (4.5,0) {$\scriptstyle {\cdots}$};
            \node[above] at (5.5,0) {$\scriptstyle {-1}$};
            \node[left] at (0,-0.5) {$\scriptstyle {y_l}$};
            \node[left] at (0,-2.5) {$\scriptstyle {y_1}$};
            \node[font=\bfseries\boldmath] at (1, -1) {$Y$};
            \node[below] at (3, -8) {$\Y$};
            \draw[gray, dotted] (0,0) grid[step=1] (6,-8);
        \end{scope}
        \begin{scope}[shift={(8,0)}]
            \draw[thick] (0,0) rectangle (6,-8);
            \draw[dashed] (3, 1) -- (3, -8);
            \draw[dashed] (-0.5,-4) -- (6.5,-4);
            \draw[fill=lightgray] (0, 0) 
            -- (0, -3) -- (1, -3)
            -- (1, -2) -- (2, -2)
            -- (2, 0) -- (0, 0) -- cycle;
            \draw[fill=lightgray] (3,0) rectangle (4,-1);
            
            \node[above] at (0.5,0) {$\scriptstyle {1}$};
            \node[above] at (1.5,0) {$\scriptstyle {\cdots}$};
            \node[above] at (2.5,0) {$\scriptstyle {l}$};
            \node[above] at (3.5,0) {$\scriptstyle {-l}$};
            \node[above] at (4.5,0) {$\scriptstyle {\cdots}$};
            \node[above] at (5.5,0) {$\scriptstyle {-1}$};
            \node[left] at (0,-0.5) {$\scriptstyle {y_l}$};
            \draw[<->] (2.5,-0.8) arc (170:370:0.5);

            \node[below] at (3, -8) {$\sigma_{l-1,l}(\Y)$};
            \draw[gray, dotted] (0,0) grid[step=1] (6,-8);
        \end{scope}    
        \begin{scope}[shift={(16,0)}]
            \draw[thick] (0,0) rectangle (6,-8);
            \draw[dashed] (3, 1) -- (3, -8);
            \draw[dashed] (-0.5,-4) -- (6.5,-4);
            \draw[fill=lightgray] (0, 0) 
            -- (0, -5) -- (1, -5) 
            -- (1, -2) -- (2, -2) 
            -- (2, -1) -- (3, -1) 
            -- (3, 0) -- (0, 0) -- cycle;
            
            \node[above] at (0.5,0) {$\scriptstyle {1}$};
            \node[above] at (1.5,0) {$\scriptstyle {\cdots}$};
            \node[above] at (2.5,0) {$\scriptstyle {l}$};
            \node[above] at (3.5,0) {$\scriptstyle {-l}$};
            \node[above] at (4.5,0) {$\scriptstyle {\cdots}$};
            \node[above] at (5.5,0) {$\scriptstyle {-1}$};
            \node[left] at (0,-2.5) {$\scriptstyle {y_1}$};
            \node[left] at (0,-5.5) {$\scriptstyle {-y_1}$};
            \node[below] at (3, -8) {$\sigma_{0,1}(\Y)$};
            \draw[<->] (-0.5,-3.8) arc (170:370:0.5);

            \draw[gray, dotted] (0,0) grid[step=1] (6,-8);
        \end{scope}    
        \begin{scope}[shift={(24,0)}]
            \draw[thick] (0,0) rectangle (6,-8);
            \draw[dashed] (3, 1) -- (3, -8);
            \draw[dashed] (-0.5,-4) -- (6.5,-4);
            \draw[fill=lightgray] (0, 0) 
            -- (0, -5) -- (1, -5) 
            -- (1, -2) -- (2, -2)
            -- (2, 0) -- (0, 0) -- cycle;
            \draw[fill=lightgray] (3,0) rectangle (4,-1);
            
            \node[above] at (0.5,0) {$\scriptstyle {1}$};
            \node[above] at (1.5,0) {$\scriptstyle {\cdots}$};
            \node[above] at (2.5,0) {$\scriptstyle {l}$};
            \node[above] at (3.5,0) {$\scriptstyle {-l}$};
            \node[above] at (4.5,0) {$\scriptstyle {\cdots}$};
            \node[above] at (5.5,0) {$\scriptstyle {-1}$};
            \node[left] at (0,-2.5) {$\scriptstyle {y_1}$};
            \node[left] at (0,-5.5) {$\scriptstyle {-y_1}$};
            \draw[<->] (-0.5,-3.8) arc (170:370:0.5);
            \node[left] at (0,-0.5) {$\scriptstyle {y_l}$};
            \draw[<->] (2.5,-0.8) arc (170:370:0.5);
            \node[below] at (3, -8) {$\sigma_{l-1,l}\sigma_{0,1}(\Y)$};
            \draw[gray, dotted] (0,0) grid[step=1] (6,-8);
        \end{scope}    
    \end{tikzpicture}
    \end{center}
    It is routine to check that each of these Maya diagrams $\M$ corresponds to a joint highest weight vector respectively.
    Also note that each $\M$ above is of the same vertical weight $\dot{\lambda}_{\M} = Y^t$.
    That is, in the Fock space, we may have at most 4 non-isomorphic irreducible summands $V_{\lghat}(\sigma(\lambda_{\Y})) \otimes V_{\lgdhat}(\dot{\lambda}_{\Y})$ as $(\lghat, \lgdhat)$-module.
    Piecing together these components, we get $V_{\Sigma\ltimes\lghat}(\lambda_{\Y}) \otimes V_{\lgdhat}(\dot{\lambda}_{\Y})$, an irreducible $(\Sigma\ltimes\lghat, \lgdhat)$-submodule of $\F$.
    In this way we can deal with those $(\lghat, \lgdhat)$-module which are not in 1-1 correspondence.
    See also Example \ref{example-diag-auto-2} below as a comparison versus the classical dual theory involving algebraic groups.
\end{Exam}

For type $A^{(1)}$, we define the action of $\sigma^{\#}_{cyc}$ on row abacus by shifting all beads to the right by one step.
For example, if $h = \frac{1}{2}$, let $\M$ be a Maya diagram of weight $(\lambda_{\M}, \dot{\lambda}_{\M}; -{\sf d}_{\M}) = \left(\sum\limits^l_{i=1} y_i \epsilon_i, \sum\limits^r_{p=1} \dot{y}_p \dot{\epsilon}_p; -{\sf d}_{\M}\right)$, then under this action, the weight of the resulting Maya diagram $\sigma^{\#}_{cyc}(\M)$ is given by
\[
    \left((y_l + r)\epsilon_1 + \sum^l_{i=2} y_{i-1} \epsilon_i,\; \sum^r_{p=1} \dot{y}_p \dot{\epsilon}_p + \mathbf{1}; \; -{\sf d}_{\M} - y_l - \frac{r}{2}\right).
\]
From the abacus viewpoint, this operation increases the charge of each row by $1$. Therefore, when we discuss the vacuum abacus corresponding to the joint highest weight vector, up to a diagram automorphism, we can always assume that its minimal charge is $0$, i.e., its corresponding Young diagram is taken from $\P^l_{r-1}$.

On the other hand, if we consider the pair $(\lslhat(l), \lglhat(r))$ with diagram automorphism $\dot{\sigma}^{\#}_{cyc}$ acting on column abacus, we have a similar operation by shifting all beads down by one step. 
As an operation on row abacus, the action of $\dot{\sigma}^{\#}_{cyc}$ on the charge set of the rows increases one element by $l$. 
In this way, up to a vertical diagram automorphism, we may assume that the charge set of the rows lies in $\{0,1,\ldots,l-1\}$, as considered in \cite{LQ}.

\section{Level-rank duality}

This section is devoted to proving Theorem \ref{mainDualThmIntro}. We given a remark for the duality data given in Table \ref{tbl-intro} before going into the proof.
\begin{Remark}\label{parity-in-Fock}
For type $(O^{(r)},O^{(r)})$, recall the parity decomposition of $\sigdomdel(R)$ in section \ref{Parity}. Comparing with the formulae (\ref{weightY}), we can see:
\begin{itemize}
    \item Each $\lambda_{\Y} \in \sigdom(R)_{\bar{0}, \bar{0}}$ shows up precisely in the case $\lgdhat = \lsohat(2r)$;
    \item Each $\lambda_{\Y} \in \sigdom(R)_{\bar{1}, \bar{0}}$ shows up precisely in the case $\lgdhat = \lsohat(2r+1)$, $h = \frac{1}{2}$;
    \item Each $\lambda_{\Y} \in \sigdom(R)_{\bar{0}, \bar{1}}$ shows up precisely in the case $\lgdhat = \lsohat(2r+1)$, $h = 0$;
    \item Each $\lambda_{\Y} \in \sigdom(R)_{\bar{1}, \bar{1}}$ shows up precisely in the case $\lgdhat = \lsohat(2r+1,1)$.
\end{itemize}
And vice versa for $\dot{\lambda} (Y)$ and $\lghat_{\lleft}$. Thus, we exhaust all irreducible integrable highest weight modules of orthogonal algebras in various spaces $\F$,
completing the result of \cite[Remark 4.2(i)]{Hasegawa1989}.
In particular, in any non-$A^{(1)}$ cases, all the dominant weights of $\lgtilde$ appearing in $\F$ lie in the same parity set.

For the dual pairs of other three types, it is remarkable that in either case of $h = 0$ or $h = \frac{1}{2}$, all possible dominant weights of given level appear in the dual space.
Thus for type $(A^{(1)}, A^{(1)})$ and $(C^{(1)}, C^{(1)})$, we can focus on the case $h = \frac{1}{2}$, in which there is a better model to consider. 
\end{Remark}

We briefly explain the idea of the proof.
The key point is to find all joint highest weight vectors in the Fock space. 
To do this, we shall first consider another dual pair on the same space, which consists of a finite dimensional subalgebra $\lgo_{\lleft}$ of $\lghat_{\lleft}$ and an infinite matrix algebra $\lgdhat{}^{\infty}$ containing $\lgdhat$:
\[
\begin{matrix}
    \lghat_{\lleft} & \oplus & \lgdhat & \curvearrowright & \F \\
    \begin{turn}{90} $\subset$ \end{turn} & & \begin{turn}{90} $\supset$ \end{turn} & & \begin{turn}{90} $=$ \end{turn} \\
    \lgo_{\lleft} & \oplus & \lgdhat{}^{\infty} & \curvearrowright & \F
\end{matrix}
\]
According to the general duality theory of finite dimensional Lie algebras (cf.\cite[\S 5.1]{Cheng2012}), we can easily find all joint highest weight vectors $v$ for the pair $(\lgo_{\lleft}, \lgdhat{}^{\infty})$ in the Fock space (see also \cite{Wang1999}).
Then the joint highest weight vectors we want must be of the form $\lgdhat{}^{\infty,-}.v$.
With the help of the identity in Proposition \ref{VirEv}, we shall check the eigenvalue of $D^{\Cl}$ on these vectors and determine all possible ones.

\subsection{Finite-infinite dual pairs}\label{sec-fin-inf} 
We refer the readers to \cite{Wang1999} and \cite[\S 5]{Cheng2012} for more detailed theory of this section. Keep the notations in Theorem \ref{mainDualThmIntro}. Fix a pair $(\lghat,\lgdhat)$ and $h$, 
recall that $\lgo$ is the subalgebra of $\lghat$ spanned by $X_{ij}(0)$, where $X=E,C,D$ or $A$ for types $A^{(1)}$, $C^{(1)}$, $O^{(\r)}$, or $A^{(2)}$ respectively. 
Via restriction, the Fock space $\F$ becomes a $\lgo$-module which is isomorphic to an infinite tensor space of spin or vector representations.
This action lifts to a suitable classical linear group $G$ whose type depending on the Fock space, satisfying that each irreducible $G$-module is also irreducible as a  $\sgmo \ltimes \lgo_{\lleft}$-module. Specific data of $G$ will be given later in Table \ref{table-fin-G}.

We recall from \cite[\S 5.1.4]{Cheng2012} a general duality theory. Let $G$  be a classical Lie group, and let $V$ be a rational representation of $G$ of countable dimension. Assume that $\mathscr{R}$ 
is an associative subalgebra of $\End(V)$ such that 
\begin{itemize}
    \item $V$ is an irreducible $\mathscr{R}$-module. 
    \item $\mathscr{R}$ is closed under the conjugation by $G$, that is, $\mathscr{R}$ is a $G$-module by conjugation.  
    \item As a $G$-module, $\mathscr{R}$ is a direct sum of finite dimensional irreducible modules. 
\end{itemize}
Then there is a $(G,\mathscr{R}^G)$-bimodule decomposition
\[V=\bigoplus_{\lambda} L(\lambda)\otimes M^{\lambda}\]
such that $L(\lambda)$'s and $M^{\lambda}$'s are respectively pairwise non-isomorphic irreducible $G$-modules and $\mathscr{R}^G$-modules. 

Now taking $V$ to be the space $\bigwedge(\W^{h,-})$, $G$ to be the group corresponding to $\lgo$, and $\mathscr{R}$ to be the Clifford algebra $\Cl=\Cl(\W^h)$, we have a pairwise non-isomorphic irreducible $(G,\Cl^G)$-bimodule decomposition of $\bigwedge(\W^{h,-})$. Except for two special cases, the Fock space $\F = \bigwedge(\W^{h,-})$ and the above decomposition can be directly applied to $\F$.
\begin{itemize}
    \item For type $A^{(1)}$, it is routine to replace $\W$ by $\bar{\W}$ in $\Cl(\W^h)$ and $\bigwedge(\W^{h,-})$ to recover the Fock space.
    \item For type $D^{(2)}$, $\F = \bigwedge(\W^{h,-})_{\text{even}}$ is not a $\Cl(\W^h)$-module. Since the generating relations of $\Cl(\W^h)$ are of even degree, we can consider its subalgebra $\Cl(\W^h)_{\text{even}}$, which is the quotient of $T(\W^h)_{\text{even}}$ given by the same relations. 
    It is easy to check that $\F = \bigwedge(\W^{h,-})_{\text{even}}$ is an irreducible representation of $\Cl(\W^h)_{\text{even}}$, and since the action of $\lghat$ on $\F$ is generated by degree-$2$ operators, we have $\lgo \subset \Cl(\W^h)_{\text{even}}$. 
    Hence, we can give a conjugation action of $G$ on $\mathscr{R} = \Cl(\W^h)_{\text{even}}$.
\end{itemize}
Therefore, in all cases we have a pairwise non-isomorphic irreducible $(G,\mathscr{R}^G)$-bimodule decomposition of $\F$.
\[\F=\bigoplus_{\lambda} V_G(\lambda) \otimes M^{\lambda}.\]

We briefly sketch the theory of determining $\mathscr{R}^G$. 
The following lemma summaries results from \cite[\S 3 and \S 4]{Howe} needed in our proofs.

\begin{Lem}\label{howe-dual}
    Let $U,V$ be $\mathbb{C}$-vector spaces with $\dim U$ finite and $\dim V$ countable. 
    The non-degenerate bilinear form on each space determines a (quasi-)isotropic decomposition, respectively. 
    Denote by $\lgl(V)_f$ the matrix algebra with finitely many non-zero entries, and $\lsp(V)_f$, $\lso(V)_f$ its subalgebras. Consider the conjugation action of the classical group of $U$ on $\Cl(U\otimes V)$. 
    \begin{enumerate}
        \item   The algebra of invariants of $GL(U)$ in $\End\!\left(\bigwedge(U\otimes V)\right)$ is generated by $\lgl(V)_f$.
        \item   If $\dim U$ is even and $V$ admits an isotropic decomposition $V=V^+\oplus V^-$, then the algebra of invariants of $Sp(U)$ in $\Cl(U\otimes V^-)$ is generated by $\lsp(V)_f$.
        \item   The algebra of invariants of $O(U)$ in $\Cl(U\otimes V)$ is generated by $\lso(V)_f$.
        In particular, if the decomposition of $V$ is quasi-isotropic, then 
            \begin{itemize}
                \item when $\dim U$ is even, $\lso(V)_f$ generates all commutants of $Pin(U)$;
                \item when $\dim U$ is odd, $\lso(V)_f$ generates all commutants of $Spin(U)$.
            \end{itemize}
    \end{enumerate}
\end{Lem}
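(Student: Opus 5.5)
The plan is to deduce all three statements from the First Fundamental Theorem (FFT) of classical invariant theory, in its skew (exterior-algebra) form as in \cite[\S 3, \S 4]{Howe}, together with a filtration argument that passes from $\Cl$ to its associated graded algebra.

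First I reduce to $\dim V<\infty$. Write $V$ as an increasing union of finite dimensional nondegenerate subspaces $V_k$, each compatible with the given (quasi-)isotropic decomposition. Then $\Cl(U\otimes V)=\varinjlim\Cl(U\otimes V_k)$, with $G$-equivariant transition maps. The finitary algebras $\lgl(V)_f$, $\lsp(V)_f$, $\lso(V)_f$ are also the unions of the corresponding algebras of the $V_k$. Since invariants commute with direct limits, it suffices to treat finite dimensional $V$. In part (1) the algebra $\End\!\left(\bigwedge(U\otimes V)\right)$ is replaced by $\Cl\bigl((U\otimes V)\oplus(U\otimes V)^*\bigr)$, which acts on $\bigwedge(U\otimes V)$ in the standard way.

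Next I use the degree filtration of $\Cl(X)$, where $X=U\otimes V$ (or the doubled space in case (1)). Its associated graded algebra is $\bigwedge(X)$, and the filtration is preserved by the classical group $G$ of $U$. Because $G$ is reductive, $\operatorname{gr}(\Cl(X)^G)=\bigwedge(X)^G$. The skew FFT then says that $\bigwedge(X)^G$ is generated by the quadratic contractions, i.e.\ the $G$-invariants in $\bigwedge^2(X)$.

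The next step identifies these quadratic invariants. Write $\bigwedge^2(U\otimes V)=S^2U\otimes\bigwedge^2V\oplus\bigwedge^2U\otimes S^2V$, and pair $U$ with $U^*$ in case (1).
\begin{itemize}
\item For $GL(U)$, the invariants are $(U\otimes U^*)^{GL}\otimes V\otimes V^*\cong\lgl(V)$.
\item For $O(U)$, they are $S^2U^{O}\otimes\bigwedge^2V\cong\lso(V)$.
\item For $Sp(U)$ acting on $U\otimes V^-$, they are $\bigwedge^2U^{Sp}\otimes S^2V^{\pm}$. Together with the mixed terms this gives exactly $\lsp(V)$.
\end{itemize}
In each case the normal-ordered quadratic elements $\sum\nord{\psi\psi}$ of $\Cl$ realize these symbols, and their commutators close up to scalars. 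A standard induction on filtration degree then lifts the generation statement from $\bigwedge(X)^G$ to $\Cl(X)^G$: take the top symbol of an invariant, write it as a polynomial in the quadratic symbols, and subtract the corresponding product in $\Cl$.

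The step I expect to be the main obstacle is the quasi-isotropic refinement in (3). There $\bigwedge(\W^-)$ is a spin module and the group that actually acts is $Pin(U)$ or $Spin(U)$, not $O(U)$. My approach is to use that $Pin(U)\subset\Cl(U)^\times$ acts on $\Cl(U\otimes V)$ by conjugation through its image in $O(U)$, twisted by the parity automorphism on odd elements. So its commutant equals $\Cl(U\otimes V)^{O(U)}$ on the even part, and one must check that no extra odd invariants appear.
\begin{itemize}
\item When $\dim U$ is even, $Pin(U)\to O(U)$ is surjective, and the sign twist rules out odd invariants.
\item When $\dim U$ is odd, $-1\in O(U)$ acts as the parity operator. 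Replacing $O(U)$ by $SO(U)=Spin(U)/\pm1$ loses nothing on even elements, and the skew FFT for $SO(U)$ adds only the determinant-type invariant. This invariant must be shown to already lie in the algebra generated by $\lso(V)_f$ after including the neutral generator $e$.
\end{itemize}
I would verify this last point directly on the top-degree element built from the odd vector $e$. I expect this to be the delicate case check.
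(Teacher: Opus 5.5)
The paper gives no proof of this lemma; it only quotes \cite[\S 3 and \S 4]{Howe}. For (1), (2) and the $O(U)$-statement in (3), your route is the standard argument behind those results and is fine. That route is: exhaust $V$ by finite-dimensional nondegenerate pieces, pass to $\operatorname{gr}\Cl=\bigwedge$ by reductivity, apply the skew FFT, identify the quadratic invariants with $\lgl(V)$, $\lsp(V)$, $\lso(V)$, and lift by induction on filtration degree. In (1), replacing $\End\bigl(\bigwedge(U\otimes V)\bigr)$ by the Clifford algebra of $(U\otimes V)\oplus(U\otimes V)^*$ is necessary, not cosmetic. For infinite-dimensional $V$, the $GL(U)$-commutant in the full endomorphism algebra is a product of endomorphism algebras of infinite-dimensional irreducibles, and it is not generated by $\lgl(V)_f$. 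Say explicitly that you prove the Clifford version, which is the one the paper uses.

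The genuine gap is the step you flag for odd $n=\dim U$, and it cannot be carried out. Put $X=U\otimes V$. The odd $SO(U)$-invariants form a whole family, not a single element built from $e$. Take a basis $u_1,\dots,u_n$ of $U$ and any nonzero $v\in V^-$, and set $\xi=(u_1\otimes v)\cdots(u_n\otimes v)$. This is a product of mutually anticommuting vectors, so it transforms by $\det$ under $O(U)$. Hence it commutes with the $Spin(U)$-action, and it acts on $\bigwedge(X^-)$ by wedging with a nonzero vector. But everything generated by $\lso(V)_f$ lies in $\Cl^{\bar 0}(X)$ plus scalars, while $\xi$ is odd. In finite dimensions you could trade $\xi$ for an even element using the central volume element. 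After your direct-limit reduction there is no such element. Suppose $\xi$ and some $a\in\Cl^{\bar 0}(U\otimes V_k)$, with $k$ large so that $v\in V_k$, acted by the same operator. Pick $y\in X^-$ orthogonal to $U\otimes V_k$. Then $y\xi=-\xi y$ and $ya=ay$ force $y\xi$ to act by $0$. This contradicts $y\wedge(u_1\otimes v)\wedge\cdots\wedge(u_n\otimes v)\neq 0$. For $n=1$ this just says $\Cl(V)$ is not generated by $\lso(V)_f$. So the odd-$\dim U$ bullet holds only with the commutant taken inside $\Cl^{\bar 0}(X)$.

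That version is all the duality argument needs, and it costs you nothing. In the quasi-isotropic case $\Cl^{\bar 0}(X)$ still acts irreducibly on $\bigwedge(X^-)$, so it can serve as $\mathscr{R}$. The paper already does exactly this with $\Cl(\W^h)_{\rm even}$ in type $D^{(2)}$. Your own remark then finishes the argument with no case check. Since $O(U)=SO(U)\times\{\pm 1\}$ and $-1_U$ acts by the parity automorphism, $(\Cl^{\bar 0})^{Spin(U)}=(\Cl^{\bar 0})^{O(U)}=\Cl^{O(U)}$.

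For even $\dim U$, your reason that odd invariants vanish is off. The sign twist on reflections only turns invariance of odd elements into $\det$-semi-invariance. The correct reason is that $-1_U\in SO(U)$, and its preimage in $Spin(U)$ conjugates by the parity automorphism.
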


We use the preceding lemma to identify in each type the group $G$ and its commutant $\Cl^G$, generated by an infinite matrix algebra $\liegd{}^{\infty}_f$.
The commutant pair $(G, \liegd{}^{\infty}_f)$ is within the framework of the uncompleted Clifford algebra $\Cl(U \otimes V)$.

Recall the construction of $\W^h$.
For type $A^{(1)}$, write $L_{\bar{0}}=2l$. Then
\[
\bar{\W}^h = \W^+_{2l}\otimes \left(\W^+_{2r}\otimes t^h\mathbb{C}[t^{\pm1}]\right)
    \oplus \W^-_{2l}\otimes \left(\W^-_{2r}\otimes t^h\mathbb{C}[t^{\pm1}]\right).
\]
Taking $U_{\bar{0}} = \W^+_{2l}$ and $V_{\bar{0}} = \W^+_{2r}\otimes t^h\mathbb{C}[t^{\pm1}]$, we have
${\rm gr}\,\Cl(\bar{\W}^h)\cong \bigwedge\!\left((U_{\bar{0}} \otimes V_{\bar{0}})\oplus (U_{\bar{0}} \otimes V_{\bar{0}})^*\right)$,
which is isomorphic to $\End\!\left(\bigwedge(U_{\bar{0}} \otimes V_{\bar{0}})\right)$.
Hence in this case we may take $G = GL(l)$, and its commutant is generated by the Lie algebra $\lgl(V_{\bar{0}})_f$.

For the other types,
\begin{align*}
\W^h &= \W_{L_{\bar{0}}}\otimes \left(\W_{R_{\bar{0}}}\otimes t^h\mathbb{C}[t^{\pm1}]
    \oplus \W_{R_{\bar{1}}}\otimes t^{h'}\mathbb{C}[t^{\pm1}]\right) \\
    &\quad\oplus \W_{L_{\bar{1}}}\otimes \left(\W_{R_{\bar{1}}}\otimes t^h\mathbb{C}[t^{\pm1}]
    \oplus \W_{R_{\bar{0}}}\otimes t^{h'}\mathbb{C}[t^{\pm1}]\right).
\end{align*}
We treat the $\W_{L_{\bar{0}}}$ and $\W_{L_{\bar{1}}}$ components separately. 
Let $\{ \bar{a}, \bar{b} \} = \mathbb{Z}_2$, and take $U_{\bar{a}} = \W_{L_{\bar{a}}}$ and
$V_{\bar{a}} = \W_{R_{\bar{a}}}\otimes t^h\mathbb{C}[t^{\pm1}] \oplus \W_{R_{\bar{b}}}\otimes t^{h'}\mathbb{C}[t^{\pm1}]$.
For type $C^{(1)}$, we take $G = Sp(L_{\bar{0}})$, and its commutant is generated by the affine Lie algebra $\lsp(V_{\bar{0}})_f$.
For types $O^{(\mathrm r)}$ or $A^{(2)}$, the commutant is generated by the Lie algebra $\lso(V_{\bar{a}})_f$. 
To determine $G_{\bar{a}}$, consider the (quasi-)isomorphic decomposition of $V_{\bar{a}}$. 
Note that the degree-$0$ polynomial part of $V$ corresponds to $\W_{R_{\bar{a} + \overline{2h}}}$. Thus:

\begin{itemize}
    \item If $R_{\bar{a} + \overline{2h}}$ is even, then $(U_{\bar{a}} \otimes V_{\bar{a}})^-$ decomposes as a direct sum of natural $\lso(L_{\bar{0}})$-modules, so the action lifts to $G_{\bar{a}} = O(L_{\bar{0}})$. 
    \item If $R_{\bar{a} + \overline{2h}}$ is odd, then $(U_{\bar{a}} \otimes V_{\bar{a}})^-$ contains spinor modules as an $\lso(L_{\bar{0}})$-module, and the decomposition of $V_{\bar{a}}$ is quasi-isotropic.
    Discuss the parity of $L_{\bar{a}}$. If $L_{\bar{a}}$ is even the action lifts to $G_{\bar{a}} = Pin(L_{\bar{a}})$, and if $L_{\bar{a}}$ is odd it lifts to $G_{\bar{a}} = Spin(L_{\bar{a}})$.
\end{itemize}
It is remarkable that since there are sign twists in the constructions for types $A^{(2)}$ comparing with type $O^{(\mathrm r)}$, the resulting commutant Lie algebras differ by subtle conventions. 
For simplicity here we denote both the algebras by $\lso(V_{\bar{a}})_f$.
For the precise differences arising from choice of sign conventions in infinite matrix algebras, see \cite[\S 1.3]{Wang1999}.

Piecing two components together, we set
$G=G_{\bar{0}}\times G_{\bar{1}}$ acting respectively on $\W_{L_{\bar{0}}}$ and $\W_{L_{\bar{1}}}$.
The Lie algebra generating the commutant is then the direct sum
$\liegd{}^{\infty}_f = (\liegd{}^{\infty}_f)_{\bar{0}} \oplus (\liegd{}^{\infty}_f)_{\bar{1}}$ with $(\liegd{}^{\infty}_f)_{\bar{a}} = \lso(V_{\bar{a}})_f$.

Summarizing the above discussion, the finite-infinite commutant pairs $(G,\liegd{}^{\infty}_f)$
inside $\Cl$ for each type are chosen as shown in the following table.

\begin{table}[!htbp]
\centering
\scalebox{0.95}{
\begin{tabular}{c||c|c}
\hline
Type         & $G$          & $\liegd{}^{\infty}_f$ \\
\hline
$A^{(1)}$   &  $GL(l)$      & $\lgl(V_{\bar{0}})_f$, $V_{\bar{0}} = \W^+_{2r}\otimes t^h\mathbb{C}[t^{\pm1}]$  \\
\hline
$C^{(1)}$   &  $Sp(2l)$     & $\lsp(V_{\bar{0}})_f$, $V_{\bar{0}} = \W_{R_{\bar{0}}}\otimes t^h\mathbb{C}[t^{\pm1}]$  \\
\hline
$O^{(\mathrm r)}$ or $A^{(2)}$  & 
    \makecell
    {
        $G = G_{\bar{0}} \times G_{\bar{1}}$, \\
        for $\bar{a}=\bar{0},\bar{1}$,\\
        $G_{\bar{a}} = 
        \begin{cases}
            O(L_{\bar{a}}),& R_{\overline{2h}} \text{ even};\\
            Pin(L_{\bar{a}}),& R_{\overline{2h}} \text{ odd},\, L_{\bar{a}}\text{ even};\\
            Spin(L_{\bar{a}}),& R_{\overline{2h}} \text{ odd},\, L_{\bar{a}}\text{ odd}.
        \end{cases}$
    }& 
    \makecell
    {
        $\lso(V_{\bar{0}})_f \oplus \lso(V_{\bar{1}})_f$, \\
        for $\{\bar{a}, \bar{b}\}=\mathbb{Z}_2$,\\
        $V_{\bar{a}} = \W_{R_{\bar{a}}}\otimes t^h\mathbb{C}[t^{\pm1}] \oplus \W_{R_{\bar{b}}}\otimes t^{h'}\mathbb{C}[t^{\pm1}]$.
    }\\
\hline
\end{tabular}
}
\caption{Finite-infinite commutant pairs $(G,\liegd{}^{\infty}_f)$ in $\Cl$}
\label{table-fin-G}
\end{table}
In particular, since we always assume $L_{\bar{1}}\le 1$, the low-dimensional orthogonal groups are interpreted as $O(0)=Pin(0)=1$ trivial and $O(1)=Spin(1)=\pm1$.
When $L_{\bar{1}} = 0$, we use the convention that $V_{\bar{1}} = 0$ since the component is zero.

To give a realization of the affine algebra $\lgdhat$ inside the infinite matrix algebra, we need to introduce the central extension $\lgdhat{}^{\infty}_f$ as well as its completed version $\lgdhat{}^{\infty}$.
The algebra $\lgl(V_{\bar{a}})_f$ admits a central extension 
$\lglhat(V_{\bar{a}})_f = \lgl(V_{\bar{a}})_f \oplus \mathbb{C} K_{\bar{a}}$ defined by the 2-cocycle
\[
\tau(x, y) = \Tr([J_{\infty}, x]y), \mbox{ where } J_{\infty} = \sum_{\gamma < 0, \mbox{ or } \gamma = 0, p>0} E_{(p, \gamma)(p, \gamma)}.
\]
Denote by $\lgl(V_{\bar{a}})$ the completed Lie algebra of matrices with finitely many non-zero diagonals. Using the same $\tau$ one can also construct the (unique) non-trivial central extension $\lglhat(V_{\bar{a}}) = \lgl(V_{\bar{a}}) \oplus \mathbb{C} K_{\bar{a}}$.

In this way, we extend in each case the algebra $\liegd{}^{\infty}_f$ to $\lgdhat{}^{\infty}_f$ and $\lgdhat{}^{\infty}$ as direct sums of subalgebras of $\lglhat(V_{\bar{a}})_f$ and $\lglhat(V_{\bar{a}})$, respectively.
In the presence of a normal-ordering renormalization, the commutant pair $(G,\liegd{}^{\infty}_f)$ in $\Cl$ is naturally lifted to the dual pair $(G, \lgdhat{}^{\infty}_f)$ on  the Fock space.
The action can be written as $\dot{\pi}{}^{\infty}: \lgdhat{}^{\infty}_f \to \Cl$, 
with the central element $K_{\bar{0}}$ mapping to the scalar $l$ for type $A^{(1)}$, or $K_{\bar{a}}$ mapping to $L_{\bar{a}}$ for other types.

Note that the action of $\lgdhat{}^{\infty}_f$ on $\F$ is locally finite, that is, given $v \in \F$, for each diagonal all but finitely many entries send $v$ to $0$.
Abusing notations, the extended morphism $\dot{\pi}{}^{\infty}: \lgdhat{}^{\infty} \lra \widetilde{\Cl}$ gives a well-defined representation of $\lgdhat{}^{\infty}$ on $\F$.
Since each locally finite $\lgdhat{}^{\infty}_f$-module can be identically viewed as a $\lgdhat{}^{\infty}$-module,  $M^{\lambda}$'s are still pairwise non-isomorphic irreducible as $\lgdhat{}^{\infty}$-module.
Via the process
$$
\liegd{}^{\infty}_f 
\xrightarrow{\text{Normal Ordering}} 
\lgdhat{}^{\infty}_f 
\xrightarrow{\text{Completion}} 
\lgdhat{}^{\infty},
$$
eventually we have a pairwise non-isomorphic irreducible $(G, \lgdhat{}^{\infty})$-bimodule decomposition:
\[
\F=\bigoplus_{\lambda} V_G(\lambda) \otimes M^{\lambda}.
\]

To complete the duality it suffices to determine the highest weight vectors and the weights in each component.
For this we briefly introduce some notations for the weights of $\lgdhat{}^{\infty}_f$.
Let ${\bf I}(V_{\bar{a}})$ be the index set of the space $V_{\bar{a}}$ for $a = 0, 1$ with subsets ${\bf I}(V_{\bar{a}})^{\pm}$ corresponding to the (quasi)-isotropic decomposition.
Write $X_{(p,\gamma)(q,\eta)}$ the basis element of $\liegd{}^{\infty}$, with $X = E, C$ or $D$ depending on types.
The Cartan subalgebra $\lhdhat{}^{\infty}_f$ has basis $X_{(p,\gamma)(p,\gamma)}, K_{\bar{0}}, K_{\bar{1}}$, where $(p,\gamma)$ ranges over  ${\bf I}(V_{\bar{0}})^- \sqcup {\bf I}(V_{\bar{1}})^-$.
Denote by $\dot{\epsilon}_{(p,\gamma)}, \Lambda_{\bar{0}}, \Lambda_{\bar{1}}$ the dual elements in the dual space.
Since we only consider locally finite modules, we can restrict our discuss to the dual subspace $(\lhdhat{}^{\infty})^*_f = \bigoplus\mathbb{C}\dot{\epsilon}_{(p,\gamma)} \oplus \mathbb{C}\Lambda_{\bar{0}} \oplus \mathbb{C}\Lambda_{\bar{1}}$.
Obviously each monomial basis vector in $\F$ is a weight vector of $\lgdhat{}^{\infty}_f$. 
Similarly as in \S \ref{Maya-diagrams}, we can read the weight $\dot{\lambda}{}^{\infty}_{\M}$ of a Maya diagram by:
\[
\dot{b}_{(p,\gamma)}=\#\{ i \mid \mbox{The cell } (i,p,\gamma)\mbox{ in } \M \mbox{ is black}\},
\]
\[
\dot{\lambda}{}^{\infty}_{\M} = \sum\limits_{(p,\gamma)} \left(\dot{b}_{(p,\gamma)} - \dot{b}_{(-p,-\gamma)} + \left(\frac{L_{\overline{2h}}}{2} - \lfloor \frac{L_{\overline{2h}}}{2} \rfloor\right)\delta_{\gamma, 0} \right) \dot{\epsilon}_{(p,\gamma)}.
\] 
In particular, for each $\gamma<0$, the coefficient of $\dot{\epsilon}_{(p,\gamma)}$ is precisely $\dot{b}_{(p,\gamma)}$, which is a non-negative integer.

Regarded through representation in $\widetilde{\Cl}$, the affine algebra $\lgdhat$ can be viewed as a subalgebra of $\lgdhat{}^{\infty}$ with the embedding written as $E_{pq}(n) \mapsto \sum_{\gamma}E_{(p,\gamma),(q,n+\gamma)}$.
Note that through this embedding, the diagram automorphisms of $\lgdhat$ can also act on $\lgdhat{}^{\infty}$.
Reading the weights of Maya diagrams gives a projection on weight spaces: 
\begin{align*}
    {\sf pr}: (\lhdhat{}^{\infty})^*_f &\to \tilde{\dot{\lieh}}{}^*, \\
    \dot{\epsilon}_{(\pm|p|,\gamma)} &\mapsto \pm\dot{\epsilon}_{|p|} + \gamma\dot{\delta};\quad \dot{\epsilon}_{(0,\gamma)}, \dot{\epsilon}_{(\gh{0},\gamma)} \mapsto 0\\
    \Lambda_{\bar{0}}, \Lambda_{\bar{1}} &\mapsto \dot{\Lambda}_0.
\end{align*}
Note that under the projection ${\sf pr}$, the signed root lattice is not always preserved especially when the Dynkin diagram of $\lgdhat$ has branching point. 
However, if we take $\lgdhat_{\ex} = \lsohat(2r+1,1)$, 
one can check that we always have ${\sf pr}((\dot{Q}{}^{\infty})^{\pm}) = \tilde{\dot{Q}}{}^{\pm}_{\ex}$. This observation will be used in later proofs.

For types $O^{(\r)}$ and $A^{(2)}$, the infinite matrix algebra $\lgdhat{}^{\infty}$ also admits non-trivial diagram automorphisms in the cases $R_{\overline{2h}}$ even.
To simplify the discussion we assume $L_{\bar{1}} = 0$, while the $L_{\bar{1}} = 1$ case can be given analogously.
Let $(p_1,\gamma_1) = (1, -\frac{1}{2})$ for $h = \frac{1}{2}$ or $(-l, 0)$ for $h = 0$ the label of the ``highest" row in ${\bf I}(V_{\bar{0}})^-$.
Then in case $R_{\overline{2h}}$ is even, the rows $\pm(p_1,\gamma_1)$ are adjacent in ${\bf F}_{\rm ver}$.
Denote by $\dot{\sigma}_{\pm(p_1,\gamma_1)}$ the diagram automorphism switching the branching end points of the Dynkin diagram of $\lgdhat{}^{\infty}$, and let $\dot{\Sigma}{}^{\infty} = \langle \dot{\sigma}_{\pm(p_1,\gamma_1)} \rangle$.
It acts on $\dot{\B}(\dot{\M})$ by permuting the rows $\pm(p_1,\gamma_1)$ (see also \S \ref{diag-auto-on-abacus}).
In this way we can also make $\F$ a $(\sgmo \ltimes \lgo, \dot{\Sigma}{}^{\infty} \ltimes \lgdhat{}^{\infty})$-module.

As remarked in Example \ref{example-diag-auto-1}, we shall compare the decompositions of $\F$ as a $(\sgmo \ltimes \lgo, \dot{\Sigma}{}^{\infty} \ltimes \lgdhat{}^{\infty})$-module and as a $(G, \lgdhat{}^{\infty})$-module. In case that $\sgmo$ is non-trial, $\sgmo=\langle\sigma\rangle$ and $\sigma$ acts on Maya diagrams by interchanging the columns indexed by $\rkCar$ and $-\rkCar$. With the help of diagram automorphisms, we are ready to describe all joint highest weight vectors in $\F$.
For $Y = (y_i)_{i=1}^l \in \P^l_{\infty}$, let $Y_+ = (\max\{y_i-r, 0\})_{i=1}^l$ and $Y_- = Y - Y_+ \in \P^l_r$.
Define Maya diagrams $\M^h_Y$ for $h = 0, \frac{1}{2}$  as follows.
\begin{center}
\begin{tikzpicture}[scale=0.5]
    \begin{scope}
        \draw[thick] (0,-9) -- (0,0) -- (9,0) -- (9,-9);
        \draw[thick] (5,0) rectangle (9,1);
        \draw[thick] (0,-3) -- (-1,-3) -- (-1,-9);
        \draw[loosely dotted] (-1,-3) -- (-1,1) -- (5,1);
        \draw[fill=lightgray] (0, 0) 
        -- (0, -8) -- (1, -8)
        -- (1, -7) -- (2, -7)
        -- (2, -2) -- (3, -2)
        -- (3, -1) -- (4, -1)
        -- (4, 0) -- (0, 0) -- cycle;
        
        \draw[dashed] (0, -2) -- (9, -2);
        \draw[dashed] (0, -3) -- (9, -3);
        \draw (-1,-5) rectangle (9,-6);
        \draw[dashed] (4, 1) -- (4, -9);
        \draw[dashed] (5, 1) -- (5, -9);

        \node[above] at (0.5,1) {$\scriptstyle {1}$};
        \node[above] at (2,1) {$\scriptstyle {\cdots}$};
        \node[above] at (3.5,1) {$\scriptstyle {l}$};
        \node[below] at (4.5, -9) {$\M^{\frac{1}{2}}_Y = \M^+(Y)$};
        \foreach \x in {1,2,...,3}
            \draw[gray, dotted] (\x,0) -- (\x,-9);
        \foreach \x in {6,7,...,8}
            \draw[gray, dotted] (\x,1) -- (\x,-9);
        \draw[gray, dotted] (0,-1) -- (9,-1);
        \foreach \y in {4,5,...,8}
            \draw[gray, dotted] (-1,-\y) -- (9,-\y);
        \node[font=\bfseries\boldmath] at (1, -1.5) {$Y$};
        \node[right] at (9, -2) {$\overline{\bf F}(-\frac{1}{2})$};
        \node[right] at (9, -8) {$\overline{\bf F}(-\frac{3}{2})$};
    \end{scope}
    \begin{scope}[shift={(15,0)}]
        \begin{colormixin}{15!white}
        \draw[fill=yellow, thin] (0, -3) 
        -- (0, -5) -- (2, -5)
        -- (2, -4) -- (3, -4)
        -- (3, -3) -- (0, -3) -- cycle;
        \end{colormixin}
        \draw[red, dotted] (0,-3) grid[step=1] (4,-5);

        \draw[thick] (0,-9) -- (0,-5) -- (4,-5) -- (4,-3) -- (5,-3) -- (5,0) -- (9,0) -- (9,-9);
        \draw[thick] (0,-6) -- (-1,-6) -- (-1,-9);
        \draw[loosely dotted] (-1,-6) -- (-1,1) -- (9,1) -- (9,0);
        \draw[loosely dotted] (0,-5) -- (0,1);
        \draw[loosely dotted] (-1,0) -- (5,0);
        \draw[fill=lightgray] (0, -5) 
        -- (0, -8) -- (1, -8)
        -- (1, -7) -- (2, -7)
        -- (2, -5) -- (0, -5) -- cycle;
        \node[font=\bfseries\boldmath] at (1, -5.5) {$Y_+$};

        \draw[fill=lightgray] (5, 0) 
        -- (5, -2) -- (6, -2)
        -- (6, -1) -- (7, -1)
        -- (7, 0) -- (5, 0) -- cycle;
        \node[font=\bfseries\boldmath] at (8, -1) {$Y_-$};
        \node[font=\bfseries\boldmath] at (1, -4) {$Y_-$};
        \draw[->] (4.5,-1) arc (80:190:1.5);

        \draw (0,-5) rectangle (9,-6);
        \draw[dashed] (4, -5) -- (4, -9);
        \draw[dashed] (5, -3) -- (5, -9);
        \draw[dashed] (5, -3) rectangle (9, -2);
        \node[above] at (0.5,1) {$\scriptstyle {1}$};
        \node[above] at (2,1) {$\scriptstyle {\cdots}$};
        \node[above] at (3.5,1) {$\scriptstyle {l}$};
        \node[above] at (5.5,1) {$\scriptstyle {-l}$};
        \node[above] at (7,1) {$\scriptstyle {\cdots}$};
        \node[above] at (8.5,1) {$\scriptstyle {-1}$};
        \node[below] at (4.5, -9) {$\M^{0}_Y = \M^+(Y_+) \cup \M^-(Y_-^c)$};

        \foreach \x in {1,2,...,3}
            \draw[gray, dotted] (\x,-5) -- (\x,-9);
        \foreach \x in {6,7,...,8}
            \draw[gray, dotted] (\x,0) -- (\x,-9);
        \draw[gray, dotted] (5,-1) -- (9,-1);
        \draw[gray, dotted] (4,-4) -- (9,-4);
        \foreach \y in {7,8}
            \draw[gray, dotted] (-1,-\y) -- (9,-\y);
        \node[right] at (9, -2) {$\overline{\bf F}(0)$};
        \node[right] at (9, -8) {$\overline{\bf F}(-1)$};
    \end{scope}    
\end{tikzpicture}
\end{center}
One can routinely check that when $Y$ runs over $\P^l_{\infty}$, the above Maya diagrams give joint highest weight vectors with weights $\lambda_{\M^h(Y)}$ exhausting all possible weights in $\sigdom$. 

Taking into account the correspondence between Maya diagrams and column abaci, one can straightforwardly check that for \( v = v(\M_Y^h) \), the conditions \(\sigma(v) \neq v\) and \(\dot{\sigma}(v) \neq v\) cannot both hold. 
Moreover, it is easy to see that \( v = v(\M_Y^h) \), \(\sigma(v)\), and \(\dot{\sigma}(v)\) are all highest weight vectors over \((\lgo, \lgdhat{}^{\infty})\). 
On the one hand, if $\sigma(v) \ne v$, then the two $\lgo$-modules generated by $v$ and $\sigma(v)$ sum up to a $G$-module. Since $\dot{\sigma}$ acts trivially, this makes a $(G, \lgdhat{}^{\infty})$-module into a $(\sgmo \ltimes \lgo, \dot{\Sigma}{}^{\infty} \ltimes \lgdhat{}^{\infty})$-module.
On the other hand, if $\dot{\sigma}(v) \ne v$, then according to the choice of $G$ one can check that these two vectors generate a pair of non-isomorphic $G$-modules. In this way, the two non-isomorphic $(G, \lgdhat{}^{\infty})$-modules generated by $v$ and $\sigma(v)$ sum up to a $(\sgmo \ltimes \lgo, \dot{\Sigma}{}^{\infty} \ltimes \lgdhat{}^{\infty})$-module.

For simplicity we consider the following case as an example, while other cases can be done in a similar way.

\begin{Exam}\label{example-diag-auto-2}
Take $(\lghat,\lgdhat) = (\lsohat(2l), \lsohat(2r))$ and $h = \frac{1}{2}$.
Write $\sigma = \sigma_{l, l-1}\in \sgmo$ and $\dot{\sigma} = \dot{\sigma}_{\pm(1, \frac{1}{2})} \in \dot{\Sigma}{}^{\infty}$.
The irreducible modules of $G = O(2l)$ and $\lgo = \lso(2l)$ fail to coincide in the following situations: (notations for $G$-modules are as in \cite{Wang1999})
\begin{itemize}
    \item Let $v(\M), v(\M')$ be two $(\lgo, \lgdhat{}^{\infty})$-highest weight vectors given by the following Maya diagrams respectively with $y_l>0$:
        \begin{center}
        \begin{tikzpicture}[scale=0.5]
            \begin{scope}
                \draw[thick] (0,-5) -- (0,0) -- (8,0) -- (8,-5);
                \draw[dashed] (4, 1) -- (4, -5);
                \draw[fill=lightgray] (0, 0)
                -- (0, -4) -- (1, -4)
                -- (1, -3) -- (2, -3)
                -- (2, -2) -- (4, -2)
                -- (4, 0) -- (0, 0) -- cycle;
                
                \node[above] at (3.5,0) {$\scriptstyle {l}$};
                \node[above] at (4.5,0) {$\scriptstyle {-l}$};
                \node[left] at (0,-1.5) {$\scriptstyle {y_l}$};
                \node[font=\bfseries\boldmath] at (1, -1.5) {$Y$};
                \node[below] at (4, -5) {$\M = \Y$};
                \foreach \x in {1,2,...,8}
                    \draw[gray, dotted] (\x,0) -- (\x,-5);
                \foreach \y in {1,2,...,4}
                    \draw[gray, dotted] (0,-\y) -- (8,-\y);
            \end{scope}
            \begin{scope}[shift={(12,0)}]
                \draw[thick] (0,-5) -- (0,0) -- (8,0) -- (8,-5);
                \draw[dashed] (4, 1) -- (4, -5);
                \draw[fill=lightgray] (0, 0)
                -- (0, -4) -- (1, -4)
                -- (1, -3) -- (2, -3)
                -- (2, -2) -- (3, -2)
                -- (3, 0) -- (0, 0) -- cycle;
                \draw[fill=lightgray] (4,0) rectangle (5,-2);

                \node[above] at (3.5,0) {$\scriptstyle {l}$};
                \node[above] at (4.5,0) {$\scriptstyle {-l}$};
                \node[left] at (0,-1.5) {$\scriptstyle {y_l}$};
                \node[below] at (4, -5) {$\M' = \sigma(\Y)$};
                \foreach \x in {1,2,...,8}
                    \draw[gray, dotted] (\x,0) -- (\x,-5);
                \foreach \y in {1,2,...,4}
                    \draw[gray, dotted] (0,-\y) -- (8,-\y);
            \end{scope}    
        \end{tikzpicture}
        \end{center}
        The weights satisfy $\lambda_{\M'} = \sigma(\lambda_{\M})$ and $\dot{\lambda}{}^{\infty}_{\M'} = \dot{\lambda}{}^{\infty}_{\M}$.
        The irreducible $G$-module $V_G(|\lambda_{\M}|)$ splits into two non-isomorphic $\lgo$-module generated by $v(\M), v(\M')$.
        Then there is an identification between the components given by 
        \[
        V_G(|\lambda_{\M}|) \otimes M^{\lambda_{\M}} =  \left(V_{\lgo}(\lambda_{\M}) \oplus V_{\lgo}(\sigma(\lambda_{\M}))\right) \otimes V_{\lgdhat{}^{\infty}}(\dot{\lambda}{}^{\infty}_{\M}) = V_{\sgmo \ltimes \lgo}(\lambda_{\M}) \otimes V_{\lgdhat{}^{\infty}}(\dot{\lambda}{}^{\infty}_{\M}).
        \]
    \item Let $v(\M), v(\M')$ be two $(\lgo, \lgdhat{}^{\infty})$-highest weight vectors given by the following Maya diagrams respectively with $y^t_1<l$:
        \begin{center}
        \begin{tikzpicture}[scale=0.5]
            \begin{scope}
                \draw[thick] (0,-5) -- (0,0) -- (8,0) -- (8,-5);
                \draw[dashed] (4, 1) -- (4, -5);
                \draw[fill=lightgray] (0, 0) 
                -- (0, -4) -- (1, -4)
                -- (1, -3) -- (2, -3)
                -- (2, 0) -- (0, 0) -- cycle;
                
                \node[above] at (1.5,0) {$\scriptstyle {y^t_1}$};
                \node[above] at (6.5,0) {$\scriptstyle {-y^t_1}$};
                \node[left] at (0,-0.5) {$\scriptstyle {1}$};
                \node[font=\bfseries\boldmath] at (1, -1.5) {$Y$};
                \node[below] at (4, -5) {$\M = \Y$};
                \foreach \x in {1,2,...,7}
                    \draw[gray, dotted] (\x,0) -- (\x,-5);
                \foreach \y in {1,2,...,4}
                    \draw[gray, dotted] (0,-\y) -- (8,-\y);
            \end{scope}
            \begin{scope}[shift={(12,0)}]
                \draw[thick] (0,-5) -- (0,0) -- (8,0) -- (8,-5);
                \draw[dashed] (4, 1) -- (4, -5);
                \draw[fill=lightgray] (0, 0) 
                -- (0, -4) -- (1, -4)
                -- (1, -3) -- (2, -3)
                -- (2, -1) -- (6, -1)
                -- (6, 0) -- (0, 0) -- cycle;
                
                \node[above] at (1.5,0) {$\scriptstyle {y^t_1}$};
                \node[above] at (6.5,0) {$\scriptstyle {-y^t_1}$};
                \node[left] at (0,-0.5) {$\scriptstyle {1}$};
                \node[below] at (4, -5) {$\M' = \dot{\sigma}(\Y)$};
                \foreach \x in {1,2,...,7}
                    \draw[gray, dotted] (\x,0) -- (\x,-5);
                \foreach \y in {1,2,...,4}
                    \draw[gray, dotted] (0,-\y) -- (8,-\y);
            \end{scope}    
        \end{tikzpicture}
        \end{center}
        Then the weights $\lambda_{\M'} = \lambda_{\M}$ and $\dot{\lambda}{}^{\infty}_{\M'} = \dot{\sigma} (\dot{\lambda}{}^{\infty}_{\M})$.
        The $G$-modules generated by $v(\M), v(\M')$ are $V_G(\lambda_{\M}), V_G(\lambda_{\M}\otimes \det)$ respectively.
        These isomorphic irreducible $\lgo$-modules fail to retain isomorphic as $G$-module.
        Then there is an identification between the components given by 
        \begin{align*}
           &\left(V_G(\lambda_{\M}) \otimes M^{\lambda_{\M}}\right) \oplus \left(V_G(\lambda_{\M}\otimes \det) \otimes M^{\lambda_{\M}\otimes \det}\right) \\
        =  &\left(V_{\lgo}(\lambda_{\M}) \otimes V_{\lgdhat{}^{\infty}}(\dot{\lambda}{}^{\infty}_{\M})\right) \oplus \left(V_{\lgo}(\lambda_{\M}) \otimes V_{\lgdhat{}^{\infty}}(\dot{\sigma}(\dot{\lambda}{}^{\infty}_{\M}))\right) = V_{\lgo}(\lambda_{\M}) \otimes V_{\dot{\Sigma}{}^{\infty} \ltimes \lgdhat{}^{\infty}}(\dot{\lambda}{}^{\infty}_{\M}).
        \end{align*}
\end{itemize}
\end{Exam}

We state the duality theorem between finite dimensional algebras and infinite matrix algebras:
\begin{Thm}[cf.\cite{Wang1999}] \label{fin-inf}
    As a $(\sgmo \ltimes \lgo, \dot{\Sigma}{}^{\infty} \ltimes \lgdhat{}^{\infty})$-module, $\F$ decomposes as:
    \[
    \F = \bigoplus_{Y \in \P^l_{\infty}} V_{\sgmo_{\lleft} \ltimes \lgo_{\lleft}}(\lambda_{\M^h(Y)}) \otimes V_{\dot{\Sigma}{}^{\infty} \ltimes \lgdhat{}^{\infty}}(\dot{\lambda}{}^{\infty}_{\M^h(Y)}; \levelL)
    \]
    with joint highest weight vectors $v^h_Y$ given by Maya diagrams $\M^h(Y)$ associate to $Y$ defined above. 
\end{Thm}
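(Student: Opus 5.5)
The plan is to start from the $(G,\mathscr{R}^G)$-bimodule decomposition already obtained in \S\ref{sec-fin-inf},
\[
\F=\bigoplus_{\lambda} V_G(\lambda)\otimes M^{\lambda},
\]
with pairwise non-isomorphic irreducible summands on both sides. By Lemma \ref{howe-dual}, $\mathscr{R}^G$ is generated by $\liegd{}^{\infty}_f$. The normal-ordering and completion argument then lets each $M^{\lambda}$ be regarded as an irreducible $\lgdhat{}^{\infty}$-module.

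The first step is to show that every $M^{\lambda}$ is an irreducible highest weight $\lgdhat{}^{\infty}$-module, and to identify its highest weight. On monomials the Cartan elements of $\lgdhat{}^{\infty}$ act diagonally. Positive root vectors move black cells upward in the vertical Maya diagram, or cancel pairs of black cells. So the operator $-D^{\Cl}(0)$ together with the finite bead count gives a grading that is bounded above on each $G$-isotypic component. Hence each $M^{\lambda}$ contains a vector killed by $(\lgdhat{}^{\infty})^+$, and irreducibility forces $M^{\lambda}$ to be generated by it.

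To locate the joint highest weight vectors, I take a $(\lgo,\lgdhat{}^{\infty})$-joint highest weight vector that is a weight vector. A joint weight space corresponds to a fixed set of column and row bead counts. The aim is to show that such a vector is, up to scalar, a single monomial $v(\M)$ with $\M$ "packed": no upward vertical move, no leftward move within $\lgo$, and no admissible cancellation. The combinatorial claim is that these packed diagrams are exactly the $\M^h_Y$ with $Y\in\P^l_\infty$, together with their images under $\sgmo$ and $\dot{\Sigma}{}^{\infty}$. The weights $\lambda_{\M^h(Y)}$ run over all dominant weights occurring, and distinct $Y$ give distinct $G$-types, consistent with multiplicity-freeness. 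I expect this to be shown column by column. When $h=\tfrac12$, columns $\pm i$ are independent and must be top-justified; cancellation rules force the negative columns to be empty, giving $\M^+(Y)$. When $h=0$, the degree-$0$ cells $\mathring{\W}\cap\W^{h,-}$ carry the spin part. Here the top rectangle must be filled according to $Y_-^c$ placed in the negative columns, with the overflow $Y_+$ in degree $-1$, giving $\M^0_Y$.

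The final step is to pass from $(G,\lgdhat{}^{\infty})$ to $(\sgmo\ltimes\lgo,\dot{\Sigma}{}^{\infty}\ltimes\lgdhat{}^{\infty})$, following the two mechanisms in Example \ref{example-diag-auto-2}. If $\sigma(v)\neq v$, one $G$-irreducible splits into $V_{\lgo}(\lambda)\oplus V_{\lgo}(\sigma\lambda)$, which is $V_{\sgmo\ltimes\lgo}(\lambda)$. If $\dot\sigma(v)\neq v$, two $G$-summands differing by $\det$ merge into $V_{\lgo}(\lambda)\otimes V_{\dot{\Sigma}{}^{\infty}\ltimes\lgdhat{}^{\infty}}$. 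Since both cannot happen at once for $v^h_Y$, regrouping the $(G,\lgdhat{}^{\infty})$-decomposition gives exactly the stated sum over $Y\in\P^l_\infty$. The level $\levelL$ is read off from $K_{\bar a}\mapsto L_{\bar a}$ (or $l$).

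I expect the main obstacle to be the characterization step in the case $h=0$ with odd $R_{\overline{2h}}$ or branching $\lgo$. There the spin/Pin part $\bigwedge(\mathring{\W}\cap\W^{h,-})$ and the half rows complicate both the description of the packed diagrams and the matching with $G$-types. My first attempt would be to reduce to Wang's explicit $(G,\lgdhat{}^{\infty})$ highest weight vectors \cite{Wang1999}, and check that they coincide with $v(\M^h_Y)$ via the weight formula for $\dot{\lambda}{}^{\infty}_{\M}$.
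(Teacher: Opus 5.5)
Your outer frame matches the paper's: start from the multiplicity-free $(G,\mathscr{R}^G)$-decomposition, then regroup via Example \ref{example-diag-auto-2}. The gap is in your middle step.

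You aim to prove that \emph{every} $(\lgo,\lgdhat{}^{\infty})$-joint highest weight vector is, up to scalar, a single packed monomial. The only reason you give is that a joint weight space corresponds to fixed column and row bead counts. That is false, because the $\lgo$-weight only records the differences $b_i-b_{-i}$. For instance, take $h=\frac12$ and $l\ge 2$. The monomial with black cells at $(1,p,\gamma),(-1,p,\gamma)$ and the one with black cells at $(2,p,\gamma),(-2,p,\gamma)$ have the same $(\lgo,\lgdhat{}^{\infty})$-weight. So joint weight spaces are multi-dimensional. Your packedness analysis only concerns monomials, while a linear combination of non-packed monomials could a priori be annihilated by all raising operators through cancellation among the images of different monomials. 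Classifying packed diagrams, however carefully done column by column, therefore does not rule out other joint highest weight vectors. Some uniqueness input is needed, and the combinatorics does not supply it.

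The paper gets that input from the duality itself and never needs monomiality as a step. It uses three facts.
\begin{enumerate}
\item Each $v(\M^h_Y)$ is a joint highest weight vector; this is the easy direction of your packedness check.
\item All $\lgo$-weights of $\F$ lie in a fixed coset of $\mathbb{Z}^l$ (\S\ref{Maya-diagrams}), and the $\lambda_{\M^h(Y)}$ with $Y\in\P^l_\infty$ exhaust, up to $\sgmo$, the dominant weights there. Hence every block $V_G(\lambda)\otimes M^{\lambda}$ contains some $v(\M^h_Y)$ or one of its $\sgmo$- or $\dot{\Sigma}{}^{\infty}$-images.
\item The decomposition is strongly multiplicity-free. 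This forces the joint highest weight vector in each block to be unique up to scalar and diagram automorphisms, and identifies $M^{\lambda}$ with the irreducible module generated by the $\lgdhat{}^{\infty}$-part of $v(\M^h_Y)$.
\end{enumerate}
That all joint highest weight vectors are monomials is then a corollary rather than an input.

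You already mention both exhaustion and multiplicity-freeness, so the repair is to make them the core of the argument and drop the monomiality claim. Your Step 1, the bounded grading used to produce highest weight vectors in each $M^{\lambda}$, is sound but becomes unnecessary, since $v(\M^h_Y)$ already sits in the relevant block.
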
 

\begin{proof}
This theorem is nothing but a higher-level analogue of \cite{Wang1999}. We only give a sketch. The key point is that up to diagram automorphism the choice of $\lambda_{\M^h(Y)}$ exhausts all possible highest weight which can show up in certain direct summands of $\F$.
By general duality theory, the decomposition of $\F$ has to be strongly multiplicity free, and therefore the above decomposition exhausts all irreducible summands and the proof is finished.
\end{proof}

\begin{Remark}
    We cannot directly carry the general duality theory to affine-affine case, since in the former case a variant form of Jacobson density theorem is needed and therefore the finite dimensional condition is crucial.
    Alternatively, we can combine finite-dimensional duality theory and the identities on anomalies to deduce the affine duality.
\end{Remark}

\subsection{From (fin, inf) to (aff, aff)}\label{pf-main-dual}
We are now in the position to prove the main duality theorem.

\begin{proof}[Proof of Theorem \ref{mainDualThmIntro}]
By definition of $\lgo_{\lleft}$, $\lgdhat{}^{\infty}$, we have the following diagram. 
\[
\begin{matrix}
    \lghat_{\lleft} & \oplus & \lgdhat & \curvearrowright & \F \\
    \begin{turn}{90} $\subset$ \end{turn} & & \begin{turn}{90} $\supset$ \end{turn} & & \begin{turn}{90} $=$ \end{turn} \\
    \lgo_{\lleft} & \oplus & \lgdhat{}^{\infty} & \curvearrowright & \F
\end{matrix}
\]
The second row is a dual pair we obtained in the previous subsection. We need to find all joint highest weight vectors in $\F$ with respect to $\lghat_{\lleft}$ and $\lgdhat$.

By Theorem \ref{fin-inf}, for each $Y=(y_1,y_2,\dots,y_l)\in \P_{\infty}^l$ a partition with at most $l$ parts, there is  a joint highest weight vector 
\[
v^h_Y\in V_{\sgmo_{\lleft} \ltimes \lgo_{\lleft}}(\lambda_{\M^h(Y)}) \otimes V_{\dot{\Sigma}{}^{\infty} \ltimes \lgdhat{}^{\infty}}(\dot{\lambda}{}^{\infty}_{\M^h(Y)}; \levelL).
\] 
Consider the weight $(\lambda_{\M^h(Y)}, \dot{\lambda}_{\M^h(Y)}; {\sf d}_{\M^h(Y)})$. 
Up to $\Sigma_{\lleft}$, the weight $\lambda_{\M^h(Y)}$ lies in $\sigdom(\levelR)$ if and only if $(\lambda_{\M^h(Y)} | \theta_{\lleft}) \le \levelR$, or equivalently, $y_1 \le r$.
Therefore the weights $\lambda_{\M^h(Y)}$, $Y \in \P^{l}_{r}$ exhaust all possible classical parts of affine weights appearing in $\F$. 
In these cases we can abbreviate $\Y$ for $\M^h(Y)$.
It is routine to check that for $Y \in \P^{l}_{r}$, each $v^h_{Y}$ is clearly a joint highest weight vector with respect to $(\lghat_{\lleft}, \lgdhat)$ of classical weight $(\lambda_{\Y}, \dot{\lambda}_{\Y})$.
In particular, $\dot{\lambda}_{\Y} \in \dot{P}^+_{\dot{\Sigma}}(\levelL)$.
By Proposition \ref{VirEv} we have an identity for anomalies:
\begin{equation}
    {\sf d}^{\lghat}_{\lambda_{\Y};\levelR} + {\sf d}^{\lgdhat}_{\dot{\lambda}_{\Y};\levelL} =
    \begin{cases}
        \frac{1}{2}{\sf d}_{Y} + \frac{LR}{32}, &\mbox{ for type } (A^{(2)},A^{(2)});\\
        {\sf d}_{\Y}, &\mbox{ for other types.}
    \end{cases}\tag{5.2.1}\label{H1}
\end{equation}

Let $v \in \F$ be a $(\lghat_{\lleft}, \lgdhat)$-highest weight vector of classical weight $(\lambda, \dot{\mu})$. We shall prove that $v$ is equal to some $v_Y^h$ for some $Y\in\P^l_r$ up to scalar and the actions of $(\Sigma,\dot{\Sigma})$. 
Up to diagram automorphisms we may assume $\lambda \in \sigdom(\levelR)_{\lleft}$ and $\dot{\mu} \in \dot{P}^+_{\dot{\Sigma}}(\levelL)$.
Restricting to $\lgo_{\lleft}$, $v$ is clearly a $\lgo_{\lleft}$-highest weight vector of weight $\lambda$. We may assume $\lambda = \lambda_{\Y}$ for some $Y \in \P^{l}_{r}$.
Then according to Theorem \ref{fin-inf}, $v$ must lie in the space $V_{\lgo_{\lleft}}(\lambda_{\Y}) \otimes V_{\dot{\Sigma}{}^{\infty} \ltimes \lgdhat{}^{\infty}}(\dot{\lambda}{}^{\infty}_{\Y})$, 
or equivalently, $v = g. \dot{\sigma}{}^{\infty}(v^h_Y)$ for some $g \in \lgdhat{}^{\infty}, \dot{\sigma}{}^{\infty} \in \dot{\Sigma}{}^{\infty}$.
Since $Y \in \P^{l}_{r}$, an easy observation on abacus shows that $\dot{\sigma}{}^{\infty}(v^h_Y) = \dot{\sigma}(v^h_Y)$ for some $\dot{\sigma} \in \dot{\Sigma}$.
Then $\dot{\sigma}(v) = \dot{\sigma} g \dot{\sigma} (v^h_Y) = \dot{\sigma}(g).(v^h_Y)$ is again a $(\lghat_{\lleft}, \lgdhat)$-highest weight vector.
Up to diagram automorphisms we can reduce our discussion to the case $v = g. v^h_Y$ for some $g \in (\lgdhat{}^{\infty})^-$.
Again by Proposition \ref{VirEv} we have
\begin{equation}
    {\sf d}^{\lghat_{\lleft}}_{\lambda_{\Y};\levelR} + {\sf d}^{\lgdhat}_{\dot{\mu};\levelL} = 
    \begin{cases}
        \frac{1}{2}{\sf d}_{v} + \frac{LR}{32}, &\mbox{ for type } (A^{(2)},A^{(2)});\\
        {\sf d}_{v}, &\mbox{ for other types.}
    \end{cases}
       \tag{5.2.2}\label{H2}
\end{equation}

We claim that $\dot{\mu}=\dot{\lambda}_{\Y}$. Canceling  out ${\sf d}^{\lghat_{\lleft}}_{\lambda_{\lleft}(Y);\levelR}$ from (\ref{H2}) and (\ref{H1}) and taking formula (\ref{Formula-vacuum}) into account, in both cases we get
\begin{equation}
    \frac{(\dot{\mu} | \dot{\mu} + 2\dot{\rho}) - (\dot{\lambda}_{\Y} | \dot{\lambda}_{\Y} + 2\dot{\rho})}{2 (\levelL + \ddCox)} = {\sf d}_v - {\sf d}_{\Y}. \tag{5.2.3}\label{H}
\end{equation}

We discuss how the action of $(\lgdhat{}^{\infty})^-$ on $v_Y^h$ affects the eigenvalues of $D^{\Cl}(0)$. 
Let $v = \sum v_i$ be a decomposition such that each $v_i$ is non-zero of weight $\dot{\mu}{}^{\infty}_i$ with respect to $\lgdhat{}^\infty$. Then each $v_i$ is of the form $g_i v^h_Y$ with $g_i \in U(\lgdhat{}^{\infty})^-$ of weight $\dot{\zeta}_i = \dot{\mu}{}^{\infty}_i - \dot{\lambda}{}^{\infty}_{\Y} \in (\dot{Q}{}^{\infty})^-$.
Since with respect to $\tilde{\liegd}$, the weight vectors $v_i$ sum up to a weight vector $v$, we deduce that through ${\sf pr}$ each $\dot{\zeta}_i$ maps to the same element in affine root lattice, say,
\[
{\sf pr}(\dot{\zeta}_i) = (\dot{\mu} - \dot{\lambda}_{\Y}) + ({\sf d}_{\Y} - {\sf d}_v)\dot{\delta} \in \tilde{\dot{Q}}{}^{-}_{\ex}.
\]
In other words, writing $\dot{b} = {\sf d}_{\Y} - {\sf d}_v$ and $\bar{\dot{\beta}} = \dot{\mu} - \dot{\lambda}_{\Y}$, together with the decomposition $\dot{\delta}=\dot{\theta}_{
\ex}+\dot{a}_0\dot{\alpha}_{0,\ex}$, we have $\dot{b}\le 0$ and $\bar{\dot{\beta}} + \dot{b}\dot{\theta}_{\ex} \le 0$ with respect to $\dot{Q}{}^{-}_{\ex}$. By the inequality (\ref{Qexplus}),  since $\dot{\mu}, \dot{\lambda}_{\Y}, \dot{\rho} \in \dot{P}_{\dot{\Sigma}}^+$, we have 
\[
(\bar{\dot{\beta}} + \dot{b}\dot{\theta}_{\ex} \mid \dot{\mu}) + (\bar{\dot{\beta}} + \dot{b}\dot{\theta}_{\ex} \mid \dot{\lambda}_{\Y}) + (\bar{\dot{\beta}} + \dot{b}\dot{\theta}_{\ex} \mid 2\dot{\rho}) \le 0.
\]
Assume the equality holds, then all summands are zero since they are all non-positive. If $r=1$, then either $\bar{\dot{\beta}} + \dot{b}\dot{\theta}_{\ex} = 0$ or $\dot{\mu}=0=\dot{\lambda}_{\Y}$.  Now assume that $r>1$. For non-branching types, we can directly deduce that $\bar{\dot{\beta}} + \dot{b}\dot{\theta}_{\ex} = 0$ according to the fact that $(\dot{Q}^+ | \dot{\rho}) > 0$. 
For branching types recall that $\dot{\theta}_{\ex} = 2\dot{\epsilon}_1$, and the assumption implies that $\bar{\dot{\beta}} + \dot{b}\dot{\theta}_{\ex} = k \dot{\epsilon}_r$ for some $k \le 0$.
Write $\dot{\mu}-\dot{\lambda}_{\Y} = k\dot{\epsilon}_r-\dot{b}\dot{\theta}_{\ex}$, then $0 = (k\dot{\epsilon}_r | \dot{\mu}) - (k\dot{\epsilon}_r | \dot{\lambda}_{\Y}) = (k\dot{\epsilon}_r | k\dot{\epsilon}_r-\dot{b}\dot{\theta}_{\ex}) = k^2$, whence $\bar{\dot{\beta}} + \dot{b}\dot{\theta}_{\ex} = 0$.
As a conclusion, in all cases the above equality holds implies that either  $\bar{\dot{\beta}} + \dot{b}\dot{\theta}_{\ex} = 0$, or $\dot{\mu}=\dot{\lambda}_{\Y}=0$ and there is nothing to prove.

Moreover, from $\dot{\mu}, \dot{\lambda}_{\Y} \in \dot{P}^+_{\dot{\Sigma}}(\levelL)$ we deduce that $(\dot{\mu} | \dot{\theta}_{\ex}), (\dot{\lambda}_{\Y} | \dot{\theta}_{\ex}) \le \levelL$, and if both equalities hold, then $(\bar{\dot{\beta}} | \dot{\theta}_{\ex}) = 0$. Revisiting the equality (\ref{H}) we have:
\begin{align*}
    {\rm LHS} =& \frac{1}{2 \r (\levelL + \ddCox)} (\dot{\mu} - \dot{\lambda}_{\Y} | \dot{\mu} + \dot{\lambda}_{\Y} + 2 \dot{\rho}) \\
                =& \frac{1}{2 \r (\levelL + \ddCox)} \left((\bar{\dot{\beta}} + \dot{b}\dot{\theta}_{\ex} | \dot{\mu} + \dot{\lambda}_{\Y} + 2 \dot{\rho}) - (\dot{b}\dot{\theta}_{\ex} | \dot{\mu} + \dot{\lambda}_{\Y} + 2 \dot{\rho})\right) \\
                \le& \frac{-\dot{b}}{2 (\levelL + \ddCox)} (\dot{\theta}_{\ex} | \dot{\mu} + \dot{\lambda}_{\Y} + 2 \dot{\rho})
                \le \frac{-\dot{b}}{2 (\levelL + \ddCox)} (\levelL + \levelL + 2\ddCox) = -\dot{b} = {\rm RHS}.
\end{align*}
According to above discussion, the equality (\ref{H}) implies that $\bar{\dot{\beta}} + \dot{b}\dot{\theta}_{\ex} = 0$, and either $\dot{b} = 0$ or $(\bar{\dot{\beta}} | \dot{\theta}_{\ex}) = 0$, whence $(\bar{\dot{\beta}} | \dot{\theta}_{\ex}) = 0$ can again imply $\dot{b} = 0$ together with the former condition. 
Consequently we have $\bar{\dot{\beta}} = 0$ and $ \dot{b}= 0$. That is, $\dot{\mu} = \dot{\lambda}_{\Y}$ and ${\sf d}_v = {\sf d}_{\Y}$.
In other words, up to diagram automorphisms every $(\lghat_{\lleft}, \lgdhat)$-highest weight vector must have weight $(\lambda_{\Y}, \dot{\lambda}_{\Y}; {\sf d}_{\Y})$ for some $Y\in \P^{l}_{r}$.

Finally we consider the multiplicity. 
Based on the above discussion that ${\sf pr}(\dot{\zeta}_i) = 0$, we claim that $\dot{\zeta}_i = 0$ for each $i$.
Write $\dot{\zeta}_i = \dot{\mu}{}^{\infty}_i - \dot{\lambda}{}^{\infty}_{\Y} = \sum\limits_{(p, \gamma)} c_{(p, \gamma)} \dot{\epsilon}_{(p, \gamma)}$.
From ${\sf pr}(\dot{\zeta}_i) = 0$ we have the equations:
        \[
            \begin{cases}
                -c_{(-p,0)} + \sum\limits_{\gamma<0} \left( c_{(p,\gamma)} - c_{(-p,\gamma)} \right) = 0, \mbox{ for all } p>0;\\
                \sum\limits_{\gamma<0} \gamma \left(\sum\limits_{p>0} (c_{(p,\gamma)} + c_{(-p,\gamma)}) + c_{(0,\gamma)}\right) = 0.
            \end{cases}
        \]
 If $h = 0$, the choice of $Y\in \P^l_r$ implies that the weight $\dot{\lambda}{}^{\infty}_{\Y}$ concentrates in $(p,0)$ with $p<0$. That is, $c_{(p, \gamma)} \ge 0$ for $\gamma < 0$.
        Then the above equations force $\dot{\zeta}_i = 0$.
If $h = \frac{1}{2}$, analogously we have that $\dot{\lambda}{}^{\infty}_{\Y}$ concentrates in $(p, -\frac{1}{2})$ with $p>0$. That is, $c_{(p,\gamma)} \ge 0$ for $\gamma \ne -\frac{1}{2}$ or $\gamma = -\frac{1}{2}, p\le 0$.
        Again from the above equations we have $\dot{\zeta}_i = 0$.

As a conclusion $\dot{\mu}{}^{\infty}_i = \dot{\lambda}{}^{\infty}_{\Y}$ for each $i$. 
That is, $v$ and $v^h_Y$ lie in the same $(\lgo_{\lleft}, \lgdhat{}^{\infty})$-weight space of the module generated by $v^h_Y$, forcing $v = v^h_Y$ up to a scalar. This completes the proof of our main duality theorem.
\end{proof}

\section{Moving vectors and defects of weights}

Recall the classical theory of representation of type $\lslhat(l)$, which categorifies the Ariki-Koike algebras.
For $\Lambda \in \tilde{P}^+(\levelR)$, $\beta \in Q^+$ such that $\Lambda - \beta\in P(\Lambda)$, the defect of the block $\mathcal{H}^{\Lambda}_{\beta}$, 
$$
\defect(\Lambda, \beta) = \frac{1}{2}\big((\Lambda|\Lambda)-(\Lambda-\beta\mid \Lambda-\beta)\big)= (\Lambda|\beta) - \frac{1}{2}(\beta | \beta),
$$
has a nice combinatorial interpretation, see \cite{JL,LQ,LT,LQT} for details.
Consider an abacus corresponding to $(\Lambda, \beta)$. 
Drawing it vertically, one can operate a move on the abacus by pushing a bead upwards one slot. 
The total amount of such available moves equals precisely the defect of the block.
Inspired by this, we shall next define the vertical moves on abacus of arbitrary types.

In this section we shall fix a dual pair $(\lghat_{\lleft}, \lgdhat)$ and $h \in \{ 0, \frac{1}{2} \}$. Thus $L_{\bar{a}},R_{\bar{a}},a=0,1$ are accordingly fixed. For type $A^{(1)}$, the main result  Theorem \ref{thm-def-weight} below in this section was proved \cite{JL} (see also \cite[Lemma 4.1.15]{LQ}) combinatorially. For the sake of brevity, we only consider non-$A^{(1)}$ types. However, the method in this section can also be applied to type $A^{(1)}$.

\subsection{Move and moving vector}\label{moving-vector}
\begin{Def}
For $v(\M) \in \F$ a monomial basis element given by a Maya diagram $\M$, we say that a (vertical) move on $v(\M)$ is one of the following operations on $\dot{\B}(\dot{\M})$:
\begin{enumerate} 
    \item pushing up a bead upwards one slot to an empty position;
    \item on a half column, removing a bead at the most top position.
\end{enumerate}
    Record each move in the following ways:
\begin{enumerate}[label = {\rm (\roman*)}]
    \item if a move of type $(1)$ sends a bead into a row indexed by $p>0$, or pushes a bead out of a row indexed by $-p<0$, we record the move by the symbol ${\vec{e}}_p$;
    \item if a move of type $(1)$ sends a bead into a row indexed by $-1$, or pushes a bead out of a row indexed by $1$, we record the move by the symbol ${\vec{e}}_0$;
    \item if a move of type $(2)$ removes a bead at a row indexed by $-r$(resp. $1$), and in the column abacus, 
    \begin{itemize}
        \item there is a row indexed by $0$ (resp. $\gh{0}$), we record the move by the symbol ${\vec{e}}_r$ (resp. ${\vec{e}}_0$);
        \item there is no row indexed by $0$ (resp. $\gh{0}$), we record the move by the symbol $\frac{1}{2}{\vec{e}}_r$ (resp. $\frac{1}{2}{\vec{e}}_0$).
    \end{itemize}
\end{enumerate}

\medskip 
For a series of moves $w_1{\vec{e}}_{p_1}, \ldots, w_n{\vec{e}}_{p_n}$, where each $w_k \in \{1,\frac{1}{2}\}$, we record the total move as the formal sum $\sum_{k = 1}^n w_k{\vec{e}}_{p_k} \in \Span_{\frac{1}{2}\mathbb{Z}}\{{\vec{e}}_p \mid p = 0, 1, \ldots, r\}$.
\end{Def}

\begin{Remark}\label{remark-half-move}
    $(1)$ Flipping back to row abacus via the Uglov map \S\ref{Uglov-map}, the symbol ${\vec{e}}_p$ records the row on which the move happens.
    That is, as an operation on the row abacus, each move ${\vec{e}}_p, p = 1, 2, \ldots, r$ brings beads into the $p$-th row, while the move ${\vec{e}}_0$ carries beads out of the $1$-th row.

    $(2)$ In the sense of Remark \ref{double-half-row}, the symbols $\frac{1}{2}{\vec{e}}_p, p = 0, r$ are reasonable when we duplicate the half columns and consider on $\mathsf{D\dot{B}}(\M)$. In these cases, the given operation corresponding to only one move on a full column. Since the duplicated half column is $\frac{1}{2}$-weighted, a single move on this column is recorded as $\frac{1}{2}{\vec{e}}_p$.
    Note that the move $\frac{1}{2}{\vec{e}}_r$ exists only if $R_{\bar{0}}$ even, and $\frac{1}{2}{\vec{e}}_0$ exists only if $R_{\bar{1}} = 0$.
\end{Remark}

\begin{Exam}
    Consider the dual pair $(\lsohat(7,1), \lsohat(5))$ with $h = \frac{1}{2}$. On the following column abacus, we can perform a move on each column as indicated by the arrows:
    \begin{center}
    \begin{tikzpicture}[scale=0.4]
    \begin{scope}[shift={(-10,0)}]
	\draw[dotted] (0,0) grid[step=1] (4,-5);
	\draw[thick] (0,0) rectangle (4,-5);
	\draw[dashed] (3,0) --++(0,-5);
	\draw[dashed] (4,0) --++(0,-5);
	\draw[dashed] (0,-2) --++(4,0);
	\draw[dashed] (0,-3) --++(4,0);
	\draw[dotted] (-1,-3) grid[step=1] (0,-5);
	\draw[thick] (-1,-3) rectangle (0,-5);
	\node[right] at (4,-0.5) {${\scriptstyle 1}$};
	\node[right] at (4,-1.5) {${\scriptstyle 2}$};
	\node[right] at (4,-2.5) {${\scriptstyle 0}$};
	\node[right] at (4,-3.5) {${\scriptstyle -2}$};
	\node[right] at (4,-4.5) {${\scriptstyle -1}$};
	\draw[fill=black] (0.5,-0.5) circle (0.3);
    \draw[->] (0.5,-0.5) -- (0.5,0.5);
	\draw[fill=black] (3.5,-0.5) circle (0.3);
    \draw[->] (3.5,-0.5) -- (3.5,0.5);
	\draw[fill=black] (1.5,-1.5) circle (0.3);
    \draw[->] (1.5,-1.5) -- (1.5,-0.5);
	\draw[fill=black] (-0.5,-3.5) circle (0.3);
    \draw[->] (-0.5,-3.5) -- (-0.5,-2.5);
	\draw[fill=black] (0.5,-3.5) circle (0.3);
	\draw[fill=black] (3.5,-4.5) circle (0.3);
    \node[below] at (-0.5,-5) {${\scriptstyle \gh{0}}$};
    \node[below] at (0.5,-5) {${\scriptstyle 1}$};
    \node[below] at (1.5,-5) {${\scriptstyle 2}$};
    \node[below] at (2.5,-5) {${\scriptstyle 3}$};
    \node[below] at (3.5,-5) {${\scriptstyle 0}$};
    \node[below] at (1.75,-5){$\vdots$};
	\end{scope}
	\begin{scope}[shift={(-10,5)}]
	\draw[dotted] (0,0) grid[step=1] (3,-5);
	\draw[thick] (0,0) rectangle (3,-5);
	\draw[dashed] (0,-2) --++(3,0);
	\draw[dashed] (0,-3) --++(3,0);
	\draw[fill=black] (0.5,-0.5) circle (0.3);
	\draw[fill=black] (1.5,-0.5) circle (0.3);
	\draw[fill=black] (2.5,-0.5) circle (0.3);
	\draw[fill=black] (0.5,-1.5) circle (0.3);
	\draw[fill=black] (1.5,-1.5) circle (0.3);
	\draw[fill=black] (0.5,-2.5) circle (0.3);
	\draw[fill=black] (1.5,-2.5) circle (0.3);
	\draw[fill=black] (0.5,-3.5) circle (0.3);
	\draw[fill=black] (1.5,-3.5) circle (0.3);
	\draw[fill=black] (2.5,-3.5) circle (0.3);
    \draw[->] (2.5,-3.5) -- (2.5,-2.5);
	\draw[fill=black] (1.5,-4.5) circle (0.3);
	\draw[fill=black] (2.5,-4.5) circle (0.3);
	\node[right] at (4,-0.5) {${\scriptstyle 1}$};
	\node[right] at (4,-1.5) {${\scriptstyle 2}$};
	\node[right] at (4,-2.5) {${\scriptstyle 0}$};
	\node[right] at (4,-3.5) {${\scriptstyle -2}$};
	\node[right] at (4,-4.5) {${\scriptstyle -1}$};
    
    \node[above] at (1.75,0){$\vdots$};
	\end{scope}

    \node at (0,0) {$\longrightarrow $};

    \begin{scope}[shift={(5,0)}]
	\draw[dotted] (0,0) grid[step=1] (4,-5);
	\draw[thick] (0,0) rectangle (4,-5);
	\draw[dashed] (3,0) --++(0,-5);
	\draw[dashed] (4,0) --++(0,-5);
	\draw[dashed] (0,-2) --++(4,0);
	\draw[dashed] (0,-3) --++(4,0);
	\draw[dotted] (-1,-3) grid[step=1] (0,-5);
	\draw[thick] (-1,-3) rectangle (0,-5);
	\node[right] at (4,-0.5) {${\scriptstyle 1}$};
	\node[right] at (4,-1.5) {${\scriptstyle 2}$};
	\node[right] at (4,-2.5) {${\scriptstyle 0}$};
	\node[right] at (4,-3.5) {${\scriptstyle -2}$};
	\node[right] at (4,-4.5) {${\scriptstyle -1}$};
	\draw[fill=gray] (0.5,0.5) circle (0.3);
	\draw[fill=gray] (1.5,-0.5) circle (0.3);
	\draw[fill=black] (0.5,-3.5) circle (0.3);
	\draw[fill=black] (3.5,-4.5) circle (0.3);
    \node[below] at (-0.5,-5) {${\scriptstyle \gh{0}}$};
    \node[below] at (0.5,-5) {${\scriptstyle 1}$};
    \node[below] at (1.5,-5) {${\scriptstyle 2}$};
    \node[below] at (2.5,-5) {${\scriptstyle 3}$};
    \node[below] at (3.5,-5) {${\scriptstyle 0}$};
    \node[below] at (1.75,-5){$\vdots$};
	\end{scope}
	\begin{scope}[shift={(5,5)}]
	\draw[dotted] (0,0) grid[step=1] (3,-5);
	\draw[thick] (0,0) rectangle (3,-5);
	\draw[dashed] (0,-2) --++(3,0);
	\draw[dashed] (0,-3) --++(3,0);
	\draw[fill=black] (0.5,-0.5) circle (0.3);
	\draw[fill=black] (1.5,-0.5) circle (0.3);
	\draw[fill=black] (2.5,-0.5) circle (0.3);
	\draw[fill=black] (0.5,-1.5) circle (0.3);
	\draw[fill=black] (1.5,-1.5) circle (0.3);
	\draw[fill=black] (0.5,-2.5) circle (0.3);
	\draw[fill=black] (1.5,-2.5) circle (0.3);
	\draw[fill=black] (0.5,-3.5) circle (0.3);
	\draw[fill=black] (1.5,-3.5) circle (0.3);
	\draw[fill=gray] (2.5,-2.5) circle (0.3);
	\draw[fill=black] (1.5,-4.5) circle (0.3);
	\draw[fill=black] (2.5,-4.5) circle (0.3);
	\node[right] at (4,-0.5) {${\scriptstyle 1}$};
	\node[right] at (4,-1.5) {${\scriptstyle 2}$};
	\node[right] at (4,-2.5) {${\scriptstyle 0}$};
	\node[right] at (4,-3.5) {${\scriptstyle -2}$};
	\node[right] at (4,-4.5) {${\scriptstyle -1}$};
    \node[above] at (1.75,0){$\vdots$};
	\end{scope}
    \end{tikzpicture}
    \end{center}
    Among the five moves, the first three moves on the columns $1, 2, 3$ are of type $(1)$ and recorded as ${\vec{e}}_0$, ${\vec{e}}_1$, ${\vec{e}}_2$ respectively.
    The other two moves on the half columns $0, \gh{0}$ are of type $(2)$. 
    For the move on the half column $0$, since there is a row indexed by $0$ in the column abacus, we record the move as ${\vec{e}}_2$; while for the move on the half column $\gh{0}$, since there is no row indexed by $\gh{0}$ in the column abacus, we record the move as $\frac{1}{2}{\vec{e}}_0$.
    Conclusively, the total move on the given abacus is recorded as $\frac{3}{2}{\vec{e}}_0 + {\vec{e}}_1 + 2{\vec{e}}_2$.
\end{Exam}

Note that vertical moves on the abacus leave the horizontal $\lgo$-weight unchanged, but do change the $\lgdtilde$-weight.
Therefore, we can also study the moves by considering the change of weights they produce.
Define a linear map $\Psi: \Span_{\frac{1}{2}\mathbb{Z}}\{{\vec{e}}_p \mid p = 0, 1, \ldots, r\} \to \lhdtilde{}^*$,
\[\begin{array}{cc}
\begin{array}{rl}
    {\vec{e}}_p \mapsto \dot{\beta}_p = & \dot{\epsilon}_{p} - \dot{\epsilon}_{p+1},  1 \le p < r;\\
    {\vec{e}}_r \mapsto \dot{\beta}_r = & 
    \begin{cases}
    2\dot{\epsilon}_{r},& R_{\bar{0}}= 2r;\\
    \dot{\epsilon}_{r},& R_{\bar{0}} = 2r+1;
    \end{cases}
\end{array}
& \;
\begin{array}{rl}
    {\vec{e}}_0 \mapsto \dot{\beta}_0 = &
    \begin{cases}
    -\dot{\epsilon}_{1} + \dot{\epsilon}_{r} + \dot{\delta},& \lgdhat = \lslhat(r);\\
    -2\dot{\epsilon}_{1} + \dot{\delta},& R_{\bar{1}} = 0;\\
    -\dot{\epsilon}_{1} + \dot{\delta},& R_{\bar{1}} = 1.
    \end{cases}
\end{array}
\end{array}\]
Since $\Psi(\frac{1}{2}{\vec{e}}_p) = \frac{1}{2}\dot{\beta}_p$ for $p = 0, r$, the map $\Psi$ is well-defined.
It is easy to verify that $\Psi$ is injective in each case.
Therefore, through this embedding $\Psi$, we can regard a move as an element in $\lhdtilde{}^*$.

Observe that in the cases that $\lgdtilde$ is not of branching type, these $\dot{\beta}_p$'s are precisely the simple roots of $\lgdtilde$, and therefore a move can be regarded as an operation via the Chevalley generator $\dot{e}_p$.
Otherwise, if $p = 0$ or $r$ is a branching point on the Dynkin diagram, then $\dot{\beta}_p \notin \tilde{\dot{Q}}^+$ and the move cannot be directly interpreted as Lie action.
For these cases, we extend $\tilde{\dot{Q}}^+$ to a larger lattice of other types, such that all moves ${\vec{e}}_p$ as well as $\frac{1}{2}{\vec{e}}_p$ are simple roots of $\lgdtilde_{\ex}$. 

As mentioned in \S\ref{Parity}, the process of taking $\lgdtilde_{\ex}$ is given by replacing each pair of branching points at the head or tail of the Dynkin diagram of $\lgdtilde$ by a double arrow. The direction depends on whether there is a half row in the abacus as well as whether there is a move of type $\frac{1}{2}{\vec{e}}_p$, as shown in Table \ref{table-double-arrow}.
Note that although the first row and the last row of the table correspond to the same $\lgdhat_{\ex}$, only in the last row the moves $\frac{1}{2}{\vec{e}}_p$ can happen. 
In these cases, we may also think that the roots $\dot{\epsilon}_{r}$ and $-\dot{\epsilon}_{1} + \frac{1}{2}\dot{\delta}$ are $\frac{1}{2}$-weighted. 

\begin{table}[!htbp]
\centering
\begin{tabular}{|l|c||l|c|}
\hline
\multicolumn{1}{|c|}{}                  & head    &      & tail   \\ 
\hline
$R_{\bar{1}}$ odd                         &
\makecell{outward\\
\begin{tikzpicture}[scale=1.2]
    \draw[line width=.5pt, double distance=2pt,->-] (1,0) node[dot]{} node[below]{${\scriptstyle 1}$} -- (0,0) node[dot]{}node[below]{${\scriptstyle 0}$};
\end{tikzpicture} 
}
&
$R_{\bar{0}}$ odd                         &
\makecell{outward\\
\begin{tikzpicture}[scale=1.2]
    \draw[line width=.5pt, double distance=2pt,->-] (4,0) node[dot]{} node[below]{${\scriptstyle l-1}$} -- (5,0) node[dot]{}node[below]{${\scriptstyle l}$};
\end{tikzpicture} 
}
\\ 
\hline
$R_{\bar{1}}$ even, $L_{\overline{2h'}}$ even    &
\makecell{inward\\
\begin{tikzpicture}[scale=1.2]
    \draw[line width=.5pt, double distance=2pt,->-] (0,0) node[dot]{} node[below]{${\scriptstyle 0}$} -- (1,0) node[dot]{}node[below]{${\scriptstyle 1}$};
\end{tikzpicture} 
}
&
$R_{\bar{0}}$ even, $L_{\overline{2h}}$ even     &
\makecell{inward\\
\begin{tikzpicture}[scale=1.2]
    \draw[line width=.5pt, double distance=2pt,->-] (5,0) node[dot]{} node[below]{${\scriptstyle l}$} -- (4,0) node[dot]{}node[below]{${\scriptstyle l-1}$};
\end{tikzpicture} 
}
\\ 
\hline
$R_{\bar{1}}$ even, $L_{\overline{2h'}}$ odd    & 
\makecell{outward\\
\begin{tikzpicture}[scale=1.2]
    \draw[line width=.5pt, double distance=2pt,->-] (1,0) node[dot]{} node[below]{${\scriptstyle 1}$} -- (0,0) node[dot]{}node[below]{${\scriptstyle 0}$};
\end{tikzpicture} 
}
& 
$R_{\bar{0}}$ even, $L_{\overline{2h}}$ odd     & 
\makecell{outward\\
\begin{tikzpicture}[scale=1.2]
    \draw[line width=.5pt, double distance=2pt,->-] (4,0) node[dot]{} node[below]{${\scriptstyle l-1}$} -- (5,0) node[dot]{}node[below]{${\scriptstyle l}$};
\end{tikzpicture} 
}
\\ 
\hline
\end{tabular}
\caption{The replacement rule of the branching point on the Dynkin diagram of $\lgdhat$}\label{table-double-arrow}
\end{table}

Detailed choices of $\lgdhat_{\ex}$ for each dual pair are given in Table \ref{table-g-ex}.
In each case the assignment $\dot{\epsilon}_p \mapsto \dot{\epsilon}_p$ induces an embedding $\tilde{\dot{Q}}^+ \hookrightarrow \tilde{\dot{Q}}^+_{\ex}$.
In particular, one can routinely check that $\tilde{\dot{P}}^+_{\ex}(\levelR) \subset \tilde{\dot{P}}^+_{\dot{\Sigma}}(\levelR)$.
Taking $\tilde{\dot{Q}}^+_{\ex}:=\tilde{\dot{Q}}^+$ in non-braching cases, we can uniformly conclude that the moves corresponding to the simple roots in $\tilde{\dot{Q}}^+_{\ex}$.  

\begin{table}[!htbp]
\centering
\renewcommand{\arraystretch}{1.4}
\begin{tabular}{|c|c||c|c|}
\hline
Dual pair $(\lghat, \lgdhat)$   &   $\lgdhat_{\ex}$  &   Dual pair $(\lghat, \lgdhat)$   &   $\lgdhat_{\ex}$  \\ 
\hline
$(\lsohat(2l), \lsohat(2r))$    &   $\lsphat(2r)$      &   $(\lsohat(2l), \lsohat(2r+1))$  &   $\lslhat{}^{(2)}(2r+1)$  \\
\hline
$(\lsohat(2l+1), \lsohat(2r)), h = 0$  &    $\lslhat{}^{(2)}(2r+1)$ &   $(\lsohat(2l+1), \lsohat(2r)), h = \frac{1}{2}$ &   ${}^{t}\!\!\lslhat{}^{(2)}(2r+1)$ \\
\hline
$(\lsohat(2l+1), \lsohat(2r+1)), h = 0$  &    $\lslhat{}^{(2)}(2r+1)$ &   $(\lsohat(2l+1), \lsohat(2r+1)), h = \frac{1}{2}$ &   $\lsohat(2r+1, 1)$ \\
\hline
$(\lsohat(2l+1,1), \lsohat(R_{\bar{0}}))$&    $\lsohat(2r+1,1)$     &   $(\lglhat{}^{(2)}(2l), \lslhat{}^{(2)}(2r))$    &   $\lsphat(2r)$ \\
\hline
    &            &   $(\lglhat{}^{(2)}(2l+1), \lslhat{}^{(2)}(2r))$    &   $\lslhat{}^{(2)}(2r+1)$ \\
\hline
\end{tabular}
\caption{The choice of $\lgdhat_{\ex}$ for each dual pair}\label{table-g-ex}
\end{table}

Next we give the definition of moving vector, which records the total move from one monomial basis element to another.

\begin{Def}
    Suppose $v, v'$ are two monomial basis elements that $v'$ can be obtained by $v$ via successive moves $w_1{\vec{e}}_{p_1}, \ldots, w_n{\vec{e}}_{p_n}$ with $w_k \in \{1,\frac{1}{2}\}, k = 1, 2, \ldots, n$. 
    We define the moving vector ${\bf mv}(v,v')$ from $v$ to $v'$ to be the coefficient vector of the formal sum $\sum_{k = 1}^n w_k{\vec{e}}_{p_k}$. 
    That is, ${\bf mv}(v,v') = (m_0,\ldots, m_r)\in (\frac{1}{2}\mathbb{N})^{r+1}$ with $\sum_{k = 1}^n w_k{\vec{e}}_{p_k} = (m_0,\ldots, m_r)\cdot({\vec{e}}_{0}, \ldots, {\vec{e}}_{r})^T$.

    In particular, if there is no more available move on $v'$, we write $v_{\rm max}:=v'$ the highest state that $v$ can move to. In this case we abbreviate ${\bf mv}(v):= {\bf mv}(v,v_{\rm max})$ and call it the moving vector of $v$. This array of components records the process pushing all beads upwards as high as possible.
\end{Def}

Under this definition ${\bf mv}(v,v')$ is determined only by the vertical weights viewed on column abaci of $v$ and $v'$, independent of the order or route of the moves. 
Note that the sum ${\bf mv}(v,v')\cdot \mathbf{1} = \sum_{k = 1}^n w_k$ records the number of moves we need to reach $v'$ from $v$.
This is also a weighted height of $\sum_{k = 1}^n w_k \dot{\beta}_{p_k}$ in $\tilde{\dot{Q}}^+_{\ex}$, denoted by ${\rm ht}_{\ex}(\sum_{k = 1}^n w_k \dot{\beta}_{p_k}) = \sum_{k = 1}^n w_k$.

By the definition of column abacus we know that all possible moves on any abacus will terminate in finite steps.
We study the properties of such terminal abacus via the combinatorial model. 
As mentioned above, it is easy to verify that during each move, the horizontal weight of the abacus, as well as the charge sequence of each column remains unchanged.
Suppose $v$ is a monomial basis element in $\F$ of weight $(\lambda, \dot{\mu}; -k)$, it is easy to see that its terminal abacus $v_{\max}$ is precisely the vacuum abacus with the same charge sequence $\sum_{i=1}^l (b_i - b_{-i})\epsilon_i = \lambda - \frac{R_{\overline{2h}}}{2}\mathbf{1}$ as $v$ (see Remark \ref{def-abacus}).
Note that $v_{\max}$ is independent of $\dot{\mu}, -k$. That is, for any abacus of horizontal weight $\lambda$, its terminal abacus is the same vacuum abacus defined by the charge sequence $\lambda - \frac{R_{\overline{2h}}}{2}\mathbf{1}$.

The above combinatorial discussion provides us a way to characterize $v_{\max}$.
Suppose $v \in \F_{(\lambda, \dot{\mu}; -k)}$ is a monomial basis element, then $v_{\max}$ has the unique maximal vertical $\lgdtilde_{\ex}$-weight among the vectors with the same horizontal $\lgo$-weight as $v$.
In fact, Lemma \ref{Pmax} below also proves this through the Lie theoretic perspective.
To state this characterization precisely, for $\lambda' \in P, \dot{\mu}' \in \dot{P}$, we define the sections
\begin{align*}
    P_{(\lambda', -)}:=& \{ \dot{\nu} - k \dot{\delta} \in \tilde{\dot{P}} \mid \F_{(\lambda', \dot{\nu}; -k)} \ne 0 \}, \\
    P_{(-, \dot{\mu}')}:=& \{ \nu - k \delta \in \tilde{P} \mid \F_{(\nu, \dot{\mu}'; -k)} \ne 0 \}.
\end{align*}
Then $v_{\max}$ is of weight $(\lambda, \dot{\mu}_{\max}; -k_{\max})$ with $\dot{\mu}_{\max} -k_{\max}\dot{\delta} = \max_{\dot{Q}_{\ex}} P_{(\lambda, -)}$, 
and therefore the moving vector corresponds to $\Psi({\bf mv}(v)) = (\dot{\mu}_{\max} -k_{\max}\dot{\delta}) - (\dot{\mu} - k\dot{\delta})$.
This observation shows that all monomial basis elements lying in the same weight space $\F_{(\lambda, \dot{\mu}; -k)}$ have the same moving vector.

Take $\Lambda \in \tilde{P}^+(\levelR)$, $\beta = \bar{\beta} + b\delta \in \tilde{Q}^+$ such that $\Lambda - \beta \in P(\Lambda)$. 
Suppose the dual weight pair corresponding to $\Lambda$ is $(\bar{\Lambda}, \dot{\bar{\Lambda}})$ with $\dot{\bar{\Lambda}}\in P^+_{\dot{\Sigma}}(\levelL)$, so that the joint highest weight vector $v_{\Lambda}$ is in $\F_{(\bar{\Lambda}, \dot{\bar{\Lambda}}; -{\sf d}_{\Lambda})}$.
Starting from $v_{\Lambda}$, after applying a series of $\lghat_{\lleft}$ operators we can get $v_0 \in \F_{(\bar{\Lambda} - \bar{\beta}, \dot{\bar{\Lambda}}; -{\sf d}_{\Lambda} - b)}$.
Let $v$ be any monomial summand of $v_0$. Then we call the abacus corresponding to $v$ a reduced $\lghat_{\lleft}$-abacus of weight $\Lambda - \beta$.
In other words, a reduced abacus of weight $\Lambda - \beta$ is any abacus of weight $(\bar{\Lambda} - \bar{\beta}, \dot{\bar{\Lambda}}; -{\sf d}_{\Lambda} - b)$.
It is remarkable here that for type $A^{(1)}$, the condition added on vertical weight to be $\dot{\bar{\Lambda}}$ is equivalent to requiring the charge sequence lying in $\mathcal{A}^r_e$ (see \cite[pp.16]{LQ}, for example). The moving vector ${\bf mv}(v)$ remains unchanged for a different choice of $v$, and depends only on $\Lambda, \beta$. This means the moving vector is a block invariant.
  
\begin{Def}
    We define the moving vector ${\bf mv}(\Lambda, \beta)$ associate to the weight $\Lambda - \beta$ to be the moving vector of  any reduced $\lghat$-abacus of weight $\Lambda-\beta$.
\end{Def}

    Moreover, Since $\F_{(\lambda, \mu, -k)} \cong \F_{w(\lambda, \mu, -k)}$ for $w$ in the affine Weyl group,  we see that all weights lying in a  Weyl group orbit have the same moving vector.

Concluding the process, the relations between $\Lambda, \beta$ and ${\bf mv}(\Lambda, \beta)$ can be roughly interpreted by the following diagram:
\begin{center}
$
\xymatrix{
        & & (\bar{\Lambda} - \bar{\beta}, \dot{\mu}_{\max}; -k_{\max}) \\
    (\bar{\Lambda}, \dot{\bar{\Lambda}}; -{\sf d}_{\Lambda})\ar[rr]^(.45){-\beta = -\bar{\beta} - b\delta}_(.45){\lghat_{\lleft} \text{-action}}   & &  (\bar{\Lambda} - \bar{\beta}, \dot{\bar{\Lambda}}; -{\sf d}_{\Lambda} - b) \ar@{-->}[u]^{{\bf mv}(\Lambda, \beta)}_{\text{move}}
}
$
\end{center}

\subsection{Orders of partitions and weights}

According to the discussion above, the weight of the terminal abacus is the unique maximal weight in the section $P_{(\lambda, -)}$ with respect to the order ${\dot{Q}_{\ex}}^+$.
In order to study the moving vector through the weights, first we need some lemmas analysing the order of weights.
Recall from Theorem \ref{mainDualThmIntro} that, up to graph automorphisms, the dominant weights we concern are given by the partitions and their Young diagrams, thus the order of weights is closely related to the (weak) dominance order of partitions.

We make the following convention. When the root lattice is clear and the order is with respect to ${Q}^+$, ${\dot{Q}}^+$, $\tilde{Q}^+$ or $\tilde{\dot{Q}}^+$, we simply write $>$ and omit the specification; for example, if $\lambda, \mu \in \dlho$, then $\lambda > \mu \Leftrightarrow \lambda - \mu \in {Q}^+$.
On the other hand, when the order is with respect to ${\dot{Q}}^+_{\ex}$ or $\tilde{\dot{Q}}^+_{\ex}$, we will explicitly write $>_{\ex}$. 

Recall the definition of the  dominance order of partitions: for two partitions $Y = (y_1, y_2, \ldots)$ and $Z = (z_1, z_2, \ldots)$, we say $Y$ dominates $Z$, denoted by $Y \succeq Z$, if for all $k \ge 1$, the $k$-th prefix sum $\sum_{i=1}^k y_i \ge \sum_{i=1}^k z_i$; and we say $Y$ strictly dominates $Z$, denoted by $Y \succ Z$, if $Y \succeq Z$ and $Y \ne Z$.

Recall from the formulae (\ref{weightY}) and (\ref{weightY-A}) the relations between the partitions and the weights of Maya diagrams, we have the following results. 

\begin{Lem}\label{order-to-order}
    Let $Y,Z\in \P^l_r$ be two partitions, whose corresponding Maya diagrams of the Young diagrams are of weights $(\lambda_{\Y}, \dot{\lambda}_{\Y})$ and $(\lambda_{\Z}, \dot{\lambda}_{\Z})$, respectively.
    \begin{enumerate}
        \item For $h = 0$, $\lambda_{\Y} > \lambda_{\Z}$ implies $Y^c \succ Z^c$; 
        for $h = \frac{1}{2}$, $\lambda_{\Y} > \lambda_{\Z}$ implies $Y \succ Z$.
        \item Suppose $\dot{\lambda}_{\Y} - \dot{\lambda}_{\Z} \in \dot{Q}$. Then $\dot{\lambda}_{\Y} >_{\ex} \dot{\lambda}_{\Z}$ if and only if $Y^t \succ Z^t$.
    \end{enumerate}
\end{Lem}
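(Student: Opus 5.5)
The plan is to reduce both statements to elementary comparisons of coordinate vectors, using the explicit weight formulas (\ref{weightY}) and (\ref{weightY-A}) together with the standard description of the positive root cones. The key observation is that the weights $\lambda_{\Y}$ and $\lambda_{\Z}$ differ by exactly $Y^c-Z^c$ (for $h=0$) or $Y-Z$ (for $h=\frac12$). The reason is that the shift $\frac{R_{\bar 0}\pmod 2}{2}\mathbf{1}$, respectively $\frac{R_{\bar 1}}{2}\mathbf 1$, is common to both weights. In type $A^{(1)}$ the shift $-\frac r2\mathbf 1$ is likewise common.

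For part (1), I would use the fact that in every classical finite type appearing as $\lgo$, each simple root except possibly the last is $\epsilon_i-\epsilon_{i+1}$. The last simple root is $\epsilon_l$, $2\epsilon_l$ or $\epsilon_{l-1}+\epsilon_l$, and in type $A$ there is no last root. Hence every $\beta\in Q^+$, written in coordinates $\beta=(\beta_1,\dots,\beta_l)$, has all prefix sums $\beta_1+\cdots+\beta_k\ge 0$. This can be checked directly on each simple root:
\begin{itemize}
\item $\epsilon_i-\epsilon_{i+1}$ has prefix sums in $\{0,1\}$;
\item $\epsilon_l$ and $2\epsilon_l$ have nonnegative prefix sums;
\item $\epsilon_{l-1}+\epsilon_l$ has nonnegative prefix sums.
\end{itemize}
Prefix-sum nonnegativity is additive, so it holds on all of $Q^+$. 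Therefore $\lambda_{\Y}-\lambda_{\Z}\in Q^+\setminus\{0\}$ gives $Y^c\succeq Z^c$ (respectively $Y\succeq Z$) with the two partitions distinct, which is exactly strict dominance.

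For part (2), only the prefix-sum direction generalises, so I would first identify $\dot Q^+_{\ex}$ concretely. By Table \ref{table-g-ex}, $\lgdhat_{\ex}$ is always of type $C$, $B$ or $A^{(2)}_{2r}$-like. Its finite root lattice therefore has simple roots $\dot\epsilon_p-\dot\epsilon_{p+1}$ together with a final root $2\dot\epsilon_r$ or $\dot\epsilon_r$, so that $2\dot\epsilon_r$ always lies in $\dot Q^+_{\ex}$. In these types, $\dot Q^+_{\ex}$ is precisely the set of integral vectors (with the appropriate parity condition when the last root is $2\dot\epsilon_r$) all of whose prefix sums are nonnegative. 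The forward implication then follows exactly as in part (1).

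For the converse, suppose $Y^t\succ Z^t$. The vertical weights differ by $Y^t-Z^t$, since the shift vectors again cancel. This difference has nonnegative prefix sums and is nonzero, and I would write it as
\[
\sum_{p<r}s_p(\dot\epsilon_p-\dot\epsilon_{p+1})+s_r\dot\epsilon_r,
\]
where $s_p$ is the $p$-th prefix sum. All coefficients $s_p$ are nonnegative integers. The only possible obstruction is when the last root is $2\dot\epsilon_r$ and $s_r$ is odd. Here the hypothesis $\dot\lambda_{\Y}-\dot\lambda_{\Z}\in\dot Q$ forces $s_r=\size(Y)-\size(Z)$ to be even, by the same argument used in Lemma \ref{root-sieving-eq}.

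I expect the main obstacle to be this parity bookkeeping in part (2). It has to be checked case by case against Table \ref{table-g-ex} to confirm that the extended last root is indeed $2\dot\epsilon_r$ exactly when $\dot Q$ imposes the even-size condition. The type $A^{(1)}$ case with bracketed weights $[Y^t]$ also needs separate handling, where the comparison is taken modulo $\mathbf 1$ and uses only the $\dot\epsilon_p-\dot\epsilon_{p+1}$ roots. There, the condition $\dot\lambda_{\Y}-\dot\lambda_{\Z}\in\dot Q$ forces equal sizes, and prefix-sum nonnegativity is again equivalent to membership in $\dot Q^+$.
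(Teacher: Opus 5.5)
Your proof is correct for the non-$A^{(1)}$ types to which Section~6 is restricted, and it is essentially the paper's argument. Part (1) and the forward half of (2) rest on nonnegativity of prefix sums of simple roots. For the converse, your explicit decomposition, with coefficients equal to the prefix sums, is the same computation as the paper's observation that each simple root of the branch-free $\dot Q_{\ex}$ has a single nonzero prefix sum.

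The only real difference is how you get integrality of the last coefficient. The paper gets it for free from $\dot\lambda_{\Y}-\dot\lambda_{\Z}\in\dot Q\subseteq\dot Q_{\ex}$, so no case-by-case parity check is needed. Also, the ``exactly when'' you plan to verify is false: for $(\lsohat(2l+1),\lsohat(2r))$ with $h=0$, $\dot Q$ has even coordinate sums but the last root of $\dot Q_{\ex}$ is $\dot\epsilon_r$. This does no harm, since you only use the other direction, which is immediate.

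Drop the type $A^{(1)}$ aside. There $[Y^t]-[Z^t]\in\dot Q$ only forces $\size(Y)\equiv\size(Z)\pmod r$, not equal sizes. For example, $Z=\varnothing$ and $Y=(r,0,\dots,0)$ give $\dot\lambda_{\Y}=\dot\lambda_{\Z}$ while $Y^t=\mathbf 1\succ Z^t$. So the equivalence genuinely fails in that type, which is why the section excludes it.
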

\begin{proof}
We first consider the prefix sum $\sum\limits_{i=1}^k c_i$ for  a simple root $\alpha_j = \sum\limits_{i=1}^l c_i \epsilon_i$ in a root system of finite classical type of rank $l$.
It is straightforward to check that the prefix sums are all non-negative. Moreover, if $j$ is not a branching point on the Dynkin diagram, then $\sum\limits_{i=1}^k c_i$ is non-zero only for $k=j$. 

(1) For $h=0$, by formula (\ref{weightY}), $\lambda_{\Y}-\lambda_{\Z}=Y^c-Z^c$ belongs to $Q^+$. Therefore, all prefix sums of $Y^c-Z^c$ is non-negative. Hence $Y^c\succ Z^c$. The case $h=\frac{1}{2}$ is similar.

    (2) The necessity  is similar to (1). For the sufficiency, suppose $Y^t \succ Z^t$. It follows from formula (\ref{weightY}) that  $\dot{\beta} = \dot{\lambda}_{\Y} - \dot{\lambda}_{\Z} = Y^t - Z^t \in \dot{Q}$.
    Writing it as linear combinations of simple roots in $\dot{Q}_{\ex}$ or the standard basis $\dot{\epsilon}_p$ respectively, we have $\dot{\beta} = \sum\limits_{p = 1}^r n_p \dot{\beta}_p = \sum\limits_{p = 1}^r b_p \dot{\epsilon}_p$.
    Since the Dynkin diagram of $\dot{Q}_{\ex}$ contains no branching points, each $\dot{\beta}_p$ has only its $p$th prefix sum non-zero. Thus, for all $p=1,2,\ldots,r$, the $p$th prefix sum of $\dot{\beta}$ is $n_p$ times the $p$th prefix sum  of $\dot{\beta}_p$. Since $\dot{\beta}\succ 0$, we deduce that $n_p$'s are   all non-negative and not all $0$, forcing $\dot{\beta} \in \dot{Q}_{\ex}^+$.
\end{proof}

In the following discussion, we shall consider the pair of dual weights $(\lambda, \dot{\lambda}) \in P^+(\levelR) \times \dot{P}^+(\levelL)$ such that $V_{\lghat}(\lambda;\levelR) \otimes V_{\lgdhat}(\dot{\lambda};\levelL)$ is a summand of $\F$.
That is, there exists a partition $Y_{\lambda} \in \P^l_r$ such that $\lambda \in \Sigma(\lambda_{\Y_{\lambda}})$ and $\dot{\lambda} \in \dot{\Sigma}(\dot{\lambda}_{\Y_{\lambda}})$.
In particular, if we make the further assumption that $(\lambda, \dot{\lambda}) \in \sigdom(\levelR) \times \dot{P}^{+}_{\dot{\Sigma}}(\levelL)$, then we have $\lambda = \lambda_{\Y_{\lambda}}, \dot{\lambda} = \dot{\lambda}_{\Y_{\lambda}}$, and the joint highest weight vector of the summand is $v(\Y_{\lambda})$.
In this case, the pair $(\lambda, \dot{\lambda})$ is in $1-1$ correspondence, with the corresponding Young diagrams in conjugation or complement with each other.

The following lemma reveals how the dominance order of partitions behaves under taking conjugates or conjugate complements (see \S\ref{Parity}). 

\begin{Lem}\label{reverse-dom-order}
    Let $Y,Z\in \P^l_r$ be two partitions  and let $n \in \mathbb{N}$.
    \begin{enumerate}
        \item If $n \ge \size(Y)$, and $Y + (n - \size(Y)) \epsilon_1 \succeq Z$, then $n \ge \size(Z)$, and $Z^t + (n - \size(Z)) \dot{\epsilon}_1 \succeq Y^t$.
        In particular, if $\size(Y)=\size(Z)$, then $Y \succeq Z$ if and only if $Z^t \succeq Y^t$.
        \item If $Y + n\epsilon_1 \succeq Z$, then $Z^{\dagger} + n\dot{\epsilon}_1 \succeq Y^{\dagger}$. In particular, $Y \succeq Z$ if and only if $Z^{\dagger} \succeq Y^{\dagger}$.
    \end{enumerate}
\end{Lem}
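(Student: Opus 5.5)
The idea is to rephrase the dominance order through conjugates and use the standard fact that, for partitions of equal size, conjugation reverses dominance. For a partition $Y$, the prefix sum $\sum_{i\le k} y_i$ counts the boxes of the Young diagram in the first $k$ rows. Equivalently, it equals $\sum_{j} \min(y^t_j, k)$, a sum over columns. I will use this column-count description to pass between $Y$ and $Y^t$.

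For (1), put $\tilde Y = Y + (n-\size(Y))\epsilon_1$. This is a partition (its first part only grows) and $\size(\tilde Y)=n$. From $\tilde Y \succeq Z$, taking $k$ large (say $k\ge l$) gives $n \ge \size(Z)$. Then put $\tilde Z^t = Z^t + (n-\size(Z))\dot\epsilon_1$. Prepending a row of length $y_1+(n-\size(Y))$ does not change the conjugate in the way we need, so I will instead compare $\tilde Y$ with $Z' := $ the partition obtained by adding $n-\size(Z)$ boxes to the first column of $Z$, i.e. $(Z')^t = \tilde Z^t$. Here $Z'$ may have more than $l$ parts, so I will work in $\P^\infty_\infty$. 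The key claim is $\tilde Y \succeq Z'$. Both have size $n$. For each $k$, the prefix sums satisfy $\sum_{i\le k} z'_i \le \sum_{i\le k} z_i + \min(k, n-\size(Z))$. I will check this is bounded by $\sum_{i\le k}\tilde y_i$, using the hypothesis together with $\sum_{i\le k}\tilde y_i \ge n - \sum_{i>k} y_i$. This inequality is the main obstacle, and it may need the length bound $l$ on $Y$; I would try the case split $k<l$ versus $k\ge l$. Once $\tilde Y \succeq Z'$ holds with equal sizes, the classical reversal $A\succeq B \Leftrightarrow B^t\succeq A^t$ gives $\tilde Z^t \succeq \tilde Y^t \succeq Y^t$. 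The last step holds because adding boxes to the first row of $Y$ only appends length-one columns, which increases prefix sums of the conjugate. The equal-size special case is just the classical reversal.

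For the classical reversal itself, I will give the short argument. If $A\succeq B$ with $|A|=|B|$ but $B^t\not\succeq A^t$, choose the minimal $j$ with $\sum_{i\le j} b^t_i < \sum_{i\le j} a^t_i$. Count the boxes in columns $>j$ via rows, $\sum_r \max(a_r - j,0)$, and derive a contradiction with $A\succeq B$ at row index $k=b^t_{j}$.

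For (2), I will translate via $Y^\dagger = (Y^c)^t$. Complementing in the $l\times r$ rectangle gives $Y\succeq Z \iff Z^c \succeq Y^c$ for equal sizes, where the prefix sums of the complement are $kr$ minus suffix sums. Since sizes need not match, I will instead reduce directly to (1). Set $N = lr - \size(Y) + n$. Then $Y+n\epsilon_1\succeq Z$ turns, after complementing, into $Z^c + (N-\size(Z^c))\epsilon_1 \succeq Y^c$ with the roles exchanged. I will verify this by comparing suffix sums, noting that the extra $n$ boxes sit in row $1$ of $Y$. The boundary issue (the box count exceeding width $r$) is why I work in unbounded partitions. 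Applying (1) to the pair $(Z^c,Y^c)$ then yields $Z^\dagger + n\dot\epsilon_1\succeq Y^\dagger$, where the added amount is $N-\size(Y^c)=n$. The "in particular" follows with $n=0$ in both directions, using $(Y^\dagger)^\dagger$-type symmetry, i.e. applying the result to the pair $(Z^\dagger, Y^\dagger)$ in $\P^r_l$.
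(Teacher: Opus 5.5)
Your reduction of (2) to (1) is set up backwards. Complementing in the $l\times r$ rectangle preserves dominance; it does not reverse it. Since $Y^c=(r-y_l,\dots,r-y_1)$, you have
$\sum_{i\le k}(Y^c)_i=kr-\size(Y)+\sum_{i\le l-k}y_i$. This is your own ``$kr$ minus suffix sums'' description, and it shows that for equal sizes $Y\succeq Z$ holds if and only if $Y^c\succeq Z^c$.

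Consequently your intermediate inequality $Z^c+(N-\size(Z^c))\epsilon_1\succeq Y^c$, with $N=lr-\size(Y)+n$, is false in general. Take $l=r=2$, $Y=(2,0)$, $Z=(1,1)$, $n=0$. Then $N=2$, $Z^c=(1,1)$, $Y^c=(2,0)$, and $(1,1)\succeq(2,0)$ fails.

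Even granting that inequality, (1) applied to the pair $(Z^c,Y^c)$ attaches the correction term to $(Y^c)^t$. It would give $Y^\dagger+n\dot\epsilon_1\succeq Z^\dagger$, not the claimed $Z^\dagger+n\dot\epsilon_1\succeq Y^\dagger$.

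What works is applying (1) to the pair $(Y^c,Z^c)$ with $N=lr-\size(Z)+n$:
\begin{itemize}
\item The condition $N\ge\size(Y^c)$ is exactly the hypothesis at index $l$.
\item By the formula above, $Y^c+(\size(Y)+n-\size(Z))\epsilon_1\succeq Z^c$ amounts to $\sum_{i\le l-k}y_i+n\ge\sum_{i\le l-k}z_i$ for $1\le k\le l$. This is the hypothesis, and it is trivial for $k=l$.
\item The conclusion of (1) is then $(Z^c)^t+(N-\size(Z^c))\dot\epsilon_1=Z^\dagger+n\dot\epsilon_1\succeq Y^\dagger$, as required.
\end{itemize}

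In (1) your plan is sound, but the key claim $\tilde Y\succeq Z'$ is left unproved, and the route you sketch does not prove it. The bound $\sum_{i\le k}z_i+\min(k,n-\size(Z))$ can exceed $\sum_{i\le k}\tilde y_i$: take $Y=(2,1)$, $n=3$, $Z=(2,0)$, $k=1$, where the bound is $3$ and $\tilde y_1=2$. The length bound $l$ plays no role either.

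Use instead the exact shape of $Z'$. Let $\ell(Z)$ be the number of nonzero parts of $Z$ and $m=n-\size(Z)$; the added boxes lie in rows $\ell(Z)+1,\dots,\ell(Z)+m$.
\begin{itemize}
\item For $k\le\ell(Z)$, the prefix sums of $Z'$ and $Z$ coincide, so the hypothesis gives what you need.
\item For $k>\ell(Z)$, since $\sum_{i\le k}\tilde y_i=n-\sum_{i>k}y_i$, you need $\sum_{i>k}y_i\le\max\{0,m-k+\ell(Z)\}$.
\item This follows from $\sum_{i>\ell(Z)}y_i\le m$, which is the hypothesis at index $\ell(Z)$, or $\size(Y)\le n$ if $Z=0$. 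Combine it with the observation that if $y_k\neq0$ then $y_{\ell(Z)+1},\dots,y_k\ge1$, while if $y_k=0$ the tail sum vanishes.
\end{itemize}

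With this filled in, your argument (equal-size conjugation reversal, then $\tilde Y^t\succeq Y^t$) is a valid alternative to the paper's proof. The paper avoids constructing $Z'$ altogether. It writes $\sum_{p\le s}y^t_p=\size(Y)-\sum_i\max\{0,y_i-s\}$ and applies Karamata's inequality to the nondecreasing convex functions $t\mapsto\max\{0,t-s\}$. The same device also handles (2) directly, with no size bookkeeping, because $\sum_{p\le s}Y^\dagger_p=ls-\sum_i\max\{0,y_i-(r-s)\}$.
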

\begin{proof}
    We only prove (1) since (2) can be proved similarly.

    From $Y + (n - \size(Y))\epsilon_1 \succeq Z$, taking sizes on both sides gives $n \ge \size(Z)$. Consider the $s$th prefix sum of $Y^t$ and $Z^t + (n -\size(Z)) \dot{\epsilon}_1$ for $s = 1, \ldots, r$,  
    \[
        \sum_{p=1}^s y^t_p = \size(Y) - \sum_{i=1}^l \max\{0, y_i - s\},\quad 
        \sum_{p=1}^s (z^t_p + \delta_{p1}(n - \size(Z)) = n - \sum_{i=1}^l \max\{0, z_i - s\}.
    \]
    Define for each integer $s$ the non-decreasing convex function $\phi_s(t) = \max\{0,t-s\}$. By Karamata's inequality \cite[Theorem 1.5.2]{HNShi}, if partition $(a_1,a_2,\ldots,a_l)$ dominates partition $(b_1,b_2,\ldots,b_l)$, then $\sum_{i=1}^l\phi_s(a_i)\ge \sum_{i=1}^l\phi_s(b_i)$. Applying this to $Y + (n -\size(Y)) \epsilon_1 \succeq Z$ gives 
    \[
        \sum_{i=1}^l \max\{0, y_i + \delta_{i1}(n - \size(Y))- s\} \ge \sum_{i=1}^l \max\{0, z_i - s\}.
    \]
    Since $n-\size(Y)\geq 0$, we have  $\max\{0, y_1-s\} + n - \size(Y) \ge \max\{0, y_1 + n - \size(Y)- s\}    $.
    Rearranging the terms gives
    \[
        \size(Y)- \sum_{i=1}^l \max\{0, y_i - s\} \le n - \sum_{i=1}^l \max\{0, z_i - s\},
    \]
    that is, $Z^t + (n - \size(Z)) \dot{\epsilon}_1 \succeq Y^t$.
\end{proof}

With the above order-reversing conclusion, we obtain some basic properties of dual weight pairs. 
\begin{Lem}\label{dualYoung}
    Let $(\lambda, \dot{\lambda}), (\mu, \dot{\mu}) \in \sigdom(\levelR) \times \dot{P}^{+}_{\dot{\Sigma}}(\levelL)$ be dual weight pairs. 
    \begin{enumerate}
        \item When $h = 0$, $\lambda > \mu$ implies $\dot{\lambda} \ngtr \dot{\mu}$. 
        \item When $h = \frac{1}{2}$, if $\size(Y_\lambda)  = \size(Y_\mu)$, then $\lambda > \mu$ implies  $\dot{\lambda} \ngtr \dot{\mu}$. 
        \item If $\dot{\lambda} - \dot{\mu} \in \dot{Q}_{\ex}$, then there exists $\sigma \in \Sigma$ such that $\sigma(\lambda) - \mu \in Q$. 
    \end{enumerate}
\end{Lem}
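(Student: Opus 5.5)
Since both pairs lie in $\sigdom(\levelR) \times \dot{P}^{+}_{\dot{\Sigma}}(\levelL)$, the discussion preceding Lemma \ref{reverse-dom-order} gives $\lambda=\lambda_{\Y_\lambda}$, $\dot\lambda=\dot\lambda_{\Y_\lambda}$, and similarly for $\mu$. The plan is to turn each order comparison of weights into a dominance comparison of partitions using Lemma \ref{order-to-order}, and then use the order reversal of Lemma \ref{reverse-dom-order} to derive a contradiction. Write $Y=Y_\lambda$ and $Z=Y_\mu$.

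For (1), with $h=0$, formula (\ref{weightY}) gives $\lambda-\mu=Y^c-Z^c$ and $\dot\lambda-\dot\mu=Y^t-Z^t$. In type $A^{(1)}$ we use (\ref{weightY-A}) instead, with $Y^\dagger$ in place of $Y^t$. Assume $\lambda>\mu$. By Lemma \ref{order-to-order}(1), $Y^c\succ Z^c$. Complementation inside the $l\times r$ box reverses dominance once the sizes are adjusted, so I expect this to give $Z^t\succeq Y^t$ through the $\dagger$-version of Lemma \ref{reverse-dom-order}(2), noting that $(Y^c)^\dagger = Y^t$. Now suppose also $\dot\lambda>\dot\mu$. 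The root lattice $\dot Q$ embeds into $\dot Q_{\ex}$, so $\dot\lambda>_{\ex}\dot\mu$, and Lemma \ref{order-to-order}(2) gives $Y^t\succ Z^t$. This contradicts $Z^t\succeq Y^t$.

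For (2), with $h=\frac12$, we have $\lambda-\mu=Y-Z$ and $\dot\lambda-\dot\mu=Y^t-Z^t$. If $\lambda>\mu$, Lemma \ref{order-to-order}(1) gives $Y\succ Z$. Since $\size(Y)=\size(Z)$, Lemma \ref{reverse-dom-order}(1) gives $Z^t\succeq Y^t$, and we conclude exactly as in (1). The size hypothesis is what allows Lemma \ref{reverse-dom-order}(1) to be applied with $n=\size(Y)$.

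For (3), the tool is root sieving, Lemma \ref{root-sieving-eq}. Both weights already lie in the same parity set by Remark \ref{parity-in-Fock}. So the only remaining obstruction, in branching types, is the size parity $\size(Y)-\size(Z)\pmod 2$. From $\dot\lambda-\dot\mu=Y^t-Z^t\in\dot Q_{\ex}$ we can read off $\size(Y^t)-\size(Z^t)=\size(Y)-\size(Z)$, or the corresponding difference of complements when $h=0$. When $\lghat$ is non-branching, Lemma \ref{root-sieving-eq}(3) gives $\lambda-\mu\in Q$ directly. When it is branching, if the size parities agree we take $\sigma=1$; otherwise the class lies in a parity set where Lemma \ref{root-sieving-eq}(2) says some $\sigma_{0,1}$ or $\sigma_{l-1,l}$ swaps the two root sieving classes, and we take that $\sigma$.

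The main obstacle is checking, type by type, that the parity set really has $\bar n_h$ or $\bar n_t$ odd exactly when an odd size difference can occur, so that a swapping $\sigma$ exists whenever it is needed. Since $\dot Q_{\ex}$ is generated by the $\pm\dot\epsilon_p$, membership in $\dot Q_{\ex}$ imposes no parity condition on its own. So the argument has to use the pair structure coming from the Fock space construction through Remark \ref{parity-in-Fock}. I would handle this by going through Table \ref{table-g-ex} case by case.
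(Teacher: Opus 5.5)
Your arguments for (1) and (2) are correct and are exactly the paper's. Lemma \ref{order-to-order} turns $\lambda>\mu$ into $Y^c\succ Z^c$ (resp.\ $Y\succ Z$). Lemma \ref{reverse-dom-order} reverses this to $Z^t\succeq Y^t$, using $(Y^c)^\dagger=Y^t$ (resp.\ the equal sizes). A strict $\dot\lambda>\dot\mu$ would then give $Y^t\succ Z^t$, a contradiction.

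Part (3) has a genuine gap. You claim that $\dot{Q}_{\ex}$ is generated by the $\pm\dot\epsilon_p$ and so imposes no parity condition. This is false exactly where it matters.

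\begin{itemize}
\item When $\lgdhat_{\ex}=\lsphat(2r)$, i.e.\ for the pairs $(\lsohat(2l),\lsohat(2r))$, $(\lglhat{}^{(2)}(2l),\lslhat{}^{(2)}(2r))$ and $(\lsphat(2l),\lsphat(2r))$, the lattice $\dot{Q}_{\ex}$ is the $C_r$ root lattice. It is spanned by $\dot\epsilon_p-\dot\epsilon_{p+1}$ and $2\dot\epsilon_r$, so it consists of the integer vectors with even coordinate sum.
\item In these same cases no element of $\Sigma$ can repair a parity mismatch. For the first two pairs the weights lie in the parity set $(\bar0,\bar0)$, where by Lemma \ref{root-sieving-eq}(2) both $\sigma_{0,1}$ and $\sigma_{l-1,l}$ fix each root sieving class. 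For $\lsphat(2l)$ the group $\Sigma$ is trivial.
\item Nor does the Fock space force equal size parity. For $(\lsohat(2l),\lsohat(2r))$ with $h=\frac12$ and $l\ge 2$, both $Y=(1,0,\dots,0)$ and $Z=\varnothing$ occur. They give $\lambda=\epsilon_1$ and $\mu=0$, and every $\sigma(\lambda)-\mu$ (namely $\epsilon_1$ or $(2r-1)\epsilon_1$) has odd coordinate sum.
\end{itemize}

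So under your reading of $\dot{Q}_{\ex}$, statement (3) would be false, and the type-by-type check you identify as the main obstacle cannot succeed. The parity has to come from the hypothesis itself. The condition $\dot\lambda-\dot\mu=Y^t-Z^t\in\dot{Q}_{\ex}$ forces $\size(Y)\equiv\size(Z)\pmod 2$, hence $\lambda-\mu\in Q$ with $\sigma=1$. For $h=0$, note that $\size(Y^c)-\size(Z^c)=\size(Z)-\size(Y)$.

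Relatedly, splitting into branching versus non-branching $\lghat$ is the wrong dichotomy. $\lsphat(2l)$ is non-branching but falls under Lemma \ref{root-sieving-eq}(2), not (3). $\lsohat(2l+1)$ is branching but its parity sets are single classes. The correct split, as in the paper, is:

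\begin{itemize}
\item If $\lghat$ is $\lsohat(2l+1)$, $\lsohat(2l+1,1)$ or $\lglhat{}^{(2)}(2l+1)$, then $Q=\mathbb{Z}^l$ and $\sigma=1$ works.
\item Otherwise $Q$ is the even-sum lattice. If $\lgdhat_{\ex}=\lsphat(2r)$, use the parity argument above.
\item If instead $\lgdhat_{\ex}$ is $\lslhat{}^{(2)}(2r+1)$ or $\lsohat(2r+1,1)$, then $\dot{Q}_{\ex}=\mathbb{Z}^r$ genuinely imposes no parity. But now the parity set has an odd entry, namely $(\bar0,\bar1)$, $(\bar1,\bar0)$ or $(\bar1,\bar1)$, and Lemma \ref{root-sieving-eq}(2) supplies a class-swapping $\sigma\in\Sigma$.
\end{itemize}
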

\begin{proof}
    Combining Lemmas \ref{order-to-order} and \ref{reverse-dom-order} directly gives (1) and (2).

    Now we prove (3). The choice of the weights together with Remark \ref{parity-in-Fock} implies that $\levelR \Lambda_0 + \lambda$ and $\levelR \Lambda_0 + \mu$ are lying in the same parity set. 
    Recall the root sieving class decomposition in each case from Lemma \ref{root-sieving-eq}.
    For $\lghat = \lsohat(2l+1)$, $\lsohat(2l+1,1)$ or $\lglhat{}^{(2)}(2l+1)$, the whole parity set forms a single root sieving equivalent class, so $\lambda-\mu\in Q$ by the definition of sieving equivalence.     Otherwise $\lghat = \lsohat(2l)$, $\lglhat{}^{(2)}(2l)$ or $\lsphat(2l)$, and $Q$ is generated by $\pm\epsilon_i\pm\epsilon_j$ and contains all integer vectors for which the sum of all components is even.     According to Table \ref{table-g-ex-all}, there are three possibilities for $\lgdhat_{\ex}$, that is, $\lgdhat_{\ex}=\lsphat(2r)$, $\lslhat{}^{(2)}(2r+1)$ or $\lsohat(2r+1,1)$.

 If $\lgdhat_{\ex} = \lsphat(2r)$, then $\dot{\lambda} - \dot{\mu} \in \dot{Q}_{\ex}$ implies $\size(Y_{\lambda}) - \size(Y_{\mu}) \equiv 0 \pmod 2$. This induces that $\lambda - \mu \in Q$. 
    
If $\lgdhat_{\ex} = \lslhat{}^{(2)}(2r+1)$, the corresponding dual pair is $(\lsohat(2l), \lsohat(2r+1))$ or $(\lglhat{}^{(2)}(2l), \lslhat{}^{(2)}(2r+1))$.
The parity set is $\tilde{P}^+(\levelR)_{0,1}$ for $h = 0$ and $\tilde{P}^+(\levelR)_{1,0}$ for $h = \frac{1}{2}$.
            By Lemma \ref{root-sieving-eq} we observe that the two equivalent classes are permuted by $\sigma_{0,1}$ for $h = 0$ and by $\sigma_{l-1, l}$ for $h = \frac{1}{2}$.            That is, we always have a $\Sigma$-conjugation of $\lambda$ which lies in the same $Q$-lattice with $\mu$.
    
If $\lgdhat_{\ex} = \lsohat(2r+1,1)$, the corresponding dual pair is $(\lsohat(2l), \lsohat(2r+1,1))$.
            The parity set is $\tilde{P}^+(\levelR)_{1,1}$, whence again by Lemma \ref{root-sieving-eq} the two equivalent classes are permuted by $\Sigma$, that is, $\sigma(\lambda)-\mu\in Q$ for some $\sigma\in\Sigma$.  
\end{proof}

\begin{Prop}\label{Pmax}
    Let $(\mu, \dot{\mu}) \in P^+(\levelR) \times \dot{P}^+(\levelL)$ be a pair of dual weights in $\F$.
    \begin{enumerate}
        \item The weight $\mu - {\sf d}_{\mu} \delta$ is  maximal in $P_{(-, \dot{\mu})}$ with respect to $\tilde{Q}^+$. 
        \item If $\dot{\mu} \in \dot{P}^{+}_{\dot{\Sigma}}(\levelL)$, then $\dot{\mu} - {\sf d}_{\mu} \dot{\delta}$ is the unique maximal element in $P_{(\mu, -)}$ with respect to $\tilde{\dot{Q}}^+_{\ex}$. 
    \end{enumerate}
\end{Prop}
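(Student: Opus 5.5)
The plan is to read both statements off the bimodule decomposition of Theorem \ref{mainDualThmIntro}, together with the anomaly identity of Proposition \ref{VirEv}. The argument for (1) is essentially the one already used in the proof of Theorem \ref{mainDualThmIntro}, with the roles of the two algebras interchanged.

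Suppose $\F_{(\nu,\dot{\mu};-k)}\neq 0$. By Theorem \ref{mainDualThmIntro}, this weight space is contained in a sum of summands $V(\lambda_{\Y})\otimes V(\dot{\lambda}_{\Y})$. Hence $\dot{\mu}-k'\dot{\delta}$ is a weight of some $V_{\dot{\Sigma}\ltimes\lgdhat}(\dot{\lambda}_{\Y};\levelL)$, and $\nu-(k-k')\delta$ is a weight of the corresponding $V_{\Sigma\ltimes\lghat}(\lambda_{\Y};\levelR)$. Here the $\delta$-degrees are normalised by the vacuum anomalies ${\sf d}$ and split according to Proposition \ref{VirEv}.

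For (1), set $\lambda=\lambda_{\Y}$ and $\dot{\lambda}=\dot{\lambda}_{\Y}$. Then $\nu-k\delta\le \lambda-{\sf d}_{\lambda}\delta-(\dot{\lambda}-\dot{\mu}\text{ contribution})$, and we want to compare this with $\mu-{\sf d}_\mu\delta$. Suppose some element $\nu-k\delta\in P_{(-,\dot{\mu})}$ satisfies $\nu-k\delta>\mu-{\sf d}_\mu\delta$. I would rerun the inequality chain from the proof of Theorem \ref{mainDualThmIntro}, now on the $\lghat$ side. From the containments of weights we have $\dot{\mu}\le_{\ex}\dot{\lambda}$ up to a $\dot{\delta}$-shift, and $\nu\le\lambda$. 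Combined with the anomaly identity (\ref{H1}) for both $(\lambda,\dot{\lambda})$ and $(\mu,\dot{\mu})$, the estimate $(\beta\,|\,\lambda+\mu+2\rho)\le$ (level $+\dCox$)$\cdot$(height in $\delta$) then forces equality. Concretely, write $\lambda-\mu=\bar{\beta}+b\theta$. The same two inequalities as in the displayed computation before (\ref{H}) give LHS $\le$ RHS, with equality only when $\bar{\beta}=0$ and $b=0$. This yields $\lambda=\mu$ and the maximality. Lemma \ref{dualYoung}(1),(2) enters exactly to exclude the case where $\lambda>\mu$ while simultaneously $\dot{\lambda}>\dot{\mu}$.

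For (2), I would use the combinatorial characterisation already established. Every monomial $v$ of horizontal weight $\mu$ moves by vertical moves to the vacuum abacus $v_{\max}$, which depends only on $\mu$. By Lemma \ref{Psi}-type reasoning, the weight difference $\Psi(\mathbf{mv}(v))$ lies in $\tilde{\dot{Q}}^+_{\ex}$. Hence $P_{(\mu,-)}$ has a unique maximum with respect to $\tilde{\dot{Q}}^+_{\ex}$, namely the weight of $v_{\max}$. It then remains to identify this weight with $\dot{\mu}-{\sf d}_{\mu}\dot{\delta}$. When $\dot{\mu}\in\dot{P}^+_{\dot{\Sigma}}$ and $\mu=\lambda_{\Y}$ (after applying $\Sigma$, which is harmless since $\F_{\sigma(\cdot)}\cong\F_{(\cdot)}$), the vector $v(\Y)$ is itself a vacuum abacus in the column picture: its Young diagram sits in the top-left corner, so no vertical move is available. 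Therefore $v_{\max}=v(\Y)$, whose weight is $(\lambda_{\Y},\dot{\lambda}_{\Y};-{\sf d}_{\Y})$, and this gives the claim. The case $\mu\neq\lambda_{\Y}$ but $\mu=\sigma(\lambda_{\Y})$ follows by transporting along $\sigma$, since the $\Sigma$-action on row abaci commutes with vertical moves (see \S\ref{diag-auto-on-abacus}).

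The main obstacle is (1) in branching types. There, $\lambda-\mu$ is only guaranteed to lie in $Q$ after applying some $\sigma\in\Sigma$ (Lemma \ref{dualYoung}(3)), and the positivity (\ref{Qexplus}) has to be checked for the extended lattice. I would first handle the non-branching case cleanly, and then reduce the branching case to it through the same $k\epsilon_l$ argument ($k^2=0$) used in the proof of Theorem \ref{mainDualThmIntro}.
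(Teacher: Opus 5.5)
Your plan is sound, but both parts take a different route from the paper.

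\textbf{Part (1).} The paper never uses the anomaly formula here. After reducing to $(\mu,\dot\mu)\in\sigdom(\levelR)\times\dot P^+_{\dot\Sigma}(\levelL)$, it writes a hypothetical larger vector as $gg'v(\Y_\nu)$ and reads off ${\sf d}_\mu\ge k\ge{\sf d}_\nu$ from the $\delta$-coefficients. It then uses the explicit Clifford degree (\ref{degreeV}) to force ${\sf d}_\mu={\sf d}_\nu$. After that, $\nu>\mu$ together with $\dot\nu>\dot\mu$ contradicts Lemma \ref{dualYoung}.

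Your Casimir-type argument also works, uniformly in $h$, once the bookkeeping is made explicit. Let $v_\lambda$ be the joint highest weight vector of the relevant summand, and write $v=gg'v_\lambda$ of weight $(\nu,\dot\mu;-m)$ with $\nu-m\delta>\mu-{\sf d}_\mu\delta$. Let $g'v_\lambda$ have weight $(\lambda,\dot\mu;-k)$. Put $b={\sf d}_\mu-k\ge0$ and $\dot b=k-{\sf d}_\lambda\ge0$. Then $\lambda-\mu+b\delta\in\tilde Q^+\setminus\{0\}$ and $\dot\lambda-\dot\mu+\dot b\dot\delta\in\tilde{\dot Q}^+$.
\begin{itemize}
\item The estimates ${\sf d}^{\lghat}_{\lambda;\levelR}-{\sf d}^{\lghat}_{\mu;\levelR}\ge-b$ and ${\sf d}^{\lgdhat}_{\dot\lambda;\levelL}-{\sf d}^{\lgdhat}_{\dot\mu;\levelL}\ge-\dot b$ are proved exactly as the bound on the left side of (\ref{H}) in the proof of Theorem \ref{mainDualThmIntro}.
\item By Proposition \ref{VirEv} they add up to an identity, so both are equalities.
\item Equality on the $\lghat$ side gives $\lambda-\mu+b\theta=0$ and $b=0$ (using $(\rho|\theta)<\dCox$), contradicting strictness.
\end{itemize}
These differences come from $U(\lghat)^-$ and $U(\lgdhat)^-$, so they lie in the non-extended lattices, where $(\rho|\cdot)$ is positive on nonzero elements. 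Hence Lemma \ref{dualYoung}, $\tilde{\dot Q}_{\ex}$ and the $k\epsilon_l$ trick are all unnecessary, and the branching-type obstacle you anticipate does not arise. In exchange, the paper's route does not depend on the type-dependent normalisations of (\ref{Formula-vacuum}) and Proposition \ref{VirEv} (e.g.\ the factor $\frac12$ for $(A^{(2)},A^{(2)})$), because it compares Clifford degrees read directly off Maya diagrams.

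\textbf{Part (2).} The paper argues from (1) together with Lemmas \ref{order-to-order} and \ref{reverse-dom-order}. Your vacuum-abacus argument is instead the combinatorial characterisation of $v_{\max}$ sketched just before the proposition. It is more direct, and it compares $\dot\mu-{\sf d}_\mu\dot\delta$ with every element of $P_{(\mu,-)}$ at once. The paper's proof of (2) applies (1) as if the top weight dominated every element of $P_{(-,\dot\lambda)}$, which is stronger than the maximality actually proved. The price of your route is that it rests on two informally stated facts: each move shifts the vertical weight by a simple root of $\lgdtilde_{\ex}$, and the terminal abacus depends only on the charges. There is no ``Lemma Psi'' in the paper, so cite the discussion of $\Psi$ and Table \ref{table-g-ex} instead.
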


\begin{proof}
    (1) Recall from Section \ref{DiagAuto}  that each $\dot{\sigma} \in \dot{\Sigma}$ induces an isomorphism of  weight spaces. This implies that $P_{(-, \dot{\mu})} = P_{(-, \dot{\sigma}(\dot{\mu}))}$. 
    Meanwhile, $\Sigma_{\lleft}$ preserves the order with respect to $\tilde{Q}^+$. 
    Therefore we may assume $(\mu, \dot{\mu}) \in \sigdom(\levelR) \times \dot{P}^{+}_{\dot{\Sigma}}(\levelL)$. 

    It is clear from $v(\Y_{\mu})\in \F_{(\mu, \dot{\mu}; -{\sf d}_{\mu})}$ that $\mu - {\sf d}_{\mu}\delta \in P_{(-,\dot{\mu})}$. Suppose, for contradiction, that there exists $\lambda - m\delta > \mu - {\sf d}_{\mu}\delta$ such that $\F_{(\lambda, \dot{\mu}; -m)} \ne 0$.
    Take non-zero $v \in \F_{(\lambda, \dot{\mu}; -m)}\cap (V_{\lghat_{\lleft}}(\nu; \levelR) \otimes V_{\lgdhat}(\dot{\nu}; \levelL))$ for some pair $(\nu,\dot{\nu})$ of dual weights. Again we may assume $(\nu, \dot{\nu}) \in \sigdom(\levelR) \times \dot{P}^{+}_{\dot{\Sigma}}(\levelL)$.
    Then we can go down from the joint highest weight vector $v(\Y_{\nu})$ to $v$ via $g \in U(\lghat_{\lleft})^-$ and $g' \in U(\lgdhat)^-$ successively, that is, $v = gg' v(\Y_{\nu})$, thanks to the commutativity of the actions of $(\lghat, \lgdhat)$. 
    We have $g'v(\Y_{\nu}) \in \F_{(\nu, \dot{\mu}; -k)}$. It follows that $\dot{\nu} - {\sf d}_{\nu}\dot{\delta} \ge \dot{\mu} - k\dot{\delta}$, and $\nu - k \delta \ge \lambda - m\delta > \mu - {\sf d}_{\mu}\delta$, as shown in  the following diagram of weights.
    \begin{center}
    $
    \xymatrix{
        (\nu, \dot{\nu}; -{\sf d}_{\nu})\ar[d]   &   &  \\
        (\nu, \dot{\mu}; -k)\ar[r]    & (\lambda, \dot{\mu}; -m)\ar@{-->}[r]   &    (\mu, \dot{\mu}; -{\sf d}_{\mu}).
    }
    $
    \end{center}
    Now write $\beta = \nu - \mu + (-k + {\sf d}_{\mu})\delta$ and $\dot{\beta }= \dot{\nu} - \dot{\mu} + (-{\sf d}_{\nu} + k )\dot{\delta}$. Since $\beta>0, \,\dot{\beta} \ge 0$, we have ${\sf d}_{\mu} \ge k \ge {\sf d}_{\nu}$.
    The $1-1$ correspondence given by $\sigdom(\levelR) \times \dot{P}^{+}_{\dot{\Sigma}}(\levelL)$ 
    implies that $\dot{\nu}\ne\dot{\mu}$.
    
    Next we compare $\mu$ and $\nu$ using Lemma \ref{dualYoung} and formula (\ref{degreeV}). If $h = 0$, then ${\sf d}_{\mu} = \frac{1}{16}\dim \W^h(0) = {\sf d}_{\nu}$ and ${\sf d}_{\mu} = k = {\sf d}_{\nu}$, thus $\nu > \mu$ and $\dot{\nu} > \dot{\mu}$, contradicting to Lemma \ref{dualYoung} (2).

    If $h = \frac{1}{2}$, then, by formula (\ref{degreeV}), we have 
    ${\sf d}_{\nu} -{\sf d}_{\mu}=\frac{1}{2}(\size(Y_{\nu})-\size(Y_{\mu}))$.  Decomposing $\delta = a_0 \alpha_0 + \theta$ as well as $\dot{\delta} = a_0 \dot{\alpha}_0 + \dot{\theta}$ in $\beta$ and $\dot{\beta}$, 
        removing the $\alpha_0, \dot{\alpha}_0$ parts and multiplying with $\mathbf{1}$, together with the fact that $\theta \cdot \mathbf{1} = 2 = \dot{\theta} \cdot \mathbf{1}$, we obtain
       \[
            \size(Y_{\nu}) - \size(Y_{\mu})+ 2(-k + {\sf d}_{\mu}) \ge 0\quad\text{and}\quad  
            \size(Y_{\nu}) - \size(Y_{\mu})+ 2(-{\sf d}_{\nu} + k) \ge 0.
\]
It follows that  ${\sf d}_{\nu} \ge k \ge {\sf d}_{\mu}$, forcing ${\sf d}_{\mu} = {\sf d}_{\nu}$ since we already have ${\sf d}_{\mu} \ge k \ge {\sf d}_{\nu}$. Then similarly we have $\nu > \mu$ as well as $\dot{\nu} > \dot{\mu}$, which again cause a contradiction. 
    
In summary, it is impossible for $\lambda - m\delta > \mu - {\sf d}_{\mu}\delta$ to appear in $P_{(-, \dot{\mu})}$, 
    which means $\mu - {\sf d}_{\mu} \delta$ is maximal in $P_{(-, \dot{\mu})}$.

    (2) Let $\dot{\lambda}$, $m$ be such that $\F_{(\mu, \dot{\lambda}; -m)} \ne 0$. 
    Now we prove $\dot{\mu} - {\sf d}_{\mu}\dot{\delta} \ge_{\ex} \dot{\lambda} - m\dot{\delta}$.
    Without loss of generality, assume $\dot{\lambda} \in \dot{P}^{+}_{\dot{\Sigma}}(\levelL)$.
    From (1) we know that $\lambda - {\sf d}_{\lambda}\delta \ge \mu - m\delta$ in $P_{(-, \dot{\lambda})}$. Since 
    $\delta = a_0 \alpha_0 + \theta$, we have 
    \[\lambda-\mu+(-{\sf d}_{\lambda}+m)(\theta+a_0\alpha_0)\geq 0.\]
    This implies $a_0(-{\sf d}_{\lambda} + m) \in \mathbb{N}$.  Scaling with $2\epsilon_1 \ge \theta$ we get $\lambda - \mu + 2(-{\sf d}_{\lambda} + m)\epsilon_1 \ge 0$. 
 Using Lemma \ref{order-to-order} to compare the orders of weights and partitions, we get:
    \begin{itemize}
        \item If $h = 0$, then ${\sf d}_{\lambda} = {\sf d}_{\mu} \le m$, and $Y^c_{\lambda} + 2(-{\sf d}_{\lambda} + m)\epsilon_1 \succeq Y^c_{\mu}$.
        From Lemma \ref{reverse-dom-order}(2), $Y^t_{\mu} + 2(-{\sf d}_{\mu} + m)\epsilon_1 \succeq Y^t_{\lambda}$, 
        hence $\dot{\mu} + (-{\sf d}_{\mu} + m)\dot{\theta}_{\ex} \ge_{\ex} \dot{\lambda}$.
        \item If $h = \frac{1}{2}$, then $Y_{\lambda} + 2(-{\sf d}_{\lambda} + m)\epsilon_1 \succeq Y_{\mu}$. 
        From Lemma \ref{reverse-dom-order}(1), $Y^t_{\mu} + 2(-{\sf d}_{\mu} + m)\epsilon_1 \succeq Y^t_{\lambda}$,
        and $m \ge {\sf d}_{\mu}$, hence similarly $\dot{\mu} + (-{\sf d}_{\mu} + m)\dot{\theta}_{\ex} \ge_{\ex} \dot{\lambda}$.
    \end{itemize}
For both cases we have $\dot{\mu} - {\sf d}_{\mu}\dot{\delta} \ge_{\ex} \dot{\lambda} - m\dot{\delta}$.
\end{proof}

According to Lemma \ref{Pmax}, the terminal weights $(\bar{\Lambda} - \bar{\beta}, \mu_{\max}; -k_{\max})$ of the move we want is precisely of the form $w(\lambda, \dot{\lambda}; -{\sf d}_{\lambda})$, 
with $(\lambda, \dot{\lambda}) \in P^{+}(\levelR) \times \dot{P}^{+}_{\dot{\Sigma}}(\levelL)$ a pair of dual weights and $w$ is a Weyl group element such that $\overline{w(\lambda)} = \bar{\Lambda} - \bar{\beta}$.
In other words, up to an action of the affine Weyl group, the terminal $v_{\max}$ of the move  is another joint highest weight vector.

\medskip 
The following corollary of Lemma \ref{Pmax} ensures that the weight spaces we are going to discuss in our reduction process is always non-zero.

\begin{Cor}\label{wtSpaceNz}
    Let $(\mu, \dot{\mu}) \in P^+(\levelR) \times \dot{P}^+(\levelL)$ be a pair of dual weights in $\F$. 
    \begin{enumerate}
    \item Let $\lambda,k$ satisfy $\mu - \lambda \in Q$ and $\F_{(\lambda, \dot{\mu}; -k)} \ne 0$, then there exists non-zero $v\in \F_{(\lambda, \dot{\mu}; -k)}$ such that $v \in \lghat_{\lleft}^- . v(\Y_{\mu})$.
    \item If $\dot{\mu} \in \dot{P}^{+}_{\dot{\Sigma}}(\levelL)$, then the section $P_{(\mu, -)}$ coincides with the weight set $\dot{P}_{\ex}(\dot{\mu} + \levelL\dot{\Lambda}_0 - {\sf d}_{\mu} \dot{\delta})$.
    \end{enumerate}
\end{Cor}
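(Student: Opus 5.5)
For part (1) I will exploit the isotypic decomposition of $\F$ from Theorem \ref{mainDualThmIntro}. Because $\F_{(\lambda,\dot{\mu};-k)}\neq 0$, I pick a nonzero vector in it. That vector has a nonzero component in some summand $V_{\lghat_{\lleft}}(\nu;\levelR)\otimes V_{\lgdhat}(\dot{\nu};\levelL)$, and that component is still of weight $(\lambda,\dot{\mu};-k)$. The summand structure lets me pass to the bigraded piece $V_{\lghat_{\lleft}}(\nu)_{\lambda-k'\delta}\otimes V_{\lgdhat}(\dot{\nu})_{\dot{\mu}-k''\dot{\delta}}$ with $k'+k''=k$.

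The central task is to force $\nu=\mu$ up to $\Sigma$, $\dot{\nu}=\dot{\mu}$ up to $\dot{\Sigma}$, and $k''={\sf d}_{\mu}$. I will argue as follows. The vertical factor is nonzero, so $\dot{\mu}-k''\dot{\delta}$ is a weight of $V_{\lgdhat}(\dot{\nu})$. Hence $\dot{\nu}-{\sf d}_{\nu}\dot{\delta}\geq\dot{\mu}-k''\dot{\delta}$. Now I apply Proposition \ref{Pmax}(1) in the dual direction, exchanging the roles of $\lghat_{\lleft}$ and $\lgdhat$, which the symmetric construction allows. It says that $\dot{\mu}-{\sf d}_{\mu}\dot{\delta}$ is maximal in $P_{(\mu,-)}$ among weights paired with horizontal weight $\mu$. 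I will combine this with the hypothesis $\mu-\lambda\in Q$: it places $\nu$ in the root-sieving class of $\mu$ up to $\Sigma$, by Lemma \ref{root-sieving-eq} and Lemma \ref{dualYoung}(3). I then compare degrees via (\ref{degreeV}) and run the same dominance-order argument as in the proof of Proposition \ref{Pmax}(1), that is, Lemmas \ref{order-to-order} and \ref{reverse-dom-order}, to conclude that $\dot{\nu}=\dot{\mu}$ and that the vertical factor is the highest weight line. Once the vertical factor is the highest weight vector of $V_{\lgdhat}(\dot{\mu})$, the chosen component lies in $V_{\lghat_{\lleft}}(\mu)\otimes v_{\dot{\mu}}$. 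That space equals $U(\lghat_{\lleft})^{-}.v(\Y_{\mu})$, which gives (1).

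Part (2) is the step I expect to be the main obstacle. For the inclusion $P_{(\mu,-)}\subseteq\dot{P}_{\ex}(\dot{\mu}+\levelL\dot{\Lambda}_0-{\sf d}_{\mu}\dot{\delta})$, I take $(\dot{\lambda},m)$ with $\F_{(\mu,\dot{\lambda};-m)}\neq 0$. Proposition \ref{Pmax}(2) gives $\dot{\lambda}-m\dot{\delta}\leq_{\ex}\dot{\mu}-{\sf d}_{\mu}\dot{\delta}$. I also have $\dot{\Sigma}$-invariance and invariance under the affine Weyl group $\dot{\mathcal{W}}$, coming from $\F_{w(\cdot)}\cong\F_{(\cdot)}$. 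I would like to use $\dot{\mathcal{W}}_{\ex}$-invariance instead, but this is exactly where the difficulty sits. The plan is to use the standard characterization $P(\Lambda)=\bigcup_w w\{\nu\ \text{dominant},\ \nu\leq\Lambda\}$ for the extended integrable module. The point needing care is that the orbit of $\dot{\mathcal{W}}\rtimes\dot{\Sigma}$ on $\lhdtilde{}^*$ agrees with the orbit of $\dot{\mathcal{W}}_{\ex}$ on the relevant lattice, because both are generated by signed permutations and $\dot{\delta}$-translations of the $\dot{\epsilon}_p$. Checking this case by case against Table \ref{table-g-ex} is the delicate part.

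For the reverse inclusion, I take a dominant $\dot{\nu}-n\dot{\delta}\leq_{\ex}\dot{\mu}-{\sf d}_{\mu}\dot{\delta}$. I realize it combinatorially by moving beads downward on the column abacus of $v(\Y_{\mu})$, which is inverse to the moves of \S\ref{moving-vector}. Each simple root of $\tilde{\dot{Q}}_{\ex}$ corresponds to such a move, and a standard string argument shows that an available move exists at every step, as in the level-one abacus theory. The horizontal weight $\mu$ is unchanged by vertical moves, so these realizations lie in $P_{(\mu,-)}$. Together with Weyl invariance this yields equality.
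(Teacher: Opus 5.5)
\textbf{Part (1) has a genuine gap.} You start from an arbitrary nonzero vector of $\F_{(\lambda,\dot{\mu};-k)}$, take its component in some summand $V_{\lghat_{\lleft}}(\nu;\levelR)\otimes V_{\lgdhat}(\dot{\nu};\levelL)$, and try to force $\dot{\nu}=\dot{\mu}$ with the vertical factor on the highest weight line. That intermediate claim is false, because a weight space of $\F$ typically meets several isotypic summands.

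Here is a concrete case. Work in type $(C^{(1)},C^{(1)})$ with $l=1$, $r=2$, $h=\frac{1}{2}$, and take $\mu=\dot{\mu}=0$ (from $Y=(0)$), $\lambda=0$, $k=1$.
\begin{itemize}
\item The vector $C_{11}(-1).1$ is a nonzero vector of $\F_{(0,0;-1)}$ inside the $\mu$-summand.
\item Now apply finite lowering operators of $\lgo_{\lleft}=\lsp(2)$ and of $\liegd=\lsp(4)$ to $v(\Y_{\nu})=\psi^{1,1}(-\frac{1}{2})\psi^{1,2}(-\frac{1}{2})$, where $Y_{\nu}=(2)$. This also gives a nonzero vector of $\F_{(0,0;-1)}$.
\item That second vector lies entirely in the summand with $(\nu,\dot{\nu})=(2\epsilon_1,\dot{\epsilon}_1+\dot{\epsilon}_2)$. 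Its vertical factor is the zero-weight vector of the $5$-dimensional $\lsp(4)$-module.
\end{itemize}
Applied to this vector, your argument would ``prove'' $\dot{\nu}=\dot{\mu}$, which is false here.

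The tools you cite cannot exclude this. Proposition \ref{Pmax} constrains the weight sets $P_{(-,\dot{\mu})}$ and $P_{(\mu,-)}$, not which summand a given vector lives in. Moreover, $P_{(\mu,-)}$ concerns horizontal weight $\mu$, whereas your component has horizontal weight $\lambda$.

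The statement only asks for existence, so the missing idea is to argue on weight sets rather than on a chosen vector, as the paper does:
\begin{itemize}
\item From Proposition \ref{Pmax}(1), together with $\mathcal{W}$-stability of $P_{(-,\dot{\mu})}$, the paper places the dominant $\mathcal{W}$-conjugate of $\lambda+\levelR\Lambda_0-k\delta$ below $\mu+\levelR\Lambda_0-{\sf d}_{\mu}\delta$.
\item By Kac's Proposition 12.5, this is then a weight of the integrable $\lgtilde$-module generated by $v(\Y_{\mu})$.
\item A weight vector there lies in $U(\lghat_{\lleft})^{-}.v(\Y_{\mu})$. Because $\lghat_{\lleft}$ commutes with $\lgdhat$, it has vertical weight $\dot{\mu}$.
\end{itemize}
The hypothesis $\F_{(\lambda,\dot{\mu};-k)}\neq 0$ enters only through this comparison of weights, never through a chosen vector.

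\textbf{Part (2) is essentially the paper's proof.} It uses Proposition \ref{Pmax}(2) for the upper bound, $\dot{\mathcal{W}}_{\ex}$-stability of $P_{(\mu,-)}$, and unbroken strings fed into Kac's characterization of $P(\Lambda)$.
\begin{itemize}
\item The $\dot{\mathcal{W}}_{\ex}$-stability needs no case check: $\dot{\sigma}_{r-1,r}$, resp.\ $\dot{\sigma}_{0,1}$, acts on weights as the reflection in $\dot{\beta}_r$, resp.\ $\dot{\beta}_0$.
\item Your ``available move at every step'' is exactly the unbroken-string property.
\item Only $\dot{\beta}_0$ and $\dot{\beta}_r$ need the one-bead combinatorial check. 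The other simple roots are handled by integrability of $\lgdhat$.
\end{itemize}
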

\begin{proof}
    (1) By Lemma \ref{Pmax}(1), $\mu - {\sf d}_{\mu}\delta > \lambda - k\delta$. If $\lambda \in P^+(\levelR)$, then $V_{\lgtilde}(\mu + \levelR \Lambda_0 - {\sf d}_{\mu}\delta)_{\lambda + \levelR \Lambda_0 - k\delta} \ne 0$ according to the convex-hull argument in \cite[Proposition 12.5]{Kac}.
    
    On the other hand, for a general $\lambda$, consider the corresponding dominant weights in the same orbit under affine Weyl group. 
    That is, take a Weyl group element $w$ such that $w(\lambda + \levelR \Lambda_0 - k \delta)\in \tilde{P}^+(\levelR)$. 
    Then again by Lemma \ref{Pmax} we can also get $w(\lambda + \levelR \Lambda_0 - k \delta) < \mu + \levelR \Lambda_0 - {\sf d}_{\mu}\delta$, 
    whence $V_{\lgtilde}(\mu + \levelR \Lambda_0 - {\sf d}_{\mu}\delta)_{\lambda + \levelR \Lambda_0 - k\delta} \cong V_{\lgtilde}(\mu + \levelR \Lambda_0 - {\sf d}_{\mu}\delta)_{w(\lambda + \levelR \Lambda_0 - k \delta)} \ne 0$.

    (2) According to Lemma \ref{Pmax}(2) and \cite[Proposition 12.5]{Kac}, it suffices to show that the section $P_{(\mu, -)}$ is convex with respect to $\tilde{\dot{Q}}_{\ex}$ and closed under the Weyl group $\dot{\mathcal{W}}_{\ex}$. For $\lgdhat$ being of non-branching types, there is nothing to prove since $\lgdhat = \lgdhat_{\ex}$. Let $\lgdhat$ be of branching type.

    Note that the action of $\dot{\sigma}_{r-1,r}$ on weight lattice coincides with the action of the simple reflection corresponding to $\dot{\beta}_r$, and similarly for $\dot{\sigma}_{0,1}$ if exists.
    With $\dot{\Sigma}$ take the place of these simple reflections, the section $P_{(\mu, -)}$ is closed under the action of $\dot{\mathcal{W}}_{\ex}$. 
    
    It remains to show that each $\dot{\beta}_p$-string is unbroken in $P_{(\mu, -)}$ for $p = 0,r$. 
    Let $\dot{\lambda} + m \dot{\delta} \in P_{(\mu, -)}$ corresponding to a Maya diagram $\M$, with $\dot{\lambda} = (\dot{\lambda}_1, \dot{\lambda}_2, \ldots, \dot{\lambda}_r)$. 
    Assume $\dot{\lambda}_r > 0$. Then either that among uncut columns of $\M$ there are more black beads in the $r$th row than in the $-r$th row, or that the $-r$th position at the top of the cut column on $\M$ is white. 
    In either case we can construct a Maya diagram of weight $(\mu, \dot{\lambda} - \dot{\beta}_r; m)$ by pulling a bead off the $r$th row or to the $-r$th row.
    This means the $\dot{\beta}_r$-string through $\dot{\lambda} + m \dot{\delta} \in P_{(\mu, -)}$ is unbroken. A similar discussion holds for $\dot{\beta}_0$, which finishes the convexity statement.
\end{proof}

\subsection{Defects of weights}\label{def-and-move}

This subsection is devoted to the following theorem, which is the main result of this section. 
\begin{Thm}\label{thm-def-weight}
    For $\Lambda \in \tilde{P}^+(\levelR)$, $\beta \in Q^+$ such that $\Lambda - \beta \in P(\Lambda)$, the defect of the algebra $\mathscr{R}^{\Lambda}_{\beta}$ equals to the sum of the components of its moving vector. In other words, 
    $\defect(\Lambda, \beta) = {\bf mv}(\Lambda, \beta) \cdot \mathbf{1}$. 
\end{Thm}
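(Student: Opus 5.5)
Following the diagram at the end of \S\ref{moving-vector}, I would express both sides of $\defect(\Lambda,\beta)={\bf mv}(\Lambda,\beta)\cdot\mathbf{1}$ in terms of anomalies and vertical weights in $\F$.

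\textbf{Step 1: the endpoints of the move.} Let $v$ be a reduced $\lghat_{\lleft}$-abacus of weight $\Lambda-\beta$. Its weight is $(\bar\Lambda-\bar\beta,\dot{\bar\Lambda};-{\sf d}_\Lambda-b)$. By Proposition \ref{Pmax}(2), applied to the dual data, together with the remark after Corollary \ref{wtSpaceNz}, the terminal $v_{\max}$ has weight $w(\lambda,\dot\lambda;-{\sf d}_\lambda)$. Here $(\lambda,\dot\lambda)$ is a dual pair with $\dot\lambda\in\dot P^+_{\dot\Sigma}(\levelL)$, and $w$ is an affine Weyl group element with $\overline{w(\lambda)}=\bar\Lambda-\bar\beta$. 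Since the move changes only vertical data, the horizontal part of the final weight is $w(\lambda+\levelR\Lambda_0-{\sf d}_\lambda\delta)$. Its $\delta$-coefficient $-k_{\max}$ is determined by $\Lambda-\beta$, because $\Lambda-\beta$ and $w(\cdots)$ lie in the same Weyl orbit up to the $\delta$-shift recorded by ${\sf d}$.

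By definition, $\Psi({\bf mv}(\Lambda,\beta))=(\dot\mu_{\max}-k_{\max}\dot\delta)-(\dot{\bar\Lambda}-({\sf d}_\Lambda+b)\dot\delta)$. The number of moves is ${\rm ht}_{\ex}$ of this element. Each simple root $\dot\beta_p$ has $(\dot\beta_p|\hat{\dot\rho}_{\ex})$ equal to a constant times its weight $w_p$; I would check this type by type using the normalizations of \S\ref{Parity}. With that, ${\rm ht}_{\ex}$ becomes a pairing $(\,\cdot\,|\dot\rho_{\ex}+\ddCox_{\ex}\dot\Lambda_0)$, up to a normalizing factor. This reduces ${\rm ht}_{\ex}$ to an expression in $\dot\mu_{\max}$, $\dot{\bar\Lambda}$ and the $\dot\delta$-coefficients.

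\textbf{Step 2: express the defect through anomalies.} Write $\Lambda-\beta=w(\Lambda')$ with $\Lambda'=\lambda+\levelR\Lambda_0-{\sf d}\,\delta$ dominant. Then $\defect(\Lambda,\beta)=\tfrac12\big((\Lambda|\Lambda)-(\Lambda'|\Lambda')\big)$, which is $\levelR$ times the difference of $\delta$-coefficients plus $\tfrac12(|\bar\Lambda|^2-|\lambda|^2)$. I would trade this for a Casimir-type expression $\tfrac{(\bar\Lambda|\bar\Lambda+2\rho)-(\lambda|\lambda+2\rho)}{2(\levelR+\dCox)}$, using the standard fact that $(\Lambda'+\hat\rho|\Lambda'+\hat\rho)$ is constant along the relevant string of $\delta$-shifts.

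Next, formula (\ref{Formula-vacuum}) and Proposition \ref{VirEv} give ${\sf d}^{\lghat}_{\bar\Lambda}+{\sf d}^{\lgdhat}_{\dot{\bar\Lambda}}={\sf d}_{\Y_\Lambda}$, and similarly for $(\lambda,\dot\lambda)$. The ${\sf d}_\Y$ are explicit in $\size(Y)$. Subtracting these two identities converts the horizontal Casimir difference into minus the vertical one, plus a size term. This is the same mechanism as the anomaly comparison in the proof of Theorem \ref{mainDualThmIntro}.

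\textbf{Step 3: compare.} I would then show that the resulting vertical expression equals ${\rm ht}_{\ex}$ from Step 1. Both sides become $(\dot\lambda-\dot{\bar\Lambda}+m\dot\theta_{\ex}|\,\cdot\,)$-type pairings, and one needs $(\dot\beta|\dot\lambda+\dot{\bar\Lambda}+2\dot\rho)/(2(\levelL+\ddCox))$ to equal the height, where $\dot\beta$ is the vertical displacement. This is where I expect the main obstacle. A quadratic expression must equal a linear height, and the estimate $(\dot\theta_{\ex}|\dot\mu+\dot\lambda+2\dot\rho)\le 2\levelL+2\ddCox$ in the main proof gave only an inequality.

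To get equality, I would use that $v_{\max}$ is a vacuum abacus, so the move pushes beads straight up. I would then compute both sides combinatorially on the column abacus, row by row: each row contributes $\binom{\cdot}{2}$-type quantities to the norm and the matching count to the height. The half-weighted rows $0,\gh 0$ must be checked separately using Remark \ref{double-half-row}. As a fallback, one can induct on moves. A single move $\dot\beta_p$ from a non-terminal abacus changes $\tfrac12(\,|\,)$ of the total weight by exactly $w_p$, provided the horizontal weight stays in the $\Sigma$-dominant chamber. One then shows that $\defect$ computed via horizontal weights is invariant under the Weyl action, reducing to the case where $\Lambda-\beta$ is itself the weight of a reduced abacus adjacent to $v_{\max}$.
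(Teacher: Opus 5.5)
Your Steps 1--2 reduce the problem to the two endpoints: the reduced abacus of weight $(\bar\Lambda-\bar\beta,\dot{\bar\Lambda};-{\sf d}_\Lambda-b)$, and $v_{\max}$, whose weight is a Weyl translate of a joint highest weight $(\lambda,\dot\lambda;-{\sf d}_\lambda)$. However, the step that would prove $\defect={\bf mv}\cdot\mathbf 1$ is missing, and the tools you propose for it are wrong as stated.

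First, ${\rm ht}_{\ex}$ is not a multiple of $(\,\cdot\,|\hat{\dot\rho}_{\ex})$. One has $(\dot\beta_p|\hat{\dot\rho}_{\ex})=\frac12(\dot\beta_p|\dot\beta_p)$, which depends on root length. For $(\lsohat(2l),\lsohat(2r))$, with $\lgdhat_{\ex}=\lsphat(2r)$, this equals $1$ for $1\le p<r$ and $2$ for $p\in\{0,r\}$, yet every move there has weight $1$. The correct linear functional is ${\rm ht}_{\ex}(\dot\epsilon_p)=r-p+\frac12$, ${\rm ht}_{\ex}(\dot\delta)=\levelR$.

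Second, your ``standard fact'' is false: $(\Lambda'+k\delta+\hat\rho|\Lambda'+k\delta+\hat\rho)$ differs from the $k=0$ value by $2k(\delta|\Lambda'+\hat\rho)\neq0$. Even the true statement (constancy across highest weights for the canonical vacuum-anomaly $\delta$-coefficient) only moves the problem. The Casimir/anomaly detour introduces $\rho$-terms and $1/(\levelR+\dCox)$ factors that do not occur in $\defect$. That is why you end up, as you note, needing a quadratic expression to equal a linear one, with no argument.

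The fallback induction on single moves also fails. Moves never change the horizontal weight $\bar\Lambda-\bar\beta$, so your proviso cannot be arranged. After one move $\dot\beta_p$ the vertical weight is in general not in $\dot P^+_{\dot\Sigma}(\levelL)$, so the new abacus is not a reduced abacus of any pair $(\Lambda'',\beta'')$ and has no dual Young diagram. Moreover, the change of $\frac12|\cdot|^2$ under $\dot\nu\mapsto\dot\nu+\dot\beta_p$ is $(\dot\nu|\dot\beta_p)+\frac12|\dot\beta_p|^2$, which depends on $\dot\nu$ rather than being $w_p$.

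The missing idea is how to pass through dominant weights only. The paper uses Lemma \ref{Stembridge} to refine the displacement from $\dot{\bar\Lambda}-({\sf d}_\Lambda+b)\dot\delta$ to $\dot\lambda-k_{\max}\dot\delta$ into steps $\dot\zeta_i\in\dot\Delta^+_{\ex}$ between weights $\dot\mu_i-k_i\dot\delta$ with $\dot\mu_i\in\dot P^+_{\ex}$. Each $\dot\mu_i$ is dual to some $\mu_i$ with a Young diagram $Y_{\mu_i}$. Lemma \ref{dualYoung}(3) and Corollary \ref{wtSpaceNz} show that $\F_{(\bar\Lambda-\bar\beta,\dot\mu_i;-k_i)}\neq0$ is reached from $\sigma_i(v(\Y_{\mu_i}))$ by $\lghat$. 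Hence $\defect(\Lambda,\beta)$ telescopes into the differences $\defect(\Lambda_{(i-1)},\beta_{i-1})-\defect(\Lambda_{(i)},\beta_i)$. Each difference is shown to equal ${\rm ht}_{\ex}\dot\zeta_i$ by a case check over root types: add or remove boxes of the Young diagram, and handle imaginary shifts by additivity.

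Note that the $k_i$ cancel in that check. It says exactly that $\frac12|\lambda_{\Y}|^2-\levelR\,{\sf d}_{\Y}+{\rm ht}_{\ex}(\dot\lambda_{\Y})$, with ${\rm ht}_{\ex}$ extended linearly and this combination being $\Sigma$-invariant, does not change along a step. Done correctly, your endpoint bookkeeping gives $\defect(\Lambda,\beta)-{\rm ht}_{\ex}\Psi({\bf mv}(\Lambda,\beta))$ equal to the difference of this quantity at $Y_\Lambda$ and at $Y_\lambda$, since the $b$- and Weyl-shift terms cancel. So you could also close Step 3 by checking directly that this quantity takes the same value at the two diagrams. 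For instance, for $h=\frac12$, $R_{\bar1}=0$, $R_{\bar0}=2r$ it reduces to $\sum_p p\,y^t_p=\sum_i\binom{y_i+1}{2}$. Your row-by-row binomial heuristic points this way, but it must be applied to the two joint highest weights with the correct linear height functional, not to a Casimir expression or a general reduced abacus.
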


First we list some basic properties of the defect $\defect(\Lambda, \beta)$.

\begin{Lem}
    Let $\Lambda \in \tilde{P}^+(\levelR)$, $\beta \in \tilde{Q}^+$ such that $\Lambda - \beta  \in P(\Lambda)$. 
    \begin{enumerate}
        \item For any $k \in \mathbb{C}$, $\defect(\Lambda + k\delta, \beta) = \defect(\Lambda, \beta)$.
        \item Suppose $w$ is a Weyl group element and $w(\Lambda-\beta)=\Lambda-\beta'$ for some  $\beta' \in \tilde{Q}^+$. Then $\defect(\Lambda, \beta) = \defect(\Lambda, \beta')$. In other words, defect of weight is invariant under affine Weyl group. 
        \item If $\defect(\Lambda,\beta)=0$, then there is a Weyl group element $w$ such that $w(\Lambda-\beta)=\Lambda$.
        \item For any $\sigma \in \Sigma$, $\defect(\Lambda, \beta) = \defect\left(\sigma(\Lambda), \sigma(\beta)\right)$.
    \end{enumerate}
\end{Lem}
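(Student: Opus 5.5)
The plan is to prove the four items directly from the formula $\defect(\Lambda,\beta)=\frac12\big((\Lambda|\Lambda)-(\Lambda-\beta|\Lambda-\beta)\big)$, using only the invariance of the normalized form $(-|-)$ on $\tilde{\lieh}^*$ under the affine Weyl group and under diagram automorphisms (\S\ref{DiagAuto}), together with standard facts on weights of integrable modules from \cite{Kac}.

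For (1), expand $(\Lambda+k\delta|\Lambda+k\delta)=(\Lambda|\Lambda)+2k(\Lambda|\delta)$ and $(\Lambda+k\delta-\beta|\Lambda+k\delta-\beta)=(\Lambda-\beta|\Lambda-\beta)+2k(\Lambda-\beta|\delta)$, using $(\delta|\delta)=0$. The difference of the two correction terms is $2k(\beta|\delta)=0$, since $\beta\in\tilde{Q}^+$ is a combination of roots and $(\alpha_i|\delta)=0$ for every simple root. Hence the defect is unchanged. For (2), $w$ is an isometry of $(-|-)$, so $(\Lambda-\beta'|\Lambda-\beta')=(w(\Lambda-\beta)|w(\Lambda-\beta))=(\Lambda-\beta|\Lambda-\beta)$. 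Subtracting from $(\Lambda|\Lambda)$ gives $\defect(\Lambda,\beta)=\defect(\Lambda,\beta')$. For (4), $\sigma$ permutes simple roots and fundamental weights and preserves $(-|-)$, so $\sigma(\Lambda)\in\tilde P^+(\levelR)$ and $\sigma(\beta)\in\tilde Q^+$. Moreover $\sigma(\Lambda)-\sigma(\beta)\in P(\sigma(\Lambda))$, because twisting $V(\Lambda)$ by $\sigma$ gives $V(\sigma\Lambda)$. The equality of defects then follows from isometry, exactly as in (2).

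For (3), I would use \cite[Proposition 12.5]{Kac}: every weight of $V(\Lambda)$ is $\mathcal W$-conjugate to a dominant weight $\Lambda-\gamma\in P(\Lambda)$ with $\gamma\in\tilde Q^+$. By (2), $\defect(\Lambda,\gamma)=\defect(\Lambda,\beta)=0$. Next, write $0=\defect(\Lambda,\gamma)=(\Lambda|\gamma)-\frac12(\gamma|\gamma)=\frac12\big((\Lambda|\gamma)+(\Lambda-\gamma|\gamma)\big)$. Both summands are nonnegative, since $\Lambda$ and $\Lambda-\gamma$ are dominant and $\gamma\in\tilde Q^+$; this needs the convention-adjusted form, which still pairs dominant weights nonnegatively with simple roots because $D$ is positive diagonal. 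Hence $(\Lambda|\gamma)=0$ and $(\Lambda-\gamma|\gamma)=0$, so $(\gamma|\gamma)=0$.

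The main obstacle is concluding from here that the weight is $\Lambda$ itself, up to $\mathcal W$. In an affine root lattice $(\gamma|\gamma)=0$ forces $\gamma=n\delta$ with $n\ge0$. But $\Lambda-n\delta$ is a weight of $V(\Lambda)$ only when $n=0$ or the level is positive. It is also not $\mathcal W$-conjugate to $\Lambda$ unless $n=0$. So the literal statement needs a repair: I would read (3) modulo $\delta$ (consistent with (1)), giving $w(\Lambda-\beta)=\Lambda-n\delta$. Alternatively, if genuinely $\gamma=0$ is required, I would use $(\Lambda|\gamma)=0$ with $\gamma=n\delta$, which gives $n\levelR=0$, so $n=0$ when $\levelR>0$. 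Given $(\Lambda|\delta)=\levelR$, this cannot hold for $\levelR>0$ unless $n=0$. Then $\gamma=0$, and $w^{-1}$ carries $\Lambda-\beta$ to $\Lambda$, as claimed.
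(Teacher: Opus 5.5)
Your proposal is correct and follows the paper: (1), (2) and (4) are the same direct computation and isometry arguments, and for (3) you reprove \cite[Proposition 11.4a]{Kac}, which the paper simply cites (you conjugate to a dominant $\Lambda-\gamma$ and write the defect as $\frac12\big((\Lambda|\gamma)+(\Lambda-\gamma|\gamma)\big)$). The one thing to fix is your claim that (3) ``needs a repair'': the weights $\Lambda-n\delta$ with $n>0$ have defect $n(\Lambda|\delta)=n\levelR\neq 0$, so they are not counterexamples, and your own conclusion $(\Lambda|\gamma)=0$ already forces $n=0$ because $\levelR>0$ in this setting; the statement therefore holds as written, and you should drop the ``modulo $\delta$'' reading.
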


\begin{proof}
  The statement  (1) can be given through straightforward calculations according to the definition of defect, while (2) and (4) follow respectively from the facts that both $w \in \mathcal{W}$ and $\sigma \in \Sigma$ keep the bilinear form $(-|-)$ invariant. The statement (3) follows from \cite[Proposition 11.4a]{Kac}
\end{proof}

Since $P_{(\lambda, -)} = P_{(\sigma(\lambda), -)}$ for $\sigma \in \Sigma_{\lleft}$, we also have ${\bf mv}(\Lambda, \beta) = {\bf mv}(\sigma(\Lambda), \sigma(\beta))$, and therefore in the following discussion we may assume $\Lambda \in \tilde{P}^{+}_{\Sigma_{\lleft}}(\levelR)$.
The moving vector ${\bf mv}(\Lambda, \beta)$ under $\Psi$ corresponds to the difference between two dominant weights $\dot{\bar{\Lambda}} - ({\sf d}_\Lambda + b)\dot{\delta}$ and $\dot{\lambda} - k_{\max}\dot{\delta}$, which lies in $\tilde{\dot{Q}}^+_{\ex}$. 
Thanks to a result of Stembridge \cite[Corollary 2.7]{Stem}, we can give the following reduction:

\begin{Lem}\label{Stembridge}
    Let $\lambda, \mu \in \tilde{P}^+$ such that $\lambda > \mu$ with respect to $\tilde{Q}^+$.
    \begin{enumerate}
        \item  If each of $\nu \in \tilde{P}$ with $\lambda > \nu > \mu$ satisfies $\nu \notin \tilde{P}^+$, then $\lambda - \mu \in \Delta$ is a root.
        \item There exists a sequence of weights $\lambda = \lambda_n > \lambda_{n-1} > \cdots > \lambda_1 > \lambda_0 = \mu$, such that $\lambda_i \in \tilde{P}^+$ and $\lambda_{i} - \lambda_{i-1} \in \Delta$ for each $i = 1, \ldots, n$. 
    \end{enumerate}
\end{Lem}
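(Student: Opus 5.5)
The lemma states two things: a step-down criterion for dominant weights, and the existence of a chain of dominant weights with root differences. The plan is to deduce both from Stembridge's result \cite[Corollary 2.7]{Stem}, which handles the finite-type and (with appropriate care) symmetrizable Kac--Moody setting. For dominant $\lambda > \mu$ it asserts that either $\lambda-\mu$ is a root, or there is a dominant $\nu$ with $\lambda > \nu > \mu$. Part (1) is this dichotomy read contrapositively, once we check that the hypotheses of \cite{Stem} hold in our affine setting. The weights live in $\tilde{P}^+$ of a symmetrizable affine Kac--Moody algebra, and the order is the one given by $\tilde{Q}^+$. Any modification of the invariant form (the $D^{(2)}$ and $C^{(1)}$ conventions of \S\ref{Parity}) does not change $\tilde{Q}^+$, $\tilde{P}^+$ or the root system $\Delta$, so it is harmless.

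For part (1), argue as follows. Suppose no dominant $\nu$ lies strictly between $\mu$ and $\lambda$, and put $\beta=\lambda-\mu\in\tilde{Q}^+\setminus\{0\}$. Choose a simple root $\alpha_i$ in the support of $\beta$ with $(\beta|\alpha_i^\vee)>0$; such an $i$ exists unless $(\beta|\beta)\le 0$, and the imaginary case is treated separately below. Then consider $\lambda-\alpha_i$ and $\mu+\alpha_i$, and reflect them by $s_i$, following Stembridge's argument. The key step is to produce a weight $\nu$ in the interval $[\mu,\lambda]$ whose dominant $\mathcal{W}$-conjugate still lies in that interval, unless $\beta$ is itself a (possibly imaginary) root. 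In the imaginary direction we use that $\delta$ and its multiples are roots, so $\beta=k\delta$ with $k>0$. In that case $\mu+\delta$ is dominant, since $(\delta|\alpha_i^\vee)=0$, and it lies strictly between $\mu$ and $\lambda$ when $k>1$; hence $k=1$ and $\beta=\delta\in\Delta$. Rather than redo Stembridge's combinatorics, I would simply cite \cite[Corollary 2.7]{Stem}, noting that it applies to all Kac--Moody root systems. I expect the main obstacle to be exactly this verification: that the citation covers imaginary roots and our normalizations.

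Part (2) follows from (1) by induction on $\mathrm{ht}(\lambda-\mu)$, the height being finite since $\lambda-\mu\in\tilde{Q}^+$. If no dominant weight lies strictly between $\mu$ and $\lambda$, part (1) gives $\lambda-\mu\in\Delta$, and we take $n=1$. Otherwise pick a dominant $\nu$ with $\lambda>\nu>\mu$. Both $\lambda-\nu$ and $\nu-\mu$ then have strictly smaller height, so the induction hypothesis gives chains from $\lambda$ to $\nu$ and from $\nu$ to $\mu$, and concatenating them gives the required sequence. The interval $\{\nu:\lambda\ge\nu\ge\mu\}$ is finite, so the process terminates.
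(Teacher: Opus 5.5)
Your proposal takes essentially the paper's route: the paper gives no argument beyond invoking Stembridge \cite[Corollary 2.7]{Stem}. Your additions are correct but inessential once that citation is granted: you treat the isotropic case $\beta\in\mathbb{Q}\delta$ separately (with the paper's $D^{(2)}$ convention the primitive element there is $\tfrac12\delta$ rather than $\delta$, which does not affect the conclusion), and you deduce (2) from (1) by induction on height. As in the paper, the one load-bearing point is that Stembridge's result holds for the affine lattice $\tilde{P}^+$ and not only for finite root systems, and you assert this rather than verify it.
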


Given $\lambda, \mu \in \tilde{P}^+$, we say these two weights are \emph{adjacent} in $\tilde{P}^+$ if the condition in Lemma \ref{Stembridge} (1) holds, since there are no other weights in $\tilde{P}^+$ between them.
This adjacent relation of weights will reflect deep connection between the corresponding blocks, as given in \S \ref{prop-rep-type}.

\begin{proof}[Proof of Theorem \ref{thm-def-weight}]
Through moves we increases the weight from  $\dot{\bar{\Lambda}} - ({\sf d}_\Lambda + b)\dot{\delta}$ to $\dot{\lambda} - k_{\max}\dot{\delta}$. With the help of Lemma \ref{Stembridge}, we can divide this process into several steps.
Set the following increasing sequence of weights with respect to $\tilde{\dot{Q}}^+_{\ex}$: 
\[
\dot{\bar{\Lambda}} - ({\sf d}_\Lambda + b)\dot{\delta} = \dot{\mu}_0 - k_0\dot{\delta} < \dot{\mu}_1 - k_1\dot{\delta} < \cdots < \dot{\mu}_n - k_n\dot{\delta} = \dot{\lambda} - k_{\max}\dot{\delta},
\]
such that each $\dot{\mu}_i$ lies in  $\dot{P}^+_{\ex}$, and the difference of each step $\dot{\zeta}_i:= \dot{\mu}_i - \dot{\mu}_{i-1} + (k_{i-1} - k_i)\dot{\delta} \in \dot{\Delta}_{\ex}^+$ is a root.
By Lemma \ref{dualYoung}(3), taking the pair of dual weights $(\mu_i, \dot{\mu}_i) \in \sigdom(\levelR) \times \dot{P}^{+}_{\dot{\Sigma}}(\levelL)$, there exists $\sigma_i \in \Sigma$ such that $\bar{\Lambda} - \sigma_i(\mu_i) \in Q$.
According to Corollary \ref{wtSpaceNz}, each weight space $\F_{(\bar{\Lambda} - \bar{\beta}, \dot{\mu}_{i}; -k_{i})}$ is non-zero, and we can get a non-zero $v_i \in \F_{(\bar{\Lambda} - \bar{\beta}, \dot{\mu}_{i}; -k_{i})}$ from the joint highest weight vector $\sigma_i(v(\Y_{\mu_i}))$ via $\lghat_{\lleft}$-action.
In other words, for each $i$ we locate a weight space $\sigma_i(\mu_i, \dot{\mu}_i; -{\sf d}_{\mu_i})$ from which we are able to go down into the weight space $(\bar{\Lambda} - \bar{\beta}, \dot{\mu}_i; -k_i)$. This reduction process can be shown graphically as in Figure \ref{step-by-step-move}.
\begin{figure}[htbp]
\centering
\begin{tikzpicture}[>=stealth]
        
        \def\RightX{4.5}
        
        \node[anchor=west] (R1) at (\RightX, 4.5) {$(\bar{\Lambda} - \bar{\beta}, \dot{\mu}_n; -k_{n})$};
        \node[anchor=west] (R2) at (\RightX, 3.0) {$(\bar{\Lambda} - \bar{\beta}, \dot{\mu}_i; -k_i)$};
        \node[anchor=west] (R3) at (\RightX, 1.5) {$(\bar{\Lambda} - \bar{\beta}, \dot{\mu}_{i-1}; -k_{i-1})$};
        \node[anchor=west] (R4) at (\RightX, 0.0) {$(\bar{\Lambda} - \bar{\beta}, \dot{\bar{\Lambda}}; -{\sf d}_\Lambda - b)$};

        \node[anchor=west] (L1) at (0, 4.5) {$\sigma_n(\mu_n, \dot{\mu}_n; -{\sf d}_{\mu_n})$};
        \node[anchor=west] (L2) at (-1.5, 3.0) {$\sigma_i(\mu_i, \dot{\mu}_i; -{\sf d}_{\mu_i})$};
        \node[anchor=west] (L3) at (-3.0, 1.5) {$\sigma_{i-1}(\mu_{i-1}, \dot{\mu}_{i-1}; -{\sf d}_{\mu_{i-1}})$};
        \node[anchor=west] (L4) at (-4.5, 0.0) {$(\bar{\Lambda}, \dot{\bar{\Lambda}}; -{\sf d}_\Lambda)$};

        \draw[->] (L1.east) -- node[above] {$w$} (R1.west);
        \draw[->] (L2.east) -- node[above] {$-\beta_i$} (R2.west);
        \draw[->] (L3.east) -- node[above] {$-\beta_{i-1}$} (R3.west);
        \draw[->] (L4.east) -- node[above] {$-\beta=-\bar{\beta}-b\delta$} (R4.west);

        \draw[->, densely dotted] ([xshift=2.2cm, yshift=0.1cm]R2.north west) 
            -- node[right] {\small $\dot{\zeta}_{i+1}+\dots+\dot{\zeta}_n$} 
            ([xshift=2.2cm, yshift=-0.1cm]R1.south west);
            
        \draw[->] ([xshift=2.2cm, yshift=0.1cm]R3.north west) 
            -- node[right] {\small $\dot{\zeta}_i$} 
            ([xshift=2.2cm, yshift=-0.1cm]R2.south west);
            
        \draw[->, densely dotted] ([xshift=2.2cm, yshift=0.1cm]R4.north west) 
            -- node[right] {\small $\dot{\zeta}_1+\dots+\dot{\zeta}_{i-1}$} 
            ([xshift=2.2cm, yshift=-0.1cm]R3.south west);
            
    \end{tikzpicture}
\caption{Step-by-step reduction of the moving process}\label{step-by-step-move}
\end{figure}

With this reduction, we discuss how the defect changes in each step.
Set $\Lambda_{(i)} \coloneq  \sigma_i(\mu_i + \levelR\Lambda_0 - {\sf d}_{\mu_i}\delta)$ and $\beta_i \coloneq  \Lambda_{(i)} - (\bar{\Lambda} - \bar{\beta} + \levelR\Lambda_0 - k_i \delta) \in \tilde{Q}$, the defect of the weight space $V_{\lghat}(\Lambda_{(i)})_{\Lambda_{(i)} - \beta_i}$ is 
\begin{align*}
    \defect\left(\Lambda_{(i)} , \beta_i\right) &= \frac{1}{2}\left( |\sigma_i(\mu_i + \levelR\Lambda_0 - {\sf d}_{\mu_i}\delta)|^2 - \left|\bar{\Lambda} - \bar{\beta} + \levelR\Lambda_0 - k_i \delta\right|^2 \right) \\
    &= \frac{1}{2}\left( |\mu_i|^2 - \left|\bar{\Lambda} - \bar{\beta}\right|^2 \right) - \levelR({\sf d}_{\mu_i} - k_i). 
\end{align*}
The difference of defects in each step is
\begin{equation}
    \defect(\Lambda_{(i-1)} , \beta_{i-1}) - \defect(\Lambda_{(i)} , \beta_i) = \frac{1}{2}(|\mu_{i-1}|^2 - |\mu_{i}|^2) + \levelR(-{\sf d}_{\mu_{i-1}} + {\sf d}_{\mu_i} + (k_{i-1} - k_i)). \tag{6.3.1}\label{D}
\end{equation}
Note that $\defect(\Lambda_{(0)} , \beta_{0}) = \defect(\Lambda, \beta)$, and $\defect(\Lambda_{(n)} , \beta_{n}) = 0$ since the weights are in the same Weyl group orbit.
Thus we have $\defect(\Lambda, \beta) = \sum_{i = 1}^n \left(\defect(\Lambda_{(i-1)} , \beta_{i-1}) - \defect(\Lambda_{(i)} , \beta_i)\right)$.
On the other hand, the moving vector maps under $\Psi$ to the sum of the differences of weights $\dot{\zeta}_i$ in each step, 
that is, $\Psi({\bf mv}(\Lambda, \beta)) = \sum_{i = 1}^n \dot{\zeta}_i$.
It is sufficient for us to prove the following Claim. 

\medskip 
\noindent 
{\bf Claim.} {\em   $\defect(\Lambda_{(i-1)} , \beta_{i-1}) - \defect(\Lambda_{(i)} , \beta_i) = {\rm ht}_{\ex} \dot{\zeta}_i$ for each $i$.}

\medskip 
    According to the construction, $\dot{\zeta}_i = \dot{\mu}_i - \dot{\mu}_{i-1} + (k_{i-1} - k_i)\dot{\delta} \in \dot{\Delta}_{\ex}^+$ is a root.
    Following a case-by-case discussion we consider all possible positive roots $\dot{\zeta}_i$.

    By examining the set of positive roots for each non-branching type from Appendix \ref{pos-roots}, 
    we can conclude a unified characterization of $\dot{\Delta}_{\ex}^+$.
    Consider the set
    \[
        S = \{ \dot{\epsilon}_p - \dot{\epsilon}_q \mid p < q\} \cup \{ \dot{\epsilon}_p + \dot{\epsilon}_q \} \cup \{ \dot{\epsilon}_p, -\dot{\epsilon}_p + \frac{1}{2}\dot{\delta} \}.
    \]
    Then any positive root can always be expressed as $\dot{\zeta}_i = \dot{\alpha} + n\dot{\delta}$, where $\dot{\alpha} \in \pm S \cup \{0\}$ and $n\dot{\delta} \in (\dot{\Delta}_{\ex})^+_{im}$.
    We first prove that Claim holds for $\dot{\zeta}_i \in (\dot{\Delta}_{\ex})^+_{im}$ and $\dot{\zeta}_i \in S$ through a case-by-case discussion.
    After that, we consider the case $\dot{\zeta}_i = \dot{\alpha} + n\dot{\delta}$, thereby providing a complete discussion for all positive roots.

    We briefly state the idea.
    Since in each case $\dot{\mu}_j = Y_{\mu_j}^t + \frac{L_{\overline{2h}} \mod 2}{2}\mathbf{1}$ for $j = i-1, i$, we can interpret the difference $\dot{\mu}_i - \dot{\mu}_{i-1}$ using Young diagrams and give the relations between ${\mu}_i,{\mu}_{i-1}$.

    {\em Case 1}. $\dot{\zeta}_i = (k_{i-1} - k_i)\dot{\delta}$.  Then $\dot{\mu}_i = \dot{\mu}_{i-1}$ and $|\mu_{i-1}|^2 = |\mu_{i}|^2$. 
    Since ${\rm ht}_{\ex} \dot{\delta} = \levelR$ for all types (recall again the special convention for $D^{(2)}$ in \S\ref{Parity}),
    \[{\rm ht}_{\ex} \dot{\zeta}_i = \levelR (k_{i-1} - k_i) = \defect(\Lambda_{(i-1)} , \beta_{i-1}) - \defect(\Lambda_{(i)} , \beta_i).\]
    
    {\em Case 2}. $\dot{\zeta}_i = \dot{\epsilon}_p - \dot{\epsilon}_q$, $1 \le p < q \le r$. 
    Then ${\rm ht}_{\ex} \dot{\zeta}_i = q - p$ and $\dot{\mu}_{i} = \dot{\mu}_{i-1} + \dot{\epsilon}_p - \dot{\epsilon}_q$, ${\sf d}_{\mu_{i-1}} = {\sf d}_{\mu_i}$, $k_{i-1} = k_i$. 
    The operation getting Young diagram $Y_{\mu_i}$ from $Y_{\mu_{i-1}}$ is given in Figure \ref{Young+p-q}.

    If $h = \frac{1}{2}$, then $\mu_j = Y_{\mu_j} + \frac{R_{\bar{1}}}{2}\mathbf{1}$. 
    That is, for some $1 \le m < n \le l$, 
    \[
    {\mu_{i-1}} = (\ldots, \overset{m}{q}, \ldots, \overset{n}{p-1}, \ldots) + \frac{R_{\bar{1}}}{2}\mathbf{1}; \; {\mu_i} = (\ldots, \overset{m}{q-1}, \ldots, \overset{n}{p}, \ldots) + \frac{R_{\bar{1}}}{2}\mathbf{1}.
    \] 
    Then the equality becomes 
    ${\rm LHS} = \frac{1}{2}(|\mu_{i-1}|^2 - |\mu_{i}|^2) = q - p = {\rm ht}_{\ex} \dot{\zeta}_i$.

    If $h = 0$, then $\mu_j = Y^c_{\mu_j} + \frac{R_{\bar{0}} \pmod{2} }{2}\mathbf{1}$. 
    That is, for some $1 \le n < m \le l$, 
    \[
    {\mu_{i-1}} = (\ldots, \overset{n}{-p+1}, \ldots, \overset{m}{-q}, \ldots) + \frac{R_{\bar{0}}}{2}\mathbf{1}; \; {\mu_i} = (\ldots, \overset{n}{-p}, \ldots, \overset{m}{-q+1}, \ldots) + \frac{R_{\bar{0}}}{2}\mathbf{1}.
    \] 
    Then the equality becomes 
    ${\rm LHS} = \frac{1}{2}(|\mu_{i-1}|^2 - |\mu_{i}|^2) = q - p = {\rm ht}_{\ex} \dot{\zeta}_i$. 

\medskip     
The remaining cases are similar, and we only give a sketch.

{\em Case 3}. $\dot{\zeta}_i = \dot{\epsilon}_p + \dot{\epsilon}_q$, $1 \le p , q \le r$. Then ${\rm ht}_{\ex} \dot{\zeta}_i = R_{\bar{0}} + 1 - q - p$ and $\dot{\mu}_{i} = \dot{\mu}_{i-1} + \dot{\epsilon}_p + \dot{\epsilon}_q$, ${\sf d}_{\mu_i} - {\sf d}_{\mu_{i-1}} = 2h$. 
    The operation getting Young diagram $Y_{\mu_i}$ from $Y_{\mu_{i-1}}$ is given in Figure \ref{Young+p+q}.

 {\em Case 4}. $\dot{\zeta}_i = \dot{\epsilon}_p$, $1 \le p \le r$. Then ${\rm ht}_{\ex} \dot{\zeta}_i = \frac{R_{\bar{0}}+1}{2} - p$ and $\dot{\mu}_{i} = \dot{\mu}_{i-1} + \dot{\epsilon}_p$, ${\sf d}_{\mu_i} - {\sf d}_{\mu_{i-1}} = h$. 
    The operation getting Young diagram $Y_{\mu_i}$ from $Y_{\mu_{i-1}}$ is given in Figure \ref{Young+p}.

{\em Case 5}. $\dot{\zeta}_i = -\dot{\epsilon}_p + \frac{1}{2}\dot{\delta}$, $1 \le p \le r$. 
    Then ${\rm ht}_{\ex} \dot{\zeta}_i = p + \frac{R_{\bar{1}} - 1}{2} $ and $\dot{\mu}_{i} = \dot{\mu}_{i-1} - \dot{\epsilon}_p$, ${\sf d}_{\mu_i} - {\sf d}_{\mu_{i-1}} = -h$, $k_{i-1} - k_i = \frac{1}{2}$. 
    The operation getting Young diagram $Y_{\mu_i}$ from $Y_{\mu_{i-1}}$ is given in Figure \ref{Young-p}.

\medskip     
\begin{figure}[htbp]
    \centering
        \begin{subfigure}{0.45\textwidth}
            \centering
            \renewcommand{\thesubfigure}{(Case 2)}
            \begin{tikzpicture}[scale=0.5, every node/.style={scale=0.8}, font=\sffamily]
                \useasboundingbox (0,0) rectangle (8,-6);
                \draw[very thick] (0,0) rectangle (8,-6);

                \fill[gray!15] (0,0) -- (7,0) -- (7,-1) -- (5,-1) -- (5,-2) -- (4,-2) -- (4,-4) -- (2,-4) -- (2,-5) -- (0,-5) -- cycle;

                \draw[thick, darkgray] (0,0) -- (7,0) -- (7,-1) -- (5,-1) -- (5,-2) -- (4,-2) -- (4,-4) -- (2,-4) -- (2,-5) -- (0,-5) -- cycle;

                \fill[red!10] (3,-3) rectangle (4,-4);
                \draw[thick, red, dashed] (3,-3) rectangle (4,-4);
                \node[red] at (3.5, -3.5) {\Large $-$};

                \fill[blue!10] (5,-1) rectangle (6,-2);
                \draw[thick, blue, dashed] (5,-1) rectangle (6,-2);
                \node[blue] at (5.5, -1.5) {\Large $+$};

                \draw[-{Stealth[scale=1.2]}, thick, gray, dashed] (3.7, -3.2) to[out=45, in=225] (5.3, -1.8);

                \node[left] at (0, -1.5) {Row $p$};
                \draw[thick, dashed, darkgray] (0, -1.5) -- (5, -1.5);

                \node[left] at (0, -3.5) {Row $q$};
                \draw[thick, dashed, darkgray] (0, -3.5) -- (3, -3.5);

                \node[above] at (3.5, 0) {Col $m$};
                \draw[thick, dashed, darkgray] (3.5, 0) -- (3.5, -3);

                \node[above] at (5.5, 0) {Col $n$};
                \draw[thick, dashed, darkgray] (5.5, 0) -- (5.5, -1);

                \draw[dotted, lightgray] (0,0) grid[step=1] (8,-6);

            \end{tikzpicture}
            \caption{$Y^t_{{\mu}_{i}} = Y^t_{{\mu}_{i-1}} + \dot{\epsilon}_p - \dot{\epsilon}_q$}
            \label{Young+p-q}
        \end{subfigure}
        \hspace{1em}
        \begin{subfigure}{0.45\textwidth}
            \centering
            \renewcommand{\thesubfigure}{(Case 3)}
            \begin{tikzpicture}[scale=0.5, every node/.style={scale=0.8}, font=\sffamily]
                \useasboundingbox (0,0) rectangle (8,-6);
                \draw[very thick] (0,0) rectangle (8,-6);

                \fill[gray!15] (0,0) -- (7,0) -- (7,-1) -- (5,-1) -- (5,-2) -- (4,-2) -- (4,-3) -- (3,-3) -- (3,-4) -- (2,-4) -- (2,-5) -- (0,-5) -- cycle;

                \draw[thick, darkgray] (0,0) -- (7,0) -- (7,-1) -- (5,-1) -- (5,-2) -- (4,-2) -- (4,-3) -- (3,-3) -- (3,-4) -- (2,-4) -- (2,-5) -- (0,-5) -- cycle;

                \fill[blue!10] (3,-3) rectangle (4,-4);
                \draw[thick, blue, dashed] (3,-3) rectangle (4,-4);
                \node[blue] at (3.5, -3.5) {\Large $+$};

                \fill[blue!10] (5,-1) rectangle (6,-2);
                \draw[thick, blue, dashed] (5,-1) rectangle (6,-2);
                \node[blue] at (5.5, -1.5) {\Large $+$};


                \node[left] at (0, -1.5) {Row $p$};
                \draw[thick, dashed, darkgray] (0, -1.5) -- (5, -1.5);

                \node[left] at (0, -3.5) {Row $q$};
                \draw[thick, dashed, darkgray] (0, -3.5) -- (3, -3.5);

                \node[above] at (3.5, 0) {Col $m$};
                \draw[thick, dashed, darkgray] (3.5, 0) -- (3.5, -3);

                \node[above] at (5.5, 0) {Col $n$};
                \draw[thick, dashed, darkgray] (5.5, 0) -- (5.5, -1);

                \draw[dotted, lightgray] (0,0) grid[step=1] (8,-6);

            \end{tikzpicture}
            \caption{$Y^t_{{\mu}_{i}} = Y^t_{{\mu}_{i-1}} + \dot{\epsilon}_p + \dot{\epsilon}_q$}
            \label{Young+p+q}        
        \end{subfigure}
    \end{figure}
\begin{figure}[htbp]
    \centering
        \begin{subfigure}{0.45\textwidth}
            \centering
            \renewcommand{\thesubfigure}{(Case 4)}
            \begin{tikzpicture}[scale=0.5, every node/.style={scale=0.8}, font=\sffamily]
                \useasboundingbox (0,0) rectangle (8,-6);
                \draw[very thick] (0,0) rectangle (8,-6);

                \fill[gray!15] (0,0) -- (7,0) -- (7,-1) -- (5,-1) -- (5,-2) -- (4,-2) -- (4,-4) -- (2,-4) -- (2,-5) -- (0,-5) -- cycle;

                \draw[thick, darkgray] (0,0) -- (7,0) -- (7,-1) -- (5,-1) -- (5,-2) -- (4,-2) -- (4,-4) -- (2,-4) -- (2,-5) -- (0,-5) -- cycle;

                \fill[blue!10] (5,-1) rectangle (6,-2);
                \draw[thick, blue, dashed] (5,-1) rectangle (6,-2);
                \node[blue] at (5.5, -1.5) {\Large $+$};


                \node[left] at (0, -1.5) {Row $p$};
                \draw[thick, dashed, darkgray] (0, -1.5) -- (5, -1.5);

                \node[above] at (5.5, 0) {Col $n$};
                \draw[thick, dashed, darkgray] (5.5, 0) -- (5.5, -1);

                \draw[dotted, lightgray] (0,0) grid[step=1] (8,-6);

            \end{tikzpicture}
            \caption{$Y^t_{{\mu}_{i}} = Y^t_{{\mu}_{i-1}} + \dot{\epsilon}_p$}
            \label{Young+p}
        \end{subfigure}
        \hspace{1em}
        \begin{subfigure}{0.45\textwidth}
            \centering
            \renewcommand{\thesubfigure}{(Case 5)}
            \begin{tikzpicture}[scale=0.5, every node/.style={scale=0.8}, font=\sffamily]
                \useasboundingbox (0,0) rectangle (8,-6);
                \draw[very thick] (0,0) rectangle (8,-6);

                \fill[gray!15] (0,0) -- (7,0) -- (7,-1) -- (6,-1) -- (6,-2) -- (4,-2) -- (4,-4) -- (2,-4) -- (2,-5) -- (0,-5) -- cycle;

                \draw[thick, darkgray] (0,0) -- (7,0) -- (7,-1) -- (6,-1) -- (6,-2) -- (4,-2) -- (4,-4) -- (2,-4) -- (2,-5) -- (0,-5) -- cycle;

                \fill[red!10] (5,-1) rectangle (6,-2);
                \draw[thick, red, dashed] (5,-1) rectangle (6,-2);
                \node[red] at (5.5, -1.5) {\Large $-$};


                \node[left] at (0, -1.5) {Row $p$};
                \draw[thick, dashed, darkgray] (0, -1.5) -- (5, -1.5);

                \node[above] at (5.5, 0) {Col $n$};
                \draw[thick, dashed, darkgray] (5.5, 0) -- (5.5, -1);

                \draw[dotted, lightgray] (0,0) grid[step=1] (8,-6);

            \end{tikzpicture}
            \caption{$Y^t_{{\mu}_{i}} = Y^t_{{\mu}_{i-1}} - \dot{\epsilon}_p$}
            \label{Young-p}        
        \end{subfigure}
    \end{figure}
 
    So far we have finished the discussions for $\dot{\zeta}_i \in (\dot{\Delta}_{\ex})^+_{im}$ or $\dot{\zeta}_i \in S$.
    Finally we suppose $\dot{\zeta}_i = {\dot{\alpha}} + n \dot{\delta}$, $\dot{\alpha} \in \pm S \cup \{0\}$, $n \dot{\delta} \in (\dot{\Delta}_{\ex})^+_{im}$. 
    The idea is to check the desired equality using the additivity on both sides.
    Take the weight space $(\bar{\Lambda} - \bar{\beta}, \dot{\mu}_i; -k_i - n)$, 
    which is non-zero since $n \dot{\delta} \in (\dot{\Delta}_{\ex})^+_{im}$ and therefore $V_{\lghat_{\lleft}}(\Lambda_{(i)})_{\Lambda_{(i)} - \beta_i - n\dot{\delta}} \ne 0$. 
    Consider the following weight diagram.
    \begin{center}
    $
    \xymatrix{
            &   &   (\mu_i, \dot{\mu}_i; -{\sf d}_{\mu_i})\ar[r]^{-\beta_{i}} & (\bar{\Lambda} - \bar{\beta}, \dot{\mu}_i; -k_i) \\
            &   (\mu_{i-1}, \dot{\mu}_{i-1}; -{\sf d}_{\mu_{i-1}})\ar[rr]^{-\beta_{i-1}} & & (\bar{\Lambda} - \bar{\beta}, \dot{\mu}_{i-1}; -k_{i-1})\ar@{-->}[u]_{\dot{\zeta}_{i}} \\
        (\mu_i, \dot{\mu}_i; -{\sf d}_{\mu_i})\ar[rrr]^{-\beta_{i} - n\delta} & & & (\bar{\Lambda} - \bar{\beta}, \dot{\mu}_i; -k_i - n) \ar@{-->}[u]_{-\dot{\alpha}}
    }
    $
    \end{center}
The case $-\dot{\alpha} \in S$  has been discussed in the previous cases. 
    If on the contrary $-\dot{\alpha} \in -S$, then  in the above diagram, we goes down from the middle row to the bottom row via $\dot{\alpha}$, which can be discussed analogously.
    Checking the changes of weights as well as defects, we have:
    \begin{align*}
            \left(\defect(\Lambda_{(i)} , \beta_i + n\delta) - \defect(\Lambda_{(i-1)} , \beta_{i-1})\right) + \left(\defect(\Lambda_{(i-1)} , \beta_{i-1}) - \defect(\Lambda_{(i)} , \beta_i)\right) \\
        \hfill    = \defect(\Lambda_{(i)} , \beta_i + n\delta) - \defect(\Lambda_{(i)} , \beta_i),
    \end{align*}
    \[
        {\rm ht}_{\ex} (-\dot{\alpha}) + {\rm ht}_{\ex}\dot{\zeta}_i = {\rm ht}_{\ex}n \dot{\delta}. 
    \]
    Then by Cases 1-5 above, the first term as well as the third term in two equations coincide respectively, forcing $\defect(\Lambda_{(i-1)} , \beta_{i-1}) - \defect(\Lambda_{(i)} , \beta_i) = {\rm ht}_{\ex} \dot{\zeta}_i$. 

We have verified the equality for all possible positive roots, which finishes the proof.
\end{proof}

\section{Concluding comments}

Finally, we briefly present some applications of the duality theorem, abacus models, and moving vectors. 
We mainly focus on the following two points. 
On the one hand, based on the new realization of duality, we refine the duality results regarding the counting of $\max^+(\Lambda)$ in \cite{KOO}.
On the other hand, using moving vectors, we provide a new and more concise description of the conditions on the representation types of cyclotomic KLR algebras of types $A^{(1)}$ and $C^{(1)}$ in \cite{AHSW, ASW, LQ}.

\subsection{Counting of $\max^+$}
Recall that in \cite{KOO} the weight set $\max^+(\Lambda)$ was characterized by defining root-sieving equivalence classes of dominant weights, and formulae counting the sizes of these classes were provided.
By observing a certain symmetry between level and rank in the formula, several pairs of sets of equal size under the interchange of level and rank are constructed, providing a level-rank duality at the counting level.
Based on the concrete realization of level-rank duality in this paper, we revisit the relations of the $\max^+$ sets corresponding to these dual weights.

Since the duality we consider is between $\Sigma \ltimes \lghat$ and $\dot{\Sigma} \ltimes \lgdhat$, the correspondence between the dominant weights of the affine algebras may not be one‑to‑one, as considered in Example \ref{example-diag-auto-1}.
In other words, the level-rank duality we constructed gives a one-to-one correspondence between the orbits of the dominant weights of the affine Lie algebras on both sides under the action of the diagram automorphism groups.
Combining the root-sieving equivalence class decompositions given in Lemma \ref{root-sieving-eq} and Remark \ref{parity-in-Fock}, we can realize the counting equality of $\max^+(\Lambda)$ sets to concrete correspondence of weights.

We use two examples to illustrate this correspondence between orbits and give the counting of their $\max^+$ sets.

\begin{Exam}{\rm 
    $(1)$ Let $(\lghat, \lgdhat) = (\lsohat(2l), \lsohat(2r+1, 1))$ and $h = \frac{1}{2}$.
    By Lemma \ref{root-sieving-eq}, the dominant weights of $\lsohat(2l)$ in this case are divided into two root-sieving equivalence classes: 
    \[
    \tilde{P}^+(2r+2)_{\bar{1}, \bar{1}} / {\sim} = \{(2r+2)\Lambda_0 + \varpi_{l-1}\} \sqcup \{(2r+2)\Lambda_0 + \varpi_{l}\}.
    \]
    Note that both the diagram automorphisms $\sigma_{01}$ and $\sigma_{l-1,l}$ permute the two equivalence classes.
    For $Y\in \P^l_r$, $\lambda_{\Y}$ and $\sigma_{01}\sigma_{l-1,l}(\lambda_{\Y})$ lie in  the latter equivalence class, 
    while $\sigma_{01}(\lambda_{\Y})$ and $\sigma_{l-1,l}(\lambda_{\Y})$ lie in the former. 
    Under duality, all these four weights correspond to the same $\dot{\lambda}_{\Y}$. In other words,
    \[
        \left\{ \lambda_{\Y}, \ \sigma_{01}\sigma_{l-1,l}(\lambda_{\Y}) \right\} \overset{2:1}{\longleftrightarrow} \{ \dot{\lambda}_{\Y} \};
        \quad 
        \left\{ \sigma_{01}(\lambda_{\Y}), \ \sigma_{l-1,l}(\lambda_{\Y}) \right\} \overset{2:1}{\longleftrightarrow} \{ \dot{\lambda}_{\Y} \}.
    \]
    On the other hand, calculating the size of the equivalence classes according to the formulae in \cite{KOO} yields
    \[
        \left|
        \max^+_{D^{(1)}_l} \left( (2r+1)\Lambda_0 + \Lambda_i \right) \right| = 2 \binom{l+r}{r}, \; i = l, l-1;
        \quad
        \left| \max^+_{D^{(2)}_{r+1}} \left( 2l\dot{\Lambda}_0 \right) \right| =  \binom{l+r}{r}.
    \]
    That is, the correspondence of the $\Sigma$-orbits of the root-sieving equivalence classes provides the correspondence for the counting of the $\max^+$ sets.

    $(2$) Let $(\lghat, \lgdhat) = (\lsohat(2l+1), \lsohat(2r+1))$ and $h = 0$.
    In this case, all dominant weights with respect to $\lghat$ or $\lgdhat$ in $\F$ lie respectively in the same root-sieving equivalence class 
    $\max^+_{B^{(1)}_l} \left( 2r\Lambda_0 + \Lambda_l \right)$ or $\max^+_{B^{(1)}_r} \left( 2l\dot{\Lambda}_0 + \dot{\Lambda}_r \right)$.
    Similar to the case in Example \ref{example-diag-auto-2}, the Young diagrams $Y\in\P^l_r$ corresponding to the highest weight vectors are divided into the following two types based on their behavior under the diagram automorphism groups $\Sigma$ and $\dot{\Sigma}$.

If $y_l = 0$, then $y^c_1 = r$ and $y^t_1 < l$. Thus $\sigma_{01}(\lambda_{\Y}) = \lambda_{\Y}$ and $\dot{\lambda}_{\Y} \ne \dot{\sigma}_{01}(\dot{\lambda}_{\Y})$.
        There are $\binom{l+r-1}{l-1}$ such $Y$. In this case, we have the orbit set correspondence:
        \[
            \left\{ \lambda_{\Y} \mid y_l = 0 \right\} \overset{1:2}{\longleftrightarrow} \{ \dot{\lambda}_{\Y}, \ \dot{\sigma}_{01}(\dot{\lambda}_{\Y}) \mid y_l = 0 \}.
        \]
  
 If $y_l \ne 0$, then $y^c_1 < r$ and $y^t_1 = l$. Thus $\sigma_{01}(\lambda_{\Y}) \ne \lambda_{\Y}$ and $\dot{\lambda}_{\Y} = \dot{\sigma}_{01}(\dot{\lambda}_{\Y})$.
        There are $\binom{l+r-1}{r-1}$ such $Y$. The orbit set correspondence is as follows.
        \[
            \left\{ \lambda_{\Y}, \ \sigma_{01}(\lambda_{\Y}) \mid y_l \ne 0 \right\} \overset{2:1}{\longleftrightarrow} \{ \dot{\lambda}_{\Y} \mid y_l \ne 0 \}.
        \]

 Calculating the size of the equivalence classes according to the formulae in \cite{KOO} yields
    \begin{align*}
        \left|
        \max^+_{B^{(1)}_l} \left( 2r\Lambda_0 + \Lambda_l \right) \right| & = \binom{l+r-1}{l-1} + 2\binom{l+r-1}{l} ;
        \\
        \left| \max^+_{B^{(1)}_r} \left( 2l\dot{\Lambda}_0 + \dot{\Lambda}_r \right) \right|
        & =  2\binom{l+r-1}{r} + \binom{l+r-1}{r-1}.
    \end{align*}
    That is, although the two $\max^+$ sets are of equal size in this case, this equality at the counting level is actually not given by a one-to-one correspondence of roots, but happens to be pieced together by two sets of orbit correspondences.
}\end{Exam}

\subsection{Conditions for finite representation type}

In this subsection, we reformulate the conditions on the representation types of cyclotomic KLR algebras of types $A^{(1)}$ and $C^{(1)}$ in \cite{AHSW, ASW, LQ}.

According to \cite{LQ}, for type $A^{(1)}$, the moving vectors on the abacus corresponding to a block can be used to give criterion of finite representation type.
In short, the sufficiency requires a detailed discussion of the charge sequence of the abacus. However, by summarizing various cases, we can deduce a more concise necessary condition. Namely, a block is of finite representation type only if the image of its moving vector under $\Psi$ is a root not exceeding $\dot{\delta}$. 
The idea of ``splitting the move into steps, each corresponding to a positive root'' in \S \ref{def-and-move} is closely related to the aforementioned condition.

On the other hand, also for type $A^{(1)}$, a quiver was constructed in \cite{ASW}, whose vertices are elements in the root-sieving equivalence class of $\Lambda$. 
This quiver helps to classify various cases of $\Lambda$ and $\beta$ in detail, and giving criteria for the block to be of finite, tame, or wild type, respectively. 
The criteria for type $C^{(1)}$ was later given in \cite{AHSW}. 

By mapping each dominant weight on the quiver constructed in \cite{ASW} to a Young diagram according to formula (\ref{weightY}) or (\ref{weightY-A}) and considering its dual, we can routinely verify that 
two dominant weights $\Lambda$ and $\Lambda'$ are directly linked on the quiver if and only if their dual weights $\dot{\Lambda}$ and $\dot{\Lambda}'$ are adjacent in the sense of Lemma \ref{Stembridge}(1), i.e., there are no other dominant weights between them.
 
Consider dual pairs of type $A^{(1)}$ or $C^{(1)}$.
    For $\Lambda \in \tilde{P}^+(\levelR)$ and $\beta \in Q^+$ such that $\Lambda - \beta \in P(\Lambda)$, and 
    assume that there is a Weyl group element $w$,  $\lambda \in P^+(\levelR)$, and $k\in \mathbb{Q}$ such that 
    $\Lambda - \beta - {\sf d}_{\Lambda}\delta = w(\lambda + \levelR\Lambda_0 - k\delta)$.
    Let $(\lambda, \dot{\lambda})$ and $(\bar{\Lambda}, \dot{\bar{\Lambda}})$ be dual pairs of weights. Then, by the methods in \cite{ASW,AHSW}, the algebra $\mathscr{R}^{\Lambda}_{\beta}$ is of finite representation type if and only if 
    the two weights $\dot{\lambda} - {\sf d}_{\lambda}\dot{\delta} > \dot{\bar{\Lambda}} - k\dot{\delta}$ are adjacent in the sense of Lemma \ref{Stembridge}(1). Particularly, we have the following proposition. 

\begin{Prop}\label{prop-rep-type}
   For type $A^{(1)}$ or $C^{(1)}$, $\Lambda \in \tilde{P}^+(\levelR)$ and $\beta \in Q^+$ such that $\Lambda - \beta \in P(\Lambda)$, if the  algebra $\mathscr{R}^{\Lambda}_{\beta}$ has finite representation type, then the moving vector $\Psi({\bf mv}(\Lambda, \beta)) \in \dot{\Delta}^+$ is a positive root not exceeding $\dot{\delta}$, 
    and the defect of the weight space satisfies $\defect(\Lambda, \beta) < \levelR$.
\end{Prop}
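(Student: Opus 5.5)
We derive Proposition \ref{prop-rep-type} from the finite-type criterion quoted just before it, together with the step-by-step reduction in the proof of Theorem \ref{thm-def-weight}. As in that proof, we first use Lemma \ref{Pmax} and the $\Sigma$-invariance of moving vectors to assume $\Lambda\in\tilde P^+_{\Sigma}(\levelR)$. The terminal abacus $v_{\max}$ then has weight $(\bar\Lambda-\bar\beta,\dot\lambda;-k_{\max})$. Here $(\lambda,\dot\lambda)$ is the dual pair attached to the dominant representative $w^{-1}(\Lambda-\beta)$, and $\Psi({\bf mv}(\Lambda,\beta))$ equals the difference between $\dot\lambda-k_{\max}\dot\delta$ and $\dot{\bar\Lambda}-({\sf d}_\Lambda+b)\dot\delta$. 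Since $A^{(1)}$ and $C^{(1)}$ are non-branching, we have $\dot Q_{\ex}=\dot Q$ and ${\rm ht}_{\ex}={\rm ht}$, so no extended lattice is needed.

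\textbf{Step 1 (the moving vector is a root).} Assume $\mathscr R^\Lambda_\beta$ has finite representation type. By the criterion of \cite{ASW,AHSW} recalled above, the two dominant weights $\dot\lambda-{\sf d}_\lambda\dot\delta$ and $\dot{\bar\Lambda}-k\dot\delta$ are then adjacent in $\tilde{\dot P}^+$. After matching normalisations ($k$ against ${\sf d}_\Lambda+b$, using Proposition \ref{VirEv} and Lemma \ref{Pmax}(2)), these are exactly the endpoints of the move. Lemma \ref{Stembridge}(1) then gives that their difference $\Psi({\bf mv}(\Lambda,\beta))$ is a positive root $\dot\zeta\in\dot\Delta^+$. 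Adjacency also means the Stembridge chain in the proof of Theorem \ref{thm-def-weight} has length $n=1$.

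\textbf{Step 2 (the root does not exceed $\dot\delta$).} Write $\dot\zeta=\dot\alpha+m\dot\delta$ with $\dot\alpha\in\pm S\cup\{0\}$. Suppose $\dot\zeta>\dot\delta$ strictly. Then either $\dot\zeta-\dot\delta$ is a positive root or $\dot\zeta=m\dot\delta$ with $m\ge2$. In both cases we aim to exhibit a dominant weight strictly between the two endpoints, contradicting adjacency:
\begin{itemize}
\item for $\dot\zeta=m\dot\delta$ with $m\ge 2$, take the lower endpoint shifted by $\dot\delta$, i.e.\ $\dot{\bar\Lambda}-(k-1)\dot\delta$;
\item for $\dot\zeta=\dot\alpha+m\dot\delta$ with $m\ge1$ and $\dot\alpha\ne0$, take one of the two endpoints shifted by $\dot\delta$, whichever is dominant.
\end{itemize}
Dominance is preserved under $\dot\delta$-shifts, and both intermediate weights lie in the weight set by Corollary \ref{wtSpaceNz}(2). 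This step is the main obstacle: it requires showing the intermediate weight is strictly between the endpoints and genuinely distinct from them, which I would check using the Young-diagram descriptions in Cases 2--5 of the proof of Theorem \ref{thm-def-weight}. If the direct argument fails, I would fall back on comparison with the explicit finite-type lists in \cite{ASW,AHSW}, translated through formulas (\ref{weightY}) and (\ref{weightY-A}).

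\textbf{Step 3 (defect bound).} By Theorem \ref{thm-def-weight},
\[
\defect(\Lambda,\beta)={\bf mv}(\Lambda,\beta)\cdot\mathbf 1={\rm ht}(\dot\zeta).
\]
Since $\dot\zeta\le\dot\delta$ with $\dot\zeta\in\dot\Delta^+$, we get ${\rm ht}(\dot\zeta)\le{\rm ht}(\dot\delta)=\levelR$, using the computation ${\rm ht}_{\ex}\dot\delta=\levelR$ from Case 1.

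For the strict inequality, it remains to exclude $\dot\zeta=\dot\delta$. In that case $\Lambda-\beta$ would be $\Lambda-\delta$ up to the Weyl group, which gives the block $\mathscr R^\Lambda_\delta$ with defect $\levelR$. I would argue this excluded case against the finite-type criterion, or else check it against the known result that defect-$\levelR$ imaginary blocks are not of finite type.
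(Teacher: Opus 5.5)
Your Steps~1 and~2 are sound, and they follow the only argument the paper actually gives for this proposition: the quoted criterion "finite type $\Leftrightarrow$ adjacency" together with Lemma~\ref{Stembridge}(1).

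Step~2 is much easier than you fear. Both endpoints lie in $\tilde{\dot P}^+$, and dominance is unaffected by $\dot\delta$-shifts. So if $\dot\zeta-\dot\delta\in\tilde{\dot Q}^+\setminus\{0\}$, the lower endpoint plus $\dot\delta$ is a dominant weight strictly between the two endpoints, contradicting adjacency. Since $A^{(1)}$ and $C^{(1)}$ are untwisted, every positive root of the finite part is $\le\dot\theta<\dot\delta$. Hence every positive root $\dot\alpha+m\dot\delta$ is either $\le\dot\delta$ or $>\dot\delta$, which closes Step~2. Choosing "whichever endpoint is dominant" is moot, since both are. Corollary~\ref{wtSpaceNz} is also not needed, because adjacency refers only to $\tilde{\dot P}^+$.

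The genuine gap is the strict inequality $\defect(\Lambda,\beta)<\levelR$, i.e.\ excluding $\dot\zeta=\dot\delta$, which you leave open. Your first suggested route cannot close it, because adjacency does not exclude $\dot\zeta=\dot\delta$. Take $(\lglhat(2),\lslhat(2))$, $h=\frac12$, $Y=(1,0)$:
\begin{itemize}
\item Then $\Lambda=\Lambda_0+\Lambda_1$ with $\levelR=2$, and its dual weight is $\dot\Lambda_0+\dot\Lambda_1$.
\item For $\beta=\delta$ you get $\Psi({\bf mv}(\Lambda,\delta))=\dot\delta$ and $\defect(\Lambda,\delta)=2=\levelR$.
\item Yet $\dot\Lambda_0+\dot\Lambda_1$ and $\dot\Lambda_0+\dot\Lambda_1+\dot\delta$ are adjacent. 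The only intermediate candidates, $\dot\Lambda_0+\dot\Lambda_1+\dot\alpha_i$ for $i=0,1$, are not dominant.
\end{itemize}
So the quoted criterion cannot rule this block out. Read literally, it would even call it representation-finite. In fact, in characteristic $\neq 2$, $\mathscr{R}^{\Lambda}_{\delta}$ is here the type $B_2$ Hecke algebra with $q=-1$, $Q_1=1$, $Q_2=-1$. Its basic algebra is the Brauer graph algebra of a path with two exceptional vertices of multiplicity $2$, hence tame.

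Excluding $\dot\zeta=\dot\delta$ therefore needs genuinely representation-theoretic information about the blocks with $\Lambda-\beta$ Weyl-conjugate to $\Lambda-\delta$. The paper imports this wholesale from the explicit case analyses it cites; your proposal names this input but does not supply it. Moreover, the blanket ``known result'' you would fall back on is false at level $\levelR=1$. Weight-one blocks of Hecke algebras of symmetric groups are Brauer tree algebras, of finite type, with defect $1=\levelR$. So a hypothesis such as $r\ge 2$ has to enter somewhere.
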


\section*{Appendix}
\appendix
\section{Datum of each type}
\label{appendix-datum-types}

For the affine Lie algebra $\lghat$ of each classical type, we list the following data:
\begin{itemize}
    \item the Dynkin diagram and the Dynkin indices $a_i$, $a_i^{\vee}$;
    \item the finite dimensional Lie algebra $\lieg$ used to construct $\lghat$, and its dimension;
    \item the root data of its finite dimensional subalgebra $\lgo$, writing as the coordinates under the standard basis $\dlho = \Span\{\epsilon_i\}$, including simple roots $\alpha_i$, the highest root $\theta$ and the half sum of positive roots $\rho$;
    \item the weight data of its finite dimensional subalgebra $\lgo$, writing as the coordinates under the standard basis $\dlho = \Span\{\epsilon_i\}$, including fundamental dominant weights $\varpi_i$;
    \item the group of diagram automorphisms $\Sigma$ needed for duality and its action;
    \item For $R\in \mathbb{Z}_{>0}$, the set of orbit representatives of level-$R$ dominant weights under $\Sigma$.
\end{itemize}

\medskip
\noindent
{\bf A.1\quad Type $D^{(1)}_{\rkCar}$, $\lghat = \lsohat({2 \rkCar})$}
    \begin{center}
    \begin{tikzpicture}[scale=1.2]
        \draw (1,0)--(0.3, 0.7)node[dot]{}node[right]{${\scriptstyle 0}$};
        \draw (1,0)--(0.3,-0.7)node[dot]{}node[right]{${\scriptstyle 1}$};
        \draw (1,0)node[dot]{}node[below]{${\scriptstyle 2}$}--(2,0)node[dot]{}node[below]{${\scriptstyle 3}$};
        \node at (2.5,0) {$\cdots\cdots$};
        \draw (3,0)node[dot]{}node[below]{${\scriptstyle l-3}$}--(4,0)node[dot]{}node[below]{${\scriptstyle l-2}$}--(4.7,0.7)node[dot]{}node[right]{${\scriptstyle l-1}$};
        \draw (4,0)--(4.7,-0.7)node[dot]{}node[right]{${\scriptstyle l}$};

        \draw[<->,dashed] (0,0.5) arc (150:210:1);
        \node[left] at (-0.1,0) {${\scriptstyle \sigma_{01}}$};

        \draw[<->,dashed] (5,0.5) arc (30:-30:1);
        \node[right] at (5.1,0) {${\scriptstyle \sigma_{l-1,l}}$};
    \end{tikzpicture} 
    \end{center}
    \begin{itemize}
        \item $(a_0, \dots, a_{\rkCar}) = (1, 1, 2, \dots, 2, 1, 1)$; $(a_0^{\vee}, \dots, a_{\rkCar}^{\vee}) = (1, 1, 2, \dots, 2, 1, 1)$; $\dCox = {2 \rkCar}-2$.
        \item $\lieg = \lso(2l)$, ${\rm dim} \lieg = {\rkCar}({2 \rkCar}-1)$.
        \item $\alpha_i = \epsilon_i - \epsilon_{i+1}$ for $1 \le i \le {\rkCar}-1$; $\alpha_{\rkCar} = \epsilon_{{\rkCar}-1} + \epsilon_{{\rkCar}}$; $\theta = \epsilon_1 + \epsilon_2$; $\rho = \sum\limits_{i=1}^{\rkCar} ({\rkCar}-i) \epsilon_i$.
        \item $(\epsilon_i | \epsilon_j) = \delta_{i, j}$; $D={\rm diag}(1, 1,\dots, 1, 1)$.
        \item $\varpi_i = \epsilon_1 + \cdots + \epsilon_i$ for $1 \le i \le {\rkCar}-2$; 
        
        $\varpi_{{\rkCar}-1} = \frac{1}{2}(\epsilon_1+ \cdots+ \epsilon_{{\rkCar}-1} - \epsilon_{\rkCar})$; $\varpi_{\rkCar} = \frac{1}{2}(\epsilon_1+ \cdots+ \epsilon_{{\rkCar}-1} + \epsilon_{\rkCar})$.
        \item Diagram automorphism: $\Sigma = \langle \sigma_{01}, \sigma_{{\rkCar}-1, {\rkCar}} \rangle$,
        
        $\sigma_{01}$ switches the vertices $0$ and $1$; $\sigma_{{\rkCar}-1, {\rkCar}}$ switches the vertices ${\rkCar}-1$ and ${\rkCar}$.
        \item The group $\Sigma$ acts on $\tilde{\lieh}{}^*$ as follows. 
        
        $$\sigma_{01}: s\delta + {\lLevel}\Lambda_0 + \sum_{i=1}^{\rkCar} y_i \epsilon_i  \mapsto  (s-\frac{{\lLevel}}{2} + y_1)\delta + {\lLevel}\Lambda_0 + ({\lLevel} - y_1) \epsilon_1 + \sum_{i=2}^{\rkCar} y_i \epsilon_i.$$
        $$\sigma_{{\rkCar}-1, {\rkCar}}: s\delta + {\lLevel}\Lambda_0 + \sum_{i=1}^{\rkCar} y_i \epsilon_i  \mapsto s\delta + {\lLevel}\Lambda_0 + \sum_{i=1}^{{\rkCar}-1} y_i \epsilon_i - y_{\rkCar} \epsilon_{\rkCar}.$$
        \item $\sigdomdel({\lLevel}) = \left\{ {\sum\limits_{i=0}^{\rkCar}} c_i \Lambda_i \mid  \sum\limits_{i=0}^{\rkCar} c_i a_i^\vee = {\lLevel}, c_0 \ge c_1, c_{\rkCar} \ge c_{{\rkCar}-1} \right\}$. 
    \end{itemize}

\medskip 
\noindent
{\bf A.2\quad Type $B^{(1)}_{\rkCar}$, $\lghat = \lsohat({2 \rkCar}+1)$}
    \begin{center}
    \begin{tikzpicture}[scale=1.2]
        \draw (1,0)--(0.3, 0.7)node[dot]{}node[right]{${\scriptstyle 0}$};
        \draw (1,0)--(0.3,-0.7)node[dot]{}node[right]{${\scriptstyle 1}$};
        \draw (1,0)node[dot]{}node[below]{${\scriptstyle 2}$}--(2,0)node[dot]{}node[below]{${\scriptstyle 3}$};
        \node at (2.5,0) {$\cdots\cdots$};
        \draw (3,0)node[dot]{}node[below]{${\scriptstyle l-2}$}--(4,0);
        \draw[line width=.5pt, double distance=2pt,->-] (4,0) node[dot]{} node[below]{${\scriptstyle l-1}$} -- (5,0) node[dot]{}node[below]{${\scriptstyle l}$};

        \draw[<->,dashed] (0,0.5) arc (150:210:1);
        \node[left] at (-0.1,0) {${\scriptstyle \sigma_{01}}$};
    \end{tikzpicture} 
    \end{center}
    \begin{itemize}
        \item $(a_0, \dots, a_{\rkCar}) = (1, 1, 2, \dots, 2, 2)$; $(a_0^{\vee}, \dots, a_{\rkCar}^{\vee}) = (1, 1, 2, \dots, 2, 1)$; $\dCox = {2 \rkCar}-1$.
        \item $\lieg = \lso(2l+1)$, ${\rm dim} \lieg = {\rkCar}({2 \rkCar}+1)$.
        \item $\alpha_i = \epsilon_i - \epsilon_{i+1}$ for $1 \le i \le {\rkCar}-1$; $\alpha_{\rkCar} = \epsilon_{{\rkCar}}$; $\theta = \epsilon_1 + \epsilon_2$; $\rho = \sum\limits_{i=1}^{\rkCar} ({\rkCar} + \frac{1}{2} - i) \epsilon_i$.
        \item $(\epsilon_i | \epsilon_j) = \delta_{i, j}$; $D={\rm diag}(1, 1,\dots, 1, \frac{1}{2})$.
        \item $\varpi_i = \epsilon_1 + \cdots + \epsilon_i$ for $1 \le i \le {\rkCar}-1$; $\varpi_{{\rkCar}} = \frac{1}{2}(\epsilon_1+ \cdots \epsilon_{{\rkCar}-1} + \epsilon_{\rkCar})$.
        \item Diagram automorphism: $\Sigma = \langle \sigma_{01} \rangle$,
        
            $\sigma_{01}$ switches the vertices $0$ and $1$.
        \item The group $\Sigma$ acts on $\tilde{\lieh}{}^*$ as follows. 
        $$\sigma_{01}: s\delta + {\lLevel}\Lambda_0 + \sum_{i=1}^{\rkCar} y_i \epsilon_i  \mapsto  (s-\frac{{\lLevel}}{2} + y_1)\delta + {\lLevel}\Lambda_0 + ({\lLevel} - y_1) \epsilon_1 + \sum_{i=2}^{\rkCar} y_i \epsilon_i.$$
        \item $\sigdomdel({\lLevel}) = \left\{ \sum\limits_{i=0}^{\rkCar} c_i \Lambda_i \mid  \sum\limits_{i=0}^{\rkCar} c_i a_i^\vee = {\lLevel}, c_0 \ge c_1 \right\}$. 
    \end{itemize}

\medskip 
\noindent
{\bf A.3\quad Type $D^{(2)}_{{\rkCar}+1}$, $\lghat = \lsohat({2 \rkCar}+1,1)$}
    \begin{center}
    \begin{tikzpicture}[scale=1.2]
        \draw[line width=.5pt, double distance=2pt,->-] (1,0) node[dot]{} node[below]{${\scriptstyle 1}$} -- (0,0) node[dot]{}node[below]{${\scriptstyle 0}$};
        \draw (1,0)--(2,0)node[dot]{}node[below]{${\scriptstyle 2}$};
        \node at (2.5,0) {$\cdots\cdots$};
        \draw (3,0)node[dot]{}node[below]{${\scriptstyle l-2}$}--(4,0);
        \draw[line width=.5pt, double distance=2pt,->-] (4,0) node[dot]{} node[below]{${\scriptstyle l-1}$} -- (5,0) node[dot]{}node[below]{${\scriptstyle l}$};
    \end{tikzpicture} 
    \end{center}    
    \begin{itemize}
        \item $(a_0, \dots, a_{\rkCar}) = (2, \dots, 2)$; $(a_0^{\vee}, \dots, a_{\rkCar}^{\vee}) = (1, 2, \dots, 2, 1)$; $\dCox = {2 \rkCar}$.
        \item $\lieg = \lso(2l+1,1)$, ${\rm dim} \lieg = ({\rkCar}+1)({2 \rkCar}+1)$.
        \item $\alpha_i = \epsilon_i - \epsilon_{i+1}$ for $1 \le i \le {\rkCar}-1$; $\alpha_{\rkCar} = \epsilon_{{\rkCar}}$; $\theta = \epsilon_1$; $\rho = \sum\limits_{i=1}^{\rkCar} ({\rkCar} + \frac{1}{2} - i) \epsilon_i$.
        \item $(\epsilon_i | \epsilon_j) = \delta_{i, j}$; $D={\rm diag}(\frac{1}{2}, 1,\dots, 1, \frac{1}{2})$.
        \item $\varpi_i = \epsilon_1 + \cdots + \epsilon_i$ for $1 \le i \le {\rkCar}-1$; $\varpi_{{\rkCar}} = \frac{1}{2}(\epsilon_1+ \cdots \epsilon_{{\rkCar}-1} + \epsilon_{\rkCar})$.
        \item Diagram automorphism: $\Sigma = \{{\rm id}\}$.
        \item $\sigdomdel({\lLevel}) = \left\{ \sum\limits_{i=0}^{\rkCar} c_i \Lambda_i \mid  \sum\limits_{i=0}^{\rkCar} c_i a_i^\vee = {\lLevel}\right\}$.
    \end{itemize}

\medskip 
\noindent
{\bf A.4\quad Type $A^{(2)}_{2\rkCar-1}$, $\lghat = \lslhat{}^{(2)}({2 \rkCar})$}
    \begin{center}
    \begin{tikzpicture}
    \begin{scope}[scale=1.2]
        \draw[line width=.5pt, double distance=2pt,->-] (0,0) node[dot]{} node[below]{${\scriptstyle 0}$} -- (1,0) node[dot]{}node[below]{${\scriptstyle 1}$};
        \draw (1,0)--(2,0)node[dot]{}node[below]{${\scriptstyle 2}$};
        \node at (2.5,0) {$\cdots\cdots$};
        \draw (3,0)node[dot]{}node[below]{${\scriptstyle l-3}$}--(4,0)node[dot]{}node[below]{${\scriptstyle l-2}$}--(4.7,0.7)node[dot]{}node[right]{${\scriptstyle l-1}$};
        \draw (4,0)--(4.7,-0.7)node[dot]{}node[right]{${\scriptstyle l}$};

        \draw[<->,dashed] (5,0.5) arc (30:-30:1);
        \node[right] at (5.1,0) {${\scriptstyle \sigma_{l-1,l}}$};
    \end{scope}
    \end{tikzpicture} 
    \end{center}
    \begin{itemize}
        \item $(a_0, \dots, a_{\rkCar}) = (1, 2, \dots, 2, 1, 1)$; $(a_0^{\vee}, \dots, a_{\rkCar}^{\vee}) = (2, 2, \dots, 2, 1, 1)$; $\dCox = {2 \rkCar}$.
        \item $\lieg = \lsl(2l)$, ${\rm dim} \lieg = (2{\rkCar}+1)({2 \rkCar}-1)$.
        \item $\alpha_i = \epsilon_i - \epsilon_{i+1}$ for $1 \le i \le {\rkCar}-1$; $\alpha_{\rkCar} = \epsilon_{{\rkCar}-1} + \epsilon_{{\rkCar}}$; $\theta = 2\epsilon_1$; $\rho = \sum\limits_{i=1}^{\rkCar} ({\rkCar}-i) \epsilon_i$.
        \item $(\epsilon_i | \epsilon_j) = \delta_{i, j}$; $D={\rm diag}(2, 1,\dots, 1, 1)$.
        \item $\varpi_i = \epsilon_1 + \cdots + \epsilon_i$ for $1 \le i \le {\rkCar}-2$; 
        
        $\varpi_{{\rkCar}-1} = \frac{1}{2}(\epsilon_1+ \cdots+ \epsilon_{{\rkCar}-1} - \epsilon_{\rkCar})$; $\varpi_{\rkCar} = \frac{1}{2}(\epsilon_1+ \cdots+ \epsilon_{{\rkCar}-1} + \epsilon_{\rkCar})$.
        \item Diagram automorphism: $\Sigma = \langle \sigma_{{\rkCar}-1, {\rkCar}} \rangle$,
        
        $\sigma_{{\rkCar}-1, {\rkCar}}$ switches the vertices ${\rkCar}-1$ and ${\rkCar}$.
        \item The group $\Sigma$ acts on $\tilde{\lieh}{}^*$ as follows. 
        $$\sigma_{{\rkCar}-1, {\rkCar}}: s\delta + {\lLevel}\Lambda_0 + \sum_{i=1}^{\rkCar} y_i \epsilon_i  \mapsto s\delta + {\lLevel}\Lambda_0 + \sum_{i=1}^{{\rkCar}-1} y_i \epsilon_i - y_{\rkCar} \epsilon_{\rkCar}.$$
        \item $\sigdomdel(\lLevel) = \left\{ \sum\limits_{i=0}^{\rkCar} c_i \Lambda_i \mid  \sum\limits_{i=0}^{\rkCar} c_i a_i^\vee = {\lLevel}, c_{\rkCar} \ge c_{{\rkCar}-1} \right\}$.
    \end{itemize}

\medskip 
\noindent
{\bf A.5\quad Type $A^{(2)}_{2\rkCar}$, $\lghat = \lslhat{}^{(2)}({2 \rkCar}+1)$}
    \begin{center}
        \begin{tikzpicture}
        \begin{scope}[scale=1.2]
            \draw[line width=.5pt, double distance=2pt,->-] (0,0) node[dot]{} node[below]{${\scriptstyle 0}$} -- (1,0) node[dot]{}node[below]{${\scriptstyle 1}$};
            \draw (1,0)--(2,0)node[dot]{}node[below]{${\scriptstyle 2}$};
            \node at (2.5,0) {$\cdots\cdots$};
            \draw (3,0)node[dot]{}node[below]{${\scriptstyle l-2}$}--(4,0);
            \draw[line width=.5pt, double distance=2pt,->-] (4,0) node[dot]{} node[below]{${\scriptstyle l-1}$} -- (5,0) node[dot]{}node[below]{${\scriptstyle l}$};
        \end{scope}
    \end{tikzpicture} 
    \end{center}
    \begin{itemize}
        \item $(a_0, \dots, a_{\rkCar}) = (1, 2, \dots, 2, 2)$; $(a_0^{\vee}, \dots, a_{\rkCar}^{\vee}) = (2, 2, \dots, 2, 1)$; $\dCox = {2 \rkCar +1}$.
        \item $\lieg = \lsl(2l+1)$, ${\rm dim} \lieg = 2{\rkCar}({2 \rkCar}+2)$.
        \item $\alpha_i = \epsilon_i - \epsilon_{i+1}$ for $1 \le i \le {\rkCar}-1$; $\alpha_{\rkCar} = \epsilon_{{\rkCar}}$; $\theta = 2\epsilon_1$; $\rho = \sum\limits_{i=1}^{\rkCar} ({\rkCar} + \frac{1}{2} - i) \epsilon_i$.
        \item $(\epsilon_i | \epsilon_j) = \delta_{i, j}$; $D={\rm diag}(2, 1,\dots, 1, \frac{1}{2})$.
        \item $\varpi_i = \epsilon_1 + \cdots + \epsilon_i$ for $1 \le i \le {\rkCar}-1$; $\varpi_{{\rkCar}} = \frac{1}{2}(\epsilon_1+ \cdots \epsilon_{{\rkCar}-1} + \epsilon_{\rkCar})$.
        \item Diagram automorphism: $\Sigma = \{{\rm id}\}$.
        \item $\sigdomdel(\lLevel) = \left\{ \sum\limits_{i=0}^{\rkCar} c_i \Lambda_i \mid \sum\limits_{i=0}^{\rkCar} c_i a_i^\vee = {\lLevel}\right\}$.
     \end{itemize}

\medskip 
\noindent
{\bf A.6\quad Type $A^{(1)}_{\rkCar-1}$, $\lghat = \lslhat({\rkCar})$}
    \begin{center}
        \begin{tikzpicture}
        \begin{scope}[scale=1.2]
            \draw (1,0)node[dot]{}node[below]{${\scriptstyle 1}$}--(2,0)node[dot]{}node[below]{${\scriptstyle 2}$};
            \node at (2.5,0) {$\cdots\cdots$};
            \draw (3,0)node[dot]{}node[below]{${\scriptstyle l-2}$}--(4,0)node[dot]{}node[below]{${\scriptstyle l-1}$};
            \draw (1,0) -- (2.5,1.5)node[dot]{}node[above]{${\scriptstyle 0}$} -- (4,0);

            \node at (2.5,0.75) {$\circlearrowleft$};
            \node[below] at (2.5,0.7) {$\sigma_{cyc}$};
        \end{scope}
    \end{tikzpicture} 
    \end{center}
    \begin{itemize}
        \item $(a_0, \dots, a_{\rkCar-1}) = (1, \dots, 1)$; $(a_0^{\vee}, \dots, a_{\rkCar-1}^{\vee}) = (1, \dots, 1)$; $\dCox = {\rkCar}$.
        \item $\lieg = \lsl(l)$, ${\rm dim} \lieg = {\rkCar}^2-1$. Root vectors: 
        $$e_i = E_{i, i+1}(0) \mbox{ for } i = 1, \dots, \rkCar-1, e_0 = E_{l, 1}(1);$$ 
        $$f_i = E_{i+1, i}(0)\mbox{ for } i = 1, \dots, \rkCar-1, f_0 = E_{1, l}(-1).$$
        \item $\alpha_i = \epsilon_i - \epsilon_{i+1}$ for $1 \le i \le {\rkCar-1}$; $\theta = \epsilon_1 - \epsilon_{\rkCar}$; $\rho = \sum\limits_{i=1}^{\rkCar} ({\rkCar}+1-i) \epsilon_i$.
        \item $(\epsilon_i | \epsilon_j) = \delta_{i, j}$; $D={\rm diag}(1, \dots, 1)$.
        \item The fundamental weights of $\lgl(l)$: $\varpi_i = \epsilon_1 + \cdots + \epsilon_i$, for $1 \le i \le {\rkCar}$.
        \item For a weight $Y = \sum\limits_{i=1}^{\rkCar} y_i \epsilon_i$ of $\lgl(l)$, denote by $[Y] \coloneq  Y - \big(\sum\limits_{i=1}^{\rkCar} y_i\big)\mathbf{1}$ the corresponding weight of $\lsl(l)$, where $\mathbf{1}=(1,1,\cdots,1)$. 
        \item Diagram automorphism: $\Sigma = \langle \sigma^{\#}_{cyc} \rangle$, for $\lglhat({\rkCar})$. 
        
        Let $\sigma_{cyc}$ be the diagram automorphism of $\lslhat({\rkCar})$ permuting the vertices $0, 1, \dots, \rkCar-1$ cyclically. It lifts to an automorphism of infinite order $\sigma^{\#}_{cyc}$ on $\lglhat({\rkCar})$. (ref. \cite[\S 1.3.4]{Hasegawa1989}). 
        \item The automorphism $\sigma_{cyc}$ acts on $\tilde{\lieh}{}^*$ as follows.
        $$\sigma_{cyc}: s\delta + {\lLevel}\Lambda_0 + \sum_{i=1}^{\rkCar} y_i \epsilon_i  \mapsto (s - \lLevel(\rkCar - 1)/2\rkCar - y_{\rkCar})\delta + {\lLevel}\Lambda_0 + \sum_{i=1}^{{\rkCar}} (y_{i-1} + (\delta_{i1} - 1/\rkCar)\lLevel) \epsilon_i.$$
        \item The automorphism $\sigma^{\#}_{cyc}$ acts on $\lghat$ as follows.
        $$\sigma^{\#}_{cyc}: E_{ij}(n) \mapsto E_{i+1, j+1\pmod{\rkCar}}(n - \delta_{i{\rkCar}} + \delta_{j{\rkCar}}) - \delta_{n0}\delta_{i{\rkCar}}\delta_{j{\rkCar}} \cdot c.$$
        \item $\sigdomdel({\lLevel}) = \left\{ \sum\limits_{i=0}^{\rkCar} c_i \Lambda_i \mid  \sum\limits_{i=0}^{\rkCar} c_i a_i^\vee = {\lLevel}, c_0 > 0 \right\}.$
    \end{itemize}

\medskip
\noindent
{\bf A.7\quad Type $C^{(1)}_{\rkCar}$, $\lghat = \lsphat({2 \rkCar}$)}
    \begin{center}
        \begin{tikzpicture}
        \begin{scope}[scale=1.2]
            \draw[line width=.5pt, double distance=2pt,->-] (0,0) node[dot]{} node[below]{${\scriptstyle 0}$} -- (1,0) node[dot]{}node[below]{${\scriptstyle 1}$};
            \draw (1,0)--(2,0)node[dot]{}node[below]{${\scriptstyle 2}$};
            \node at (2.5,0) {$\cdots\cdots$};
            \draw (3,0)node[dot]{}node[below]{${\scriptstyle l-2}$}--(4,0);
            \draw[line width=.5pt, double distance=2pt,->-] (5,0) node[dot]{} node[below]{${\scriptstyle l}$} -- (4,0) node[dot]{}node[below]{${\scriptstyle l-1}$};
        \end{scope}
    \end{tikzpicture} 
    \end{center}
    \begin{itemize}
        \item $(a_0, \dots, a_{\rkCar}) = (1, 2, \dots, 2, 1)$; $(a_0^{\vee}, \dots, a_{\rkCar}^{\vee}) = (2, 2, \dots, 2, 2)$; $\dCox = 2{\rkCar}+2$.
        \item $\lghat = \lsphat({2 \rkCar}):= \{x\in \lgl(2\rkCar)\mid x^T J_{sp} + J_{sp} x = 0\}$, where 
        $J_{sp} = \left(\begin{matrix}
            J_{\rkCar} &   \\ & -J_{\rkCar}
        \end{matrix}\right)$; 
        ${\rm dim} \lieg = {\rkCar}({2 \rkCar}+1)$. 
        Root vectors: Let $C_{ij} = E_{ij} - {\rm sgn}(ij) E_{-j, -i}$. 
        $$e_i = C_{i, i+1}(0)\mbox{ for } i = 1, \dots, \rkCar-1,\; e_0 = C_{-1, 1}(1),\; e_{\rkCar}=C_{\rkCar-1,-\rkCar}(0);$$ 
        $$f_i = C_{i+1, i}(0)\mbox{ for } i = 1, \dots, \rkCar-1,\; f_0 = C_{1, -1}(-1),\; f_{\rkCar}=C_{-\rkCar,\rkCar-1}(0).$$
        \item $\alpha_i = \epsilon_i - \epsilon_{i+1}$ for $1 \le i \le {\rkCar}-1$; $\alpha_{\rkCar} = 2\epsilon_{{\rkCar}}$; $\theta = 2\epsilon_1$; $\rho = \sum\limits_{i=1}^{\rkCar} ({\rkCar} + 1 - i) \epsilon_i$.
        \item $(\epsilon_i | \epsilon_j) = \delta_{i, j}$; $D={\rm diag}(2, 1,\dots, 1, 2)$.
        \item $\varpi_i = \epsilon_1 + \cdots + \epsilon_i$ for $1 \le i \le {\rkCar}$.
        \item Diagram automorphism: $\Sigma = \{{\rm id}\}$.
        \item $\sigdomdel({\lLevel}) = \left\{ \sum\limits_{i=0}^{\rkCar} c_i \Lambda_i \mid \sum\limits_{i=0}^{\rkCar} c_i a_i^\vee = {\lLevel}\right\}$.
     \end{itemize}

\section{The set of positive roots for non-branching types}\label{pos-roots}
Referring to \cite[Prop 6.3]{Kac} and \cite[Appendix A]{Carter},
we list the set of positive roots $\dot{\Delta}_{\ex}^+$ for all non-branching type classical affine Dynkin diagram.
The Dynkin diagram ${}^t\!A^{(2)}_{2\rkCar}$ is obtained from $A^{(2)}_{2\rkCar}$ by reindex the vertices reversedly, see Remark \ref{reverse-A}.

Here we denote by $\Delta^+_{im}$ (resp. $\Delta^+_{re}$) the set of imaginary (resp. real) positive roots, with the subscripts $s,l,m$ corresponding to the set of the shortest, the longest, or the medium real roots, respectively.

In all the following sets, the indices $i,j$ always run over $1\le i < j \le l$.
The set of positive roots for each non-branching type is given in Table \ref{tbl-non-branching}.

\begin{table}[h]
\centering
\renewcommand{\arraystretch}{1.8} 
\begin{tabular}{l l p{10cm}}
\toprule
\textbf{Type} & \textbf{Positive Roots ($\Delta^+$)} \\ \midrule
$A^{(1)}_{\rkCar-1}$ & 
\begin{tabular}{@{}l@{}} 
$\Delta^+_{im} = \mathbb{N}^+ \delta$ \\
$\Delta^+_{re} = (\{\epsilon_i - \epsilon_j \} + \mathbb{N}\delta) \cup (\{-\epsilon_i + \epsilon_j\} + \mathbb{N}^+\delta)$
\end{tabular} \\ \hline

$C^{(1)}_{\rkCar}$ & 
\begin{tabular}{@{}l@{}} 
$\Delta^+_{im} = \mathbb{N}^+ \delta$ \\
$\Delta^+_{re,s} = (\{\epsilon_i \pm \epsilon_j \} + \mathbb{N}\delta) \cup (\{ - \epsilon_i \pm \epsilon_j\} + \mathbb{N}^+\delta)$ \\
$\Delta^+_{re,l} = (\{2\epsilon_i\} + \mathbb{N}\delta) \cup (\{ - 2\epsilon_i \} + \mathbb{N}^+\delta)$
\end{tabular} \\ \hline

$D^{(2)}_{\rkCar+1}$ & 
\begin{tabular}{@{}l@{}} 
$\Delta^+_{im} = \frac{1}{2}\mathbb{N}^+ \delta$ \\
$\Delta^+_{re,l} = (\{\epsilon_i \pm \epsilon_j \} + \mathbb{N}\delta) \cup (\{ - \epsilon_i \pm \epsilon_j\} + \mathbb{N}^+\delta)$ \\
$\Delta^+_{re,s} = (\{\epsilon_i\} + \frac{1}{2}\mathbb{N}\delta) \cup (\{ - \epsilon_i \} + \frac{1}{2}\mathbb{N}^+\delta)$
\end{tabular} \\ \hline

$A^{(2)}_{2\rkCar}$ & 
\begin{tabular}{@{}l@{}} 
$\Delta^+_{im} = \mathbb{N}^+ \delta$ \\
$\Delta^+_{re,m} = (\{\epsilon_i \pm \epsilon_j \} + \mathbb{N}\delta) \cup (\{ - \epsilon_i \pm \epsilon_j\} + \mathbb{N}^+\delta)$ \\
$\Delta^+_{re,s} = (\{\epsilon_i\} + \mathbb{N}\delta) \cup (\{ - \epsilon_i \} + \mathbb{N}^+\delta)$ \\
$\Delta^+_{re,l} = (\{ \pm 2\epsilon_i\} + (1 + 2\mathbb{N})\delta)$
\end{tabular} \\ \hline

${}^t\!A^{(2)}_{2\rkCar}$ & 
\begin{tabular}{@{}l@{}} 
$\Delta^+_{im} = \mathbb{N}^+ \delta$ \\
$\Delta^+_{re,m} = (\{\epsilon_i \pm \epsilon_j \} + \mathbb{N}\delta) \cup (\{ - \epsilon_i \pm \epsilon_j\} + \mathbb{N}^+\delta)$ \\
$\Delta^+_{re,l} = (\{2\epsilon_i\} + \mathbb{N}\delta) \cup (\{ - 2\epsilon_i \} + \mathbb{N}^+\delta)$ \\
$\Delta^+_{re,s} = (\{ \pm \epsilon_i\} + (\frac{1}{2} + \mathbb{N})\delta)$
\end{tabular} \\ 
\bottomrule
\end{tabular}
\caption{Root System Data for Non-Branching Types}\label{tbl-non-branching}
\end{table}

\section{Actions of pairs $(\lghat,\lgdhat)$}\label{appendix-aff-aff}

We list for each type the algebras $\lghat,\lgdhat$ and $\lghat_{\bigalg}$, together with the detailed actions of $\lghat,\lgdhat$ on $\F$.

\medskip 
\noindent
{\bf C.1\quad Type $(O^{(\r)}, O^{(\r)})$}
Let $(\lghat, \lgdhat) = (\lsohat(L_{\bar{0}}, L_{\bar{1}}), \lsohat(R_{\bar{0}}, R_{\bar{1}}))$ and $\lieg_{\bigalg}=\lso(L_{\bar{0}} R_{\bar{0}} + L_{\bar{1}} R_{\bar{1}}, L_{\bar{1}} R_{\bar{0}} + L_{\bar{0}} R_{\bar{1}})$.
As a level-$1$ $\lghat_{\bigalg}$-module, $\F$ admits commuting actions by $\lghat_{\lleft}$ and $\lgdhat$ via restriction:
\[\begin{array}{cc}
\begin{array}{rcl}
     \pi_{\lleft}:  \lghat_{\lleft} & \to & \End (\F) \\
     D_{ij}(n) & \mapsto & \sum\limits_{(s, \gamma)} \nord{\psi^{i,s}(n-\gamma) \psi^{-j, -s}(\gamma)} \\
     c_{\lleft} &\mapsto&  R;\\
\end{array}
& 
\begin{array}{rcl}
  \dot{\pi}:  \lgdhat & \to & \End (\F) \\
     D_{pq}(n) & \mapsto & \sum\limits_{(k, \gamma)} \nord{\psi^{k,p}(n-\gamma) \psi^{-k, -q}(\gamma)} \\
    \dot{c} & \mapsto & L.\\
\end{array}
\end{array}\]

\medskip 
\noindent
{\bf C.2\quad Type $(A^{(2)}, A^{(2)})$}
Let $(\lghat_{\lleft}, \lgdhat) = (\lglhat{}^{(2)}(L), \lslhat{}^{(2)}(R))$, $\lghat_{\bigalg} = \lglhat{}^{(2)}(LR)$ and fix $h=0$.
As a level-$1$ $\lghat_{\bigalg}$-module, $\F$ admits commuting actions by $\lghat_{\lleft}$ and $\lgdhat$ via restriction:

\begin{align*}
    \pi_{\lleft}: & \lghat_{\lleft} \to \End (\F) \\
    & A_{ij}(n) \mapsto \sum_{s}\sum_{\gamma} \nord{\psi^{i,s}(n-\gamma) \psi^{-j, -s}(\gamma)} \\
    & A_{-i, j}(n) \mapsto \sum_{s}\sum_{\gamma} (-1)^{\gamma} \nord{ \psi^{-i,s}(n-\gamma) \psi^{-j, -s}(\gamma) } \\
    & A_{i, -j}(n) \mapsto \sum_{s}\sum_{\gamma} (-1)^{\gamma} \nord{ \psi^{i,s}(n-\gamma) \psi^{j, -s}(\gamma) } \\
    & A_{i, 0}(n) \mapsto \sum_{s \ge 0}\sum_{\gamma} \nord{ \psi^{-i,s}(n-\gamma) \psi^{0, -s}(\gamma) } +  \sum_{s < 0}\sum_{\gamma} (-1)^{\gamma} \nord{ \psi^{-i,s}(n-\gamma) \psi^{0, -s}(\gamma) }\\
    & A_{-i, 0}(n) \mapsto \sum_{s \ge 0}\sum_{\gamma} (-1)^{\gamma} \nord{ \psi^{-i,s}(n-\gamma) \psi^{0, -s}(\gamma) } +  \sum_{s < 0}\sum_{\gamma} \nord{ \psi^{-i,s}(n-\gamma) \psi^{0, -s}(\gamma) }\\
    & A_{0, 0}(n) \mapsto \sum_{s \ne 0}\sum_{\gamma} \nord{ \psi^{0,s}(n-\gamma) \psi^{0, -s}(\gamma) } +  \sum_{\gamma} (-1)^{\gamma} \nord{ \psi^{0,0}(n-\gamma) \psi^{0, 0}(\gamma) }\\
    & c_{\lleft} \mapsto R;\\
\end{align*} 
for $i, j>0$, and
\begin{align*}
    \dot{\pi}: & \lgdhat \to \End (\F) \\
    & A_{pq}(n) \mapsto
    \begin{cases}
        \sum\limits_{\gamma} \left(\sum\limits_{k \ge 0}\nord{ \psi^{k,p}(n-\gamma) \psi^{-k, -q}(\gamma) } + (-1)^n \sum\limits_{k < 0}\nord{ \psi^{k,p}(n-\gamma) \psi^{-k, -q}(\gamma) }\right),\\
        \hfill \text{ if } p > 0 , q \ge 0;\\
        \sum\limits_{\gamma} \left(\sum\limits_{k > 0}\nord{ \psi^{k,p}(n-\gamma) \psi^{-k, -q}(\gamma) } + (-1)^n \sum\limits_{k \le 0}\nord{ \psi^{k,p}(n-\gamma) \psi^{-k, -q}(\gamma) }\right),\\
        \hfill \text{ if } p \le 0 , q < 0;\\
        \sum\limits_{\gamma} \left(\sum\limits_{k > 0}\nord{ \psi^{k,p}(n-\gamma) \psi^{-k, -q}(\gamma) } + (-1)^n \sum\limits_{k < 0}\nord{ \psi^{k,p}(n-\gamma) \psi^{-k, -q}(\gamma) }\right)   \\
        \hspace{1cm} + \sum\limits_{\gamma}\nord{ (-1)^{\gamma} \psi^{0,p}(n-\gamma) \psi^{0, -q}(\gamma) },\hfill \text{ otherwise}.
    \end{cases}\\
    & \dot{c} \mapsto L.
\end{align*}

\medskip 
\noindent
{\bf C.3\quad Type $(C^{(1)}, C^{(1)})$}
Let $(\lghat_{\lleft}, \lgdhat) = (\lsphat(2l), \lsphat(2r))$ and $\lghat_{\bigalg} = \lsohat(4lr)$.
As a level-$1$ $\lghat_{\bigalg}$-module, $\F$ admits commuting actions by $\lghat_{\lleft}$ and $\lgdhat$ via restriction:
\begin{align*}
    \pi_{\lleft}: & \lghat_{\lleft} \to \End (\F) \\
    & C_{ij}(n) \mapsto \sum_{s=1}^r \sum_{\gamma} (\nord{\psi^{i,s}(n-\gamma) \psi^{-j, -s}(\gamma)} + \sgn(ij) \nord{\psi^{i,-s}(n-\gamma) \psi^{-j, s}(\gamma)}) \\
    & c_{\lleft} \mapsto r;\\
    \dot{\pi}: & \lgdhat \to \End (\F) \\
    & C_{pq}(n) \mapsto \sum_{k=1}^l \sum_{\gamma} (\nord{\psi^{k,p}(n-\gamma) \psi^{-k, -q}(\gamma)} + \sgn(pq) \nord{\psi^{-k,p}(n-\gamma) \psi^{k, -q}(\gamma)}) \\
    & \dot{c} \mapsto l.\\
\end{align*}

\medskip 
\noindent
{\bf C.4\quad Type $(A^{(1)}, A^{(1)})$}
Let $(\lghat,\lgdhat)=(\lglhat(l),\lslhat(r))$ and $\lghat_{\bigalg}= \lsohat(2lr)$.
As a level-$1$ $\lghat_{\bigalg}$-module, $\F$ admits commuting actions by $\lghat_{\lleft}$ and $\lgdhat$ via restriction:
\[\begin{array}{cc}
 \begin{array}{rcl}
    \pi_{\lleft}: \lghat_{\lleft} & \to &\End (\F) \\
     E_{ij}(n) & \mapsto & \sum_{s=1}^r \sum_{\gamma} \nord{\psi^{i,s}(n-\gamma) \psi^{-j, -s}(\gamma)} \\
     c_{\lleft} & \mapsto & r;\\
 \end{array}   
 \begin{array}{rcl}
    \dot{\pi}: \lgdhat &\to& \End (\F) \\
     E_{pq}(n) & \mapsto & \sum_{k=1}^l \sum_{\gamma} \nord{\psi^{k,p}(n-\gamma) \psi^{-k, -q}(\gamma)} \\
     \dot{c} & \mapsto & l.\\
 \end{array}
\end{array}
\]

\section*{Acknowledgements}
The first and the second authors are partially supported by Beijing Natural Science Foundation (No. 1252011). The work was also supported by the Open Project Program of Key Laboratory of Mathematics and Complex System(No. K202402), Beijing Normal University. The authors thank Dr. Huang Lin for valuable discussions.

\bigskip 
\noindent 
Wei Hu, School of Mathematical Sciences, Beijing Normal University, Beijing 100875, China

\medskip 
\noindent
Email: {\tt huwei@bnu.edu.cn}

\medskip 
\noindent
Feiyue Huang, School of Mathematical Sciences, Beijing Normal University, Beijing 100875, China

\medskip 
\noindent
Email: {\tt fyhuang@mail.bnu.edu.cn}

\medskip 
\noindent
Yanbo Li, School of Mathematics and Statistics, Northeastern University at Qinhuangdao, Qinhuangdao 066004, China

\medskip 
\noindent
Email: {\tt liyanbo707@163.com}

\medskip 
\noindent
Xiangyu Qi, School of Mathematics and Statistics, Beijing Institute of Technology, Beijing, 100081, China

\medskip 
\noindent
Email: {\tt qixiangyumath@163.com}
\end{document}